\newlength{\defbaselineskip}
\newcommand{\setlinespacing}[1]%
           {\setlength{\baselineskip}{#1 \defbaselineskip}}
\newcommand{\singlespacing}{\setlength{\baselineskip}{\defbaselineskip}}
\newtheorem{thm}{Theorem}[section]
\newtheorem{cor}[thm]{Corollary}
\newtheorem{lem}[thm]{Lemma}
\newtheorem{prop}[thm]{Proposition}
\theoremstyle{definition}
\newcommand{\Real}{\mathbb{ R}}
\newcommand{\R}{\mathbb{ R}}
\newcommand{\Z}{\mathbb{ Z}}
\newcommand{\Wholes}{\mathbb{ Z}}
\newcommand{\Complex}{\mathbb{ C}}
\newcommand{\abs}[1]{\left\vert#1\right\vert}			% vector norm
\newcommand{\norm}[1]{\left\Vert#1\right\Vert}		% operator norm
\newcommand{\sref}[1]{(\ref{#1})}                       % equaton references
\numberwithin{equation}{section}
\begin{document}

\begin{frontmatter}
\title{Travelling Corners for Spatially Discrete \\ Reaction-Diffusion Systems }
\journal{...}

\author[LD2]{H. J. Hupkes},
\author[LD1]{L. Morelli \corauthref{coraut}}
\corauth[coraut]{Corresponding author. }

\address[LD1]{
  Mathematisch Instituut - Universiteit Leiden \\
  P.O. Box 9512; 2300 RA Leiden; The Netherlands \\
  Email:  {\normalfont{\texttt{leonardo.morelli@gmail.com}}}
}
\address[LD2]{
  Mathematisch Instituut - Universiteit Leiden \\
  P.O. Box 9512; 2300 RA Leiden; The Netherlands \\ Email:  {\normalfont{\texttt{hhupkes@math.leidenuniv.nl}}}
}

\date{Version of \today}

\begin{abstract}
\singlespacing
We consider reaction-diffusion equations
on the planar square lattice that admit
spectrally stable planar travelling wave solutions.
We show that these solutions can be continued into a branch
of travelling corners. As an example, we consider
the monochromatic and bichromatic Nagumo lattice differential equation
and show that both systems exhibit interior and exterior corners.

Our result is valid in the setting where the group velocity is zero. In this case,
the equations for the corner can be written as a difference equation posed on
an appropriate Hilbert space. Using a non-standard global center manifold reduction,
we recover a two-component difference equation that describes the behaviour
of solutions that bifurcate off the planar travelling wave.
The main technical complication is the lack of regularity caused by the spatial discreteness,
which prevents the symmetry group from being factored out in a standard fashion.
\end{abstract}

\begin{subjclass}
\singlespacing
34A33 \sep 35B36.
\end{subjclass}

\begin{keyword}
\singlespacing
travelling corners, anisotropy, lattice differential equations,
global center manifolds.
\end{keyword}

\end{frontmatter}

\section{Introduction}

In this paper we construct travelling corner
solutions to a class of planar lattice differential equations
(LDEs) that includes the Nagumo LDE
\begin{equation}
\label{eq:int:nag:lde}
\dot u_{i,j} =u_{i+1,j} +u_{i-1,j} +u_{i,j+1} +u_{i,j-1} -4u_{i,j}
+g_{\mathrm{cub}}(u;\rho)
\end{equation}
posed on the two-dimensional square lattice $(i,j) \in \mathbb{Z}^2$,
in which the nonlinearity is given by the bistable cubic
\begin{equation}
g_{\mathrm{cub}}(u;\rho) =  (u^2 - 1)(\rho - u) ,
\qquad \qquad
-1 < \rho < 1 .
\end{equation}
%with the standard bistable.
Such corners can be seen as interfaces that connect
planar waves travelling in slightly different directions.
In particular, our analysis does not require the use of the comparison principle,
but merely requires a number of spectral and geometric conditions to hold
for the underlying planar travelling waves.
This allows our results to be applied to a wide range of LDEs,
highlighting the important role that anisotropy and topology
play in spatially discrete settings.

\paragraph{Reaction-diffusion systems}
The LDE \sref{eq:int:nag:lde} can be seen as
a nearest-neighbour spatial discretization of the Nagumo PDE
\begin{equation}
\label{eq:int:nag:pde}
u_t = u_{xx} + u_{yy} +  g_{\mathrm{cub}}(u;\rho) .
\end{equation}
In modelling contexts one often uses the two stable
equilibria of the nonlinearity $g$ to represent material phases or
biological species that compete for dominance in a spatial domain.
Indeed, the diffusion term tends to attenuate
high frequency oscillations, while the bistable nonlinearity
promotes these. The
balance between these two
dynamical features leads to interesting pattern forming
behaviour.

As a consequence,
the PDE \sref{eq:int:nag:pde} % with $\Omega = \Real^2$
has served as a prototype system for the understanding
of many basic concepts at the heart of dynamical systems theory, including the
existence and stability of planar travelling waves,
the expansion of localized structures and the study of obstacles.
Multi-component versions of \sref{eq:int:nag:pde} such as the Gray-Scott
model \cite{GRAYSCOTT1983}
play an important role in the formation of patterns,
generating spatially periodic structures from equilibria that destabilize through Turing
bifurcations. Memory devices have been designed
using FitzHugh-Nagumo-type systems with two components \cite{KAMIN2006},
which support stable stationary radially symmetric spot patterns.
Similarly, one can find stable travelling spots \cite{HEIJSTER2014}
for three-component FitzHugh-Nagumo systems,
which have been used to describe gas discharges \cite{ORG1998,SCHENK1997}.

At present, a major effort is underway to understand the impact that
non-local effects can have on reaction-diffusion systems. For example,
many neural field models include infinite-range convolution terms
to describe the dynamics of large networks of neurons
\cite{BRESS2011,BRESS2014,PINTO2001,SNEYD2005},
which interact with each other over long distances.
The description of phase transitions in Ising models \cite{BatesInfRange,BatesDiscConv}
features non-local interactions that
can be both attractive and repulsive
depending on the length scale involved.

It is well-known by now that the
topology of the underlying spatial domain
can have a major impact on the dynamical behaviour exhibited by such non-local
systems. For example, nerve fibers have a myeline coating that admits
gaps at regular intervals \cite{RANVIER1878},
which can block signals
from propagating through the fiber \cite{VL7,HUX1949,LILLIE1925}.
In order to study the growth of plants,
one must take into account that cells
divide and grow in a fashion that is influenced heavily by the spatial configuration
of their neighbours \cite{GRIEN2007}.
Finally, the periodic structure inherent in many meta-materials
strongly influences the phase transitions that can occur
\cite{CAHNNOV1994,CAHNVLECK1999,VAIN2009}
as a consequence of the visco-elastic interactions between their building blocks.

We view the planar LDE \sref{eq:int:nag:lde} as a prototype model
that allows the impact of such non-local spatially-discrete effects
to be explored. Indeed, the
spatial $\Real^2 \to \Wholes^2$ transition
breaks the locality but also the translational and rotational symmetry of \sref{eq:int:nag:pde},
leading to several interesting phenomena and mathematical challenges.

\paragraph{Existence of planar waves}

It is well-known that
the balance between the diffusion and reaction
terms in the PDE \sref{eq:int:nag:pde}
is resolved through the formation of planar travelling
wave solutions
\begin{equation}
\label{eq:int:trv:wave:ansatz}
 u(x,y, t) = \Phi(x \cos \zeta + y \sin \zeta +ct); \qquad \Phi(-\infty) = -1,
   \qquad \Phi(\infty) = 1,
\end{equation}
which connect the two stable equilibria $u = \pm 1$.
When $c \neq 0$, these waves can be thought of as a mechanism by which the fitter species
or more energetically favourable phase invades the spatial domain.
The existence of these waves can be obtained by applying a phase-plane analysis
\cite{Fife1977} to the travelling wave ODE
\begin{equation}
  \label{eq:int:eq:waveProfile:pde}
 c\Phi' = \Phi'' + g_{\mathrm{cub}}(\Phi;\rho),
 \qquad
 \Phi(\pm \infty) = \pm 1,
\end{equation}
which results after substituting \sref{eq:int:trv:wave:ansatz}
into \sref{eq:int:nag:pde}.

On the other hand, substitution of the analogous Ansatz
\begin{equation}
\label{eq:int:eq:waveProfile:lde}
u_{ij}(t) = \Phi(i \cos \zeta + j \sin \zeta +ct); \qquad \Phi(-\infty) = -1,
   \qquad \Phi(\infty) = 1,
\end{equation}
into the LDE \sref{eq:int:nag:lde}
leads to the mixed-type functional differential equation (MFDE)
\begin{equation}
   \label{eq:int:trvWave:mfde}
  \begin{array}{lcl}
  c\Phi'(\xi) &= & \Phi(\xi + \cos \zeta) + \Phi(\xi + \sin \zeta)
    + \Phi(\xi - \cos \zeta) + \Phi(\xi - \sin \zeta) - 4\Phi(\xi)
     + g_{\mathrm{cub}}\big(\Phi(\xi); \rho \big).  % ; \\[0.2cm]
  %& & \qquad \Phi(-\infty) = 0, \qquad \Phi(\infty) = 1.
  \end{array}
\end{equation}
The broken rotational invariance in the transition
from \sref{eq:int:nag:pde} to \sref{eq:int:nag:lde}
is manifested by the
explicit presence of the propagation direction
in \sref{eq:int:trvWave:mfde}. The broken translational
invariance causes the wavespeed $c$ to appear in
\sref{eq:int:trvWave:mfde}
as a singular parameter.

A comprehensive existence theory for solutions to \sref{eq:int:trvWave:mfde}
was obtained in \cite{MPB}. In particular, for every $\zeta \in [0, 2 \pi]$
and $\rho \in [-1,1]$ there exists a unique wavespeed $c = c_{\rho,\zeta}$
for which \sref{eq:int:trvWave:mfde} admits a solution.
However, it is a delicate question to decide whether $c \neq 0$
or $c = 0$. Indeed, a sufficient energy difference between the two
stable equilibrium states is needed for the propagation of
waves \cite{BatesDiscConv,Bell1984, EVV2005AppMath, VL28},
a phenomenon referred to as propagation failure.
In fact, due to the angular dependence in \sref{eq:int:trvWave:mfde},
planar waves can fail to propagate
in certain directions that resonate with the lattice,
whilst travelling freely in others
\cite{CMPVV,HOFFMPcrys,MPCP}.

\paragraph{Linearization}
It is well-known that planar travelling waves
can be used as a skeleton to describe the global dynamics of
the PDE \sref{eq:int:trv:wave:ansatz} \cite{AW78}.
In particular, they have been used as building blocks to construct
other more complicated types of solutions.
A key ingredient in such constructions is to understand
the dynamics of the system that arises after linearizing
\sref{eq:int:nag:pde} around the planar waves
\sref{eq:int:trv:wave:ansatz}.

Performing this linearization for $\zeta =0 $,
we obtain the system
\begin{equation}
\partial_t v(x,y,t) =  \partial_{xx} v(x,y,t) + \partial_{yy}v(x,y,t)
 + g_{\mathrm{cub}}'\big(\Phi(x +ct) ; \rho \big) v(x,y,t),
\end{equation}
which can be transformed to the temporally autonomous system
\begin{equation}
\label{eq:int:pde:linearized:sys:transf}
\partial_t v(x,y,t) = \partial_{xx} v(x,y,t) + \partial_{yy} v(x,y,t)
  - c \partial_x v(x,y,t) + g_{\mathrm{cub}}'\big(\Phi(x); \rho\big) v(x,y,t)
\end{equation}
by the variable transformation $x' = x + c t$.
Since this system is also autonomous with respect to the $y$-coordinate,
which is transverse to the motion of the wave,
it is convenient to apply a Fourier transform in this direction.
Upon introducing the symbol
\begin{equation}
\label{eq:int:pde:l:z:def}
\big[\mathcal{L}_{z} p \big](x)
= \partial_{xx} p(x) + z^2  p(x) - c \partial_x p(x) + g_{\mathrm{cub}}'\big(\Phi(x); \rho \big) p(x),
\end{equation}
we readily find
\begin{equation}
\partial_t \hat{v}_{\omega}(x,t) =
\big[ \mathcal{L}_{i\omega} \hat{v}_{\omega}(\cdot, t)](x) .
\end{equation}

Inspecting \sref{eq:int:pde:l:z:def}, we readily see that
the spectrum of $\mathcal{L}_{z}$
can be obtained by rigidly shifting the spectrum of $\mathcal{L}_0$
by $z^2$. In particular, writing $\lambda_z = z^2$,
we find that
\begin{equation}
\mathcal{L}_{z} \Phi' = \lambda_z \Phi'.
\end{equation}
Noting that $\lambda_{i \omega} = - \omega^2$, we hence
see that %the evolution of
perturbations of the form
$v(x,y, 0) = \theta(y, 0) \Phi'(x)$ %can be evolved under
evolve under
\sref{eq:int:pde:linearized:sys:transf}
%can be determined
according to the heat semiflow $\theta_t = \theta_{yy}$.
These perturbations are important because they correspond
at the linear level with transverse deformations of the planar wave
interface.

On the other hand,
linearizing the LDE \sref{eq:int:nag:lde}
around the spatially-discrete wave \sref{eq:int:eq:waveProfile:lde}
that travels in the horizontal direction $\zeta = 0$,
we obtain  the temporally non-autonomous system
\begin{equation}
 \dot{v}_{ij}(t) = v_{i+1, j}(t) + v_{i, j+1}(t) + v_{i-1, j}(t) + v_{i, j-1}(t)
  - 4 v_{ij}(t) + g_{\mathrm{cub}}'\big(\Phi(i + ct) ; \rho \big) v_{ij}(t).
\end{equation}
Although the time dependence cannot be readily transformed away,
it is still possible to take a discrete Fourier transform in the $j$-direction.
This leads to the system
\begin{equation}
\label{eq:int:time:dep:lin:prob:lde}
\begin{array}{lcl}
\frac{d}{dt}[\hat{v}_{\omega}]_i(t)
& = &
[\hat{v}_{\omega}]_{i+1}(t) + e^{i\omega}[\hat{v}_{\omega}]_{i}(t)
 + [\hat{v}_{\omega}]_{i-1}(t) +
 e^{-i \omega}[\hat{v}_{\omega}]_{i}(t)
  - 4 [\hat{v}_{\omega}]_i(t)
\\[0.2cm]
& & \qquad + g_{\mathrm{cub}}'(\Phi(i + ct) ; \rho \big) [\hat{v}_{\omega}]_{i}(t).
\end{array}
\end{equation}
Looking for solutions of the form
\begin{equation}
 [\hat{v}_{\omega}]_i =
  e^{\lambda t} w_{\omega}( i + ct) ,
\end{equation}
we arrive the eigenvalue problem
\begin{equation}
\mathcal{L}^{\mathrm{hor}}_{i \omega} w_{\omega} = \lambda w_{\omega}
\end{equation}
for the linear operator
\begin{equation}
\begin{array}{lcl}
[\mathcal{L}^{\mathrm{hor}}_{z}p](\xi)
& = &  -c p'(\xi)
  + 2 \cosh(z)  p(\xi)
   + p(\xi +1) + p(\xi -1)
   - 4 p(\xi)
%\\[0.2cm]
%& & \qquad
  + g_{\mathrm{cub}}'\big(\Phi(\xi) ; \rho \big) p(\xi) .
\end{array}
\end{equation}
The theory developed in \cite{HJHNLS,BGV2003} essentially
justifies this formal calculation and confirms that the
spectral properties of
$\mathcal{L}_{z}$ can be used to
understand the dynamics of the time-dependent
problem \sref{eq:int:time:dep:lin:prob:lde}.
Upon writing $\lambda_z = 2 (\cosh(z) - 1)$,
we again have
\begin{equation}
\mathcal{L}^{\mathrm{hor}}_z \Phi' = \lambda_z \Phi' .
\end{equation}
To find the evolution of perturbations of the form
$v_{ij}(0) = \theta_j \Phi'$ under \sref{eq:int:time:dep:lin:prob:lde},
we must now solve the discrete heat equation
\begin{equation}
\label{eq:int:nag:lde:phase:heat:eq}
\dot{\theta}_j = \theta_{j+1} + \theta_{j-1} - 2 \theta_j .
\end{equation}
The situation is hence similar to that encountered
for the PDE \sref{eq:int:nag:pde}.

More material changes arise however when considering
the diagonal direction
$\zeta = \frac{\pi}{4}$. Following a similar procedure as above,
one arrives at the linear operator
\begin{equation}
\begin{array}{lcl}
[\mathcal{L}^{\mathrm{diag}}_{z}p](\xi)
& = &
 -c p'(\xi)
  + 2 \cosh (z )\big[ p(\xi + 1 ) + p(\xi -1) \big]
   - 4 p(\xi)
%\\[0.2cm]
%& & \qquad
  + g_{\mathrm{cub}}'\big(\Phi_*(\xi) ; \rho \big) p(\xi) ,
\end{array}
\end{equation}
which has a spectrum that can no longer be directly
related to that of $\mathcal{L}^{\mathrm{diag}}_0$.
It is hence no longer clear how to formulate
an analogue for  \sref{eq:int:nag:lde:phase:heat:eq}
to describe the linear evolution of interface deformations.
However, it is still the case that $\lambda_z = O(z^2)$
as $z \to 0$ for the curve of eigenvalues
that bifurcates from the zero eigenvalue
$\mathcal{L}^{\mathrm{diag}}_0 \Phi' = 0$.

For general rational angles $\zeta$ this quadratic behaviour
need no longer be true. In fact, we obtain the relation
\begin{equation}
\label{eq:int:def:grp:velocity}
[\partial_z \lambda_z]_{z=0} = \partial_\zeta c_{\rho,\zeta}
\end{equation}
for the quantity that is often referred to as the group velocity.
A similar relation was found in \cite{HARAG2006ANISO}
for planar PDEs with direction-dependent diffusion coefficients.
However, in this case it is always possible to
change the coordinate system in such a way that
$\lambda_z = O(z^2)$ holds again.

Such a transformation is not possible in the spatially discrete
setting \sref{eq:int:nag:lde}, since this would require
the transverse spatial coordinate
to become continuous. However, we do remark here that
the function $\phi \mapsto c_{\rho,\zeta}$ can behave
rather wildly in the critical regime where $\rho$ is small,
allowing the group velocity to vanish at specific values for
$\rho$ even if $\zeta \notin \frac{\pi}{4} \Wholes$.

\paragraph{Stability of planar waves}

The realization that transverse
interfacial deformations are governed by a heat equation
led to the development of two main approaches
to establish the nonlinear stability of the
planar waves \sref{eq:int:trv:wave:ansatz}.
Both approaches exploit the coordinate system
\begin{equation}
u(x,y,t) = \Phi\big( x + ct + \theta(y,t) \big) + v(x, y, t)
\end{equation}
in the neighbourhood of the planar travelling wave
and require the initial perturbations
$\theta(y,0)$ and $v(x,y,0)$ to be localized in a suitable sense.

The first approach was pioneered by Kapitula in \cite{KAP1997},
where he used semigroup methods %pectral methods, Green's functions and
and fixed-point arguments to
%analyze the coupled
%sysstem for $\theta$couple a nonlinear heat equation
%for $\theta$ with an  show
show that $\theta$ tends algebraically to zero, while
$v$ decays exponentially fast.
The advantage of this approach is that only weak
spectral assumptions need to be imposed
on the underlying system. However,
the crude estimates on the nonlinear
terms lead to rather weak estimates for the basin of attraction.

The second approach leverages the comparison principle
to obtain stability for a much larger class of initial perturbations.
By slowing down the natural decay-rate of the fundamental solution
of the heat equation, the authors of the landmark paper
\cite{BHM} were able to construct explicit super and sub-solutions
to \sref{eq:int:nag:pde} that trap perturbations
that can be arbitrarily large (but localized). In fact,
the authors use their construction to show that these planar waves
can pass around large compact obstacles and still eventually recover
their shape.

In \cite{HJHOBST2D,HJHSTB2D} these approaches were generalized
to the discrete setting of \sref{eq:int:nag:lde},
thereby continuing the early work by Bates and Chen \cite{BatesChen2002}
featuring %who studied
a related four-dimensional non-local problem.
In both cases the key technical challenge was the analysis of
troublesome non-selfadjoint terms
spawned by the anisotropy of the lattice, especially in
situation where the group velocity
does not vanish.
These terms have slower decay rates than their PDE counterparts
and hence require special care to close the nonlinear bootstrapping
procedure.
For example, the sub-solutions in \cite{BHM} consist of only two terms,
while 33 terms were required in \cite{HJHOBST2D} to correct for the slower
decay.

\paragraph*{Spreading phenomena}
The classic result \cite[Thm. 5.3]{AW78}
obtained by Weinberger for the PDE
\sref{eq:int:nag:pde}
states that large compact blobs with $u \approx 1$
inside and $u \approx -1$ outside can expand throughout the plane.
The proof of this result relies on the construction of radially expanding
sub- and super-solutions
by glueing together planar travelling waves.

In \cite{HJHOBST2D} a weak
version of this expansion result
was established for the LDE \sref{eq:int:nag:lde}
in the special case that no direction is pinned.
However, the underlying sub- and super-solutions expand at the
speeds $\min_{0 \le \zeta \le 2\pi} c_{\rho, \zeta}$
and $\max_{0 \le \zeta \le 2\pi} c_{\rho,\zeta}$ respectively,
which still leaves a considerable
hole in our knowledge of the expansion process.
Indeed,
the numerical results in \cite{BachBrenda}
provide strong evidence that the limiting
shape of the expanding blob can be found
by applying the Wulff construction \cite{roosen1998wulffman}
to the polar plot of the $\zeta \mapsto c_{\rho,\zeta}$
relation. For a large subset of parameters $\rho$
this limiting shape resembles
a polygon.

The main motivation
behind the current paper is to take a step towards
understanding this expansion process by looking
at the evolution of a single corner. Indeed,
when the expanding blob is sufficiently large,
it would seem to be very reasonable to assume that the corners
of the polygon behave in an almost independent fashion.

\paragraph{Corners for PDEs}
Assuming for concreteness that $\rho <0$,
the horizontal planar wave $(c,\Phi)$ given by
\sref{eq:int:trv:wave:ansatz} with $\phi = 0$ satisfies
$c > 0$, which means that it travels towards the left.
%The results
In \cite{HARAG2006} Haragus and Scheel construct
travelling corner solutions to \sref{eq:int:nag:pde}
by `bending' this planar wave to the left
in the spatial limits $y \to \pm \infty$,
so that the interface resembles a $>$ sign.

In particular, for any small opening angle
$\varphi > 0$, the authors establish the existence and stability
of
solutions of the form
\begin{equation}
\label{eq:int:nag:pde:corner}
u(x,y,t) = \Phi( x + \theta(y) + \frac{c}{\cos\varphi} t)
+ v( x + \frac{c}{\cos \varphi} t , y).
\end{equation}
Here $\norm{v(\cdot, y)}_{H^2} = O( \varphi^2)$
uniformly in $y$, while the phase $\theta$ satisfies the limits
\begin{equation}
\label{eq:int:corner:lims:for:y}
\lim_{y \to \pm \infty} \theta'(y) = \pm \tan \varphi.
\end{equation}
Notice that the horizontal speed $\frac{c}{\cos \varphi}$
of these corners is faster than the original speed
of the planar wave.

The result is obtained by
using the change of variable $x' = x + \tilde{c} t$
to recast
\sref{eq:int:nag:pde} as %in the form
\begin{equation}
u_t = u_{xx} - \tilde{c} u_x
+ u_{yy} + g_{\mathrm{cub}}( u; \rho)
\end{equation}
and subsequently demanding $u_t = 0$.
The resulting system can be written in the
first-order form
\begin{equation}
\label{eq:int:pde:trnsf}
\begin{array}{lcl}
u_y  & = & v ,
\\[0.2cm]
v_y & = & \tilde{c} u_x - u_{xx} - g_{\mathrm{cub}}(u; \rho) ,
\end{array}
\end{equation}
which admits the family of
$y$-independent equilibria
\begin{equation}
\label{eq:int:fam:of:eq:pde}
\tilde{c} = c,
\qquad
\big( u, v\big)(x,y) =
\big( \Phi( x + \vartheta), 0 \big),
\qquad
\qquad
\vartheta \in \Real.
\end{equation}
The linearization of \sref{eq:int:pde:trnsf}
around $(\tilde{c} , u ) = (c, \Phi)$
can be written as
\begin{equation}
\label{eq:int:lin:sys:corner:lin}
\begin{array}{lcl}
p_y  & = & q ,
\\[0.2cm]
q_y & = & \tilde{c} p_x - p_{xx}
- g_{\mathrm{cub}}'(\Phi(\cdot); \rho) p .
\end{array}
\end{equation}
This system admits the $y$-independent
solutions $(\Phi', 0)$ caused by the
translational invariance, together with
the linearly growing solution $(y \Phi', \Phi')$.
In particular, the
desired corner \sref{eq:int:nag:pde:corner}
lives on the two-dimensional global center manifold
associated to the family \sref{eq:int:fam:of:eq:pde}.
The solutions on this manifold can be
represented in the form
\begin{equation}
\begin{array}{lcl}
(u,v) = \Big(\Phi\big(  \theta(y) + x \big), 0\Big)
+ \kappa(y) \Big(0 , \Phi'\big(\theta(y) + x \big) \Big)
+  h^*\big( \kappa(y) , \tilde{c} \big)\big( \theta(y) + x \big)
\end{array}
\end{equation}
for some function
\begin{equation}
h^*: (- \epsilon, \epsilon) \times (c - \epsilon, c+\epsilon) 
\to H^2 \times H^1.
\end{equation}
One can subsequently obtain two skew-coupled
ODEs
to describe the dynamics of the scalar functions
$\kappa$ and $\theta$.
A relatively straightforward analysis
shows that these ODEs have solutions
for which $\theta$ satisfies the limits
\sref{eq:int:corner:lims:for:y},
while $\kappa$ remains small.
This suffices to  establish
the existence of the corners
\sref{eq:int:nag:pde:corner}.

In addition, in \cite{HARAG2006ANISO}
%introduced
anisotropic effects
were introduced
into the problem by allowing
the nonlinearity $g$ to depend on the gradient of $u$
and considering non-diagonal
diffusion coefficients.
In such cases the group velocity $c_g$
defined by the quantities
\sref{eq:int:def:grp:velocity} need not vanish,
but it can be removed by
applying a coordinate transformation
$y' = y - c_g t$ in the transverse direction.

By restricting their attention to small
opening angles $\varphi$
and using center manifold arguments,
Scheel and Haragus were also
able to apply their techniques
to multi-component reaction-diffusion PDEs
such as the FitzHugh-Nagumo and Gray-Scott equations
\cite{HARAG2006ANISO,HARAG2006}
However, it is also possible to consider
large opening angles when considering
equations that admit a comparison principle.
Indeed, in
\cite{BONNETHAMEL1999} explicit
sub- and super-solutions are
used to construct corners for the Nagumo PDE
\sref{eq:int:nag:pde} that can be arbitrarily sharp.

\paragraph{Corners for LDEs}
The crucial point
in the analysis outlined above
for the corners \sref{eq:int:nag:pde:corner}
is that the phase shift $\theta(y)$
can be completely factored out from the system.
This implies that the ODE for $\kappa$
does not depend on $\theta$.
In addition, it allows the center manifold to be constructed
by a standard fixed point argument
analogous to the local case.

This is possible because the
right-hand side of \sref{eq:int:lin:sys:corner:lin}
maps $H^2 \times H^1$ into $H^1 \times L^2$,
which roughly means that its inverse gains an order
of regularity in both components. This precisely
compensates for the loss of regularity
that arises by factoring out the phase-shift.

However, when attempting to mimic
this procedure for the LDE \sref{eq:int:nag:lde}
one runs into a fundamental difficulty.
Indeed, the analog
of \sref{eq:int:lin:sys:corner:lin}
has a right-hand side that
now maps $H^1 \times H^1$ into $H^1 \times L^2$
due to the lack of second derivatives in
the equation.
This forces us to construct a full two-dimensional
global center manifold that takes into the account
the dynamics of $\theta$ and $v$ simultaneously.

A similar situation was encountered
by one of the authors in \cite{HJHBRGSG}, where
modulated travelling
wave solutions were constructed to
a class of non-local systems.
However, the analog of
\sref{eq:int:lin:sys:corner:lin}
is a difference equation rather than a
differential equation. The order
of this difference equation can become
arbitrarily large depending on the
height of the fraction $\tan \zeta$,
which we require to be rational.
Nevertheless, the final step
in our analysis requires us to uncover
%depends on recovering
a first-order difference equation
for the center variables.

The main technical contribution in this paper
is that we adapt the %main
spirit of the approach in \cite{HJHBRGSG}
to construct global center manifolds
for the differential-difference systems
that we encounter here. This approach
uses two intertwined fixed point procedures
to separate the flow problem for the
two center variables from the task
of capturing the shape of the
remainder function $h^*$.
%In order to understand
The underlying linear
problems have non-autonomous
slowly-varying coefficients,
for which we develop appropriate
solution operators.

In this paper we do require
the group velocity \sref{eq:int:def:grp:velocity}
to vanish. Unlike in the spatially continuous setting,
this cannot always be arranged by a simple
variable transformation. Indeed, such a
transformation would force the spatial
variable transverse to the propagation direction
to become continuous, destroying
the difference structure of the system.
This prevents us from exploiting
the $\omega \mapsto \omega +  2\pi$ periodicity
in the Fourier variable.
As a result, resonances start to appear in the spectrum
that are very hard to control. A similar
situation was encountered in \cite{HJHBRGSG},
which forced the authors to add a smoothening term
to the underlying system.

We emphasize that the group velocity
for \sref{eq:int:nag:lde}
vanishes automatically in the directions
$\zeta =0$ and $\zeta = \frac{\pi}{4}$.
In addition, directions where the wavespeed
is minimal (and hence the group velocity is zero)
play an important role in the Wulff construction,
which is the primary motivation for our analysis here.
In any case, the delicate behaviour of the $c_{\rho,\zeta}$
map for the Nagumo LDE \sref{eq:int:nag:lde}
leads to a much richer class of behaviour
than that displayed by its continuous
counterpart \sref{eq:int:nag:pde}.
For example, the latter only features interior corners,
while the former can also admit exterior corners.
The former also allows for so-called
bichromatic corners, which connect
spatially homogeneous equilibria
to checkerboard patterns.

While we are confident that our center manifold construction
will also allow us to establish the (linearized) stability of the corners
constructed here along the lines of the approach in \cite{HARAG2006ANISO},
we do not pursue this in the present paper. The main reason is that
there is no coordinate transformation that can freeze our corners
and also leave the discrete structure of the equation intact.
One would need to generalize the approach developed in \cite{BGV2003,HJHNLS, HJHSTB2D}
to accommodate solutions that vary in two directions instead of just one,
which we expect to be a tedious task.

\paragraph{Organization}
%This paper is organized as follows.
Our main results are formulated in {\S}\ref{sec:mr} and applied to the Nagumo LDE
\sref{eq:int:nag:lde} in {\S}\ref{sec:mr:ex:i}-\ref{sec:mr:ex:ii}.
In {\S}\ref{sec:setup} we derive the differential-difference system that the pair $(\theta, v)$
must satisfy and formulate the global center manifold result. We proceed in {\S}\ref{sec:prlm}
by deriving a representation formula for solutions to the linearized problem with constant phase.
This requires us to compute a convoluted spectral projection operator that arises from the second order pole
that the operator $\mathcal{L}_z^{-1}$ has in $z = 0$. In {\S}\ref{sec:linop:cc}-{\S}\ref{sec:linop:sv}
we combine this representation formula with Fourier analysis to construct a solution operator
for the linearized problem where the phase is allowed to vary slowly. Finally,
we setup the fixed point problems required to build the global center manifold in \S\ref{sec:cm},
appealing at times to the results in \cite{HJHBRGSG} for overlapping parts of the program.

\paragraph{Acknowledgements}
HJH acknowledges support from the Netherlands Organization for Scientific Research (NWO)
(grant 639.032.612). LM acknowledges support from the Netherlands Organization for
Scientific Research (NWO) (grant 613.001.304).

\section{Main Results}
\label{sec:mr}

In this paper we consider the nearest-neighbour lattice
differential equation
\begin{equation}
\label{eq:mr:mainLDE}
\dot{u}_{ij}(t) = f\big(  u_{i+1, j}(t), u_{i, j+1}(t), u_{i-1, j}(t), u_{i, j-1}(t) , u_{ij}(t) \big),
\end{equation}
posed on the planar lattice $(i,j) \in \Wholes^2$,
in which $u$ takes values in $\Real^d$.
For convenience, we introduce the operator $\pi^+_{ij}: \ell^\infty(\Wholes^2; \Real^d) \to (\Real^d)^5$
that acts as
\begin{equation}
\pi^+_{ij} u =
\big( u_{i+1, j}, u_{i, j+1}, u_{i-1, j}, u_{i, j-1} , u_{ij} \big) \in (\Real^d)^5
\end{equation}
for any $(i,j) \in \Wholes^2$,
which allows us to rewrite \sref{eq:mr:mainLDE}
in the condensed form
\begin{equation}
\label{eq:mr:pimainLDE}
\dot{u}_{ij}(t) = f\big( \pi^+_{ij} u(t) \big).
\end{equation}
The plus sign corresponds with the fact that a ''+''-shaped stencil
is used to sample $u$.

The conditions we impose on the nonlinearity $f$ are summarized
in the following assumption.
\begin{itemize}
\item[(Hf)]{
  The nonlinearity $f: (\Real^d)^5 \to \Real^d$ is $C^r$-smooth for some $r \ge 2$ and there exist two points $u_\pm \in \Real^d$ with
  \begin{equation}
    f( u_\pm, u_\pm, u_\pm, u_\pm, u_\pm) = 0.
  \end{equation}
}
\end{itemize}
We emphasize that the two points $u_\pm$ are allowed to be equal.
These two equilibria are required to be connected by a planar travelling wave solution
to \sref{eq:mr:mainLDE}. In particular, we pick an arbitrary rational direction
$(\sigma_A, \sigma_B) \in \Wholes^2$ with $\mathrm{gcd} ( \sigma_A, \sigma_B) = 1$
and impose the following condition.
\begin{itemize}
\item[(H$\Phi$)]{
  There exists a wave speed $c_* \neq 0$ and a wave profile $\Phi_* \in C^{r+1}(\Real, \Real^d)$
  so that the function
  \begin{equation}
    \label{eq:mr:TWansatz}
    u_{ij}(t) = \Phi_*\big( i \sigma_A + j \sigma_B + c_* t \big)
  \end{equation}
  satisfies \sref{eq:mr:mainLDE} for all $t \in \Real$.
  In addition, we have the limits
  \begin{equation}
    \label{eq:mr:wavelim}
     \lim_{\xi \to \pm \infty} \Phi_*(\xi) = u_\pm .
  \end{equation}
}
\end{itemize}
Upon introducing the operator $\tau: C(\Real; \Real^d) \to C(\Real; (\Real^d)^5)$
that acts as
\begin{equation}
\label{eq:mr:defTauWithoutOmega}
[\tau p](\xi) = \Big(p(\xi+\sigma_A),p(\xi+\sigma_B),p(\xi-\sigma_A),p(\xi-\sigma_B),p(\xi) \Big) \in (\Real^d)^5,
\end{equation}
we note that the pair $(c_*, \Phi_*)$ must satisfy the
functional differential equation of mixed type (MFDE)
\begin{equation}
\label{eq:mr:trv:wave:mfde}
\begin{array}{lcl}
c_* \Phi_*'(\xi)
 &= &
f \big( \Phi_*(\xi + \sigma_A), \Phi_*(\xi + \sigma_B), \Phi_*(\xi - \sigma_A),
\Phi_*(\xi - \sigma_B), \Phi_*(\xi) \big)
\\[0.2cm]
& = &  f\big( [\tau\Phi_*](\xi) \big) .
\end{array}
\end{equation}
In particular, the $C^{r+1}$-continuity mentioned in (H$\Phi$) is automatic upon assuming that $\Phi_*$ is merely continuous.

For convenience, we now introduce the new coordinates
\begin{equation}
\begin{array}{lcl}
n & = & \sigma_A i + \sigma_B j ,
\\[0.2cm]
l & = &
  \sigma_A j - \sigma_B i ,
\end{array}
\end{equation}
which are parallel respectively orthogonal to the direction of motion of the wave \sref{eq:mr:TWansatz}.
Upon introducing the notation
\begin{equation}
\pi^\times_{nl} u
 = \big(
        u_{n + \sigma_A , l - \sigma_B } , u_{n + \sigma_B, l + \sigma_A},
        u_{n - \sigma_A , l + \sigma_B } , u_{n - \sigma_B, l - \sigma_A },
        u_{nl}
   \big),
\end{equation}
the LDE \sref{eq:mr:pimainLDE} transforms into
the equivalent problem
\begin{equation}
\label{eq:mr:lde:cross}
\dot{u}_{nl}(t) = f\big( \pi^\times_{nl} u(t) \big) ,
\end{equation}
which admits the travelling wave solution
\begin{equation}
\label{eq:mr:trv:wave:cross}
u_{nl}(t) = \Phi_*( n + c_* t) .
\end{equation}

A standard approach towards establishing the stability of the wave
\sref{eq:mr:trv:wave:cross}
under the nonlinear dynamics of the LDE \sref{eq:mr:lde:cross} is to consider the linear variational problem
\begin{equation}
\dot{v}_{nl}(t) = Df\big( [\tau \Phi_*](n + c_* t) \big) \pi^\times_{nl} v(t).
\end{equation}
Looking for a solution of the form
\begin{equation}
\label{eq:mr:ans:pertb}
v_{nl}(t) = e^{\lambda t} e^{z l } p(n + c_* t),
\end{equation}
we readily find that $p$ must satisfy the eigenvalue problem
\begin{equation}
\mathcal{L}_z p =       \lambda p  .
\end{equation}
Here the linear operator $\mathcal{L}_z: W^{1, \infty}(\Real; \Complex^d) \to L^\infty(\Real; \Complex^d)$
acts as
\begin{equation}
\label{eq:mr:def:lz}
[\mathcal{L}_{z} p](\xi) = - c_* p'(\xi) + \sum_{j=1}^5 A_{z,j}(\xi) p(\xi + r_j),
\end{equation}
with shifts $r_j$ and functions $A_{z,j}$ that are given by
\begin{equation}
\begin{array}{lclclcl}
r_1 & = & \sigma_A, & &
  A_{z,1}(\xi) & = & e^{ -\sigma_B z} D_1 f\big( [\tau \Phi_*](\xi) \big) , \\[0.2cm]
r_2 & = & \sigma_B, & &
  A_{z,2}(\xi) & = & e^{ +  \sigma_A z} D_2 f\big( [\tau \Phi_*](\xi) \big), \\[0.2cm]
r_3 & = & -\sigma_A, & &
  A_{z,3}(\xi) & = & e^{ +  \sigma_B z} D_3 f\big( [\tau \Phi_*](\xi) \big), \\[0.2cm]
r_4 & = & -\sigma_B, & &
  A_{z,4}(\xi) & = & e^{ -  \sigma_A z} D_4 f\big( [\tau \Phi_*](\xi) \big), \\[0.2cm]
r_5 & = & 0, & &
  A_{z,5}(\xi) & = &  D_5 f\big( [\tau \Phi_*](\xi) \big).
\end{array}
\end{equation}

Since $\Phi_*(\xi)$ approaches $u_\pm$ as $\xi \to \pm \infty$,
it is possible to define the characteristic $\Complex^{d \times d}$-valued functions
\begin{equation}
\Delta^\pm_{z}(s) = - c_*  s I + \sum_{j=1}^5 [\lim_{\xi \to \pm \infty} A_{z, j}(\xi)] e^{ s r_j} .
\end{equation}
Our first spectral assumption states that
these characteristic functions %- \lambda
cannot have roots on the imaginary axis
whenever $z$ is purely imaginary.  % if $\Re \lambda \ge 0$.
\begin{itemize}
\item[$\mathrm{(HS1)}$]{
  For all $\omega \in [-\pi, \pi]$ and $\nu \in \Real$
  %and $\lambda \in \Complex$ that have $\Re \lambda \ge 0$,
  we have
  \begin{equation}
    \label{eq:mr:hs1:charEq}
    \det \big[ \Delta^\pm_{i \omega}(i \nu) \big]  %- \lambda I]
        \neq 0 .
  \end{equation}
  %for all $\nu \in \Real$.
  %In addition, for $\omega \neq 0$
  %the operator $\mathcal{L}_{i\omega} $
  %is invertible as a map from $W^{1, \infty}(\Real, \Complex^d)$
  % into $L^\infty(\Real, \Complex^d)$.
}
\end{itemize}

We note that \sref{eq:mr:hs1:charEq}
can be used to rule out kernel elements of $\mathcal{L}_{i \omega}$ that behave
as $e^{ i \nu \xi}$ as $\xi \to \pm \infty$.
In fact, using \cite[Thm. A]{MPA} we see that (HS1)
implies that $\mathcal{L}_{i\omega}$ is a Fredholm operator
for all $\omega \in [-\pi, \pi]$.
Our next condition demands that these operators
are actually invertible for $\omega \neq 0$.

\begin{itemize}
\item[$\mathrm{(HS2)}$]{
  For any $\omega \neq 0$
  the operator $\mathcal{L}_{i\omega} $
  is invertible as a map from $W^{1, \infty}(\Real, \Complex^d)$
  into $L^\infty(\Real, \Complex^d)$.
}
\end{itemize}

Since the Fredholm index varies continuously,
(HS1) and (HS2) together imply that the Fredholm index
of $\mathcal{L}_0$ is zero. The translational invariance of the problem
implies that $\mathcal{L}_0 \Phi_*' = 0$, which means that zero is an eigenvalue
for $\mathcal{L}_0$. Our next assumption
states that this eigenvalue is in fact algebraically simple.

\begin{itemize}
\item[$\mathrm{(HS3)}$]{
  We have the characterization
  \begin{equation}
    \mathrm{Ker}( \mathcal{L}_0 ) = \mathrm{span} \{ \Phi_*' \}
  \end{equation}
  and the algebraic simplicity condition
  \begin{equation}
    \label{eq:mr:not:in:range}
    \Phi_*' \notin \mathrm{Range} (\mathcal{L}_0 ) .
  \end{equation}
}
\end{itemize}

For any $z \in \mathbb{C}$ we now introduce the linear operator
\begin{equation}
\mathcal{L}^{\mathrm{adj}}_z: W^{1, \infty}(\Real, \Complex^d) \to L^{\infty}(\Real, \Complex^d)
% \qquad \omega \in [-\pi, \pi],
\end{equation}
that acts as
\begin{equation}
[\mathcal{L}^{\mathrm{adj}}_z q](\xi) =  c_* q'(\xi) + \sum_{j=1}^5
%A^\dagger_{\omega,j}(\xi) q(\xi - r_j).
A^*_{z,j}(\xi - r_j) q(\xi - r_j).
\end{equation}
An easy computation shows that
\begin{equation}
\int_{-\infty}^{\infty} \langle q(\xi),  [\mathcal{L}_{z} p ](\xi) \rangle_{\Complex^d} \, d \xi
= \int_{-\infty}^{\infty} \langle [\mathcal{L}^{\mathrm{adj}}_z q](\xi), p(\xi) \rangle_{\Complex^d} \, d \xi
\end{equation}
holds for all pairs $p,q \in W^{1, \infty}(\Real, \Complex^d)$.
For these reason, we refer to this operator as the formal adjoint
of $\mathcal{L}_z$.

Using \cite[Thm. A]{MPA} together with $\mathrm{(HS3)}$,
one sees that the kernel of $\mathcal{L}^{\mathrm{adj}}_0$ must also be one-dimensional.
In particular, it is spanned by a function $\psi_* \in W^{1, \infty}(\Real,\Real^d)$
that can be uniquely fixed by the identity
\begin{equation}
\label{eq:mr:normalizatonConditionOnPsi}
\int_{-\infty}^{\infty} \langle \psi_*(\xi),  \Phi_*'(\xi) \rangle \, d \xi = 1
\end{equation}
on account of \sref{eq:mr:not:in:range}.
We note that (HS1) implies that both $\Phi_*'(\xi)$ and $\psi_*(\xi)$ decay exponentially
as $\xi \to \pm \infty$.

We now explore two important consequences of the algebraic simplicity condition (HS3).
The first of these states that the zero eigenvalue can be extended
to a branch of eigenvalues $\lambda_z$ for $\mathcal{L}_z$ when $\abs{z}$ is small.

\begin{lem}[{see \cite[Prop. 2.2]{HJHSTB2D}}]
\label{lem:mr:def:branch:lambda:z}
Assume that (Hf), $(H\Phi)$ and (HS1)-(HS3) are all satisfied.
Then there exists a constant $\delta_z > 0$ together
with pairs
\begin{equation}
  (\lambda_z, \phi_z ) \in \Complex \times W^{1, \infty}(\Real, \Complex^d) ,
\end{equation}
defined for each
$z \in \mathbb{C}$ with $\abs{z}< \delta_z$,
such that the following hold true.
\begin{itemize}
\item[(i)]{
  The characterization
  \begin{equation}
    \mathrm{Ker} ( \mathcal{L}_z - \lambda_z) = \mathrm{span} \{ \phi_z \}
  \end{equation}
  together with the algebraic simplicity condition
  \begin{equation}
    \phi_z \notin \mathrm{Range}( \mathcal{L}_z - \lambda_z)
  \end{equation}
  hold for each $z \in \mathbb{C}$ with $\abs{z} < \delta_z$.
}
\item[(ii)]{
  We have $\lambda_0 = 0$, $\phi_0 = \Phi_*'$ and
  the maps $z \mapsto \lambda_z$ and $z \mapsto \phi_z$ are analytic.
}
\item[(iii)]{
 The normalization condition
 \begin{equation}
   \label{eq:mr:norm:cnd:on:phi:z}
   \langle \psi_*, \phi_z \rangle_{L^2} = 1
 \end{equation}
 holds for every $z \in \mathbb{C}$ with $\abs{z} < \delta_z$ .
}
\end{itemize}
\end{lem}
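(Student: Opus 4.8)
The plan is to carry out an analytic Lyapunov--Schmidt reduction, in the spirit of the argument behind \cite[Prop. 2.2]{HJHSTB2D}. The point of departure is that $z \mapsto \mathcal{L}_z$ is a holomorphic family of bounded operators from $W^{1,\infty}(\Real,\Complex^d)$ to $L^\infty(\Real,\Complex^d)$, since the coefficients $A_{z,j}$ depend on $z$ only through the entire functions $e^{\pm \sigma_A z}$ and $e^{\pm \sigma_B z}$; moreover the difference $\mathcal{B}_z := \mathcal{L}_z - \mathcal{L}_0$ is a bounded operator with $\|\mathcal{B}_z\| = O(|z|)$ as $z \to 0$. By \cite[Thm.~A]{MPA} together with (HS1)--(HS2) the operator $\mathcal{L}_0$ is Fredholm of index $0$, and since small bounded perturbations preserve this property, every operator $\mathcal{L}_z - \lambda$ with $(z,\lambda)$ near $(0,0)$ is again Fredholm of index $0$.

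Next I would record the splitting forced by the algebraically simple eigenvalue at $z = 0$. Because $\psi_*$ decays exponentially it lies in $L^1$, so $p \mapsto \langle \psi_*, p\rangle_{L^2}$ is a bounded linear functional on $L^\infty$; by (HS3) and the closed-range statement in \cite[Thm.~A]{MPA} one has $\mathrm{Range}(\mathcal{L}_0) = \{ f : \langle \psi_*, f\rangle_{L^2} = 0\}$, with topological complement $\mathrm{span}\{\Phi_*'\}$, the sum being direct thanks to the normalization \sref{eq:mr:normalizatonConditionOnPsi}. Write $Q f = f - \langle \psi_*, f\rangle_{L^2}\,\Phi_*'$ for the associated projection onto $\mathrm{Range}(\mathcal{L}_0)$, let $Y_1 = \{ p \in W^{1,\infty} : \langle \psi_*, p\rangle_{L^2} = 0\}$, and let $M_0 : \mathrm{Range}(\mathcal{L}_0) \to Y_1$ be the bounded inverse of the isomorphism $\mathcal{L}_0|_{Y_1}$. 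Looking for an eigenfunction of the form $\phi = \Phi_*' + w$ with $w \in Y_1$ --- which is exactly the normalization $\langle \psi_*, \phi\rangle_{L^2} = 1$ --- I split the eigenvalue equation $(\mathcal{L}_z - \lambda)(\Phi_*' + w) = 0$ into its $Q$- and $(I-Q)$-components. Using $Q\Phi_*' = 0$ and $\mathcal{L}_0 \Phi_*' = 0$, the $Q$-component rearranges to the fixed-point problem
\[
 w = -\, M_0 Q\big[ \mathcal{B}_z(\Phi_*' + w) - \lambda w \big] ,
\]
whose right-hand side, for $(z,\lambda)$ in a small polydisc around $(0,0)$, is a holomorphic uniform contraction in $w$ on a small ball (because $\|\mathcal{B}_z\| = O(|z|)$ and the $\lambda w$ term is $O(|\lambda|)\|w\|$). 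This produces a holomorphic solution $w = w(z,\lambda)$ with $w(0,\lambda) \equiv 0$. Substituting this into the $(I-Q)$-component and pairing with $\psi_*$ yields the scalar bifurcation equation
\[
 g(z,\lambda) := \big\langle \psi_*,\, (\mathcal{L}_z - \lambda)(\Phi_*' + w(z,\lambda)) \big\rangle_{L^2} = 0 ,
\]
and a short computation using $\mathcal{L}_0 \Phi_*' = 0$, $\langle \psi_*, \Phi_*'\rangle_{L^2} = 1$, $w(z,\lambda) \in Y_1$ and $w(0,\cdot) \equiv 0$ gives $g(0,\lambda) = -\lambda$, so in particular $\partial_\lambda g(0,0) = -1 \neq 0$. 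The analytic implicit function theorem then yields a holomorphic branch $z \mapsto \lambda_z$ with $\lambda_0 = 0$ solving $g(z,\lambda_z) = 0$; setting $\phi_z := \Phi_*' + w(z,\lambda_z)$ gives a holomorphic family of eigenfunctions with $\phi_0 = \Phi_*'$ and $\langle \psi_*, \phi_z\rangle_{L^2} = 1$. This already delivers (ii) and (iii).

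For part (i), geometric simplicity follows because the map $z \mapsto \dim \mathrm{Ker}(\mathcal{L}_z - \lambda_z)$ is upper semicontinuous along the Fredholm family, hence bounded by $\dim \mathrm{Ker}(\mathcal{L}_0) = 1$ for $|z|$ small, while $\phi_z \neq 0$ lies in it; Fredholm index $0$ then gives $\dim \mathrm{coker}(\mathcal{L}_z - \lambda_z) = 1$ as well. The algebraic simplicity condition $\phi_z \notin \mathrm{Range}(\mathcal{L}_z - \lambda_z)$ is, in this clean one-dimensional reduction, equivalent to $\partial_\lambda g(z,\lambda_z) \neq 0$, which holds for $|z|$ small by continuity from $\partial_\lambda g(0,0) = -1$. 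I expect the main obstacle to be purely functional-analytic bookkeeping rather than anything deep: one must verify carefully that $Q$ and $I - Q$ are bounded on the pair $W^{1,\infty} \hookrightarrow L^\infty$, that $M_0$ is a genuine bounded inverse (which rests squarely on the Fredholm and closed-range assertions of \cite[Thm.~A]{MPA} combined with (HS1)--(HS3)), and that the whole construction survives the passage to complex $z$ --- here it is convenient that $\psi_*$ is real-valued, so that $p \mapsto \langle \psi_*, p\rangle_{L^2}$ is genuinely $\Complex$-linear and the reduction remains holomorphic. It is worth noting that the loss-of-regularity difficulty stressed elsewhere in this paper does not bite at this stage, because the reduction is finite-dimensional and $\mathcal{B}_z$ is an honest bounded operator on $W^{1,\infty}$; that subtlety only resurfaces once the transverse phase is permitted to vary.
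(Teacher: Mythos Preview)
The paper does not supply its own proof of this lemma: it is stated with the citation ``see \cite[Prop.~2.2]{HJHSTB2D}'' and then used as a black box, so there is nothing in the present paper to compare against directly. Your Lyapunov--Schmidt reduction is the standard route to such a result and is essentially what one expects the cited proposition to contain; the argument you outline is correct. The only places where I would tighten the exposition are minor: for the algebraic simplicity in (i) it is slightly cleaner to note that the adjoint family $\mathcal{L}_z^{\mathrm{adj}} - \overline{\lambda_z}$ has a one-dimensional kernel spanned by some $\psi_z$ with $\psi_0 = \psi_*$, whence $\mathrm{Range}(\mathcal{L}_z - \lambda_z) = \{f : \langle \psi_z, f\rangle = 0\}$ and $\phi_z \notin \mathrm{Range}(\mathcal{L}_z - \lambda_z)$ reduces to $\langle \psi_z, \phi_z\rangle \neq 0$, which follows by continuity from $\langle \psi_*, \Phi_*'\rangle = 1$; this avoids having to unwind exactly why $\partial_\lambda g \neq 0$ encodes algebraic simplicity. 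Otherwise the functional-analytic bookkeeping you flag (boundedness of $Q$, existence of $M_0$, $\Complex$-linearity of the pairing with the real $\psi_*$) is routine and your treatment of it is accurate.
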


The second consequence is that wave $(c_*,\Phi_*)$
travelling in the rational direction $(\sigma_A, \sigma_B)$ can be perturbed
to yield waves travelling in nearby directions. In particular,
we introduce the constants $(\zeta_*, \sigma_*)$ by writing
\begin{equation}
\sigma_* = \sqrt{\sigma_A^2 + \sigma_B^2},
\qquad (\sigma_A, \sigma_B) = \sigma_*( \cos \zeta_*, \sin \zeta_*) .
\end{equation}
Looking for solutions
to the LDE \sref{eq:mr:lde:cross}
of the form
\begin{equation}
\label{eq:mr:ansatz:u:nl:varphi}
u_{nl}(t)
 = \Phi_{\varphi}( n \cos \varphi + l \sin \varphi + c_\varphi t ),
\end{equation}
a short computation
shows that the pair $(c_\varphi, \Phi_\varphi)$
must satisfy the MFDE
\begin{equation}
\label{eq:mr:trv:wave:diff:angle}
c_\varphi \Phi_{\varphi}'(\xi) = f\big( \tau_\varphi \Phi_\varphi \big)(\xi),
\end{equation}
in which we have introduced the notation
\begin{equation}
\begin{array}{lcl}
[\tau_{\varphi} p](\xi)
& = &
\Big(
  p\big(\xi + \sigma_* \cos(\varphi+\zeta_*)\big)
    ,
  p\big(\xi + \sigma_* \sin(\varphi + \zeta_*)\big) ,
\\[0.2cm]
& & \qquad
  p\big(\xi - \sigma_* \cos(\varphi+ \zeta_*) \big),
  p\big(\xi - \sigma_* \sin(\varphi + \zeta_*) \big),
  p\big(\xi)
\Big) .
\end{array}
\end{equation}
In order to translate these waves back to the original coordinates,
we remark that any solution to \sref{eq:mr:trv:wave:diff:angle}
yields a solution to the original LDE \sref{eq:mr:mainLDE}
by writing
\begin{equation}
u_{ij}(t) = \tilde{\Phi}_{\tilde{\zeta}}\big(i \cos \tilde{\zeta} + j \sin \tilde{\zeta}
 + \tilde{c}_{\tilde{\zeta}} t \big)
\end{equation}
with the rescaled quantities
\begin{equation}
\label{eq:mr:rescalings}
\tilde{\zeta} = \zeta_* + \varphi,
\qquad
\tilde{c}_{\tilde{\zeta}} = \sigma_*^{-1} c_{\varphi},
\qquad
\tilde{\Phi}_{\tilde{\zeta}}(\xi) = \Phi_{\varphi}(\sigma_* \xi) .
\end{equation}

\begin{lem}[{see {\S}\ref{sec:prlm}}]
\label{lem:mr:angl:dep}
Assume that (Hf), $(H\Phi)$ and (HS1)-(HS3) are all satisfied.
Then there exists a constant $\delta_\varphi > 0$ together
with pairs
\begin{equation}
  (c_{\varphi}, \Phi_{\varphi} ) \in \Real \times W^{1, \infty}(\Real, \Complex^d) ,
\end{equation}
defined for each $\varphi \in (-\delta_\varphi, \delta_\varphi)$,
such that the following hold true.
\begin{itemize}
\item[(i)]{
  For every $\varphi \in (-\delta_\varphi, \delta_\varphi)$,
  the pair $(c_{\varphi} , \Phi_{\varphi})$
  satisfies the MFDE \sref{eq:mr:trv:wave:diff:angle},
  while the function
  \begin{equation}
    u_{nl}(t) = \Phi_{\varphi}\big(n \cos \varphi + l \sin \varphi + c_{\varphi} t \big)
  \end{equation}
  satisfies the LDE \sref{eq:mr:lde:cross} for all $t \in \Real$.
}
\item[(ii)]{
  The maps $\varphi \mapsto c_{\varphi}$ and $\varphi \mapsto \Phi_{\varphi}$
  are $C^{r-1}$-smooth. In addition, we have
  $c_0 = c_*$, together with $\Phi_0 = \Phi_*$.
}
\item[(iii)]{
 The normalization condition
 \begin{equation}
      \langle \psi_*, \Phi_{\varphi} \rangle = \langle \psi_*, \Phi_* \rangle
 \end{equation}
 holds for every $\varphi \in (-\delta_\varphi, \delta_\varphi)$.
}
\item[(iv)]{
  We have the identities
  \begin{equation}
  \label{eq:mr:grp:vel:vs:ang}
  [\partial_\varphi c_{\varphi}]_{\varphi = 0} = [\partial_z \lambda_z]_{z =0},
  \qquad
  [\partial_\varphi \Phi_{\varphi}]_{\varphi = 0} = [\partial_z \phi_z]_{z =0} .
  \end{equation}
}
\end{itemize}
\end{lem}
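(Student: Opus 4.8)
The plan is to set up \sref{eq:mr:trv:wave:diff:angle} as a nonlinear problem in $\varphi$ and apply the implicit function theorem, working on a function space adapted to the fact that the phase of $\Phi_\varphi$ is free. First I would fix the parametrization: since $\varphi = 0$ gives back the operator $\tau_0 = \tau$ (using $\sigma_*(\cos\zeta_*,\sin\zeta_*) = (\sigma_A,\sigma_B)$ and $\cos\zeta_*\cdot\sigma_* = \sigma_A$, etc.), the point $(c_0,\Phi_0) = (c_*,\Phi_*)$ solves \sref{eq:mr:trv:wave:diff:angle} with $\varphi = 0$. Then I would define a $C^{r-1}$ map
\begin{equation}
G : \Real \times \Real \times W^{1,\infty}(\Real,\Real^d) \to L^\infty(\Real,\Real^d),
\qquad
G(\varphi, c, \Phi) = c\Phi' - f(\tau_\varphi \Phi),
\end{equation}
noting that $\varphi \mapsto \tau_\varphi$ is $C^\infty$ into the space of bounded shift operators (the shift amounts $\sigma_*\cos(\varphi+\zeta_*)$ etc.\ depend analytically on $\varphi$, and translation is smooth on $W^{1,\infty}$ as a map into $L^\infty$), so the loss of one derivative here is harmless and accounts for the $C^{r-1}$ regularity claimed in (ii). The linearization of $G$ in $(c,\Phi)$ at $(0,c_*,\Phi_*)$ is $(\dot c,\dot\Phi) \mapsto \dot c\,\Phi_*' + \mathcal{L}_0\dot\Phi$, which is \emph{not} invertible: its kernel contains $\{(0,s\Phi_*') : s\in\Real\}$ by (HS3), and its range misses $\psi_*$ by \sref{eq:mr:not:in:range}. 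However, (HS3) tells us that adjoining the $\dot c\,\Phi_*'$ term exactly repairs this: $(\dot c,\dot\Phi)\mapsto \dot c\,\Phi_*' + \mathcal{L}_0\dot\Phi$ is onto (since $\Phi_*'\notin\mathrm{Range}\,\mathcal{L}_0$ and $\mathrm{Range}\,\mathcal{L}_0$ has codimension one), with one-dimensional kernel spanned by $(0,\Phi_*')$. So I would restrict $\dot\Phi$ to the complement $Y := \{\dot\Phi \in W^{1,\infty} : \langle\psi_*,\dot\Phi\rangle_{L^2} = 0\}$; on $\Real\times Y$ the derivative becomes an isomorphism onto $L^\infty$, and the implicit function theorem yields unique $C^{r-1}$ branches $\varphi\mapsto c_\varphi$ and $\varphi\mapsto\Phi_\varphi$ with $\Phi_\varphi - \Phi_* \in Y$, for $\varphi$ in some interval $(-\delta_\varphi,\delta_\varphi)$. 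This normalization is precisely $\langle\psi_*,\Phi_\varphi\rangle = \langle\psi_*,\Phi_*\rangle$, giving (iii), and the translation to the original coordinates via \sref{eq:mr:rescalings} together with the observation that $\Phi_\varphi(n\cos\varphi + l\sin\varphi + c_\varphi t)$ solves \sref{eq:mr:lde:cross} is a direct substitution, giving (i).

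For part (iv) I would differentiate the identity $c_\varphi\Phi_\varphi' = f(\tau_\varphi\Phi_\varphi)$ with respect to $\varphi$ and evaluate at $\varphi = 0$. Writing $\dot c = [\partial_\varphi c_\varphi]_{\varphi=0}$ and $\dot\Phi = [\partial_\varphi\Phi_\varphi]_{\varphi=0}$, and computing $[\partial_\varphi\tau_\varphi]_{\varphi=0}$ from the derivatives of the shift amounts — $\partial_\varphi[\sigma_*\cos(\varphi+\zeta_*)]_{\varphi=0} = -\sigma_*\sin\zeta_* = -\sigma_B$ and $\partial_\varphi[\sigma_*\sin(\varphi+\zeta_*)]_{\varphi=0} = \sigma_*\cos\zeta_* = \sigma_A$ — one finds that the $\varphi$-derivative of $f(\tau_\varphi\Phi_*)$ produces exactly the same combination of shifted derivatives of $\Phi_*$ weighted by $D_jf$ that appears when one differentiates the $z$-dependence of the symbols $A_{z,j}$ in \sref{eq:mr:def:lz} at $z=0$; this is the structural reason \sref{eq:mr:grp:vel:vs:ang} holds. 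The upshot is the identity
\begin{equation}
\dot c\,\Phi_*' + \mathcal{L}_0\dot\Phi = [\partial_z(\mathcal{L}_z - \lambda_z)]_{z=0}\,\Phi_*' + [\partial_z\lambda_z]_{z=0}\,\Phi_*',
\end{equation}
in which the first right-hand term is the $\varphi$-derivative contribution rewritten via the $z$-parametrization. Differentiating the eigenvalue relation $(\mathcal{L}_z - \lambda_z)\phi_z = 0$ from Lemma \ref{lem:mr:def:branch:lambda:z} at $z = 0$ gives the companion identity $[\partial_z\lambda_z]_{z=0}\Phi_*' = [\partial_z(\mathcal{L}_z-\lambda_z)]_{z=0}\Phi_*' + \mathcal{L}_0[\partial_z\phi_z]_{z=0}$; pairing both identities against $\psi_*$ and using $\mathcal{L}_0^{\mathrm{adj}}\psi_* = 0$ together with the normalization \sref{eq:mr:normalizatonConditionOnPsi} isolates $\dot c = [\partial_z\lambda_z]_{z=0}$, and then subtracting the two identities and using that $\mathcal{L}_0$ is injective on $Y$ (both $\dot\Phi$ and $[\partial_z\phi_z]_{z=0}$ lie in $Y$ after the normalizations in (iii) and Lemma \ref{lem:mr:def:branch:lambda:z}(iii)) forces $\dot\Phi = [\partial_z\phi_z]_{z=0}$.

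The main obstacle is bookkeeping the precise correspondence between the $\varphi$-derivative of the geometric shift operator $\tau_\varphi$ and the $z$-derivative of the algebraic Fourier-type symbols $A_{z,j}$; these arise from different operations (bending the propagation direction versus taking a transverse Fourier mode) and one must check that the linearized inhomogeneities genuinely coincide, not merely lie in the same one-dimensional complement of $\mathrm{Range}\,\mathcal{L}_0$. Once that identification is in hand, (iv) follows by the Fredholm/pairing argument sketched above. A secondary technical point worth stating carefully is that translation is only $C^{r-1}$, not $C^r$, as a map $W^{1,\infty}\to L^\infty$ after composing with the $C^r$ nonlinearity $f$; this is why the branches are $C^{r-1}$ and cannot in general be upgraded, but since $r\ge 2$ this still leaves at least $C^1$ regularity, which is all that is needed for the derivative identities in (iv) to make sense.
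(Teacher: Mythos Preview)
Your proposal is correct and follows essentially the same route as the paper: the paper likewise obtains (i)--(iii) by a Lyapunov--Schmidt / implicit function theorem argument (citing \cite[Prop.~3.7]{HJHOBST2D}) and obtains (iv) by differentiating the travelling-wave MFDE in $\varphi$, identifying the inhomogeneity as $A_1\Phi_*' := [\partial_z\mathcal{L}_z]_{z=0}\Phi_*'$, and comparing with the first line of \sref{eq:prlm:diffs:of:l}. One minor slip to fix: with your definition $G(\varphi,c,\Phi)=c\Phi'-f(\tau_\varphi\Phi)$ and the paper's convention $\mathcal{L}_0 p=-c_*p'+Df(\tau\Phi_*)\tau p$, the partial linearization is $(\dot c,\dot\Phi)\mapsto \dot c\,\Phi_*' - \mathcal{L}_0\dot\Phi$, so your displayed identity should read $\dot c\,\Phi_*' - \mathcal{L}_0\dot\Phi = A_1\Phi_*'$; this does not affect the argument.
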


We remark that the first quantities in \sref{eq:mr:grp:vel:vs:ang} can be interpreted as a
so-called \textit{group velocity}, which %can be seen
represents
%as
the speed at which long-amplitude perturbations travel in the transverse direction.
Indeed, expanding \sref{eq:mr:ans:pertb} with $p = \phi_z$ and $\lambda = \lambda_z$
we find
\begin{equation}
v_{nl}(t) = \mathrm{exp}\big[ z( l + [\partial_z \lambda_z]_{z =0} t) + O (z^2 t) \big] \phi_z(n + c_* t).
\end{equation}
Our final condition requires $\lambda_z$ to depend quadratically on $z$,
which means that this group velocity has to vanish.
We emphasize that the inequality $[\partial^2_z \lambda_z]_{z = 0}  > 0$ was required
in \cite{HJHSTB2D} to obtain the nonlinear stability of the planar wave $(c_*, \Phi_*)$.

\begin{itemize}
\item[$\mathrm{(HM)}$]{
  We have the identities
  \begin{equation}
    \label{eq:mr:grp:vel:zero}
    [\partial_\varphi c_{\varphi}]_{\varphi = 0} = [\partial_z \lambda_z]_{z =0} = 0,
  \end{equation}
  together with the inequality
  \begin{equation}
    [\partial^2_z \lambda_z]_{z = 0} \neq 0 .
  \end{equation}
}
\end{itemize}

As a final preparation, we introduce the directional dispersion
\begin{equation}
d_\varphi = \frac{c_\varphi}{\cos\varphi }.
\end{equation}
Assuming that the original wave travels in the horizontal direction $\zeta_* = 0$,
the quantity $d_\varphi$ represents the speed at which level-sets
of the wave $(c_{\varphi}, \Phi_{\varphi})$ travel along the horizontal axis;
see Figure \ref{fig:mr:corn:int:ext}.
An easy calculation using \sref{eq:mr:grp:vel:zero}
shows that
\begin{equation}
d_0 = c_*, \qquad [\partial_\varphi d_\varphi]_{\varphi =0} = 0,
\qquad
[\partial_\varphi^2 d_\varphi]_{\varphi =0}
= [\partial_\varphi^2 c_\varphi]_{\varphi=0} + c_* .
\end{equation}
Our main result establishes the existence of travelling corners
in the setting where $[\partial_\varphi^2 d_\varphi]_{\varphi =0}  \neq 0$.
Assuming again that $\phi_* =0$ and that $[\partial_z^2 \lambda_{z}]_{z=0} $
and $[\partial_\varphi^2 d_\varphi]_{\varphi =0}$ are both strictly positive,
the level-sets resemble a $>$ sign.
In particular, when $c_* >0$ this resembles an interior corner
travelling to the left.

\begin{figure}[t]
\centering
\includegraphics[width=\textwidth]{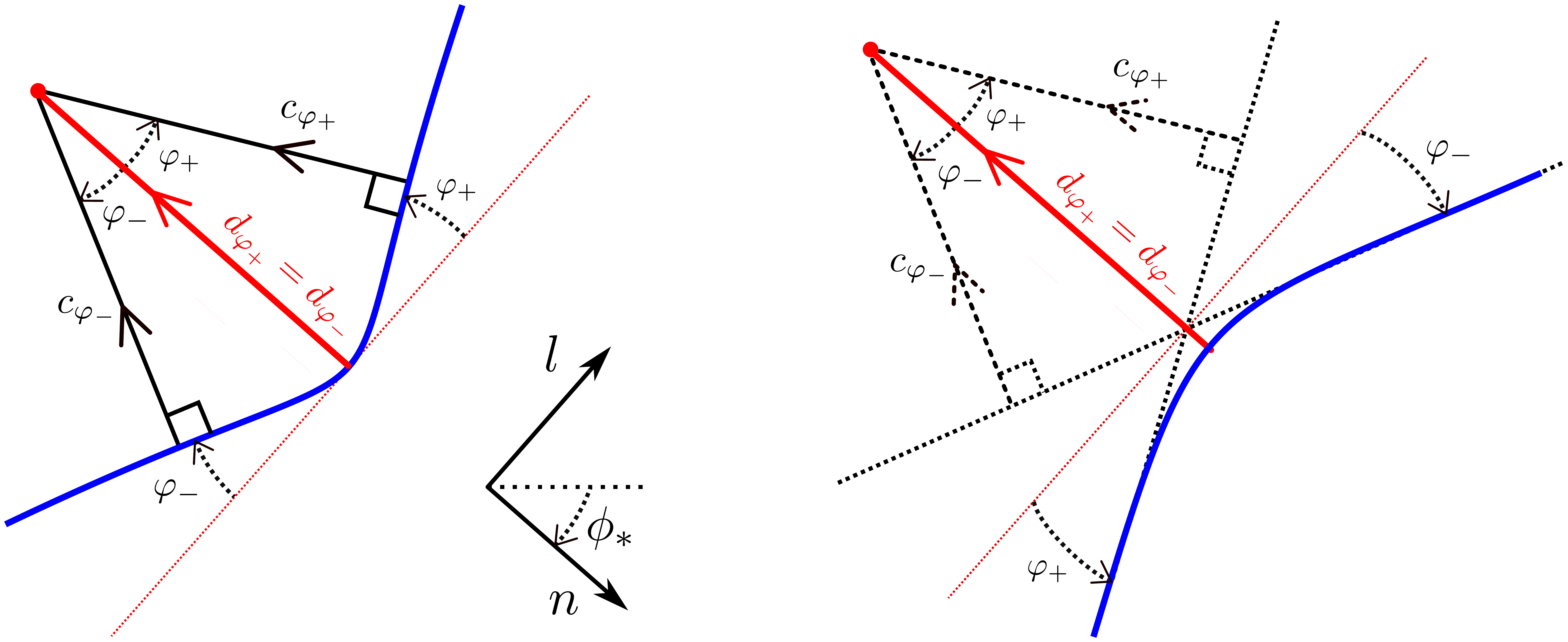}
\caption{The blue curves in the left and right panels
depict the interface of an interior respectively
exterior corner. Both corners travel
at the speed $d_{\varphi_-} = d_{\varphi_+}$
and share the
coordinate system $(n,l)$ depicted in the center.
Angles are positive
when oriented counter-clockwise and negative otherwise.
All speeds are positive.}
\label{fig:mr:corn:int:ext}
\end{figure}

\begin{thm}[{see {\S}\ref{sec:setup}}]
\label{thm:mr:ex:corner}
Assume that (Hf), $(H\Phi)$, (HS1)-(HS3) and (HM) are all satisfied.
Assume furthermore that $[\partial_{\varphi}^2 d_\varphi]_{\varphi=0} \neq 0$
and pick a sufficiently large $K > 0$.
Then for any $c \neq c_*$ sufficiently close to $c_*$ with
\begin{equation}
\mathrm{sign} \big( c - c_*  \big) =
 \mathrm{sign} \Big( [\partial_{\varphi}^2 d_\varphi]_{\varphi=0} \Big),
\end{equation}
there exist two sequences
\begin{equation}
(\theta, v):  \Wholes \to \Real \times H^1(\Real; \Real^d)
\end{equation}
together with two angles $\varphi_- < 0 < \varphi_+$
that satisfy the following properties.
\begin{itemize}
\item[(i)]{
  The function
  \begin{equation}
    u_{nl}(t) = \Phi_*(n + ct + \theta_l) + v_l(n + ct)
  \end{equation}
  satisfies the LDE \sref{eq:mr:lde:cross} for all $t \in \Real$.
}
\item[(ii)]{
  We have $\langle \psi_*( \cdot + \theta_l) , v_l \rangle_{L^2} = 0$
  for all $l \in \Wholes$ .
}
\item[(iii)]{
  We have the identities
  \begin{equation}
    d(\varphi_\pm) = c.
  \end{equation}
}
\item[(iv)]{
  If  $[\partial_\varphi^2 d_\varphi]_{\varphi =0}$
  and $[\partial_z^2 \lambda_z]_{z=0}$ have the same sign,
  %  $d''(0) \lambda''(0) > 0$
  then we have the limits
  \begin{equation}
  \theta_{l+1}-\theta_l \to \tan( \varphi_\pm) ,
  \qquad
  \qquad
  l \to \pm \infty.
  \end{equation}
  %\begin{equation}
%\begin{split}
%\theta_{l+1}-\theta_l &\rightarrow \tan( \varphi_-)  <0 \quad l\rightarrow -\infty \\
%\theta_{l+1}-\theta_l &\rightarrow \tan(\varphi_+)  >0 \quad l\rightarrow +\infty .
%\end{split}
%\end{equation}
  On the other hand, if these quantities have opposing signs,
  then we have the limits
  \begin{equation}
  \theta_{l+1}-\theta_l \to \tan( \varphi_\mp) ,
  \qquad
  \qquad
  l \to \pm \infty.
  \end{equation}
  %
  %  \begin{equation}
%\begin{split}
%\theta_{l+1}-\theta_l &\rightarrow \tan( \varphi_+)  >0 \quad l\rightarrow -\infty \\
%\theta_{l+1}-\theta_l &\rightarrow \tan(\varphi_-)  <0 \quad l\rightarrow +\infty .
%\end{split}
%\end{equation}
}
\item[(v)]{
 For every $l \in \Wholes$ we have the bound
 \begin{equation}
   \abs{ \varphi_\pm}^2 + \norm{ v_l }_{H^1} \le K \abs{ c - c_* }.
 \end{equation}
}
\end{itemize}
\end{thm}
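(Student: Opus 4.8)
The plan is to follow the "bending" philosophy of Haragus--Scheel, but adapted to the differential--difference setting where the transverse variable $l$ is discrete. First I would pass to the co-moving frame $\xi = n + ct$ and demand that $u_{nl}(t) = \Phi_*(\xi + \theta_l) + v_l(\xi)$ solves \sref{eq:mr:lde:cross}. Because the wave $(c_*, \Phi_*)$ is a solution for $c = c_*$ only, the mismatch $c - c_*$ enters as a singular perturbation, exactly as it does in the travelling-wave MFDE \sref{eq:mr:trv:wave:mfde}. Substituting the ansatz and imposing the phase condition (ii), $\langle \psi_*(\cdot + \theta_l), v_l\rangle_{L^2} = 0$, yields a coupled system: a scalar difference equation for the phases $\{\theta_l\}$ driven by solvability (Fredholm) conditions against $\psi_*$, and an infinite system of MFDEs for $\{v_l\}$ that one solves by inverting the linearized operators $\mathcal{L}_z$ off their kernel. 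This reduction and the associated global center manifold are precisely what {\S}\ref{sec:setup} provides; I would cite that result to obtain a two-component difference equation of the schematic form
\begin{equation}
\theta_{l+1} - 2\theta_l + \theta_{l-1} = \mathcal{N}_1(\theta, \kappa; c), \qquad
\kappa_{l+1} - \kappa_l = \mathcal{N}_2(\theta, \kappa; c),
\end{equation}
where $\kappa_l$ is the center coordinate conjugate to the generalized eigenfunction, and the nonlinearities are small (quadratic in the relevant quantities), with the discrete Laplacian on the left coming from the quadratic dispersion $[\partial_z^2\lambda_z]_{z=0} \neq 0$ guaranteed by (HM).

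**Key steps, in order.** (1) Invoke the setup section to reduce \sref{eq:mr:lde:cross} under the corner ansatz to the center manifold and extract the skew-coupled difference system for $(\theta_l, \kappa_l)$; here the vanishing group velocity (HM) is what makes the leading transverse operator the symmetric discrete Laplacian rather than a first-order transport term, so that bounded heteroclinic-type profiles can exist. (2) Analyze the limiting "outer" problem: setting $\kappa \equiv 0$ and linearizing the phase equation about a constant slope $\theta_{l+1} - \theta_l \equiv \tan\varphi$ gives an algebraic relation $d(\varphi) = c$ between the asymptotic slope and the corner speed, using $d_0 = c_*$, $[\partial_\varphi d_\varphi]_{\varphi=0} = 0$, and $[\partial_\varphi^2 d_\varphi]_{\varphi=0} \neq 0$; the sign condition $\mathrm{sign}(c - c_*) = \mathrm{sign}([\partial_\varphi^2 d_\varphi]_{\varphi=0})$ is exactly the condition that this quadratic has two real roots $\varphi_- < 0 < \varphi_+$, of size $O(\sqrt{|c - c_*|})$, which yields (iii) and the scaling in (v). (3) Construct the connecting orbit: treat the difference system as a discrete dynamical system in $l$ whose two equilibria-at-infinity correspond to the constant-slope states with slopes $\tan\varphi_\pm$, and use the center-manifold / stable--unstable manifold structure together with a shooting or Lyapunov--Schmidt argument to produce a bounded solution interpolating between them; the sign of $[\partial_z^2\lambda_z]_{z=0}$ relative to $[\partial_\varphi^2 d_\varphi]_{\varphi=0}$ determines the orientation of the connection, giving the two cases in (iv). (4) Close the estimates: a bootstrap on the center manifold equations, using that $h^*$ is quadratically small in $\kappa$, gives $\|v_l\|_{H^1} \le K|c - c_*|$ uniformly in $l$ and $\kappa_l$ small, completing (v) and (i).

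**Main obstacle.** The hard part will be step (3) together with the construction underlying step (1): unlike the PDE case in \cite{HARAG2006}, the phase $\theta_l$ cannot be factored out of the linearized operator because the spatial discreteness means the right-hand side only gains one order of regularity, not two, so the center manifold must carry both $\theta$ and $v$ and must be built by the intertwined double fixed-point scheme of \cite{HJHBRGSG} rather than a single contraction. Concretely, one must (a) build solution operators for the linearized, slowly-$l$-varying, non-autonomous MFDEs with the second-order pole of $\mathcal{L}_z^{-1}$ at $z = 0$ handled via the convoluted spectral projection computed in {\S}\ref{sec:prlm}, and (b) show that the resulting difference equation for the center variables is genuinely first-order (for $\kappa$) and second-order (for $\theta$) after all higher-order shifts are resolved, so that the phase-plane picture needed for the heteroclinic connection is available. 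Controlling the resonances that appear because the $\omega \mapsto \omega + 2\pi$ periodicity cannot be exploited --- the reason (HM) is imposed --- is the delicate technical point, and I would lean on the overlapping analysis in \cite{HJHBRGSG} wherever the structure matches, reserving new work for the genuinely two-dimensional (in $(n,l)$) features of the corner.
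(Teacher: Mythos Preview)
Your overall strategy matches the paper's: reduce to the center manifold of Proposition~\ref{prp:set:cm}, identify the equilibria of the reduced dynamics with the constant-slope states via $d(\varphi_\pm)=c$, and then build a heteroclinic in $l$ connecting them. The main technical weight indeed lies in the center-manifold construction, which you correctly flag and which the paper outsources to {\S}\ref{sec:setup}--{\S}\ref{sec:cm}.

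Two points where the paper's argument is structurally simpler than what you propose. First, the reduced system in Proposition~\ref{prp:set:cm} is \emph{first-order} in both variables,
\[
\theta_{l+1}-\theta_l = f_\theta(\kappa_l,c),\qquad \kappa_{l+1}-\kappa_l = f_\kappa(\kappa_l,c),
\]
with $f_\theta,f_\kappa$ depending on $\kappa$ and $c$ only; the $\theta$-equation is not a discrete Laplacian and there is no $\theta$-dependence in either nonlinearity. This decoupling is a direct consequence of the translational symmetry $\theta\mapsto\theta+\mathrm{const}$, and it is the single most important structural fact for the final step: the $\kappa$-dynamics is a \emph{scalar autonomous} first-order difference equation. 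Second, because of this, no shooting or Lyapunov--Schmidt is needed in step~(3). The expansion $f_\kappa(\kappa,c)=2[\partial_z^2\lambda_z]_{z=0}^{-1}\big[(c-c_*)-\tfrac12[\partial_\varphi^2 d_\varphi]_{\varphi=0}\kappa^2\big]+\ldots$ gives $f_\kappa>0$ on $(\kappa_-,\kappa_+)$ and $|D_1 f_\kappa|<\tfrac12$ there, so forward iteration is monotone and trapped, while backward iteration is obtained by solving $\tilde\kappa=\kappa-f_\kappa(\tilde\kappa,c)$ via a contraction on $(\kappa_-,\kappa)$. This yields the heteroclinic $\kappa_l\to\kappa_\pm$ directly, and $\theta$ is then recovered by summation, with $\theta_{l+1}-\theta_l\to f_\theta(\kappa_\pm,c)=\tan\varphi_\pm$ by item~(iii) of Proposition~\ref{prp:set:cm}. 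Your plan would work, but once you recognize the skew-product, decoupled form of the reduced flow, the connecting-orbit step collapses to elementary monotone analysis.
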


\subsection{The Nagumo LDE}
\label{sec:mr:ex:i}

As an example, we return to the Nagumo LDE
\begin{equation}
\label{eq:mr:nag:lde}
\dot{u}_{ij} = u_{i+1, j} + u_{i, j+1} + u_{i-1, j}  + u_{i, j-1} - 4u_{ij} + g(u_{ij} ; \rho)
\end{equation}
in which the nonlinearity is given by the scaled cubic
\begin{equation}
g(u ; \rho) = \frac{5}{2} g_{\mathrm{cub}}(u) = \frac{5}{2} (u^2 - 1)( \rho - u)
\end{equation}
for some detuning parameter $\rho \in (-1, 1)$.
In the terminology of \sref{eq:mr:mainLDE},
we hence have
\begin{equation}
f\big(
 v_1, \ldots, v_5)
= v_1 + v_2 + v_3 + v_4 - 4 v_5 + g(v_5; \rho),
\end{equation}
which shows that (Hf) is satisfied upon picking $u_\pm = \pm 1$.

Turning to $(H\Phi)$, we note that the results in \cite{MPB} show
that for each $\zeta \in [0, 2 \pi]$
and $\rho \in (-1, 1)$ there is a unique wavespeed
$c = c_{\rho,\zeta}$ for which the
system
\begin{equation}
\label{eq:mr:ex:mfde:for:phi}
c \Phi'(\xi) = \Phi(\xi + \cos \zeta)
+ \Phi(\xi - \cos \zeta)
+ \Phi(\xi + \sin \zeta)
+ \Phi(\xi - \sin \zeta)
-4 \Phi(\xi) + g\big( \Phi(\xi) ;\rho \big)
\end{equation}
admits a monotonic solution $\Phi = \Phi_{\rho,\zeta}$ that
also satisfies the limits \sref{eq:mr:wavelim}. Figure
\ref{fig:mr:ex:i:speeds:mc} contains polar plots of the
$\zeta \mapsto c_{\rho,\zeta}$ relation, which can be very delicate
whenever $\abs{\rho}$ is small.

By symmetry, we have
$c_{\rho,\zeta} = - c_{-\rho,\zeta}$
and hence $c_{0,\zeta} = 0$
for all angles $\zeta \in  [0,2 \pi]$.
Upon writing
\begin{equation}
\rho_*(\zeta) = \sup\{ \rho: c_{\rho, \zeta} = 0 \},
\end{equation}
the results in \cite{MPB}
show that $0 \le \rho_*(\zeta) < 1$
for all $\zeta \in [0, 2 \pi]$.
In particular, this means that
$(H\Phi)$ is satisfied whenever
$\tan \zeta$ is rational (or infinite)
and $\rho_*(\zeta) < \abs{\rho} < 1$.
Under the same conditions,
the discussion in \cite[{\S}6]{HJHSTB2D}
uses arguments based on the comparison principle
%and the work in \cite{MPB}
to show that also (HS1)-(HS3)
are valid.

The verification of the conditions in (HM) is
much more subtle. In order to make the angular
dependence fully explicit, we first pick
\begin{equation}
\label{eq:mr:ex:nag:tilde:sigma}
(\tilde{\sigma}_A, \tilde{\sigma}_B) = (\cos \zeta, \sin \zeta)
\end{equation}
and consider the operators
\begin{equation}
\tilde{\mathcal{L}}_z: H^1(\Real;\Real) \to L^2(\Real;\Real)
\end{equation}
that act as
\begin{equation}
\label{eq:mr:ex:nag:l:z}
\begin{array}{lcl}
[\tilde{\mathcal{L}}_z p](\xi)
& = & -c_* p'(\xi)
  + e^{ - \tilde{\sigma}_B z} p(\xi + \tilde{\sigma}_A )
  + e^{ \tilde{\sigma}_A z}  p(\xi + \tilde{\sigma}_B )
  + e^{ \tilde{\sigma}_B z} p(\xi - \tilde{\sigma}_A )
  + e^{-\tilde{\sigma}_A z} p(\xi - \tilde{\sigma}_B )
  -4 p(\xi)
\\[0.2cm]
& & \qquad
  + g'\big(\Phi_*(\xi) ; \rho \big) p(\xi) .
\end{array}
\end{equation}
Writing $\tilde{\lambda}_{z}$ for the branch of eigenvalues
for $\tilde{\mathcal{L}}_z$
bifurcating from $\tilde{\lambda}_0 = 0$
and comparing this to the branch $\lambda_z$
defined in Lemma \ref{lem:mr:def:branch:lambda:z},
we have
\begin{equation}
\tilde{\lambda}_{z} = \lambda_{\sigma_*^{-1} z} ,
\end{equation}
with $\sigma_* > 0$ the smallest number so that
\begin{equation}
\sigma_* \big(\tilde{\sigma}_A, \tilde{\sigma}_B )
 = (\sigma_A, \sigma_B) \in \mathbb{Z}^2.
\end{equation}
In particular, we have
\begin{equation}
[\partial_{z}\tilde{\lambda}_{z} ]_{z = 0}
= \sigma_*^{-1} [\partial_z \lambda_z]_{z=0}.
\end{equation}
In view of the similar rescalings \sref{eq:mr:rescalings}
and the fact that the statements in Theorem
\ref{thm:mr:ex:corner} merely depend on the signs
of the quantities $[\partial^2_\varphi d_{\varphi}]_{\varphi=0}$
and $[\partial_z^2 \lambda_z]_{z=0}$,
we simply write\footnote{
  The terms involving  $\pm$ and $\mp$
  should be evaluated twice, once with the top signs
  and once with the bottom signs. For example,
  $\pm p(\xi \mp 1) = p(\xi -1 ) - p (\xi +1)$.
}
\begin{equation}
\begin{array}{lcl}
[\mathcal{L}_{z;\rho,\zeta} p](\xi)
& = & -c_{\rho, \zeta} p'(\xi)
+ e^{ \mp z \sin \zeta } p(\xi \pm \cos \zeta )
+ e^{ \pm z \cos \zeta} p(\xi \pm \sin \zeta)
%  + e^{ - z \sin \zeta } p(\xi + \cos \zeta )
%  + e^{  z \cos \zeta }  p(\xi + \sin \zeta )
%  + e^{ z \sin \zeta} p(\xi - \cos \zeta )
%  + e^{-z \cos \zeta} p(\xi - \sin \zeta )
  -4 p(\xi)
\\[0.2cm]
& & \qquad
  + g'\big(\Phi_{\rho,\zeta}(\xi) ; \rho \big) p(\xi)
\end{array}
\end{equation}
and focus on the eigenvalues $\lambda_{z; \rho,\zeta}$
and eigenfunctions $\phi_{z; \rho,\zeta}$
bifurcating from $(0, \Phi'_{\rho,\zeta} )$
for $\rho_*(\zeta) < \abs{\rho} < 1$.
We write $\psi_{\rho, \zeta}$
for the solution to the adjoint
equation
\begin{equation}
\label{eq:mr:ex:mfde:for:psi}
\begin{array}{lcl}
-c_{\rho, \zeta} \, \psi'(\xi)
& = &
 \psi(\xi \pm \cos \zeta )
+  \psi(\xi \pm \sin \zeta)
%  + e^{ - z \sin \zeta } p(\xi + \cos \zeta )
%  + e^{  z \cos \zeta }  p(\xi + \sin \zeta )
%  + e^{ z \sin \zeta} p(\xi - \cos \zeta )
%  + e^{-z \cos \zeta} p(\xi - \sin \zeta )
  -4 \psi(\xi)
%\\[0.2cm]
%& & \qquad
  + g'\big(\Phi_{\rho,\zeta}(\xi) ; \rho \big) \psi(\xi)
\end{array}
\end{equation}
that is normalized to have
$\langle \psi_{\rho,\zeta} , \Phi_{\rho,\zeta}' \rangle = 1$.

In our context, the operators defined in
\sref{eq:prlm:def:A:k}
and \sref{eq:prlm:def:A:k}
act as
\begin{equation}
\label{eq:mr:ex:nag:defs:a:i:ii:b:i}
\begin{array}{lcl}
{[}A_1 p](\xi) & = & \mp \sin( \zeta ) p(\xi \pm \cos \zeta)
  \pm \cos (\zeta) p(\xi \pm \sin \zeta),
\\[0.2cm]
{[}A_2 p](\xi) & = & \sin( \zeta)^2  p(\xi \pm \cos \zeta)
  + \cos( \zeta)^2 p(\xi \pm \sin \zeta),
\\[0.2cm]
{[}B_1 p](\xi) & = &
   \mp \cos( \zeta)  p(\xi \pm \cos \zeta)
   \mp \sin( \zeta)  p(\xi \pm \sin \zeta) .
\end{array}
\end{equation}
In particular, Lemma
\ref{lem:prlms:ids:for:lambda:derivs}
allows us to compute
\begin{equation}
\begin{array}{lcl}
{[}\partial_z \lambda_{z;\rho,\zeta}]_{z=0}
& = &
 \mp \sin\zeta \langle \psi_{\rho,\zeta}, \Phi_{\rho,\zeta}'(\cdot \pm \cos \zeta) \rangle
  \pm \cos \zeta \langle \psi_{\rho,\zeta}, \Phi_{\rho,\zeta}'(\cdot \pm \sin \zeta)
 \rangle ,
\\[0.2cm]
\end{array}
\end{equation}
which in turn allows us to find
$[\partial_z \phi_{z;\rho,\zeta}]_{z=0}$
by solving the MFDE
\begin{equation}
\label{eq:mr:ex:mfde:for:phi:1}
\begin{array}{lcl}
\mathcal{L}_{0;\rho, \zeta}
[\partial_z \phi_{z; \rho, \zeta}]_{z=0}
& = &
 \pm \sin(\zeta)  \Phi_{\rho,\zeta}'(\cdot \pm \cos \zeta)
  \mp \cos (\zeta)  \Phi_{\rho,\zeta}'(\cdot \pm \sin \zeta)
 + [\partial_z \lambda_{z;\rho,\zeta}]_{z=0} \Phi_{\rho, \zeta}'.
\end{array}
\end{equation}
In addition, item (iv) of Lemma \ref{lem:mr:angl:dep}
shows that
\begin{equation}
[\partial_\zeta c_{\rho,\zeta}] =
 [\partial_z \lambda_{z;\rho,\zeta}]_{z =0},
  \qquad
  [\partial_\zeta \Phi_{\rho,\zeta}]
    = [\partial_z \phi_{z;\rho,\zeta}]_{z =0} .
\end{equation}

Turning to the second derivatives,
we again use
Lemma \ref{lem:prlms:ids:for:lambda:derivs}
to compute
\begin{equation}
\begin{array}{lcl}
{[}\partial_z^2 \lambda_{z;\rho,\zeta}]_{z=0}
& = & \sin(\zeta)^2  \langle \psi_{\rho,\zeta},
     \Phi_{\rho,\zeta}'(\cdot \pm \cos \zeta) \rangle
  + \cos(\zeta)^2 \langle \psi_{\rho,\zeta} , \Phi_{\rho,\zeta}'(\cdot \pm \sin \zeta) \rangle
\\[0.2cm]
& & \qquad
\mp 2 \sin \zeta \langle \psi_{\rho,\zeta},
    [\partial_z \phi_{z;\rho,\zeta}]_{z=0}(\cdot \pm \cos \zeta)
    \rangle
\\[0.2cm]
& & \qquad
    \pm 2 \cos \zeta \langle \psi_{\rho,\zeta} ,
       [\partial_z \phi_{z;\rho,\zeta}]_{z=0}(\cdot \pm \sin \zeta)
  \rangle
\\[0.2cm]
& & \qquad
  - 2 [\partial_z \lambda_{z;\rho,\zeta}]_{z=0}
    \langle \psi_{\rho,\zeta}, [\partial_z \phi_{z;\rho,\zeta}]_{z=0} \rangle .
\end{array}
\end{equation}
We remark that the last line vanishes in principle
if the normalization \sref{eq:mr:norm:cnd:on:phi:z} is imposed.
However, numerically it is convenient to be free to utilize a
different normalization, in which case
this term should be included.

Finally, in view of the fact
that that $D^2f$ acts only on its fifth argument,
we can  use
Lemma \ref{lem:prlms:ids:for:Phi:phi:derivs}
to obtain
\begin{equation}
\begin{array}{lcl}
[\partial_\zeta^2 c_{\rho,\zeta}]
& = &
 \langle \psi_{\rho,\zeta},  g''(\Phi_{\rho,\zeta};\rho)
\big[\partial_\zeta \Phi_{\rho, \zeta} ]^2 \rangle
\\[0.2cm]
& & \qquad
 + \sin(\zeta)^2 \langle \psi_{\rho,\zeta},  \Phi_{\rho,\zeta}''(\cdot \pm \cos \zeta) \rangle
 + \cos(\zeta)^2 \langle \psi_{\rho,\zeta}, \Phi_{\rho,\zeta}''(\cdot \pm \sin \zeta) \rangle
\\[0.2cm]
& & \qquad
   \mp \cos \zeta \langle \psi_{\rho,\zeta}, \Phi_{\rho,\zeta}'(\cdot \pm \cos \zeta) \rangle
   \mp \sin \zeta \langle \psi_{\rho,\zeta}, \Phi_{\rho,\zeta}'(\cdot \pm \sin \zeta) \rangle
\\[0.2cm]
& & \qquad
\mp  2 \sin\zeta \langle \psi_{\rho,\zeta},
       [\partial_\zeta \Phi_{\rho,\zeta}'](\cdot \pm \cos \zeta)
    \rangle
%\\[0.2cm]
%& & \qquad
   \pm 2 \cos \zeta \langle \psi_{\rho,\zeta},
       [\partial_\zeta \Phi_{\rho,\zeta}'](\cdot \pm \sin \zeta)
  \rangle
\\[0.2cm]
& & \qquad
 - 2 [\partial_\zeta c_{\rho,\zeta}]
   \langle \psi_{\rho,\zeta},
     [\partial_\zeta \Phi_{\rho,\zeta}']
   \rangle .
\end{array}
\end{equation}
The last line can be ignored if indeed
$[\partial_\zeta c_{\rho,\zeta}] = 0$.

\begin{figure}[t]
\centering
\includegraphics[width=\textwidth]{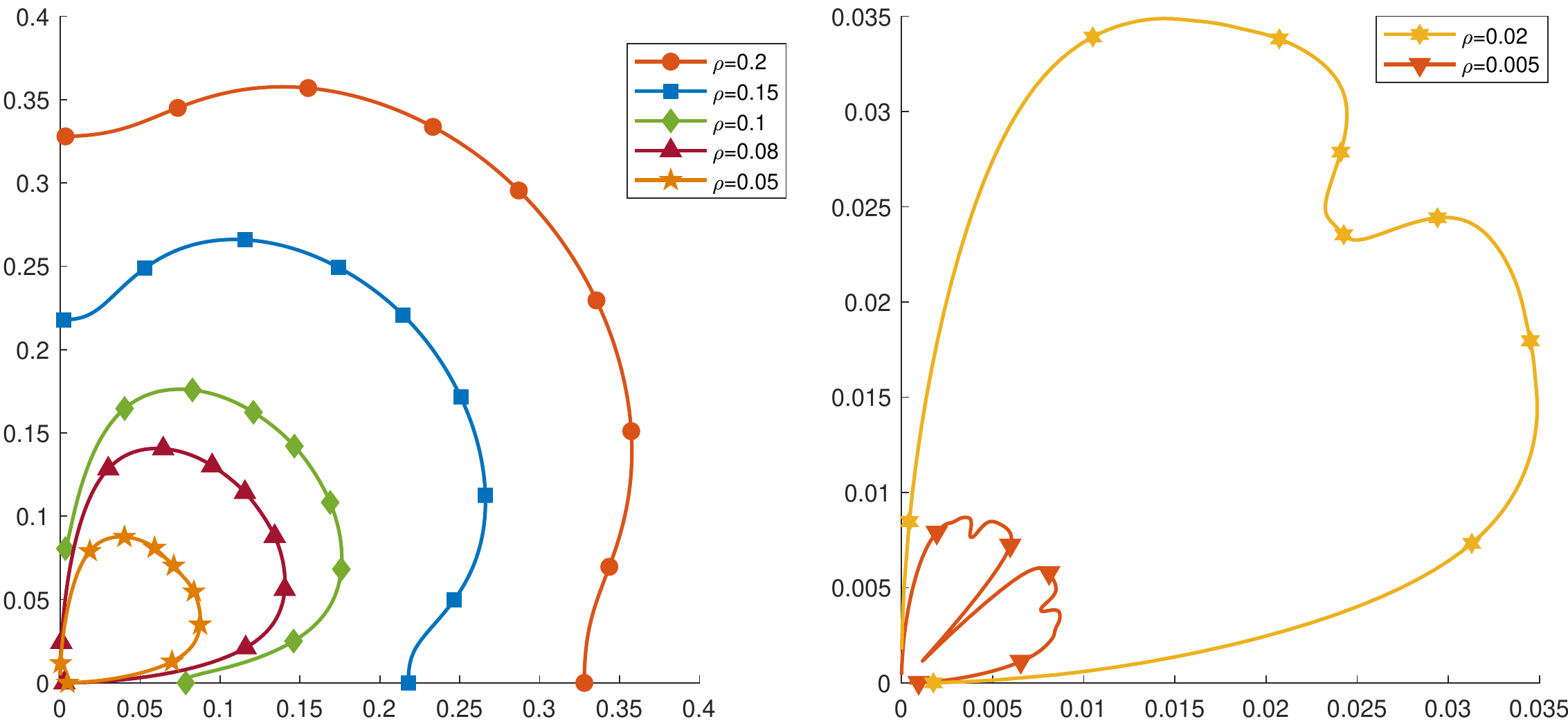}
\caption{Both panels contain polar plots of the $\zeta \mapsto c_{\rho,\zeta}$
relation, for various values of $\rho > 0$. Since $c \le 0 $ in this setting,
we have actually plotted the points $-c_{\rho,\zeta}(\cos\zeta,\sin\zeta)$
for $0 \le \zeta \le \frac{\pi}{2}$. Notice the extra minima that start to form in the directions
$\tan \zeta = 1$ and subsequently $\tan \zeta = \frac{2}{3}$ as $\rho$ is decreased.
}
\label{fig:mr:ex:i:speeds:mc}
\end{figure}

In the special cases
where $\zeta = k \frac{\pi}{4}$ for some $k \in \Wholes$,
we have $A_1 = 0$ and hence
\begin{equation}
[\partial_z \lambda_{z; \rho,k \frac{\pi}{4} }]_{z=0}
= [\partial_\zeta c_{\rho,\zeta}]_{\zeta= k \frac{\pi}{4} }
= 0,
\qquad
[\partial_z \phi_{z; \rho,k \frac{\pi}{4}}]_{z=0}
= [\partial_\zeta \Phi_{\rho,\zeta}]_{\zeta= k \frac{\pi}{4} }
= 0.
\end{equation}
For $\zeta = 0$ this allows us to write
\begin{equation}
\label{eq:mr:ex:hor:snd:derivs}
\begin{array}{lcl}
[\partial_z^2 \lambda_{z; \rho, 0 }]_{z=0}
& = & 2 \langle  \psi_{\rho,\zeta}, \Phi_{\rho,\zeta}' \rangle = 2,
\\[0.2cm]
[\partial_\zeta^2 c_{\rho,\zeta}]_{\zeta = 0}
& = & \langle \psi_{\rho,\zeta}, 2 \Phi_{\rho,\zeta}''
+ \Phi_{\rho,\zeta}'(\cdot - 1)
   - \Phi_{\rho,\zeta}'(\cdot + 1) \rangle .
\end{array}
\end{equation}
On the other hand, for $\zeta = \frac{\pi}{4}$
we have
\begin{equation}
\label{eq:mr:ex:diag:snd:derivs}
\begin{array}{lcl}
[\partial_z^2 \lambda_{z; \rho, \frac{\pi}{4}}]_{z=0}
& = & \langle  \psi_{\rho,\zeta}, \Phi_{\rho,\zeta}'(\cdot + \frac{1}{2} \sqrt{2})
   + \Phi_{\rho,\zeta}'(\cdot - \frac{1}{2} \sqrt{2}) \rangle ,
\\[0.2cm]
[\partial_\zeta^2 c_{\rho,\zeta}]_{\zeta = \frac{\pi}{4} }
& = & \langle \psi_{\rho,\zeta},
   \Phi_{\rho,\zeta}''(\cdot + \frac{1}{2} \sqrt{2})
 + \Phi_{\rho, \zeta}''(\cdot - \frac{1}{2} \sqrt{2} )
   + \sqrt{2} \Phi_{\rho,\zeta}'(\cdot - \frac{1}{2} \sqrt{2} )
     - \sqrt{2} \Phi_{\rho,\zeta}'(\cdot + \frac{1}{2} \sqrt{2} ) \rangle .
\end{array}
\end{equation}
Since $\psi_{\rho,\zeta}$ and $\Phi_{\rho,\zeta}'$ are strictly positive
we hence see that
\begin{equation}
  [\partial_z^2 \lambda_{z; \rho, k\frac{\pi}{4}}]_{z=0} > 0
\end{equation}
for all $k \in \Wholes$. The numerical results
in \cite[{\S}6]{HJHSTB2D} suggest that this
inequality extends to a wide range of $(\rho, \zeta)$
and we take this for granted for the remainder of our discussion.
However, even for the straightforward expressions
\sref{eq:mr:ex:hor:snd:derivs}-\sref{eq:mr:ex:diag:snd:derivs},
it is not clear
whether the quantity
$c_{\rho, \zeta} + \partial_\zeta^2 c_{\rho, \zeta}$
has a sign.

For any fixed $\zeta_*$, we now introduce the notation
\begin{equation}
\label{eq:mr:ex:i:defs:cg:kappa:d}
c_g(\rho) = [\partial_z \lambda_{z;\rho, \zeta}]_{z=0},
\qquad
\kappa_{d}(\rho) = c_{\rho, \zeta_*} + \big[\partial_\zeta^2 c_{\rho, \zeta} \big]_{\zeta = \zeta_*}
\end{equation}
for the group velocity and second derivative of the directional dispersion
that play a role in Theorem \ref{thm:mr:ex:corner}.
In particular, to apply this result
we need $c_g(\rho) = 0$ and $\kappa_d(\rho) \neq 0$.
Since $c_{\rho,\zeta} \le 0$ whenever $\rho \ge 0$,
we have an interior corner for $\kappa_d(\rho) < 0$
and an exterior corner for $\kappa_d(\rho ) > 0$. In both cases the corner
travels in the rightward direction (provided $\abs{\zeta} < \frac{\pi}{2}$).

\begin{figure}[t]
\centering
\includegraphics[width=\textwidth]{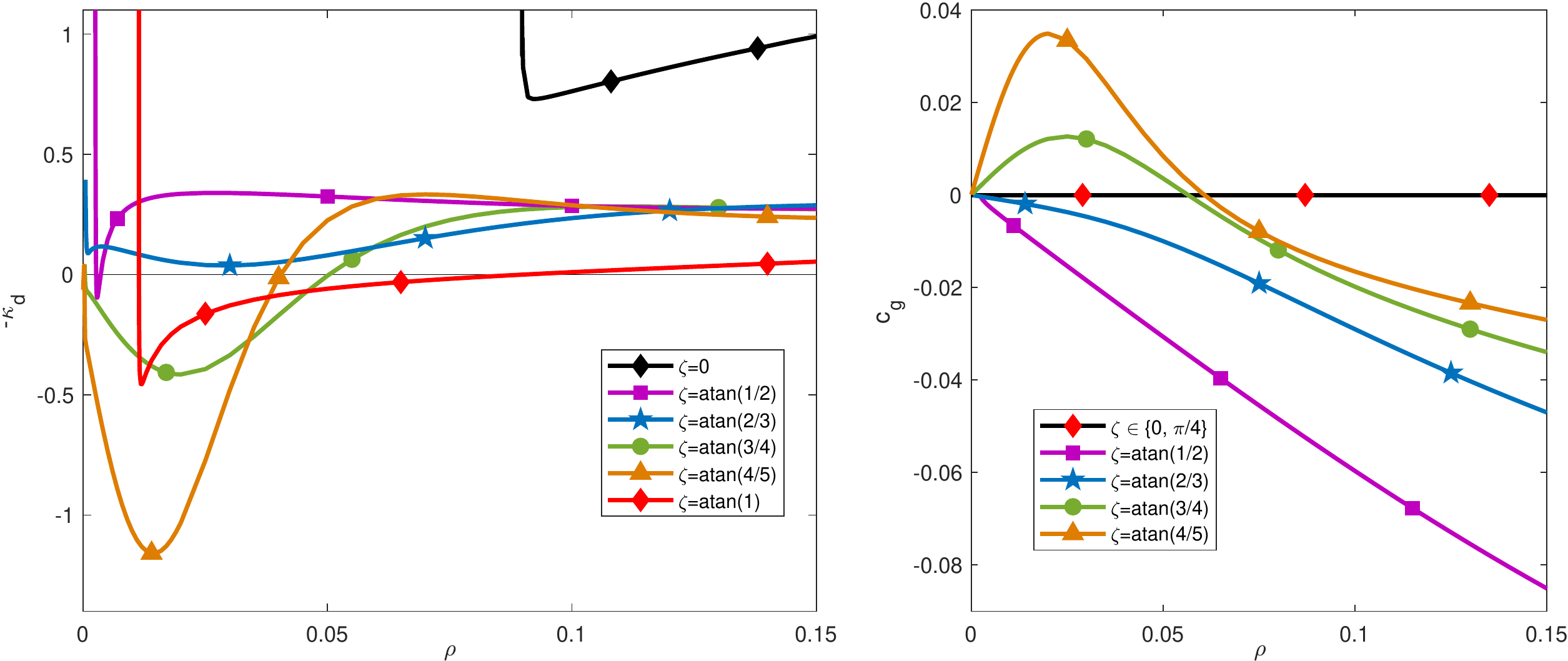}
\caption{The left panel contains numerically computed values for $-\kappa_d(\rho)$. The sharp spikes
occur at the critical value $\rho_*(\zeta)$ where pinning sets in.
We note that sign changes appear for $\zeta = \frac{\pi}{2}$ but not for $\zeta = 0$. % and $\zeta_* = \arctan(\frac{1}{2})$.
In particular, the identity $c_g \equiv 0$ for these directions implies that interior and exterior
corners can both occur for $\zeta = \frac{\pi}{2}$, while the horizontal direction $\zeta = 0$ features interior corners only.
The right panel contains numerically computed values for $c_g(\rho)$. Notice
the zero-crossings for $\tan \zeta = \frac{3}{4}$ and $\tan \zeta = \frac{4}{5}$,
which indicates the presence of interior corners at these two critical values for $\rho$.
}
\label{fig:mr:ex:i:cg:kappa:d}
\end{figure}

In Figure \ref{fig:mr:ex:i:cg:kappa:d} we have
numerically computed the quantities \sref{eq:mr:ex:i:defs:cg:kappa:d}
for a range of rational directions. In all cases,
we also confirmed numerically that $[\partial_z^2 \lambda_{z,\rho,\zeta}]_{z=0} > 0$.
In particular, the results predict interior corners travelling in the horizontal direction $\zeta_* = 0$,
while both types of corners can travel in the diagonal direction $\zeta_* = \frac{\pi}{4}$.
In addition, for two critical values of $\rho > 0$ there are interior corners that travel
in the direction $\zeta_* = \arctan(3/4)$ respectively $\zeta_* = \arctan(4/5)$.
% at two critical values for $\rho$.
%In addition, and
%exterior corners travelling in the directions $\zeta = \arctan(3/4)$ and $\zeta = \arctan(4/5)$
%and both types of corners travelling in the diagonal direction $\zeta = \frac{\pi}{4}$.
To obtain these results,
we simultaneously solved the systems
\sref{eq:mr:ex:mfde:for:phi},
\sref{eq:mr:ex:mfde:for:psi} and
\sref{eq:mr:ex:mfde:for:phi:1}.
For well-posedness reasons,
we added the extra terms $\gamma \Phi''$,
$\gamma \psi''$
respectively $\gamma [\partial_z \phi_{z;\rho,\zeta}]''$
to the right-hand side of each equation,
taking $\gamma = 10^{-6}$. % some small $\gamma > 0$.
For the
precise procedure, we refer
to \cite[{\S}6]{HJHSTB2D}.

%Crystallographic pinning conjecture.
%Indicates discontinuities in pinning region
%boundary . Analog of saying that minima
%occur naturally
%at rational directions?

\subsection{Bichromatic Nagumo LDE}
\label{sec:mr:ex:ii}

We here reconsider the Nagumo LDE
\begin{equation}
\label{eq:mr:nag:lde:ex:ii}
\dot{u}_{ij} = \alpha \big[
 u_{i+1, j} + u_{i, j+1} + u_{i-1, j}  + u_{i, j-1} - 4u_{ij}
\big] + g_{\mathrm{cub}}(u_{ij} ; \rho),
\end{equation}
but are now
interested
in so-called bichromatic planar travelling wave solutions.
Such solutions  can be written in the form
\begin{equation}
\label{eq:mr:ex:ii:bichrom:wave:ansatz}
u_{ij}(t)
 = \left\{ \begin{array}{lcl}
   \Phi^{(u)}( i \cos \phi + j \sin \phi + c t) & & \hbox{if } i+j \hbox{ is even}, \\[0.2cm]
   \Phi^{(v)}( i \cos \phi + j \sin \phi + c t) & & \hbox{if } i+j \hbox{ is odd}, \\[0.2cm]
   \end{array}
\right.
\end{equation}
for some wavespeed $c \in \Real$
and $\Real^2$-valued waveprofile
\begin{equation}
\Phi = (\Phi^{(u)}, \Phi^{(v)}): \Real \to \Real^2.
\end{equation}
These waves fit into the framework
of this paper, since they can be seen
as travelling wave
solutions for the `doubled' LDE
\begin{equation}
\label{eq:mr:nag:ex:ii:doubled:lde}
\begin{array}{lcl}
\dot{u}_{ij} & = & \alpha \big[ v_{i+1,j} + v_{i, j+1} + v_{i-1, j} + v_{i, j-1} - 4 u_{ij} \big]
 + g_{\mathrm{cub}}\big( u_{ij} ; \rho\big) ,
\\[0.2cm]
\dot{v}_{ij} & = & \alpha \big[ u_{i+1,j} + u_{i, j+1} + u_{i-1, j} + u_{i, j-1} - 4 v_{ij} \big]
 + g_{\mathrm{cub}}\big( v_{ij} ; \rho\big) .
\\[0.2cm]
\end{array}
\end{equation}

\begin{figure}[t]
\centering
\includegraphics[width=\textwidth]{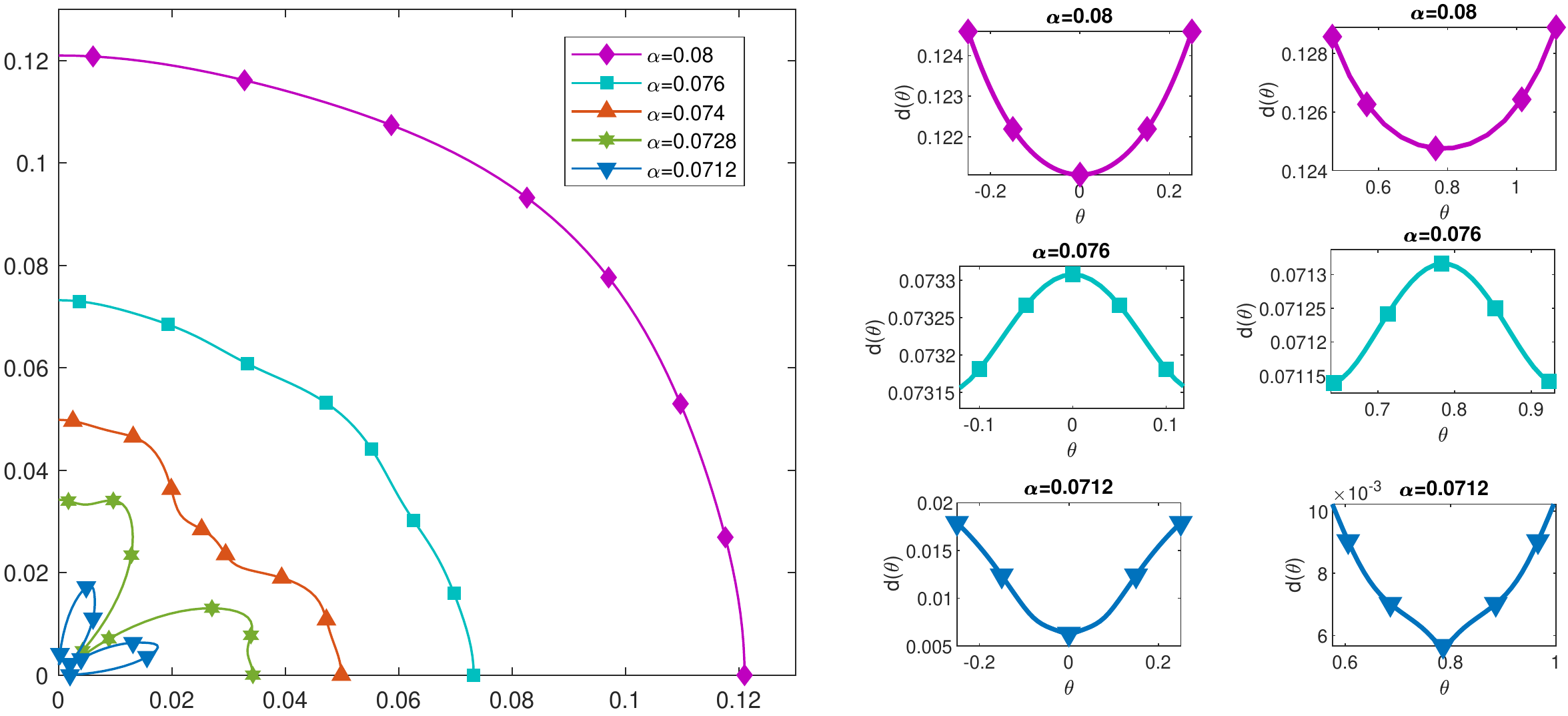}
\caption{The left panel contains polar plots of the $\zeta \mapsto c_{\rho, \alpha,\zeta}$
relation, with fixed $\rho =0$. In particular, the curves consist of the points $c_{\rho,\alpha,\zeta}(\cos\zeta,\sin\zeta)$.
The right panel depicts the directional dispersion $d(\zeta) = \frac{c_{\rho,\alpha, \zeta}}{\cos (\zeta - \zeta_*) }$,
with $\zeta_* = 0$ for the left column and $\zeta_* = \frac{\pi}{4}$ for the right column, again with $\rho = 0$.
These results strongly suggest that $[\partial_{\zeta}^2 d(\zeta) ]_{\zeta = \zeta_*}$ can take both signs as the diffusion
coefficient $\alpha$ is varied. In particular, both the horizontal and diagonal directions can
have interior and exterior corners.
}
\label{fig:mr:ex:ii:bic:sp}
\end{figure}

We now introduce the notation
\begin{equation}
G(u, v; \rho, \alpha) =
\left(
  \begin{array}{ccc}
     4\alpha (v - u ) + g_{\mathrm{cub}}(u;\rho) \\[0.2cm]
     4\alpha (u - v ) + g_{\mathrm{cub}}(v;\rho) \\[0.2cm]
  \end{array}
  \right),
\end{equation}
together with the matrix
\begin{equation}
\mathcal{J} =
\left( \begin{array}{cc} 0 & 1 \\ 1 & 0 \end{array} \right)
\end{equation}
and the operator
\begin{equation}
\Delta_{z, \zeta} p =
 e^{ \mp z \sin \zeta } p(\xi \pm \cos \zeta )
+ e^{ \pm z \cos \zeta} p(\xi \pm \sin \zeta)
  -4 p(\xi) .
\end{equation}
Substitution of the bichromatic wave Ansatz \sref{eq:mr:ex:ii:bichrom:wave:ansatz}
into the LDE \sref{eq:mr:nag:lde:ex:ii}
leads to the travelling wave MFDE
\begin{equation}
\label{eq:mr:nag:bc:trv:wave}
 c \Phi' = \alpha \mathcal{J} \Delta_{0, \zeta} \Phi
   + G(\Phi ; \rho, \alpha),
\end{equation}
while the spatially homogeneous equilibria $(u_{ij}, v_{ij}) = (u,v)$
to
the doubled LDE \sref{eq:mr:nag:ex:ii:doubled:lde}
satisfy the system
\begin{equation}
\label{eq:mr:ex:ii:bichrom:eqlb}
G(u, v; \rho,\alpha) = 0.
\end{equation}

The results in \cite{HJHBICHROM} show that there exists an open set of values $(\rho, \alpha)$
with $\rho \in (-1, 1)$ and $\alpha > 0$
for which \sref{eq:mr:ex:ii:bichrom:eqlb} admits solutions
$(u_{\mathrm{bc}}, v_{\mathrm{bc}}) \in (0, 1)^2$
that are stable spatially homogeneous equilibria for \sref{eq:mr:nag:ex:ii:doubled:lde}.
By applying the theory in \cite{HJHNEGDIF}, we
hence obtain the existence of solutions to \sref{eq:mr:nag:bc:trv:wave}
that satisfy the limits
\begin{equation}
\label{eq:mr:nag:bc:trv:wave:lims}
\lim_{\xi \to - \infty} ( \Phi^{(u)}, \Phi^{(v)} ) = (0, 0),
\qquad
\lim_{\xi \to + \infty} ( \Phi^{(u)}, \Phi^{(v)} ) =
(u_{\mathrm{bc}}, v_{\mathrm{bc}}) ,
\end{equation}
together with similar solutions that connect
$(u_{\mathrm{bc}}, v_{\mathrm{bc}})$ with $(1,1)$.

We remark that the existence theory in \cite{HJHNEGDIF} does not prescribe whether $c \neq 0$ or $c = 0$.
However, in the special case $\zeta = \frac{\pi}{4}$ the travelling wave
system can be written as
\begin{equation}
\begin{array}{lcl}
  c [\Phi^{(u)}]'(\xi) & = &
    2 \alpha \big[ \Phi^{(v)}(\xi + \frac{1}{2}\sqrt{2} ) + \Phi^{(v)}(\xi - \frac{1}{2} \sqrt{2} ) - 2 \Phi^{(u)}(\xi)]
      + g_{\mathrm{cub}}\big( \Phi^{(u)}(\xi) ; \rho \big),
\\[0.2cm]
  c [\Phi^{(v)}]'(\xi) & = &
   2 \alpha \big[ \Phi^{(u)}(\xi + \frac{1}{2}\sqrt{2} ) + \Phi^{(u)}(\xi - \frac{1}{2} \sqrt{2} ) - 2 \Phi^{(v)}(\xi)]
       + g_{\mathrm{cub}}\big( \Phi^{(v)}(\xi) ; \rho \big) .
\end{array}
\end{equation}
After a spatial rescaling, this corresponds precisely with the bichromatic travelling wave
MFDE \cite[Eq. (2.5)]{HJHBICHROM} encountered in the one-dimensional spatial setting.
We remark here that this does \textbf{not} hold for the horizontal direction $\zeta = 0$.

In any case, this observation allows us to apply \cite[Thm. 2.3]{HJHBICHROM}.
As a consequence, for $\zeta = \frac{\pi}{4}$ there exists
an open set of values $(\rho, \alpha)$ for which the wavespeed does not vanish,
allowing us to verify $(H\Phi)$. By continuity in $\zeta$, this hence also holds
for nearby angles. In addition, our numerical results suggest
that the diagonal direction is the first to become pinned as $\alpha$ is decreased;
see Figure \ref{fig:mr:ex:ii:bic:sp}.
This contrasts the situation encountered in the monochromatic case,
where the diagonal waves satisfy the same travelling wave MFDE as the horizontal waves,
but with a doubled diffusion coefficient. As a result, the
monochromatic horizontal waves pin earlier than their diagonal counterparts.

Since the spectral conditions (HS1)-(HS3) can be verified
with techniques similar to those used for the monochromatic case,
we now turn our attention to (HM). In particular,
writing $(c_{\rho,\alpha,\zeta}, \Phi_{\rho,\alpha,\zeta})$
for the solution to \sref{eq:mr:nag:bc:trv:wave}
that satisfies the limits \sref{eq:mr:nag:bc:trv:wave:lims},
we introduce the operator
\begin{equation}
\mathcal{L}_{z;\rho,\alpha, \zeta} p =
 - c_{\rho, \alpha, \zeta} p'
 + \alpha \mathcal{J} \Delta_{z, \zeta} p
   + DG(\Phi_{\rho, \alpha, \zeta} ; \rho, \alpha) p
\end{equation}
for any $p = ( p^{(u)}, p^{(v)} ) \in H^1(\Real;\Real)
  \times H^1(\Real;\Real)$.
We now introduce the notation
 $A_1^{\mathrm{mc}}$, $A_2^{\mathrm{mc}}$ and $B_1^{\mathrm{mc}}$
for the operators \sref{eq:mr:ex:nag:defs:a:i:ii:b:i}
defined for the monochromatic equation.
In addition, we write
$A_1^{\mathrm{bc}}$, $A_2^{\mathrm{bc}}$ and $B_1^{\mathrm{bc}}$
for the operators defined in \sref{eq:prlm:def:A:k}
and \sref{eq:prlm:def:A:k}
associated to
the bichromatic problem \sref{eq:mr:nag:bc:trv:wave}.
It is not hard to verify the relations
\begin{equation}
A_1^{\mathrm{bc}} = \alpha \mathcal{J} A_1^{\mathrm{mc}}  ,
\qquad
A_2^{\mathrm{bc}} = \alpha \mathcal{J} A_2^{\mathrm{mc}} ,
\qquad
B_1^{\mathrm{bc}} = \alpha \mathcal{J} B_1^{\mathrm{mc}}.
\end{equation}

We write $(\lambda_{z; \rho, \alpha , \zeta} , \phi_{z; \rho,\alpha,\zeta})$
and $\psi_{\rho,\alpha, \zeta}$ for the analogs
of the similar named expressions defined in {\S}\ref{sec:mr:ex:i}.
Since $A_1^{\mathrm{bc}} =0 $
for $\zeta = 0$ and $\zeta = \frac{\pi}{4}$,
we again have
\begin{equation}
[\partial_z \lambda_{z; \rho, \alpha, k \frac{\pi}{4} }]_{z=0}
= [\partial_\zeta c_{\rho,\alpha,\zeta}]_{\zeta= k \frac{\pi}{4} }
= 0,
\qquad
[\partial_z \phi_{z; \rho,\alpha,k \frac{\pi}{4}}]_{z=0}
= [\partial_\zeta \Phi_{\rho,\alpha,\zeta}]_{\zeta= k \frac{\pi}{4} }
= 0
\end{equation}
for all $k \in \Wholes$.

For $\zeta = 0$ this allows us to write
\begin{equation}
\label{eq:mr:ex:bichrom:hor:snd:derivs}
\begin{array}{lcl}
[\partial_z^2 \lambda_{z; \rho, \alpha, 0 }]_{z=0}
& = & 2 \alpha \langle  \psi_{\rho,\alpha,\zeta}, \mathcal{J} \Phi_{\rho,\alpha,\zeta}' \rangle ,
\\[0.2cm]
[\partial_\zeta^2 c_{\rho,\alpha,\zeta}]_{\zeta = 0}
& = & \alpha \langle \psi_{\rho,\alpha,\zeta},
 \mathcal{J}\big[
    2 \Phi_{\rho,\alpha,\zeta}''
    + \Phi_{\rho,\alpha,\zeta}'(\cdot- 1)
     - \Phi_{\rho,\alpha,\zeta}'(\cdot + 1)
    \big]
  \rangle .
\end{array}
\end{equation}
On the other hand, for $\zeta = \frac{\pi}{4}$
we have
\begin{equation}
\label{eq:mr:ex:bichrom:diag:snd:derivs}
\begin{array}{lcl}
[\partial_z^2 \lambda_{z; \rho,\alpha, \frac{\pi}{4}}]_{z=0}
& = & \alpha \langle  \psi_{\rho,\alpha,\zeta},
  \mathcal{J} \big[
    \Phi_{\rho,\alpha,\zeta}'(\cdot + \frac{1}{2} \sqrt{2})
   + \Phi_{\rho,\alpha,\zeta}'(\cdot - \frac{1}{2} \sqrt{2})
  \big]
  \rangle ,
\\[0.2cm]
[\partial_\zeta^2 c_{\rho,\alpha,\zeta}]_{\zeta = \frac{\pi}{4} }
& = & \alpha \langle \psi_{\rho,\alpha,\zeta},
  \mathcal{J} \big[\Phi_{\rho,\alpha,\zeta}''(\cdot + \frac{1}{2} \sqrt{2})
 + \Phi_{\rho,\alpha, \zeta}''(\cdot - \frac{1}{2} \sqrt{2} )
    \big] \rangle
\\[0.2cm]
& & \qquad
  + \sqrt{2} \alpha \langle \psi_{\rho,\alpha,\zeta},
    \mathcal{J} \big[
     \Phi_{\rho,\alpha,\zeta}'(\cdot - \frac{1}{2} \sqrt{2} )
     -  \Phi_{\rho,\alpha,\zeta}'(\cdot + \frac{1}{2} \sqrt{2} )
  \big]
  \rangle .
\end{array}
\end{equation}
Since both components of $\psi_{\rho,\alpha,\zeta}$ and $\Phi_{\rho,\alpha,\zeta}'$ are strictly positive,
we again see that
\begin{equation}
  [\partial_z^2 \lambda_{z; \rho, \alpha, k\frac{\pi}{4}}]_{z=0} > 0
\end{equation}
for all $k \in \Wholes$.

As before, it is unclear if the derivatives of $c$ have a sign.
As a consequence of the increasing number of components in the MFDEs,
our numerical method is at present unable to compute the desired
derivatives in the same fashion as above.
Instead, we simply compute the directional dispersion relation directly
and determine by inspection whether it is concave or convex;
see Figure \ref{fig:mr:ex:ii:bic:sp}. Interestingly enough,
we find that this characterization flips at least twice as $\alpha$ is decreased,
both for the horizontal direction $\zeta = 0$ and the diagonal direction $\zeta = \frac{\pi}{4}$.
In contrast to the monochromatic case,
we hence see that interior and exterior bichromatic corners
can both travel in the horizontal direction.

\section{Problem setup}
\label{sec:setup}

%The actual LDE in $(n,l)$ coordinates is given by
%\begin{equation}
%\dot{u}_{nl}(t) = u_{n \pm \sigma_A , l \pm \sigma_B} + u_{n \pm \sigma_B , l \mp \sigma_A }
% - 4 u_{nl} + f\big(u_{nl} \big)
%\end{equation}
In this section we setup a differential-algebraic equation
to describe
solutions to the LDE
\begin{equation}
\label{eq:set:lde}
\dot{u}_{nl}(t) = f\big( \pi^\times_{nl} u(t) \big)
\end{equation}
that can be written in the form
\begin{equation}
\label{eq:set:ansatz:u:with:xi}
u_{nl}(t) = \Xi_l( n + ct)
\end{equation}
for some sequence
\begin{equation}
\label{eq:set:seq:Xi}
\Xi: \Wholes \to W^{1, \infty}(\Real; \Real^d) .
\end{equation}
The elements $\Xi_l$ will be required to lie in the orbital vicinity of the waveprofile $\Phi_*$.
In particular, we formulate a global center manifold reduction that allows us to find an equivalent
two component difference equation of skew-product form.

For any sequence $\Xi$ of the form \sref{eq:set:seq:Xi}, we introduce the
notation
\begin{equation}
  \Xi^\diamond: \Wholes \to W^{1, \infty}(\Real; (\Real^d)^5)
\end{equation}
to refer to the expanded sequence
\begin{equation}
\Xi^\diamond_l =
\Big( \Xi_{l - \sigma_B } - \Xi_{l} , \Xi_{l + \sigma_A} - \Xi_l,
  \Xi_{l + \sigma_B} - \Xi_l , \Xi_{l - \sigma_A} - \Xi_l, 0 \Big) .
\end{equation}
In addition,
for any
\begin{equation}
\overline{p} = (p_1, \ldots, p_5)
\in C(\Real; (\Real^d)^5 )
\end{equation}
we introduce the function $\overline{\tau} \overline{p}$
that is given by
\begin{equation}
\begin{array}{lcl}
[\overline{\tau} \overline{p} ](\xi)
& = & \big(p_1(\xi + \sigma_A) , p_2(\xi + \sigma_B), p_3(\xi - \sigma_A),
  p_4 (\xi - \sigma_B) , p_5(\xi) \big) \in (\Real^d)^5.
\end{array}
\end{equation}
This allows us to write
\begin{equation}
\pi^{\times}_{nl} u (t) =
\tau \Xi_l(n + ct ) +
\overline{\tau} \Xi^\diamond_{l} (n + ct)
\end{equation}
for the function $u$
defined in \sref{eq:set:ansatz:u:with:xi}.
In particular,
this function $u$ %defined in \sref{eq:set:ansatz:u:with:xi}
satisfies the LDE
\sref{eq:set:lde}
if and only if
\begin{equation}
\label{eq:set:sys:sc:for:xi}
c \Xi_l'(\xi) = f \big( [\tau \Xi_l](\xi)
+ [\overline{\tau} \Xi^\diamond_l](\xi) \big)
\end{equation}
holds for all $l \in \Wholes$ and $\xi \in \Real$.

From now on, we often drop the explicit $\xi$-dependence.
For instance, we simply write
\begin{equation}
\label{eq:set:sys:sc:for:xi:sh}
c \Xi_l' = f \big( [\tau \Xi_l]
+ [\overline{\tau} \Xi^\diamond_l] \big)
\end{equation}
instead of the longer form \sref{eq:set:sys:sc:for:xi}.

For technical reasons it is advantageous
to recast \sref{eq:set:sys:sc:for:xi:sh}
as a $2d$-component system.
To this end, we introduce
the first differences
\begin{equation}
\Upsilon_l %(\xi)
 = \Xi_l %(\xi)
    - \Xi_{l-1}. %(\xi).
\end{equation}
In addition, we introduce the notation
\begin{equation}
s^\diamond[\Upsilon]: \Wholes
  \to C\big(\Real ; (\Real^d)^5 \big)
\end{equation}
for the summed sequence
\begin{equation}
\begin{array}{lcl}
s^\diamond[\Upsilon]_l
& = &
\Big(
  - \sum_{j=0}^{ \sigma_B- 1} \Upsilon_{l - j}, \,
  \sum_{j=1}^{\sigma_A} \Upsilon_{l + j},  \,
  \sum_{j=1}^{\sigma_B} \Upsilon_{l+j},   \,
  - \sum_{j=0}^{\sigma_A -1 } \Upsilon_{l - j}, \,
  0
\Big),
\end{array}
\end{equation}
in which we make the convention that sums
where the lower index is strictly larger
than the upper index are set to zero.
For example, in the special
case $(\sigma_A, \sigma_B) = (1, 0)$
we have
\begin{equation}
\begin{array}{lcl}
s^\diamond[\Upsilon]_l
& = &
\Big(
  0,
   \Upsilon_{l + 1},
  0  ,
  -  \Upsilon_{l },
  0
\Big).
\end{array}
\end{equation}
These definitions allow us to observe
that
\begin{equation}
\Xi^\diamond = s^\diamond[\Upsilon].
\end{equation}
In particular, the system \sref{eq:set:sys:sc:for:xi:sh}
can now be rewritten in the equivalent form
\begin{equation}
\label{eq:mr:sys:xi:upsilon}
\begin{array}{lcl}
\Xi_{l+1} - \Xi_{l} & = & \Upsilon_{l+1} ,
\\[0.2cm]
c \Xi'_l & = &
f\big( \tau\Xi_l + \overline{\tau} s^\diamond[\Upsilon]_l \big) .
\end{array}
\end{equation}

In the special case $c = c_*$,
the travelling wave solution
\sref{eq:mr:trv:wave:cross}
gives rise to
$l$-independent solutions
to \sref{eq:mr:sys:xi:upsilon}
of the form
\begin{equation}
\label{eq:set:trv:wave:xi:ups}
\big(\Xi_l, \Upsilon_l \big)
= \big(
  T_{\vartheta} \Phi_*,   0  \big)  ,
\end{equation}
in which $\vartheta \in \Real$
can be chosen arbitrarily.
Here we have introduced the left-shift operator
\begin{equation}
[T_{\vartheta} p](\xi) = p(\xi + \vartheta)
\end{equation}
for any $p \in C(\Real; \Real^d)$.

We now look for a branch of solutions
to \sref{eq:mr:sys:xi:upsilon}
that bifurcates from the travelling
waves \sref{eq:set:trv:wave:xi:ups}
for $c \neq c_*$. In particular,
we consider the Ansatz
\begin{equation}
\label{eq:set:ansatz:xi:ups}
\begin{array}{lcl}
\Xi_l
 & = & T_{\theta_l} \Phi_*
  + v_l  ,
\\[0.2cm]
\Upsilon_l
 & = & w_l ,
\end{array}
\end{equation}
for three sequences
\begin{equation}
(\theta, v, w): \Wholes \to \Real \times H^1(\Real;\Real^d) \times H^1(\Real;\Real^d)
\end{equation}
that much hence satisfy
\begin{equation}
\label{eq:setup:pb:eq:i}
\begin{array}{lcl}
v_{j+1} -v_j
 & = &
 w_{j+1} -T_{\theta_{j+1}}\Phi_*
 + T_{\theta_j} \Phi_* ,
\\[0.2cm]
c \big[ T_{\theta_j}\Phi' + v' \big]
& = &
 f\big( T_{\theta_j} \tau \Phi_* + \tau v
   + \overline{\tau} s^\diamond[w]_l \big) .
\end{array}
\end{equation}
In order to close the system,
we augment \sref{eq:setup:pb:eq:i}
by demanding that
\begin{equation}
\label{eq:set:trnsv:def:eq}
\langle T_{\theta_l} \psi_*,  v_l \rangle = 0
\end{equation}
for all $l \in \Wholes$.

We now set out to isolate the linear
and nonlinear parts
of the system \sref{eq:setup:pb:eq:i}.
For any $\tilde{v} \in C(\Real; (\Real^d)^5 )$ and
$\vartheta \in \Real$
we therefore introduce the function
$\mathcal{N}_{f; \vartheta}(\tilde{v}) \in C(\Real; \Real^d)$
that is given by
\begin{equation}
\mathcal{N}_{f;\vartheta}(\tilde{v})
%(\xi)
= f(   \tau  T_{\vartheta} \Phi_*  +   \tilde{v} \big)
  - f\big( \tau T_{\vartheta} \Phi_*  \big)
 - Df\big( \tau T_{\vartheta} \Phi_*  \big) \tilde{v} .
\end{equation}
%\begin{equation}
%\mathcal{N}_{f;\vartheta}(\tilde{v})
%%(\xi)
%= f(   \big[ \tau  T_{\vartheta} \Phi_* ](\xi) +   \tilde{v}(\xi) \big)
%  - f\big( [\tau T_{\vartheta} \Phi_* ](\xi) \big)
% - Df\big( [\tau T_{\vartheta} \Phi_* ](\xi) \big) \tilde{v}(\xi)
%\end{equation}
In addition,
for any phase $\vartheta \in \Real$ and
difference $\theta_d \in \Real$
we introduce the function
$\mathcal{N}_{\Phi_*;\vartheta}(\theta_{d}) \in C(\Real; \Real^d)$
by writing
\begin{equation}
\mathcal{N}_{\Phi_*;\vartheta}(\theta_{d})
  = T_{\vartheta + \theta_d}\Phi_*
    - T_{\vartheta}\Phi_*
    - T_{\vartheta} \Phi_*' \theta_{d}.
\end{equation}

Using these functions,
the system \sref{eq:setup:pb:eq:i}
can be recast as
\begin{equation}
\label{eq:setup:pb:eq:ii}
\begin{array}{lcl}
v_{j+1} -v_j
 & = &
 w_{j+1} - T_{\theta_j} \Phi_*' (\theta_{j+1} - \theta_j)
  -  \mathcal{N}_{\Phi;\theta_j}(\theta_{j+1} - \theta_j) ,
\\[0.2cm]
(c - c_*) \big[ T_{\theta_j} \Phi_*' + v' \big]
& = &
\mathcal{L}^{(\theta_j)}_* v_j
+ Df( T_{\theta_j} \tau \Phi_* )\overline{\tau} s^\diamond[w]_j
+ \mathcal{N}_{f; \theta_j}( \tau v + \overline{\tau} s^\diamond[w]_j ) ,
\end{array}
\end{equation}
in which we have
\begin{equation}
\mathcal{L}^{(\vartheta)}_* v
= - c_* v' + Df(T_{\vartheta} \tau \Phi_*) \tau v .
\end{equation}

For any $v \in L^2(\Real;\Real^d)$, we now introduce the notation
\begin{equation}
Q_{\vartheta} v = \langle T_{\vartheta} \psi_*, v \rangle_{L^2},
\qquad
P_{\vartheta} v =  [Q_{\vartheta} v ] T_{\vartheta} \Phi_*' .
\end{equation}
Applying a difference to \sref{eq:set:trnsv:def:eq},
we obtain
\begin{equation}
\begin{array}{lcl}
0 & = &
  Q_{\theta_{j+1}} v_{j+1}
  - Q_{\theta_j} v_j
\\[0.2cm]
& = &
  Q_{\theta_j} (v_{j+1} -v_j)
   + \big[
       Q_{\theta_{j+1} } - Q_{\theta_j}
     \big]
        v_{j+1} .
\\[0.2cm]
\end{array}
\end{equation}
Substituting the equation for $v$,
we arrive at
\begin{equation}
\label{eq:set:eq:theta:diff:pre}
\begin{array}{lcl}
 \big[
   Q_{\theta_j} - Q_{\theta_{j+1}}
 \big] v_{j+1}
& = &
  Q_{\theta_j} w_{j+1}
  - (\theta_{j+1} - \theta_j)
  - Q_{\theta_j}
      \mathcal{N}_{\Phi;\theta_j}(\theta_{j+1} - \theta_j)  .
\end{array}
\end{equation}
In order to
formulate this in a more compact fashion,
we write
\begin{equation}
\sigma_* = \max\{ \abs{\sigma_A}, \abs{\sigma_B} \}
\end{equation}
together with
\begin{equation}
\mathrm{ev}_l p =
\Big( p_{l -\sigma_* + 1} , \ldots , p_{l + \sigma_* - 1}, p_{l + \sigma_*}
\Big)
\end{equation}
for any sequence $p$. %$p: \Wholes \to \mathcal{H}$.
In addition, we introduce the shorthand notation
\begin{equation}
L^2 = L^2(\Real;\Real^d),
\qquad
H^1 = H^1(\Real;\Real^d),
\qquad
\mathbf{L}^2 = (L^2)^{2  \sigma_*},
\qquad
\mathbf{H}^1 = (H^1)^{2 \sigma_*}.
\end{equation}

With a slight abuse of notation,
we introduce the function $\mathcal{S}: \Real^{2 \sigma_*} \times \mathbf{H}^1 \times \mathbf{H}^1 \to \Real$
that acts as
\begin{equation}
\label{eq:set:defn:nl:s}
\begin{array}{lcl}
\mathcal{S}\big( \mathrm{ev}_l( \theta, v, w) \big)
 & = &
  \big[Q_{\theta_{l+1}} - Q_{\theta_l} \big]
      v_{l+1}
    - Q_{\theta_l}
      \mathcal{N}_{\Phi; \theta_l}
        (\theta_{l+1} - \theta_l) .
\end{array}
\end{equation}
This allows us to rewrite \sref{eq:set:eq:theta:diff:pre}
in the form
\begin{equation}
\label{eq:set:sys:for:theta:diff}
\begin{array}{lcl}
\theta_{l+1} - \theta_l
& = &
  Q_{\theta_l} w_{l+1}
  + \mathcal{S}\big( \mathrm{ev}_{l}( \theta, v, w) \big) .
\end{array}
\end{equation}
Substituting this back into \sref{eq:setup:pb:eq:ii},
% our coupled system
%for $(v,w)$,
we obtain
\begin{equation}
\label{eq:set:eq:for:v:diff:ii}
\begin{array}{lcl}
v_{l+1} - v_l
 & = &
   (1 -  P_{\theta_l} ) w_{l+1}
   -  \mathcal{S}\big( \mathrm{ev}_l(\theta, v, w) \big) T_{\theta_l} \Phi_*'
\\[0.2cm]
& & \qquad
  -  \mathcal{N}_{\Phi; \theta_l}(\theta_{l+1} - \theta_l)  .
\end{array}
\end{equation}

For any triplet $(\theta, v, w) : \Wholes \to \Real \times H^1 \times H^1$ ,
we now introduce the sequences
\begin{equation}
\mathcal{T}(\theta)[v,w]: \Wholes \to H^1 \times L^2,
\qquad
\mathcal{D}(\theta)[v,w]: \Wholes \to H^1 \times H^1,
\end{equation}
defined by
\begin{equation}
\begin{array}{lcl}
\mathcal{T}(\theta)[v, w]_l
 & = & (w_{l+1} , - \mathcal{L}^{(\theta_l)}_* v_l ) ,
\\[0.2cm]
\mathcal{D}(\theta)[ v,w]_l
& = & \big(v_{l+1} - v_l ,
  Df( T_{\theta_l} \tau \Phi_*) \overline{\tau} s^\diamond[w]_l
  \big).
\end{array}
\end{equation}
In addition, we introduce
the nonlinear function
\begin{equation}
\mathcal{R}: \Real^{2 \sigma_*} \times \mathbf{H}^1 \times \mathbf{H}^1
\to H^1 \times L^2
\end{equation}
that acts as
\begin{equation}
\label{eq:set:defn:nl:r}
\begin{array}{lcl}
\mathcal{R}\big( \mathrm{ev}_l (\theta, v, w) \big)
& = &
 ( c - c_*)(0, v_l')
  - \mathcal{S}\big( \mathrm{ev}_l(\theta, v, w) \big)
  \big( T_{\theta_l} \Phi_*' , 0 \big)
\\[0.2cm]
& & \qquad
  - \Big( \mathcal{N}_{\Phi; \theta_l}(\theta_{l+1} - \theta_l),
  \mathcal{N}_{f; \theta_l}
    \big( \tau v_l + \overline{\tau} s^\diamond[w]_l \big) \Big) .
\end{array}
\end{equation}
Finally, we introduce the operators
\begin{equation}
\label{eq:set:defns:q:i:ii}
Q^{(1)}_{\vartheta} [v, w] = Q_{\vartheta} v,
\qquad \qquad
Q^{(2)}_{\vartheta} [v, w] = Q_{\vartheta} w
\end{equation}
and the associated projections
\begin{equation}
\label{eq:set:defns:p:i:ii}
P^{(1)}_{\vartheta} [v, w]
   = (P_\vartheta v , 0) ,
\qquad
P^{(2)}_{\vartheta}(v, w) = (0 , P_{\vartheta} w) .
\end{equation}
This allows us to represent the full problem as
\begin{equation}
\label{eq:set:red:sys}
\begin{array}{lcl}
\theta_{l+1} - \theta_l
 & = &
    Q_{\theta_l} w_{l+1}
    + \mathcal{S}\big(\mathrm{ev}_l(\theta, v, w) \big) ,
\\[0.2cm]
\mathrm{pev}_l \mathcal{D}(\theta)[v, w]
 & = &
   (I - P^{(1)}_{\theta_l} )
  \mathrm{pev}_l \mathcal{T}(\theta)[v, w]
  + (c - c_*) (0, T_{\theta_l} \Phi_*')
  +     \mathcal{R}\big( \mathrm{ev}_l (\theta, v, w) \big),
\end{array}
\end{equation}
in which we have introduced
% the projection
%\begin{equation}
%\begin{array}{lcl}
%P^{(1)}_{\vartheta} (v, w)
% & = & (P_\vartheta v , 0) ,
%\end{array}
%\end{equation}
%together with
the pointwise evaluation operator
\begin{equation}
\mathrm{pev}_l H = H_l
\end{equation}
that acts on sequences $H$.

\begin{lem}
\label{lem:set:formulations:eqv}
Assume that (Hf), $(H\Phi)$ and $(HS1)$-$(HS3)$ are satisfied.
Pick a constant $c \in \Real$
together with three sequences
\begin{equation}
(\theta, v, w): \Wholes \to \Real \times H^1 \times H^1
\end{equation}
and consider the pair $(\Xi, \Upsilon)$
defined by
\sref{eq:set:ansatz:xi:ups}.
Then the differential-algebraic system \sref{eq:mr:sys:xi:upsilon}
and the identity
\begin{equation}
\label{eq:set:lem:eqv:q:proj:cnst}
Q_{\theta_l} v_l = Q_{\theta_0} v_0
\end{equation}
are both satisfied
for all $l \in \Wholes$,
if and only if
\sref{eq:set:red:sys}
holds for all $l \in \Wholes$.
\end{lem}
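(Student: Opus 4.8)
The plan is to show that the system \sref{eq:set:red:sys} is nothing but an algebraic repackaging of \sref{eq:mr:sys:xi:upsilon} together with the propagated transversality constraint \sref{eq:set:lem:eqv:q:proj:cnst}, obtained by the chain of substitutions carried out in the preceding pages. I would prove the two implications separately, but both rest on the same bookkeeping identities. First I would establish, once and for all, that under the Ansatz \sref{eq:set:ansatz:xi:ups} one has $\Xi^\diamond = s^\diamond[\Upsilon] = s^\diamond[w]$ and $\pi^\times_{nl}u(t) = \tau\Xi_l(n+ct) + \overline{\tau}\,s^\diamond[w]_l(n+ct)$, so that the second line of \sref{eq:mr:sys:xi:upsilon} is equivalent to the second line of \sref{eq:setup:pb:eq:i}; this is just the computation recorded before the statement. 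Expanding $f$ around $T_{\theta_j}\tau\Phi_*$ using the definitions of $\mathcal{N}_{f;\vartheta}$ and $\mathcal{L}^{(\vartheta)}_*$, and $T_{\theta_{j+1}}\Phi_*$ around $T_{\theta_j}\Phi_*$ using $\mathcal{N}_{\Phi_*;\vartheta}$, turns \sref{eq:setup:pb:eq:i} into \sref{eq:setup:pb:eq:ii}; these are purely formal Taylor rearrangements and I would simply cite them as already derived.

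The crux is the handling of the constraint. I would argue the forward direction as follows: assume \sref{eq:mr:sys:xi:upsilon} and \sref{eq:set:lem:eqv:q:proj:cnst} hold for all $l$. The constraint \sref{eq:set:lem:eqv:q:proj:cnst} says $Q_{\theta_l}v_l$ is constant in $l$; I must show this constant equals the value making \sref{eq:set:trnsv:def:eq} hold, i.e. that it is $0$. Here I would either invoke that the branch we are constructing is normalized so that $Q_{\theta_0}v_0 = 0$ by fiat (it is part of the data of the solutions we seek, cf. item (ii) of Theorem \ref{thm:mr:ex:corner}), or — cleaner — phrase the lemma so that \sref{eq:set:lem:eqv:q:proj:cnst} together with the reduced system is equivalent to \sref{eq:mr:sys:xi:upsilon} plus constancy of $Q_{\theta_l}v_l$, which is exactly what the statement says ("$Q_{\theta_l}v_l = Q_{\theta_0}v_0$"). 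With constancy in hand, applying the difference operator to $Q_{\theta_l}v_l$ and substituting the $v$-equation produces \sref{eq:set:eq:theta:diff:pre}, which after introducing $\mathcal{S}$ is precisely the first line of \sref{eq:set:red:sys}; back-substituting the resulting expression for $\theta_{l+1}-\theta_l$ into the $v$-equation of \sref{eq:setup:pb:eq:ii} and splitting off $P_{\theta_l}$ yields \sref{eq:set:eq:for:v:diff:ii}, which is the second component of the second line of \sref{eq:set:red:sys}; the $w$-component of that line is a tautology from the definitions of $\mathcal{D}, \mathcal{T}, \mathcal{R}$ and the first line of \sref{eq:setup:pb:eq:ii}. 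I would collect all of this by matching components against the definitions \sref{eq:set:defns:q:i:ii}-\sref{eq:set:defns:p:i:ii} and \sref{eq:set:defn:nl:r}.

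For the converse I would run the same computation backwards: given \sref{eq:set:red:sys}, the second line restricted to the first component recovers \sref{eq:set:eq:for:v:diff:ii}; combining with the first line (the $\theta$-difference equation) and the definitions of $\mathcal{S}$, $\mathcal{N}_{\Phi_*;\vartheta}$ reconstitutes both lines of \sref{eq:setup:pb:eq:ii}, hence \sref{eq:mr:sys:xi:upsilon}. The remaining point is to recover the constraint \sref{eq:set:lem:eqv:q:proj:cnst}: applying $Q_{\theta_l}$ to the $v$-equation and using that the first line of \sref{eq:set:red:sys} was engineered exactly so that $Q_{\theta_{l+1}}v_{l+1} - Q_{\theta_l}v_l = 0$, one gets constancy of $Q_{\theta_l}v_l$, which is \sref{eq:set:lem:eqv:q:proj:cnst}. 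I expect the main obstacle to be purely notational: keeping the five-component stencil bookkeeping ($\pi^\times$, $\tau$, $\overline\tau$, $s^\diamond$, $\mathrm{ev}_l$, $\mathrm{pev}_l$) straight while verifying that the scalar identity $Q_{\theta_j}(v_{j+1}-v_j) + [Q_{\theta_{j+1}}-Q_{\theta_j}]v_{j+1} = 0$ really does reduce, after substitution of the second line of \sref{eq:setup:pb:eq:ii}, to \sref{eq:set:eq:theta:diff:pre} — in particular checking that the term $Q_{\theta_j}\big[(c-c_*)(T_{\theta_j}\Phi_*'+v')\big]$ behaves correctly, using $\langle\psi_*,\Phi_*'\rangle = 1$ and the fact that the $\mathcal{L}^{(\theta_j)}_*$ and $Df\cdot\overline\tau s^\diamond[w]$ contributions are absorbed into $\mathcal{R}$ via the projection structure. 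None of this is deep; it is a careful matching of terms, and I would present it as such, pointing to the displayed equations rather than re-deriving them.
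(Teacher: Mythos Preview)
Your proposal is correct and follows essentially the same approach as the paper: the forward direction cites the preceding computations, and the converse applies $Q_{\theta_l}$ to \sref{eq:set:eq:for:v:diff:ii} and uses the definition of $\mathcal{S}$ to recover $Q_{\theta_{l+1}}v_{l+1} = Q_{\theta_l}v_l$, after which \sref{eq:mr:sys:xi:upsilon} follows by reversing the algebraic substitutions. One small simplification: your anticipated difficulty with the term $Q_{\theta_j}\big[(c-c_*)(T_{\theta_j}\Phi_*'+v')\big]$ does not arise, since the derivation of the $\theta$-difference equation \sref{eq:set:eq:theta:diff:pre} uses only the first (algebraic) line of \sref{eq:setup:pb:eq:ii}, not the second (differential) line.
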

\begin{proof}
The computations above
shows that indeed \sref{eq:set:red:sys}
holds whenever
\sref{eq:mr:sys:xi:upsilon}  and
\sref{eq:set:lem:eqv:q:proj:cnst} are satisfied.
The converse implication
can be checked by using \sref{eq:set:eq:for:v:diff:ii}
to compute
\begin{equation}
\begin{array}{lcl}
Q_{l} [v_{l+1} - v_l]
& = & - \mathcal{S}\big(\mathrm{ev}_l(\theta, v, w) \big)
 - Q_{\theta_l} \mathcal{N}_{\Phi;\theta_l}
     (\theta_{l+1} - \theta_l)
\\[0.2cm]
& = & -[Q_{\theta_{l+1}} - Q_{\theta_l} ] v_{l+1},
\end{array}
\end{equation}
which gives \sref{eq:set:lem:eqv:q:proj:cnst}.
The identity
\sref{eq:mr:sys:xi:upsilon}
then follows readily.
\end{proof}

We now proceed to obtain estimates on the nonlinear terms. These are mostly standard
bounds for quadratic nonlinearities that will be used in {\S}\ref{sec:cm}
for the center manifold construction. Notice however that any dependencies on the phase $\theta$
always involve differences in $\theta$. % rather than the actual value for $\theta$.

\begin{lem}
\label{lem:set:bnd:n:f}
Assume that (Hf) and $(H\Phi)$ hold
and pick a sufficiently large constant $K > 0$.
Then for any $\tilde{v} \in H^1\big(\Real;(\Real^d)^5\big)$ with $\norm{\tilde{v}}_{H^1} \le 1$
%$\abs{\tilde{v}} \le 1$
we have the bound
\begin{equation}
\label{eq:set:bnd:n:f:i}
\norm{ \mathcal{N}_{f; \vartheta}(\tilde{v}) }_{L^2}
\le K \norm{\tilde{v}}_{H^1} \norm{\tilde{v}}_{L^2}.
\end{equation}
In addition, for any
pair $(\vartheta^A, \vartheta^B) \in \Real^2$
and any pair
\begin{equation}
(\tilde{v}^A, \tilde{v}^B) \in H^1\big(\Real;(\Real^d)^5\big) \times H^1\big(\Real;(\Real^d)^5\big)
\end{equation}
with $\norm{\tilde{v}^A}_{H^1} \le 1$
and $\norm{\tilde{v}^B}_{H^1} \le 1$,
we have the Lipschitz estimate
\begin{equation}
\label{eq:set:bnd:n:f:ii}
\norm{ \mathcal{N}_{f; \vartheta^A}(\tilde{v}^A)
- \mathcal{N}_{f; \vartheta^B}(\tilde{v}^B)  }_{L^2}
\le K \big[ \norm{\tilde{v}^A}_{H^1} + \norm{\tilde{v}^B}_{H^1}  + \abs{\vartheta^A -  \vartheta^B} \big]
     \Big[
       \norm{ \tilde{v}^A - \tilde{v}^B }_{L^2}
       +  \abs{\vartheta^A -  \vartheta^B}
     \Big] .
\end{equation}
\end{lem}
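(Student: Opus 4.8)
The plan is to recognise $\mathcal{N}_{f;\vartheta}(\tilde v)$ as the second order Taylor remainder of $f$ at the base point $\tau T_\vartheta \Phi_*$, so that
\begin{equation*}
\mathcal{N}_{f;\vartheta}(\tilde v)(\xi) = \int_0^1 (1-s)\, D^2 f\big( \tau T_\vartheta \Phi_*(\xi) + s \tilde v(\xi) \big)\big[ \tilde v(\xi), \tilde v(\xi) \big]\, ds
\end{equation*}
holds pointwise in $\xi \in \Real$. The preliminary observation that makes everything work is a uniform boundedness statement: by $(H\Phi)$ the image of $\tau T_\vartheta \Phi_*$ lies in a bounded subset of $(\Real^d)^5$ that does not depend on $\vartheta$, while $\norm{\tilde v}_{H^1} \le 1$ forces $\norm{\tilde v}_{L^\infty} \le C_{\mathrm{emb}}$ through the one-dimensional Sobolev embedding $H^1(\Real) \hookrightarrow L^\infty(\Real)$. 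Hence there is a fixed compact set containing $\tau T_\vartheta \Phi_*(\xi) + s \tilde v(\xi)$ for all admissible $\xi, \vartheta, s$ and all admissible $\tilde v$, on (a neighbourhood of) which the $C^r$-smoothness from (Hf) bounds $f$, $Df$ and $D^2 f$ by a single constant $M$.

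Granting this, \sref{eq:set:bnd:n:f:i} is immediate: the integral representation gives $\abs{\mathcal{N}_{f;\vartheta}(\tilde v)(\xi)} \le \tfrac12 M \abs{\tilde v(\xi)}^2$, so that $\norm{\mathcal{N}_{f;\vartheta}(\tilde v)}_{L^2}^2 \le \tfrac14 M^2 \norm{\tilde v}_{L^\infty}^2 \norm{\tilde v}_{L^2}^2 \le \tfrac14 M^2 C_{\mathrm{emb}}^2 \norm{\tilde v}_{H^1}^2 \norm{\tilde v}_{L^2}^2$, which is the claim for $K$ large enough. For the Lipschitz bound \sref{eq:set:bnd:n:f:ii} I would split
\begin{equation*}
\mathcal{N}_{f;\vartheta^A}(\tilde v^A) - \mathcal{N}_{f;\vartheta^B}(\tilde v^B) = \big[ \mathcal{N}_{f;\vartheta^A}(\tilde v^A) - \mathcal{N}_{f;\vartheta^A}(\tilde v^B) \big] + \big[ \mathcal{N}_{f;\vartheta^A}(\tilde v^B) - \mathcal{N}_{f;\vartheta^B}(\tilde v^B) \big]
\end{equation*}
and integrate each bracket along a straight path, in the $\tilde v$-slot and the $\vartheta$-slot respectively. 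For the first bracket one uses $D_{\tilde v} \mathcal{N}_{f;\vartheta}(\tilde v) = Df(\tau T_\vartheta \Phi_* + \tilde v) - Df(\tau T_\vartheta \Phi_*)$, which is $O(\abs{\tilde v(\xi)})$ pointwise by the mean value theorem; this yields the pointwise bound $M\big(\abs{\tilde v^A(\xi)} + \abs{\tilde v^B(\xi)}\big)\abs{\tilde v^A(\xi) - \tilde v^B(\xi)}$, and after pulling out the first factor in $L^\infty$ and reusing the embedding one obtains $\le C\big( \norm{\tilde v^A}_{H^1} + \norm{\tilde v^B}_{H^1} \big)\norm{\tilde v^A - \tilde v^B}_{L^2}$. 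For the second bracket the key point is that
\begin{equation*}
\partial_\vartheta \mathcal{N}_{f;\vartheta}(\tilde v) = \big[ Df(\tau T_\vartheta \Phi_* + \tilde v) - Df(\tau T_\vartheta \Phi_*) - D^2 f(\tau T_\vartheta \Phi_*)\tilde v \big]\, \tau T_\vartheta \Phi_*'
\end{equation*}
is \emph{again} a second order Taylor remainder, now of $Df$, and hence $O(\abs{\tilde v(\xi)}^2)$ pointwise because $\Phi_*'$ is bounded; this gives the pointwise bound $C \abs{\tilde v^B(\xi)}^2 \abs{\vartheta^A - \vartheta^B}$ and therefore, after one more use of the embedding and the normalisation $\norm{\tilde v^B}_{L^2} \le 1$, a contribution $\le C \norm{\tilde v^B}_{H^1} \abs{\vartheta^A - \vartheta^B}$. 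Adding the two contributions and enlarging $K$ if necessary produces exactly the form stated, since the first factor on the right-hand side of \sref{eq:set:bnd:n:f:ii} dominates each of $\norm{\tilde v^A}_{H^1}+\norm{\tilde v^B}_{H^1}$ and $\norm{\tilde v^B}_{H^1}$, while the second factor dominates each of $\norm{\tilde v^A-\tilde v^B}_{L^2}$ and $\abs{\vartheta^A-\vartheta^B}$.

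The only point requiring real care — and the step I expect to be the main obstacle — is the bookkeeping of norms: the intrinsically quadratic structure of $\mathcal{N}_f$ must be turned into estimates that are \emph{linear} in $\tilde v^A - \tilde v^B$ and in $\vartheta^A - \vartheta^B$, which succeeds precisely because in every product one of the two copies of $\tilde v$ can be absorbed into an $L^\infty$-bounded (hence $H^1$-bounded) prefactor. The observation that $\partial_\vartheta \mathcal{N}_f$ remains quadratic in $\tilde v$ is what forces the phase-dependence to enter only through differences of $\theta$, as anticipated in the paragraph preceding the lemma; once this is in place, the remaining manipulations are routine.
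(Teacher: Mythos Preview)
Your approach is correct in outline and differs from the paper's in the Lipschitz step. The paper does not split into a $\tilde v$-variation and a $\vartheta$-variation; instead it introduces $\mathcal{M}_{f;\tilde\Phi}(\tilde v)=f(\tau\tilde\Phi+\tilde v)-f(\tau\tilde\Phi)-Df(\tau\tilde\Phi)\tilde v$ and verifies the single algebraic identity
\[
\Delta_{\mathcal M}
=\mathcal M_{f;\tilde\Phi^B+\tilde v^B}\big(\tilde\Phi^A+\tilde v^A-\tilde\Phi^B-\tilde v^B\big)
-\mathcal M_{f;\tilde\Phi^B}\big(\tilde\Phi^A-\tilde\Phi^B\big)
+\big[Df(\tau\tilde\Phi^B+\tilde v^B)-Df(\tau\tilde\Phi^B)\big]\Delta
+\big[Df(\tau\tilde\Phi^B)-Df(\tau\tilde\Phi^A)\big]\tilde v^A,
\]
with $\tilde\Phi^\#=T_{\vartheta^\#}\Phi_*$ and $\Delta=\tilde\Phi^A+\tilde v^A-\tilde\Phi^B-\tilde v^B$, after which each term is bounded using only the pointwise quadratic estimate for $\mathcal M$ and the Lipschitz bound on $Df$. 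Your path-integration argument is a perfectly good alternative and is arguably more transparent about why the phase enters only through differences.

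There is, however, one step you should tighten. Your claim that $\partial_\vartheta\mathcal N_{f;\vartheta}(\tilde v)=[Df(a+\tilde v)-Df(a)-D^2f(a)\tilde v]\,\tau T_\vartheta\Phi_*'$ is $O(|\tilde v(\xi)|^2)$ pointwise is a second-order remainder statement for $Df$, and that requires $D^2f$ to be Lipschitz, i.e.\ $f\in C^{2,1}$. Assumption (Hf) only gives $f\in C^r$ with $r\ge 2$, so when $r=2$ this is not available. Fortunately you do not need it: the crude bound $|Df(a+\tilde v)-Df(a)|\le M|\tilde v|$ together with $|D^2f(a)\tilde v|\le M|\tilde v|$ already gives $|\partial_\vartheta\mathcal N_{f;\vartheta}(\tilde v)(\xi)|\le C|\tilde v(\xi)|$, hence $\|\partial_\vartheta\mathcal N_{f;\vartheta}(\tilde v^B)\|_{L^2}\le C\|\tilde v^B\|_{L^2}$, which after integration yields the contribution $C\|\tilde v^B\|_{L^2}\,|\vartheta^A-\vartheta^B|$. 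This is still dominated by the right-hand side of \sref{eq:set:bnd:n:f:ii}, so your argument goes through with this weaker (and legitimately available) estimate in place of the quadratic one.
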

\begin{proof}
Upon writing
\begin{equation}
\mathcal{M}_{f; \tilde{\Phi}}(\tilde{v})
= f(\tau \tilde{\Phi} + \tilde{v} ) -
  Df( \tau \tilde{\Phi} ) \tilde{v}
- f(\tau \tilde{\Phi}  ),
\end{equation}
we readily see that
\begin{equation}
\mathcal{N}_{f;\vartheta}(\tilde{v})
= \mathcal{M}_{f; T_{\vartheta} \Phi_*}(\tilde{v}) .
\end{equation}
Since $f$ is at least $C^2$-smooth,
there exists $C_1 > 0$
so that the pointwise bound
\begin{equation}
\abs{ \mathcal{M}_{f; \tilde{\Phi}}(\tilde{v})(\xi) }
\le C_1 \abs{ \tilde{v}(\xi) }^2
\end{equation}
holds whenever
\begin{equation}
\label{eq:set:bnds:t:phi:v}
\norm{\tilde{\Phi}}_{H^1} \le \norm{\Phi_*}_{H^1} + 1,
\qquad
\norm{\tilde{v}}_{H^1} \le 1.
\end{equation}
This yields
\begin{equation}
\norm{ \mathcal{M}_{f; \tilde{\Phi}}(\tilde{v}) }_{L^2}
\le C_2 \norm{\tilde{v}}_{H^1} \norm{\tilde{v}}_{L^2}
\end{equation}
for some $C_2 > 0$, from which \sref{eq:set:bnd:n:f:i}
follows.

Upon writing
\begin{equation}
\Delta_{\mathcal{M}}
=
\mathcal{M}_{f;\tilde{\Phi}^A}(\tilde{v}^A)
  - \mathcal{M}_{f; \tilde{\Phi}^B}(\tilde{v}^B),
\end{equation}
a short computation shows that
\begin{equation}
\begin{array}{lcl}
\Delta_{\mathcal{M}}
 & = & \mathcal{M}_{f; \tilde{\Phi}^B + \tilde{v}^B}\big(\tilde{\Phi}^A + \tilde{v}^A - \tilde{\Phi}^B - \tilde{v}^B \big)
   - \mathcal{M}_{f; \tilde{\Phi}^B}\big(\tilde{\Phi}^A - \tilde{\Phi}^B)
\\[0.2cm]
& & \qquad
  + \big( Df(\tau \tilde{\Phi}^B + \tilde{v}^B) - Df(\tau \tilde{\Phi}^B) \big)[\tilde{\Phi}^A + \tilde{v}^A - \tilde{\Phi}^B - \tilde{v}^B ]
\\[0.2cm]
& & \qquad
 + \big( Df(\tau\tilde{\Phi}^B ) - Df(\tau\tilde{\Phi}^A) \big) \tilde{v}^A .
\end{array}
\end{equation}
Under the assumption that
\sref{eq:set:bnds:t:phi:v}
holds for both $(\tilde{\Phi}^A, v^A)$
and  $(\tilde{\Phi}^B, v^B)$,
we hence find
\begin{equation}
\begin{array}{lcl}
\norm{\Delta_{\mathcal{M}}}_{L^2}
\le
C_3 \big[
 \norm{\tilde{\Phi}^A - \tilde{\Phi}^B}_{H^1}
 + \norm{\tilde{v}^A }_{H^1} + \norm{\tilde{v}^B}_{H^1}
\big] \big[
   \norm{\tilde{v}^A - \tilde{v}^B}_{L^2}
   + \norm{\tilde{\Phi}^A - \tilde{\Phi}^B}_{L^2}
\big]
\end{array}
\end{equation}
for some $C_3 > 0$.
The bound \sref{eq:set:bnd:n:f:ii}
now follows from the fact that
\begin{equation}
\norm{ T_{\vartheta^B} \Phi_* - T_{\vartheta^A} \Phi_* }_{H^1}
\le C_4 \abs{\vartheta^B - \vartheta^A}
\end{equation}
for some $C_4 > 0$.
\end{proof}

\begin{lem}
\label{lem:set:bnd:n:phi}
Assume that (Hf), $(H\Phi)$ and (HS1) hold
and pick a sufficiently large constant $K > 0$.
Then for any pair $\vartheta \in \Real$
and $\theta_d \in \Real$
we have the bound
\begin{equation}
\label{eq:set:bnd:n:phi}
\norm{ \mathcal{N}_{\Phi_*; \vartheta}(\theta_{d}) }_{H^1}
\le K \abs{\theta_d}^2 .
\end{equation}
In addition, for any pair $(\vartheta^A, \vartheta^B) \in \Real^2$
and any pair $(\theta_d^A, \theta_d^B) \in \Real^2$
we have the bound
\begin{equation}
\label{eq:set:bnd:n:phi:delta}
\norm{ \mathcal{N}_{\Phi_*; \vartheta^A}(\theta^A_{d})
 - \mathcal{N}_{\Phi_*; \vartheta^B}(\theta^B_{d})
}_{H^1}
\le K \big[ \abs{\vartheta^A - \vartheta^B} + \abs{ \theta_d^A} + \abs{\theta_d^B} \big]
  \big[ \abs{\vartheta^A - \vartheta^B} +  \abs{ \theta^A_d - \theta^B_d } \big].
\end{equation}
\end{lem}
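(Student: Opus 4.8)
The plan is to write $\mathcal{N}_{\Phi_*; \vartheta}(\theta_d)$ as the left-shift $T_\vartheta$ applied to a second-order Taylor remainder in the translation parameter, and then to estimate everything in $H^1$ using that each $T_a$ is an isometry of $H^k(\Real; \Real^d)$. Setting $\eta = \xi + \vartheta$ and applying Taylor's theorem to $s \mapsto \Phi_*(\eta + s)$ between $s = 0$ and $s = \theta_d$ produces the identity
\begin{equation}
\mathcal{N}_{\Phi_*; \vartheta}(\theta_d) = \theta_d^2 \int_0^1 (1-t)\, T_{\vartheta + t \theta_d} \Phi_*'' \, dt ,
\end{equation}
which already exhibits the quadratic factor $\theta_d^2$. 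Before using it I would record the regularity input: bootstrapping the wave MFDE \sref{eq:mr:trv:wave:mfde} by differentiating $c_* \Phi_*' = f\big([\tau \Phi_*]\big)$ once and twice gives $c_* \Phi_*'' = Df\big([\tau\Phi_*]\big)[\tau \Phi_*']$ and $c_* \Phi_*''' = D^2 f\big([\tau\Phi_*]\big)[\tau\Phi_*', \tau\Phi_*'] + Df\big([\tau\Phi_*]\big)[\tau\Phi_*'']$; since $f \in C^r$ with $r \ge 2$ and (HS1) forces $\Phi_*'$ to decay exponentially, an induction shows $\Phi_*'$, $\Phi_*''$ and $\Phi_*'''$ all decay exponentially, so in particular $\Phi_*' \in H^2$ and $\Phi_*'' \in H^1$. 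The first estimate \sref{eq:set:bnd:n:phi} then follows at once from Minkowski's integral inequality and $\norm{T_{\vartheta + t\theta_d}\Phi_*''}_{H^1} = \norm{\Phi_*''}_{H^1}$, with any $K \ge \tfrac12 \norm{\Phi_*''}_{H^1}$.

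For the Lipschitz bound \sref{eq:set:bnd:n:phi:delta} I would write $\mathcal{N}_{\Phi_*; \vartheta}(\theta_d) = T_\vartheta R(\theta_d)$ with $R(\theta_d)(\eta) = \Phi_*(\eta + \theta_d) - \Phi_*(\eta) - \Phi_*'(\eta)\theta_d$, and split
\begin{equation}
\mathcal{N}_{\Phi_*; \vartheta^A}(\theta_d^A) - \mathcal{N}_{\Phi_*; \vartheta^B}(\theta_d^B)
 = T_{\vartheta^B}\big(T_{\vartheta^A - \vartheta^B} - I\big) R(\theta_d^A)
   + T_{\vartheta^B}\big(R(\theta_d^A) - R(\theta_d^B)\big) .
\end{equation}
The first term I would bound using $\norm{(T_h - I) p}_{H^1} \le \abs{h}\,\norm{p'}_{H^1}$, valid for $p \in H^2$, applied with $h = \vartheta^A - \vartheta^B$ and $p = R(\theta_d^A)$: the derivatives $R(\theta_d)'$ and $R(\theta_d)''$ are themselves the first-order Taylor remainders $\Phi_*'(\cdot + \theta_d) - \Phi_*'(\cdot) - \Phi_*''(\cdot)\theta_d$ and $\Phi_*''(\cdot + \theta_d) - \Phi_*''(\cdot) - \Phi_*'''(\cdot)\theta_d$, so writing each as a three-term difference of translates and using $\norm{T_a q - q}_{L^2} \le \abs{a}\,\norm{q'}_{L^2}$ gives both $R(\theta_d^A) \in H^2$ and $\norm{R(\theta_d^A)'}_{H^1} \le C\,\abs{\theta_d^A}$ with $C$ depending only on $\norm{\Phi_*''}_{L^2}$ and $\norm{\Phi_*'''}_{L^2}$ — this is precisely the step that keeps everything inside the regularity $\Phi_*' \in H^2$ afforded by $r \ge 2$. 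The second term is handled by observing that $\theta_d \mapsto R(\theta_d)$ is $C^1$ into $H^1$ with derivative $R'(\theta_d) = T_{\theta_d}\Phi_*' - \Phi_*'$ and that $\norm{T_s \Phi_*' - \Phi_*'}_{H^1} \le \abs{s}\,\norm{\Phi_*''}_{H^1}$, so that integrating along the segment from $\theta_d^B$ to $\theta_d^A$ yields $\norm{R(\theta_d^A) - R(\theta_d^B)}_{H^1} \le \norm{\Phi_*''}_{H^1}\big(\abs{\theta_d^A} + \abs{\theta_d^B}\big)\abs{\theta_d^A - \theta_d^B}$. Adding the two contributions gives a bound of the form $C\big[\abs{\vartheta^A - \vartheta^B}\abs{\theta_d^A} + (\abs{\theta_d^A} + \abs{\theta_d^B})\abs{\theta_d^A - \theta_d^B}\big]$, which is trivially dominated by the symmetric product on the right of \sref{eq:set:bnd:n:phi:delta} after enlarging the constant, with no smallness hypothesis on $\theta_d^A, \theta_d^B$ required.

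The computation is essentially routine; the only point deserving care is the bookkeeping in the second part, where the genuinely quadratic-in-$\theta_d$ behaviour of $\mathcal{N}_{\Phi_*; \vartheta}$ must be repackaged into the bilinear product of \sref{eq:set:bnd:n:phi:delta} while every $H^1$-manipulation is kept within the two derivatives of $\Phi_*'$ that $r \ge 2$ provides. I do not expect any genuine obstacle beyond this.
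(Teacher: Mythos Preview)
Your proof is correct. For the Lipschitz bound the paper takes a slightly different route: rather than your factorisation $\mathcal{N}_{\Phi_*;\vartheta}(\theta_d) = T_\vartheta R(\theta_d)$ and the split into a $(T_{\vartheta^A-\vartheta^B}-I)R(\theta_d^A)$ term plus an $R(\theta_d^A)-R(\theta_d^B)$ term, it writes out the algebraic identity
\[
\Delta_{\mathcal{N}}
= \mathcal{N}_{\Phi_*;\,\vartheta^B+\theta_d^B}\big(\vartheta^A+\theta_d^A-\vartheta^B-\theta_d^B\big)
- \mathcal{N}_{\Phi_*;\,\vartheta^B}\big(\vartheta^A-\vartheta^B\big)
+ \big(T_{\vartheta^B+\theta_d^B}\Phi_*'-T_{\vartheta^B}\Phi_*'\big)[\cdots]
+ \big(T_{\vartheta^B}\Phi_*'-T_{\vartheta^A}\Phi_*'\big)\theta_d^A,
\]
deliberately parallel to the decomposition used for $\mathcal{N}_f$ in the preceding lemma, and then feeds the first two terms back into the already-established quadratic bound \sref{eq:set:bnd:n:phi} while estimating the last two via a single shift estimate on $\Phi_*'$. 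The paper's argument is shorter and keeps the structural analogy with $\mathcal{N}_f$ visible; your integral-remainder approach is more analytic and self-contained, and actually yields a slightly tighter intermediate estimate (your cross term is only $|\vartheta^A-\vartheta^B|\,|\theta_d^A|$). Both rest on the same regularity input $\Phi_*'''\in L^2$, which is precisely what $r\ge 2$ together with (HS1) supplies.
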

\begin{proof}
Assumptions (Hf), $(H\Phi)$ and (HS1)
imply that $\Phi_*'''$ is a continuous
function that decays exponentially,
from which \sref{eq:set:bnd:n:phi} follows.
Writing
\begin{equation}
\Delta_{\mathcal{N}}
=  \mathcal{N}_{\Phi_*; \vartheta^A}(\theta^A_{d})
 - \mathcal{N}_{\Phi_*; \vartheta^B}(\theta^B_{d}) ,
\end{equation}
one can compute
\begin{equation}
\begin{array}{lcl}
\Delta_{\mathcal{N}}
 & = & \mathcal{N}_{\Phi_*; \vartheta^B + \theta^B_d}
   \big(\vartheta^A + \theta_d^A - \vartheta^B - \theta_d^B \big)
   - \mathcal{N}_{\Phi_*; \vartheta^B}\big(\vartheta^A - \vartheta^B)
\\[0.2cm]
& & \qquad
  + \big( T_{\vartheta^B + \theta_d^B} \Phi_*' - T_{\vartheta^B} \Phi_*' \big)
     [\vartheta^A + \theta_d^A - \vartheta^B - \theta_d^B ]
\\[0.2cm]
& & \qquad
 + \big( T_{\vartheta^B}\Phi_*' - T_{\vartheta^A} \Phi_*'  \big) \theta_d^A .
\end{array}
\end{equation}
The inequality \sref{eq:set:bnd:n:phi:delta}
follows directly from this representation.
\end{proof}

\begin{cor}
\label{cor:set:bnds:full:r:s}
Assume that (Hf), $(H\Phi)$ and (HS1) hold
and pick a sufficiently large constant $K > 0$.
Then for any triplet of sequences
\begin{equation}
(\theta, v, w) : \Wholes \to \Real \times H^1 \times H^1
\end{equation}
that has
\begin{equation}
  \norm{v_l}_{H^1} + \norm{w_l}_{H^1} \le 1
\end{equation}
for all $l \in \Wholes$,
we have the bounds
\begin{equation}
\begin{array}{lcl}
\norm{ \mathcal{R}\big(\mathrm{ev}_l( \theta , v, w) \big) }_{H^1 \times L^2}
& \le &
K \big[ \abs{\theta_{l+1} - \theta_l} + \norm{\mathrm{ev}_l (v,w)}_{\mathbf{H}^1 \times \mathbf{H}^1}
   %  + \norm{\mathrm{ev}_l w}_{H^1}
   \big]
       \big[ \abs{\theta_{l+1} - \theta_l}  + \norm{\mathrm{ev}_l (v,w)}_{\mathbf{L}^2 \times \mathbf{L}^2}
        % + \norm{\mathrm{ev}_l w}_{L^2}
        \big]

 %K  \abs{\theta_{l+1} - \theta_l}^2
\\[0.2cm]
& & \qquad
 +   \abs{c - c_*} \norm{v_l}_{H^1}  ,
\\[0.2cm]
\abs{ \mathcal{S}\big( \mathrm{ev}_l (\theta, v, w) \big)}
& \le &
 K \abs{\theta_{l+1} - \theta_l}
   \big[ \norm{v_{l+1}}_{L^2}
          + \abs{\theta_{l+1} - \theta_l} \big]
\end{array}
\end{equation}
for all $l \in \Wholes$.
In addition, for any pair of triplets
\begin{equation}
(\theta^A, v^A, w^A) : \Wholes \to \Real \times H^1 \times H^1,
\qquad
(\theta^B, v^B, w^B) : \Wholes \to \Real \times H^1 \times H^1
\end{equation}
that has
\begin{equation}
  \norm{v^A_l}_{H^1} + \norm{w^A_l}_{H^1} \le 1,
  \qquad \qquad
  \norm{v^B_l}_{H^1} + \norm{w^B_l}_{H^1} \le 1
\end{equation}
for all $l \in \Wholes$,
the quantities
\begin{equation}
\begin{array}{lcl}
\Delta_{\mathcal{R}} & = & \mathcal{R}\big(\mathrm{ev}_l( \theta^A , v^A, w^A) \big)
- \mathcal{R}\big(\mathrm{ev}_l( \theta^B , v^B, w^B) \big) ,
\\[0.2cm]
\Delta_{\mathcal{S}} & = &
    \mathcal{S}\big( \mathrm{ev}_l (\theta^A, v^A, w^A) \big)
- \mathcal{S}\big( \mathrm{ev}_l (\theta^B, v^B, w^B) \big)
\end{array}
\end{equation}
satisfy the bounds
\begin{equation}
\begin{array}{lcl}
\norm{ \Delta_{\mathcal{R}}
 }_{H^1 \times L^2}
& \le &
 K \big[
   \abs{\theta^A_{l+1} - \theta^A_l}
   + \abs{\theta^B_{l+1} - \theta^B_l}
   + \norm{\mathrm{ev}_l (v^A, w^A)}_{\mathbf{H}^1 \times \mathbf{H}^1}
   + \norm{\mathrm{ev}_l (v^B, w^B)}_{\mathbf{H}^1 \times \mathbf{H}^1}
   %+  \norm{\mathrm{ev}_l w^A}_{H^1}
   %+ \norm{\mathrm{ev}_l w^B}_{H^1}
 \big]
\\[0.2cm]
& & \qquad \qquad
  \times \big[
   \norm{ \mathrm{ev}_l (v^A -  v^B) }_{\mathbf{L}^2}
   + \norm{ \mathrm{ev}_l (w^A -  w^B) }_{\mathbf{L}^2}
   + \abs{\mathrm{ev}_l (\theta^A - \theta^B) }
 \big]
\\[0.2cm]
& & \qquad
  + \abs{c - c_*} \norm{v^A_l - v^B_l}_{H^1},
\\[0.2cm]
\abs{ \Delta_{\mathcal{S}} }
& \le &
 \big[
   \abs{\theta^A_{l+1} - \theta^A_l}
   + \abs{\theta^B_{l+1} - \theta^B_l}
   + \norm{\mathrm{ev}_l v^A}_{\mathbf{L}^2}
   + \norm{\mathrm{ev}_l v^B}_{\mathbf{L}^2}
 \big]
\\[0.2cm]
& & \qquad \qquad
  \times \big[
   \norm{ \mathrm{ev}_l (v^A -  v^B) }_{\mathbf{L}^2}
   + \abs{\mathrm{ev}_l (\theta^A - \theta^B) }
 \big] .
\end{array}
\end{equation}
\end{cor}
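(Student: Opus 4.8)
The plan is to derive everything from the exponential decay of $\psi_*$ and $\Phi_*'$ (hence also of $\psi_*'$, $\Phi_*''$, $\Phi_*'''$, guaranteed by (Hf), $(H\Phi)$, (HS1) as already exploited in Lemmas~\ref{lem:set:bnd:n:f}--\ref{lem:set:bnd:n:phi}) together with the two quadratic estimates \sref{eq:set:bnd:n:f:i}--\sref{eq:set:bnd:n:f:ii} and \sref{eq:set:bnd:n:phi}--\sref{eq:set:bnd:n:phi:delta}. First I would record a handful of elementary uniform bounds: the map $\vartheta \mapsto T_\vartheta \psi_*$ is Lipschitz from $\Real$ into $L^2$, so $Q_\vartheta$ has operator norm $\le \norm{\psi_*}_{L^2}$ on $L^2$ uniformly in $\vartheta$ and $\abs{Q_{\vartheta^A} v - Q_{\vartheta^B} v} \le C\abs{\vartheta^A - \vartheta^B}\, \norm{v}_{L^2}$; likewise $\vartheta \mapsto T_\vartheta \Phi_*'$ is Lipschitz into $H^1$; and the maps $w \mapsto \overline{\tau} s^\diamond[w]_l$ and $v \mapsto \tau v$ are bounded from $\mathbf{H}^1$ (resp. $\mathbf{L}^2$) into $(H^1)^5$ (resp. $(L^2)^5$) with constants depending only on $\sigma_*$, since $s^\diamond[w]_l$ is a fixed finite linear combination of the entries $w_{l-\sigma_*+1}, \ldots, w_{l+\sigma_*}$. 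In particular $\norm{\tau v_l + \overline{\tau} s^\diamond[w]_l}_{H^1} \le C\big(\norm{v_l}_{H^1} + \norm{\mathrm{ev}_l w}_{\mathbf{H}^1}\big)$, which under the smallness hypothesis is bounded by a fixed constant, so \sref{eq:set:bnd:n:f:i}--\sref{eq:set:bnd:n:f:ii} apply (used with $1$ replaced by a fixed larger radius, at the expense of enlarging $K$).

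Next I would estimate $\mathcal{S}$. By \sref{eq:set:defn:nl:s}, $\mathcal{S}\big(\mathrm{ev}_l(\theta,v,w)\big) = [Q_{\theta_{l+1}} - Q_{\theta_l}] v_{l+1} - Q_{\theta_l} \mathcal{N}_{\Phi_*;\theta_l}(\theta_{l+1} - \theta_l)$; the first term is $\le C\abs{\theta_{l+1}-\theta_l}\, \norm{v_{l+1}}_{L^2}$ by the Lipschitz bound on $Q_\vartheta$, and the second is $\le \norm{\psi_*}_{L^2} \norm{\mathcal{N}_{\Phi_*;\theta_l}(\theta_{l+1}-\theta_l)}_{L^2} \le K\abs{\theta_{l+1}-\theta_l}^2$ by \sref{eq:set:bnd:n:phi}, which gives the claimed bound on $\abs{\mathcal{S}}$. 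For $\Delta_{\mathcal{S}}$ I would expand each of the two products via $ab - a'b' = a(b-b') + (a-a')b'$: for the first product use the uniform Lipschitz bound on $\vartheta \mapsto Q_\vartheta$, estimating the $(a-a')$ piece crudely by $\abs{Q_{\theta^A_{l+1}} - Q_{\theta^B_{l+1}}} + \abs{Q_{\theta^A_l} - Q_{\theta^B_l}} \le C\abs{\mathrm{ev}_l(\theta^A - \theta^B)}$; for the second use \sref{eq:set:bnd:n:phi} on the factor left unchanged and \sref{eq:set:bnd:n:phi:delta} on the difference, noting $\abs{(\theta^A_{l+1}-\theta^A_l) - (\theta^B_{l+1}-\theta^B_l)} \le 2\abs{\mathrm{ev}_l(\theta^A-\theta^B)}$. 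All surplus factors are bounded by the smallness hypothesis and absorbed into $K$; routing the inter-family differences into the second bracket and the single-sequence first-differences and $\mathbf{L}^2$-norms into the first bracket yields exactly the stated estimate for $\abs{\Delta_{\mathcal{S}}}$.

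Finally I would treat $\mathcal{R}$. From \sref{eq:set:defn:nl:r} it is the sum of $(c-c_*)(0,v_l')$, of $-\mathcal{S}\big(\mathrm{ev}_l(\theta,v,w)\big)(T_{\theta_l}\Phi_*',0)$, and of $-\big(\mathcal{N}_{\Phi_*;\theta_l}(\theta_{l+1}-\theta_l),\, \mathcal{N}_{f;\theta_l}(\tau v_l + \overline{\tau}s^\diamond[w]_l)\big)$. The first contributes $\abs{c-c_*}\norm{v_l}_{H^1}$; the second is bounded by $\norm{\Phi_*'}_{H^1}$ times the $\mathcal{S}$-bound just proved; the third is controlled in $H^1$ by \sref{eq:set:bnd:n:phi} and in $L^2$ by \sref{eq:set:bnd:n:f:i} together with the preliminary norm bound on $\tau v_l + \overline{\tau}s^\diamond[w]_l$. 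Summing gives the $\mathcal{R}$-bound. For $\Delta_{\mathcal{R}}$ the only genuinely delicate piece is the composite middle term: $\mathcal{S}^A (T_{\theta^A_l}\Phi_*',0) - \mathcal{S}^B(T_{\theta^B_l}\Phi_*',0) = \Delta_{\mathcal{S}}(T_{\theta^A_l}\Phi_*',0) + \mathcal{S}^B \big([T_{\theta^A_l} - T_{\theta^B_l}]\Phi_*',0\big)$, handled via the $\Delta_{\mathcal{S}}$ bound, the boundedness of $\Phi_*'$ in $H^1$, the $\mathcal{S}$-bound for $\mathcal{S}^B$, and $\norm{[T_{\theta^A_l}-T_{\theta^B_l}]\Phi_*'}_{H^1} \le C\abs{\theta^A_l - \theta^B_l}$. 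The remaining pieces are $(c-c_*)\big(0,(v^A_l)'-(v^B_l)'\big)$ and the differences of the two $\mathcal{N}$-terms, which are exactly \sref{eq:set:bnd:n:phi:delta} and \sref{eq:set:bnd:n:f:ii} applied with $\vartheta^{A/B} = \theta^{A/B}_l$ and $\tilde v^{A/B} = \tau v^{A/B}_l + \overline{\tau}s^\diamond[w^{A/B}]_l$, using $\norm{\tilde v^A - \tilde v^B}_{L^2} \le C\big(\norm{v^A_l - v^B_l}_{L^2} + \norm{\mathrm{ev}_l(w^A-w^B)}_{\mathbf{L}^2}\big)$ and $\abs{\theta^A_l - \theta^B_l} \le \abs{\mathrm{ev}_l(\theta^A-\theta^B)}$. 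I expect the main obstacle to be purely organisational: keeping the proliferating cross-terms sorted so that every inter-family difference lands in the second bracket and every ``size'' quantity in the first, while absorbing all fixed constants and bounded surplus factors into $K$; no new idea beyond the two quadratic Lemmas and the elementary shift estimates is needed.
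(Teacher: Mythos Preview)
Your proposal is correct and takes essentially the same approach as the paper, which simply states that the bounds follow from Lemmas~\ref{lem:set:bnd:n:f}--\ref{lem:set:bnd:n:phi} upon inspecting the definitions of $\mathcal{S}$ and $\mathcal{R}$. Your write-up is a faithful and detailed expansion of that inspection; no additional ideas are needed.
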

\begin{proof}
This follows from Lemma's \ref{lem:set:bnd:n:f}-\ref{lem:set:bnd:n:phi}
upon inspecting the definitions of $\mathcal{S}$ and $\mathcal{R}$.
\end{proof}

We are now in a position to state our main center manifold result.
The fact that this manifold is two dimensional
is related to the observation
that the linear problem
\begin{equation}
\mathcal{D}(0)[v,w]
=  (I - P_0^{(1)}) \mathcal{T}(0)[v,w]
\end{equation}
has the constant solutions $(\Phi_*', 0)$
and $( [\partial_z \phi_z]_{z=0}, \Phi_*')$,
as we will see in \S\ref{sec:linop:cc}.

\begin{prop}[{see {\S}\ref{sec:cm}}]
\label{prp:set:cm}
Assume that (Hf), $(H\Phi)$, (HS1)-(HS3) and (HM)
all hold, recall the integer $r$ defined in (Hf)
%Consider the system \eqref{NEW_3_vect_10}.
and pick sufficiently small constants $\delta_c > 0$
and $\delta > 0$.
%For any small $\delta_\theta$
%there exist a constant $\delta>0$ and
Then there exists a function
\begin{equation}
h: \R^2 \times (c_* - \delta_c , c_* + \delta_c)
  \mapsto H^1 \times H^1
\end{equation}
together with functions
\begin{equation}
f_\theta, f_\kappa: \Real \times (c_* - \delta_c , c_* + \delta_c)  \to \Real
\end{equation}
such that the following properties are satisfied.
\begin{enumerate}
\item[(i)]{
 The function $h$ is $C^{r-1}$ smooth.
 %where $r$ is the regularity of $f$.
 In addition, we have the behaviour
\begin{equation}
h(\vartheta,\kappa ,c)=O(|\kappa|^2 +|c-c_*|)
\end{equation}
for $\kappa \rightarrow 0$ and $c\rightarrow c_*$,
uniformly for $\vartheta \in \Real$.
}
\item[(ii)]{
The functions $f_\theta$ and $f_\kappa$ are $C^{r-1}$-smooth.
In addition, we have the behaviour
%Also the expansions:
%\begin{equation}
%\begin{array}{lcl}
%f_\theta(\kappa, c) & = & O( \abs{c - c_*}) + \abs{\kappa}^2 )
%\\[0.2cm]
\begin{equation}
\begin{array}{lcl}
f_\kappa(\kappa, c) & = &
2 [\partial_z^2 \lambda_z]_{z=0}^{-1}
  \big[ c - c_*
    - \frac{1}{2} [\partial_z^2 d_\varphi]_{\varphi=0}  \kappa^2
  \big]
  + O\big( (c - c_*)^2 + (c - c_*) \kappa + \kappa^3 \big)
\end{array}
\end{equation}
as $\kappa \rightarrow 0$ and $c \rightarrow c_*$.
}

\item[(iii)]{
  For each small $\phi \ge 0$ there is a unique $\kappa_\phi \in [0, \delta]$
  for which $f_{\kappa}(\kappa_\phi, d_\phi ) = 0$.
  Similarly, whenever $-\phi \ge 0$ is small there is a unique $\kappa_\phi \in [-\delta, 0]$
  for which $f_{\kappa}(\kappa_\phi, d_\phi ) =0 $.
  In both cases we have $f_\theta( \kappa_\phi, d_\phi ) = \tan \phi$.
}

\item[(iv)]{
 Pick a $c \in (c_* - \delta ,c_* + \delta)$ and consider a triplet of sequences
 \begin{equation}
   (\theta, v, w) : \Wholes \to \Real \times H^1 \times H^1
 \end{equation}
 that satisfies \sref{eq:set:red:sys} and admits the bound
 \begin{equation}
 \norm{v_l}_{H^1} + \norm{w_l}_{H^1} \le \delta
 \end{equation}
 for all $l \in \Wholes$. Then upon writing
 \begin{equation}
\kappa_l = Q_{\theta_l} w_l = \left< T_{\theta_l} \psi_* ,w_l\right>_{L^2},
\end{equation}
  the identity
  \begin{equation}
  \label{eq:setup:cm:projs}
   (v,w)_l = \kappa_l T_{\theta_l} ( [\partial_z \phi_z]_{z=0} , \Phi_*')
   + h(\theta_l,\kappa_l, c)
\end{equation}
  together with the difference equation
  \begin{equation} \label{eq:setup:diff:eq:kappa:theta}
\begin{split}
\theta_{l+1} -\theta_l &= f_\theta(\kappa_l,c), \\
\kappa_{l+1} -\kappa_l &= f_\kappa(\kappa_l,c)
\end{split}
\end{equation}
  are both satisfied for all $l \in \Wholes$.
}

\item[(v)]{
 Pick a speed $c \in (c_* - \delta ,c_* + \delta)$ and pair of sequences
 \begin{equation}
   (\theta, \kappa) : \Wholes \to \Real \times \Real
 \end{equation}
 that satisfies \sref{eq:setup:diff:eq:kappa:theta} and admits the bound
 \begin{equation}
 \norm{\kappa_l} \le \delta
 \end{equation}
 for all $l \in \Wholes$. Then the triplet $(\theta, v, w)$
 obtained by applying the identity \sref{eq:setup:cm:projs}
 satisfies \sref{eq:set:red:sys}.
}
\end{enumerate}
\end{prop}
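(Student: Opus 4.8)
The plan is to establish Proposition \ref{prp:set:cm} by a Lyapunov--Perron construction for the skew-product system \sref{eq:set:red:sys}, following the intertwined fixed-point strategy of \cite{HJHBRGSG}. As noted just above the statement, the linearisation $\mathcal{D}(0)[v,w] = (I - P^{(1)}_0)\mathcal{T}(0)[v,w]$ carries the two constant solutions $(\Phi_*',0)$ and $([\partial_z\phi_z]_{z=0},\Phi_*')$; the first is the translation mode already split off in the Ansatz \sref{eq:set:ansatz:xi:ups} and accounted for by $\theta$, while the second — compatible with the transversality constraint $Q_0 v_l = 0$ since $\langle\psi_*,[\partial_z\phi_z]_{z=0}\rangle = 0$ — supplies the second center direction, coordinatised by $\kappa_l = Q_{\theta_l}w_l$. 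For the complementary ``hyperbolic'' part we rely on the solution operators $\mathcal{G}(\theta)$ built in {\S}\ref{sec:linop:cc}--{\S}\ref{sec:linop:sv}: for every sequence $\theta$ with $\sup_l|\theta_{l+1}-\theta_l|$ small they invert the corresponding slowly-varying linear operator on the subspace where the $Q_{\theta_l}$-components vanish, with norm bounded uniformly in $\theta$ (hence in the base phase $\vartheta$).

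First I would fix a speed $c$ near $c_*$ and a base point $\vartheta$, and set up two coupled fixed-point problems on small balls of sequences. The \emph{remainder map} takes a prescribed center sequence $(\theta,\kappa)$ — with $\theta_0=\vartheta$, $\kappa_0 = \kappa$, and all differences of order $\delta$ — and returns $(\tilde v,\tilde w) = \mathcal{G}(\theta)$ applied to the forcing $\mathcal{R}(\mathrm{ev}_l(\theta,v,w)) + (c-c_*)(0,T_{\theta_l}\Phi_*')$ of \sref{eq:set:red:sys}, where $(v,w)_l = \kappa_l T_{\theta_l}([\partial_z\phi_z]_{z=0},\Phi_*') + (\tilde v,\tilde w)_l$. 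The \emph{flow map} takes a prescribed remainder $(\tilde v,\tilde w)$ and returns the center sequence produced by summing the two center-projected scalar equations of \sref{eq:set:red:sys} forward and backward from $(\vartheta,\kappa)$. Thanks to the quadratic estimates of Corollary \ref{cor:set:bnds:full:r:s} — in which, decisively, the $\theta$-dependence enters only through the small increments $\theta_{l+1}-\theta_l$ — both maps are contractions on balls of radius $C(|c-c_*|+\delta^2)$, uniformly in $\vartheta$, so the coupled system has a unique solution $\big(\theta,\kappa,(\tilde v,\tilde w)\big)(\vartheta,\kappa,c)$ depending $C^{r-1}$-smoothly on $(\vartheta,\kappa,c)$ by differentiating the fixed-point relation. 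Setting $h(\vartheta,\kappa,c) = (\tilde v,\tilde w)_0$ — this is the content of \sref{eq:setup:cm:projs} — and reading off the increments of $\theta$ and $\kappa$ at site $0$ defines $f_\theta$ and $f_\kappa$; since the reduced recursion \sref{eq:setup:diff:eq:kappa:theta} is (forward and, for $\delta$ small, backward) uniquely solvable, its orbits are determined by any single value, which turns $h$ into a genuine graph and yields (i), (iv) and (v) by the usual completeness argument.

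Next I would extract the expansions in (ii) and (iii). Taylor-expanding the fixed-point relation, $h = O(|\kappa|^2 + |c-c_*|)$ leaves only the linear response to the forcing, and the $\kappa$-increment equation collapses to $\kappa_{l+1}-\kappa_l = a\,(c-c_*) + b\,\kappa_l^2$ up to the stated higher-order terms. The coefficient $a = 2[\partial_z^2\lambda_z]_{z=0}^{-1}$ appears because the inhomogeneity $(0,T_{\theta_l}\Phi_*')$ is resonant with the center mode, so that projecting it off invokes the convoluted projection associated in {\S}\ref{sec:prlm} with the second-order pole of $\mathcal{L}_z^{-1}$ at $z=0$, whose size is controlled by $[\partial_z^2\lambda_z]_{z=0}$ — the discrete analogue of the quadratic eigenvalue branch that governs the transverse ``heat'' behaviour of the phase. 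To identify $b$ and to prove (iii), I would feed the exact planar waves $(c_\varphi,\Phi_\varphi)$ of Lemma \ref{lem:mr:angl:dep} into the Ansatz \sref{eq:set:ansatz:xi:ups}: these yield solutions of \sref{eq:mr:sys:xi:upsilon} with constant increments $\theta_{l+1}-\theta_l = \tan\varphi$, constant $\kappa_l = \kappa^{(\varphi)} = O(\varphi)$ and speed $c = d_\varphi$, hence \emph{equilibria} of the reduced flow, which forces $f_\kappa(\kappa^{(\varphi)},d_\varphi) = 0$ and $f_\theta(\kappa^{(\varphi)},d_\varphi) = \tan\varphi$. Comparing the expansion of $f_\kappa$ with $d_\varphi - c_* = \frac{1}{2}[\partial_\varphi^2 d_\varphi]_{\varphi=0}\varphi^2 + O(\varphi^3)$ (using $[\partial_\varphi d_\varphi]_{\varphi=0}=0$) pins down $b = -[\partial_z^2\lambda_z]_{z=0}^{-1}[\partial_\varphi^2 d_\varphi]_{\varphi=0}$ and shows that $f_\kappa(\cdot,d_\phi)$ has, for small $\phi\ge 0$, a unique zero $\kappa_\phi \in [0,\delta]$, which must coincide with $\kappa^{(\phi)}$; hence $f_\theta(\kappa_\phi,d_\phi) = \tan\phi$, and symmetrically for $\phi \le 0$.

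The step I expect to be the main obstacle is the passage from the parametrised remainder — a priori a functional of the entire center sequence — to a graph $h(\vartheta,\kappa,c)$ over two real parameters, with all estimates uniform in the unbounded phase $\vartheta$. This is exactly the point at which the spatial discreteness forbids the clean factoring-out of the translation symmetry available in the PDE case: there is no gain of regularity to compensate for differentiating the shift $T_{\theta_l}$, so the argument must route through the non-autonomous, slowly-varying solution operators of {\S}\ref{sec:linop:cc}--{\S}\ref{sec:linop:sv} and exploit the exponential dichotomy of their hyperbolic part — which guarantees that $(\tilde v,\tilde w)_l$ depends on the center data only through $(\theta_k-\theta_l,\kappa_k)$ with exponentially decaying weights — together with the matching portion of the program in \cite{HJHBRGSG}, to which we defer the overlapping bookkeeping.
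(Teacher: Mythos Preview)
Your proposal is correct and follows essentially the same route as the paper: the intertwined fixed-point construction of \cite{HJHBRGSG} built on the slowly-varying inverses of {\S}\ref{sec:linop:cc}--{\S}\ref{sec:linop:sv}, followed by the identification of the quadratic coefficient and of item (iii) by plugging in the tilted planar waves $(c_\varphi,\Phi_\varphi)$ as exact equilibria of the reduced recursion. The only organisational differences are that the paper first builds a three-parameter manifold $h_*(\vartheta,\alpha,\beta,c)$ and then restricts to $\alpha=0$ via the invariant constraint $Q_{\theta_l}v_l=0$, and computes the linear coefficient $\nu_1=2[\partial_z^2\lambda_z]_{z=0}^{-1}$ by an explicit polynomial-in-$l$ Ansatz for $\mathcal{K}^{\mathrm{up}}_\eta(0)(0,\Phi_*')\mathbf{1}$ rather than by invoking the Laurent expansion directly.
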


\begin{proof}[Proof of Theorem \ref{thm:mr:ex:corner}]
For explicitness, we assume that
$[\partial_\varphi^2 d_\varphi]_{\varphi=0} > 0$
and $[\partial_z^2 \lambda_z]_{z=0} > 0$.
Whenever $c - c_* > 0$ is sufficiently small,
the identity $[\partial_{\varphi} d_{\varphi} ]_{\varphi=0} = 0$
allows us to use a Taylor expansion to show that
there exist
$\varphi_- < 0 < \varphi_+$
for which $d_{\varphi_-} = d_{\varphi_+} = c$.
Noting that
\begin{equation}
\varphi_\pm = O( \sqrt{  c - c_*  } ),
\end{equation}
we use (iii) of Proposition \ref{prp:set:cm}
to define two quantities
$\kappa_\pm = \kappa_{\varphi_\pm}$
for which we have the identity
\begin{equation}
f_\kappa(\kappa_{\varphi_\pm} , c) = 0.
\end{equation}
In addition, possibly after further restricting
the size of $c-c_*$, we can ensure that the inequalities
\begin{equation}
f_{\kappa}( \kappa , c) > 0,
\qquad
\qquad
\abs{D_1 f_{\kappa}(\kappa , c) } < \frac{1}{2}
\end{equation}
hold for all $\kappa \in ( \kappa_- , \kappa_+)$.
As a consequence, for any such $\kappa$ we have
the bounds
\begin{equation}
\begin{array}{lcl}
\kappa - \kappa_-
& \le  &
f_{\kappa}(\kappa , c) - f_{\kappa}(\kappa_-, c)
\\[0.2cm]
& = &
  f_{\kappa}(\kappa , c)
\\[0.2cm]
& = &
f_{\kappa}(\kappa , c) -
  f_{\kappa}(\kappa_+ , c)
\\[0.2cm]
& \le &
   \kappa_+ - \kappa .
\end{array}
\end{equation}
In particular, for any $\kappa \in (\kappa_-, \kappa_+)$
one can apply the contraction mapping principle
to the fixed point problem
\begin{equation}
\tilde{\kappa}  = \kappa - f_{\kappa}(\tilde{\kappa} , c)
\end{equation}
and obtain a unique solution in $\tilde{\kappa} \in  (\kappa_-, \kappa)$.

For any choice of $\tilde{\kappa}_0 \in (\kappa_-, \kappa_+)$,
the problem
\begin{equation}
\kappa_{l+1} - \kappa_{l} = f_{\kappa}(\kappa, c) ,
\qquad
\qquad
\kappa_0 = \tilde{\kappa}_0
\end{equation}
can therefore be iterated backwards and forwards with respect to $l$
to yield a solution $\kappa: \Wholes \to (\kappa_-, \kappa_+)$.
This solution is strictly increasing and satisfies
the limits
\begin{equation}
\lim_{l \to \pm \infty} \kappa_l = \kappa_\pm .
\end{equation}
By applying the representation
\sref{eq:setup:cm:projs} one can now construct
the desired solution $(\theta, v)$.
\end{proof}

\section{Preliminaries}
\label{sec:prlm}

In this section we obtain a number
of preliminary results related
to the constant-coefficient
linear system
\begin{equation}
\label{eq:lin:repr:inhom:probl}
\mathcal{D}(0) [V] - \mathcal{T}(0)[V] = H .
\end{equation}
In particular, we study the Fourier symbol $\Delta(z)$
associated to this system and obtain
a representation formula for solutions that are allowed to
grow at a small exponential rate.

As a preparation, we
introduce the notation
\begin{equation}
\mathcal{L}_* = \mathcal{L}_0 = \mathcal{L}_*^{(0)},
\qquad
Q_* = Q_0 = \langle \psi_*, \cdot \rangle
\end{equation}
for the linearization of the travelling wave MFDE \sref{eq:mr:trv:wave:mfde}
around $\Phi_*$ and the corresponding projection onto the kernel element $\Phi_*'$.
In addition,
for any $z \in \mathbb{C}$ we define the vector
\begin{equation}
\begin{array}{lcl}
s^\diamond_z
& = &
\Big(
  - \sum_{j=0}^{ \sigma_B- 1} e^{ - z j} ,
  \sum_{j=1}^{\sigma_A} e^{z j},
  \sum_{j=1}^{\sigma_B} e^{z j },
  - \sum_{j=0}^{\sigma_A -1 } e^{- z j } ,
  0
\Big) \in \Complex^5,
\end{array}
\end{equation}
recalling the convention that
sums where the lower index is strictly larger
than the upper index are set to zero.
By construction, this allows us to write
\begin{equation}
s^\diamond [ e^{z \cdot} w] = e^{z \cdot} s^\diamond_z w
\end{equation}
for any $w \in H^1$.

%In particular,
Upon introducing
the linear operators
$\Delta(z): H^1 \times H^1 \to H^1 \times L^2$
that act as
\begin{equation}
\label{eq:prlm:def:delta:z}
\begin{array}{lcl}
\Delta(z) & = &
    \left(
  \begin{array}{cc}
     e^{z} - 1 & - e^{z} \\[0.2cm]
     \mathcal{L}_* &  Df (\tau \Phi_*) \overline{\tau} s^\diamond_z \\[0.2cm]
  \end{array}
  \right) ,
\end{array}
\end{equation}
we hence have the identity
\begin{equation}
\mathcal{D}(0)[ e^{z \cdot} V ] - \mathcal{T}(0)[ e^{z \cdot} V]
= e^{z \cdot} \Delta(z) V
\end{equation}
for any $V \in H^1 \times H^1$.
Our first main result shows that these operators
are invertible along vertical lines that are close to
the imaginary axis.

\begin{prop}
\label{prp:hom:delta:inv}
Assume that (Hf), $(H\Phi)$, (HS1)-(HS3) and (HM) are all satisfied
and pick a sufficiently small $\eta_{\max} > 0$. Then there exists a constant $K > 0$
so that for every $\eta$ with $0 < \abs{\eta} < \eta_{\max}$
and every $\omega \in [0,  2 \pi]$, the operator
\begin{equation}
\Delta(\eta + i \omega): H^1 \times H^1 \to H^1 \times L^2
\end{equation}
is invertible and satisfies the bound
\begin{equation}
\norm{ \Delta(\eta + i \omega)^{-1} }_{
  \mathcal{L}\big( H^1 \times L^2 ;  H^1 \times H^1  \big)
} \le K \eta^{-2}.
\end{equation}
\end{prop}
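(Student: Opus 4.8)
The plan is to reduce the invertibility of $\Delta(\eta+i\omega)$ to that of the MFDE operator $\mathcal{L}_{\eta+i\omega}\colon H^1\to L^2$, and then to exploit the spectral picture provided by Lemma~\ref{lem:mr:def:branch:lambda:z}, hypothesis (HM) and hypothesis (HS2).

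\emph{Step 1 (algebraic reduction).} Given $(H_1,H_2)\in H^1\times L^2$, the first row of $\Delta(z)(v,w)=(H_1,H_2)$ reads $(e^z-1)v-e^z w=H_1$, which I solve for $w=(1-e^{-z})v-e^{-z}H_1$; no small denominator appears since we divide by $e^z$, not by $e^z-1$. Substituting into the second row and using the elementary geometric-sum identity $(1-e^{-z})s^\diamond_z=\big(e^{-\sigma_B z}-1,\;e^{\sigma_A z}-1,\;e^{\sigma_B z}-1,\;e^{-\sigma_A z}-1,\;0\big)$, one checks against the definitions of $A_{z,j}$ and $r_j$ that $\mathcal{L}_*+(1-e^{-z})\,Df(\tau\Phi_*)\overline{\tau}s^\diamond_z=\mathcal{L}_z$. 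Hence the second row becomes $\mathcal{L}_z v=H_2+e^{-z}Df(\tau\Phi_*)\overline{\tau}s^\diamond_z H_1=:\tilde H_z$, with $\|\tilde H_z\|_{L^2}\le C(\|H_1\|_{H^1}+\|H_2\|_{L^2})$ uniformly for $|z|$ bounded. Since $|e^z-1|+|e^{-z}|$ stays bounded, $\Delta(z)$ is invertible as soon as $\mathcal{L}_z$ is, and $\|\Delta(z)^{-1}\|_{\mathcal{L}(H^1\times L^2;\,H^1\times H^1)}\le C\big(1+\|\mathcal{L}_z^{-1}\|_{\mathcal{L}(L^2;H^1)}\big)$. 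It therefore suffices to prove that $\mathcal{L}_z^{-1}$ exists with $\|\mathcal{L}_z^{-1}\|_{\mathcal{L}(L^2;H^1)}\le K\eta^{-2}$ in the stated range.

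\emph{Step 2 (small $|z|$).} For $|z|<\delta_z$, Lemma~\ref{lem:mr:def:branch:lambda:z} supplies the algebraically simple eigenvalue $\lambda_z$ with eigenfunction $\phi_z$; letting $\psi_z$ be the eigenfunction of $\mathcal{L}^{\mathrm{adj}}_z$ normalized by $\langle\psi_z,\phi_z\rangle=1$, the rank-one projection $\Pi_z g=\langle\psi_z,g\rangle\phi_z$ is bounded from $L^2$ to $H^1$ uniformly in $z$, commutes with $\mathcal{L}_z$, and satisfies $\mathcal{L}_z\Pi_z=\lambda_z\Pi_z$. On the complement $\mathcal{L}_z$ is a small perturbation of the bijection $\mathcal{L}_0\colon\ker Q_*\to\mathrm{Range}(\mathcal{L}_0)$ guaranteed by (HS3), hence invertible there with uniformly bounded inverse. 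This gives $\mathcal{L}_z^{-1}=\lambda_z^{-1}\Pi_z+R_z$ with $\|R_z\|$ bounded, so $\|\mathcal{L}_z^{-1}\|_{\mathcal{L}(L^2;H^1)}\le C|\lambda_z|^{-1}$. Since $[\partial_z\lambda_z]_{z=0}=0$ and $[\partial_z^2\lambda_z]_{z=0}\neq0$ by (HM), analyticity of $z\mapsto\lambda_z$ yields $|\lambda_z|\ge\tfrac14|[\partial_z^2\lambda_z]_{z=0}|\,|z|^2\ge c|z|^2\ge c\eta^2$ once $|z|<\rho_0$ for $\rho_0$ small. Thus $\|\mathcal{L}_z^{-1}\|\le C\eta^{-2}$ for $0<|z|<\rho_0$; by the $2\pi i$-periodicity $\mathcal{L}_{z+2\pi i}=\mathcal{L}_z$ (each $A_{z,j}$ involves only $e^{\pm\sigma_A z},e^{\pm\sigma_B z}$ with $\sigma_A,\sigma_B\in\Wholes$) the same bound holds when $\omega$ is within $\rho_0/2$ of $0$ or $2\pi$, provided $\eta_{\max}<\rho_0/2$.

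\emph{Step 3 (bounded $\omega$), conclusion, and the main obstacle.} For $\omega$ in the compact set $[\rho_0/2,\,2\pi-\rho_0/2]$, (HS2) gives invertibility of $\mathcal{L}_{i\omega}$, and $\omega\mapsto\mathcal{L}_{i\omega}^{-1}$ being norm-continuous it is bounded there by some $M$. Since $\|\mathcal{L}_{\eta+i\omega}-\mathcal{L}_{i\omega}\|_{\mathcal{L}(H^1;L^2)}\le C|\eta|$ (the $A_{z,j}$ are analytic in $z$, uniformly in $\xi$, because $\Phi_*$ is bounded and $f$ is smooth), a Neumann series gives $\|\mathcal{L}_{\eta+i\omega}^{-1}\|\le 2M$ once $|\eta|<(2CM)^{-1}$. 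Choosing $\eta_{\max}$ below all the above thresholds and $K$ above $C$ and $2M\eta_{\max}^2$, the two regimes cover $\omega\in[0,2\pi]$ and yield $\|\mathcal{L}_z^{-1}\|_{\mathcal{L}(L^2;H^1)}\le K\eta^{-2}$, whence $\|\Delta(\eta+i\omega)^{-1}\|\le K'\eta^{-2}$ by Step~1. The delicate point is Step~2: producing a uniformly bounded spectral projection for the simple eigenvalue $\lambda_z$ and a uniformly invertible complement, which rests on analytic Fredholm perturbation theory built on (HS1)--(HS3) and on transferring those facts from the $W^{1,\infty}/L^\infty$ setting of the excerpt to the $H^1/L^2$ setting used here (legitimate thanks to the exponential decay at $\pm\infty$ supplied by (HS1)). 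The quadratic lower bound $|\lambda_z|\gtrsim\eta^2$, and hence the precise exponent $\eta^{-2}$, is exactly where (HM) is indispensable: a nonzero group velocity would inject a linear term into $\lambda_z$ and collapse the estimate.
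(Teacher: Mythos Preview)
Your argument is correct and parallels the paper's strategy: both reduce the invertibility of $\Delta(z)$ to that of $\mathcal{L}_z$ via the same algebraic identity $\mathcal{L}_z=\mathcal{L}_*+(1-e^{-z})Df(\tau\Phi_*)\overline{\tau}s^\diamond_z$ (the paper records the resulting explicit formula for $\Delta(z)^{-1}$ in \sref{eq:prlm:def:delta:z:inv}), and both cover $\omega$ away from $0$ and $2\pi$ by invoking (HS2) together with a perturbation in $\eta$.

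The only genuine difference is in the small-$|z|$ regime. You use an abstract spectral-projection decomposition $\mathcal{L}_z^{-1}=\lambda_z^{-1}\Pi_z+R_z$ together with the quadratic lower bound $|\lambda_z|\ge c|z|^2$ from (HM), which suffices to deliver the $\eta^{-2}$ estimate. The paper instead develops an explicit two-term Laurent expansion of $\mathcal{L}_z^{-1}$ around $z=0$ (Lemma~\ref{lem:prlm:exp:l:z:inv}), identifying the $z^{-2}$ and $z^{-1}$ coefficients in terms of $\Phi_*'$, $[\partial_z\phi_z]_{z=0}$, $Q_*$ and the operator $\Gamma_*$. For the present proposition your softer route is entirely adequate and arguably cleaner; the paper's explicit expansion is not overkill in context, however, since those precise singular terms are reused immediately afterwards to compute the residue $\mathrm{Res}_{z=0}\,e^{zl}\Delta(z)^{-1}\mathcal{Q}(z)$ that enters the representation formula in Proposition~\ref{prp:prlm:repr:frm}.
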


In order to gain insight regarding solutions to the homogeneous linear system
\begin{equation}
\label{eq:lin:repr:prob:hom}
\mathcal{D}(0)[V] - \mathcal{T}(0)[V] = 0,
\end{equation}
we briefly discuss the maximal Jordan chain associated to
$\Delta(z)$ at $z = 0$. In particular, we set out
to construct an analytic function
$z \mapsto \mathcal{J}(z) \in H^1 \times H^1$
with $\mathcal{J}(0) \neq 0$ so that
\begin{equation}
\label{eq:prlm:jordan:chain:defn}
\Delta(z) \mathcal{J}(z) = O ( z^m )
\end{equation}
for the largest possible value of $m$.

As a preparation, we note that
\begin{equation}
\begin{array}{lcl}
s^\diamond_z
& = &
\frac{e^z}{1 - e^z}
\Big(  e^{ - \sigma_B  z } - 1 ,
  e^{ \sigma_A z} - 1 , e^{\sigma_B z} - 1, e^{- \sigma_A z } - 1
  , 0 \Big)
\end{array}
\end{equation}
whenever $z \neq 0$.
Recalling the definition \sref{eq:mr:def:lz},
we hence see that
\begin{equation}
\label{eq:id:for:mcl:z:i}
\mathcal{L}_z = \mathcal{L}_*  + e^{-z} (e^z - 1) Df(\tau \Phi_*) \overline{\tau} s^\diamond_z .
\end{equation}
Upon introducing the notation
\begin{equation}
\label{eq:prlm:def:A:k}
A_k = \big[ \partial^k_z \mathcal{L}_z \big]_{z = 0}
\end{equation}
for any integer $k \ge 1$,
we may differentiate \sref{eq:id:for:mcl:z:i}
to find
\begin{equation}
\label{eq:prlm:ids:ai:all}
\begin{array}{lcl}
Df(\tau \Phi_*) \overline{\tau} s^\diamond_0
& = &
  A_1,
\\[0.2cm]
Df(\tau \Phi_*) \overline{\tau} [\partial_z s^\diamond_z]_{z=0}
& = &
 \frac{1}{2} (A_2 + A_1).
\\[0.2cm]
\end{array}
\end{equation}

In particular, we may write
\begin{equation}
\Delta(0) =
\left(
  \begin{array}{cc}
     0 & - 1 \\[0.2cm]
     \mathcal{L}_* & A_1  \\[0.2cm]
  \end{array}
  \right),
\qquad
\Delta'(0) =
\left(
  \begin{array}{cc}
     1 & - 1 \\[0.2cm]
     0 & \frac{1}{2}(A_1 + A_2) \\[0.2cm]
  \end{array}
  \right).
\end{equation}
Using (HS3) we immediately see
\begin{equation}
\mathrm{Ker} \big(\Delta(0)\big) = \mathrm{span} \{ (\Phi_*' , 0)^T  \},
\end{equation}
which allows us to pick $\mathcal{J}(0) = (\Phi_*', 0)$.
This chain can be extended by exploiting
the following preliminary identities.

\begin{lem}
\label{lem:prlms:ids:for:lambda:derivs}
Assume that (Hf), $(H\Phi)$ and (HS1)-(HS3) are satisfied.
Then we have the identities
\begin{equation}
\label{eq:prlm:diffs:of:l}
\begin{array}{lcl}
\mathcal{L}_* [\partial_z \phi_z]_{z=0} ]
  &= &  - A_1 \Phi_*' + [\partial_z \lambda_z]_{z=0} \Phi_*' ,
  \\[0.2cm]
\mathcal{L}_* [\partial_z^2 \phi_z]_{z=0} ]
  &= &  - A_2 \Phi_*' - 2 A_1 [\partial_z \phi_z]_{z=0}
  + [\partial_z^2 \lambda_z]_{z=0} \Phi_*'
  + 2 [\partial_z \lambda_z]_{z=0} [\partial_z \phi_z]_{z=0},
\\[0.2cm]
\mathcal{L}_* [\partial_z^3 \phi_z]_{z=0} ]
 & = &  - (A_3 \Phi_*' + 3 A_2 [\partial_z \phi_z]_{z=0} + 3 A_1 [\partial_z^2 \phi_z]_{z=0} )
\\[0.2cm]
& & \qquad
   + [\partial_z^3 \lambda_z]_{z=0} \Phi_*' + 3 [\partial_z^2 \lambda_z]_{z=0} [\partial_z \phi_z]_{z=0}
   + 3 [\partial_z \lambda_z]_{z=0} [\partial_z^2 \phi_z]_{z=0} ,
\end{array}
\end{equation}
together with
\begin{equation}
\label{eq:prlm:ids:for:lambda:derivs}
\begin{array}{lcl}
%\lambda'(0)
[\partial_z \lambda_z]_{z=0} & = &
  \langle \psi_*, A_1 \Phi_*' \rangle   ,
\\[0.2cm]
[\partial_z^2 \lambda_z]_{z=0} & = &
  \langle \psi_* , A_2 \Phi_*' + 2 A_1 [\partial_z \phi_z]_{z=0} \rangle ,
\\[0.2cm]
[\partial_z^3 \lambda_z]_{z=0} & = &
  \langle \psi_*, A_3 \Phi_*' + 3 A_2 [\partial_z \phi_z]_{z=0}
    + 3 A_1 [\partial_z^2 \phi_z]_{z=0} \rangle .
\end{array}
\end{equation}
\end{lem}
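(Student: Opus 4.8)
The plan is the standard perturbation computation for a simple isolated eigenvalue: differentiate the defining relation $\mathcal{L}_z \phi_z = \lambda_z \phi_z$ repeatedly in $z$ and evaluate at $z = 0$. By Lemma~\ref{lem:mr:def:branch:lambda:z} the maps $z \mapsto \lambda_z$ and $z \mapsto \phi_z$ are analytic near $z = 0$ with $\lambda_0 = 0$ and $\phi_0 = \Phi_*'$, and the identity \sref{eq:id:for:mcl:z:i} exhibits $z \mapsto \mathcal{L}_z$ as an entire family of bounded operators from $H^1$ to $L^2$; this justifies all the differentiations below and the use of the Leibniz rule.

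First I would apply $\partial_z^k$ to $\mathcal{L}_z \phi_z = \lambda_z \phi_z$, obtaining by the Leibniz rule
\[
  \sum_{j=0}^{k} \binom{k}{j} \big(\partial_z^{j}\mathcal{L}_z\big)\big(\partial_z^{k-j}\phi_z\big)
  = \sum_{j=0}^{k} \binom{k}{j} \big(\partial_z^{j}\lambda_z\big)\big(\partial_z^{k-j}\phi_z\big).
\]
Evaluating at $z=0$, recalling $A_j = [\partial_z^{j}\mathcal{L}_z]_{z=0}$ from \sref{eq:prlm:def:A:k}, $A_0 = \mathcal{L}_*$, $\lambda_0 = 0$ and $\phi_0 = \Phi_*'$, and then isolating the $j=0$ term $\mathcal{L}_* [\partial_z^{k}\phi_z]_{z=0}$ on the left, the cases $k = 1, 2, 3$ reproduce exactly the three identities \sref{eq:prlm:diffs:of:l}; this step is a purely mechanical expansion of the binomial sums and needs no further input.

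Next I would pair each relation in \sref{eq:prlm:diffs:of:l} with $\psi_*$ in the $L^2$ inner product. Since $\psi_*$ spans $\mathrm{Ker}(\mathcal{L}^{\mathrm{adj}}_0)$, the left-hand side $\langle \psi_*, \mathcal{L}_* [\partial_z^{k}\phi_z]_{z=0}\rangle = \langle \mathcal{L}^{\mathrm{adj}}_0 \psi_*, [\partial_z^{k}\phi_z]_{z=0}\rangle$ vanishes. On the right-hand side, differentiating the normalization $\langle \psi_*, \phi_z\rangle_{L^2} = 1$ from \sref{eq:mr:norm:cnd:on:phi:z} once, twice and three times gives $\langle \psi_*, [\partial_z^{j}\phi_z]_{z=0}\rangle = 0$ for every $j \ge 1$, while $\langle \psi_*, \Phi_*'\rangle = 1$; substituting these annihilates every term in which a derivative of $\lambda_z$ multiplies a pairing against a positive-order derivative of $\phi_z$, and what remains, solved for $[\partial_z^{k}\lambda_z]_{z=0}$, is precisely \sref{eq:prlm:ids:for:lambda:derivs}.

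I do not expect a genuine obstacle: the only points requiring care are citing analyticity of the branch (Lemma~\ref{lem:mr:def:branch:lambda:z}) and entireness of $\mathcal{L}_z$ to license term-by-term differentiation, and using that $\psi_*$ annihilates $\mathrm{Range}(\mathcal{L}_*)$ together with the differentiated normalization conditions. The most tedious part is writing out the $k=3$ binomial expansion, but it is entirely routine; a similar computation appears in \cite{HJHSTB2D}.
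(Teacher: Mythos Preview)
Your proposal is correct and follows essentially the same approach as the paper: differentiate $\mathcal{L}_z\phi_z=\lambda_z\phi_z$ via the Leibniz rule, evaluate at $z=0$ to obtain \sref{eq:prlm:diffs:of:l}, then pair with $\psi_*$ and use $\langle\psi_*,\mathcal{L}_* y\rangle=0$ together with the normalization $\langle\psi_*,\phi_z\rangle=1$ to extract \sref{eq:prlm:ids:for:lambda:derivs}. Your write-up is somewhat more explicit about justifying the differentiations and about how the normalization kills the cross terms, but the argument is the same.
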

\begin{proof}
Differentiating the definition
$\mathcal{L}_z \phi_z = \lambda_z \phi_z$,
we obtain the identities
\begin{equation}
\begin{array}{lcl}
[\partial_z \mathcal{L}_z] \phi_z
  + \mathcal{L}_z [\partial_{z} \phi_z]
& = &
[\partial_z \lambda_z] \phi_z
    + \lambda_z [\partial_z \phi_z ] ,
\\[0.2cm]
[\partial_z^2 \mathcal{L}_z] \phi_z +
 2 [\partial_z \mathcal{L}_z ] \partial_z \phi_z
 + \mathcal{L}_z [\partial_{z}^2 \phi_z]
 & =&
  [\partial_z^2 \lambda_z] \phi_z +
  2 [\partial_z \lambda_z] \partial_z \phi_z
  + \lambda_z [\partial_z^2 \phi_z ]  ,
\\[0.2cm]
[\partial_z^3 \mathcal{L}_z] \phi_z +
 3 [\partial_z^2 \mathcal{L}_z] \partial_z \phi_z
 + 3 [\partial_z \mathcal{L}_z] \partial_{z}^2 \phi_z
 + \mathcal{L}_z [\partial_{z}^3 \phi_z]
 & = &
  [\partial_z^3 \lambda_z] \phi_z +
  3 [\partial_z^2 \lambda_z] \partial_z \phi_z
  + 3[\partial_z \lambda_z] \partial_{z}^2 \phi_z
\\[0.2cm]
& & \qquad
  + \lambda_z [\partial_{z}^3 \phi_z].
\end{array}
\end{equation}
Evaluating these expressions
at $z = 0$ we find \sref{eq:prlm:diffs:of:l}.
The deriatives \sref{eq:prlm:ids:for:lambda:derivs}
can then be obtained by
recalling the normalization
$\langle \psi_*, \phi_z \rangle = 1$
and using the fact
that $\langle \psi_*, \mathcal{L}_* y \rangle = 0$
for all $y \in H^1$.
\end{proof}

Indeed, we now write
\begin{equation}
\mathcal{J}(z)
= (\Phi_*' , 0)
+ z ( [\partial_z \phi_z]_{z=0} , \Phi_*')
+ z^2 \big( v(z), w(z) \big)
\end{equation}
for a pair of analytic functions
$z \mapsto \big( v(z), w(z) \big) \in H^1 \times H^1$.
Using the identity
\begin{equation}
\Delta''(0) =
\left(
  \begin{array}{cc}
     1 & - 1 \\[0.2cm]
     0 & Df(\tau \Phi_*) \overline{\tau}
        [\partial_z^2 s^\diamond_z]_{z=0}
     \\[0.2cm]
  \end{array}
  \right) ,
\end{equation}
we may exploit \sref{eq:prlm:diffs:of:l}
to compute
\begin{equation}
\label{eq:prlm:comp:jord:chain}
\begin{array}{lcl}
\Delta(z) \mathcal{J}(z)
& = &
  \Delta(0) (\Phi_*', 0)^T
  + z \Big( \Delta'(0) (\Phi_*', 0)^T + \Delta(0) ([\partial_z \phi_z]_{z=0}, \Phi_*')^T
  \Big)
\\[0.2cm]
& & \qquad
  + z^2 \Big( \frac{1}{2}\Delta''(0) (\Phi_*', 0)^T
    + \Delta'(0)([\partial_z \phi_z]_{z=0}, \Phi_*')^T
    + \Delta(0) V(0) \Big)
+ O(z^3)
\\[0.2cm]
& = &
  z^2 \Big(
     \big(   [\partial_z \phi_z]_{z=0} -\frac{1}{2} \Phi_*',
    \frac{1}{2}(A_1 + A_2) \Phi_*' \big)^T
    + \Delta(0) \big(v(0), w(0)\big)^T
  \Big) + O (z^3) .
\end{array}
\end{equation}
In particular, we have achieved $m = 2$
in \sref{eq:prlm:jordan:chain:defn}.
This corresponds with the
presence of two solutions
\begin{equation}
\label{eq:lin:repr:def:two:hom:sols}
[V_{\mathrm{hom}}^A]_l =  (\Phi_*', 0),
\qquad
\qquad
[V_{\mathrm{hom}}^B]_l = ( l \Phi_*', 0) +  ( [\partial_z \phi_z]_{z=0} , \Phi_*')
\end{equation}
to the linear homogeneous problem
\sref{eq:lin:repr:prob:hom}.
However, it is not possible
to achieve $m = 3$. Indeed,
setting the $O(z^2)$ term in \sref{eq:prlm:comp:jord:chain}
to zero, we obtain
\begin{equation}
w(0) = \frac{1}{2} \Phi_*' - [\partial_z \phi_z]_{z=0}
\end{equation}
and hence
\begin{equation}
\mathcal{L}_* v(0)
= \frac{1}{2} A_2 \Phi_*' + A_1 [\partial_z \phi_z]_{z=0}.
\end{equation}
Taking the inner product with $\psi_*$, we
may use (HM) to obtain the contradiction
\begin{equation}
0 = \langle \psi_*, \frac{1}{2} A_2 \Phi_*' + A_1 [\partial_z \phi_z]_{z=0} \rangle
= \frac{1}{2} [\partial_z^2 \lambda_z]_{z=0} \neq 0 .
\end{equation}

Our second main result confirms that there are no other
linearly independent solutions to \sref{eq:lin:repr:prob:hom}
that are bounded by $e^{\eta_{\max} \abs{l} }$.
In addition, it provides a representation formula
for solutions to the inhomogeneous system \sref{eq:lin:repr:inhom:probl}
that share such an exponential bound.

In order to formulate this conveniently,
we introduce the family of sequence spaces
\begin{equation}
BX_{\mu, \nu} (\mathcal{H}) :=
 \left\{V :\Z \mapsto \mathcal{H} :
 \sup_{l<0} e^{- \mu l} \norm{V_l}_{\mathcal{H}}
    +\sup_{l\geq 0} e^{-\nu l} \norm{V_l}_{\mathcal{H}}  <\infty   \right\}
\end{equation}
for any $\mu,\nu \in \R$ and any Hilbert space $\mathcal{H}$.
In addition, for any $H \in BX_{\mu, \nu}$
we introduce the forward discrete Laplace transform
\begin{equation}
\mathcal{Z}_+[H](z) = \sum_{n=0}^\infty e^{-zn}H_n,
\qquad
\qquad \Re z > \nu
\end{equation}
together with the backward discrete Laplace transform
\begin{equation}
\mathcal{Z}_-[H](z) = \sum_{n=1}^\infty e^{zn}H_{-n},
\qquad \qquad
\Re z < \mu .
\end{equation}

\begin{prop}
\label{prp:prlm:repr:frm}
Assume that (Hf), $(H\Phi)$, (HS1)-(HS3) and (HM) are all satisfied.
Fix four constants $\gamma_- < \eta_- < \eta_+ < \gamma_+$
for which $0 < \abs{\eta_\pm} < \eta_{\max}$
and $0 < \abs{\gamma_\pm} < \eta_{\max}$ all hold,
together with
\begin{equation}
\mathrm{sign} (\gamma_-) = \mathrm{sign} ( \eta_-)
,\qquad
\mathrm{sign} (\gamma_+) = \mathrm{sign} ( \eta_+) .
\end{equation}
Consider any
$V \in BX_{\eta_-, \eta_+}(H^1 \times H^1)$
and write
\begin{equation}
H = \mathcal{D}(0) [ V] - \mathcal{T}(0)[V].
\end{equation}
Then we have the representation formula
\begin{equation}
\label{eq:prlm:main:repr:formula}
\begin{array}{lcl}
V_l & = & \frac{1}{2 \pi i} \int_{\gamma_+ - \pi i}^{\gamma_+ + \pi i}
  e^{z l} \Delta(z)^{-1} \mathcal{Z}_+[H](z) \, d z
   + \frac{1}{2 \pi i} \int_{\gamma_- - \pi i }^{\gamma_- + \pi i}  e^{z l}
       \Delta(z)^{-1} \mathcal{Z}_-[H](z) \, dz
\\[0.2cm]
& & \qquad
+ [V_{\mathrm{hom}}^A]_l
  \tilde{P}_A \big[ \mathrm{ev}_0 V \big]
  + [V_{\mathrm{hom}}^B]_l \big[\tilde{P}_B \mathrm{ev}_0 V \big]
\end{array}
\end{equation}
for some pair of bounded linear maps
\begin{equation}
(\tilde{P}_A, \tilde{P}_B):
\mathbf{H}^1 \times \mathbf{H}^1 \to \Real \times \Real
\end{equation}
that satisfy
\begin{equation}
\tilde{P}_{A} = \tilde{P}_B = 0
\end{equation}
when
$\mathrm{sign}(\eta_-) = \mathrm{sign}(\eta_+)$
and
\begin{equation}
\tilde{P}_{A} \big[ \mathrm{ev}_0 V_{\mathrm{hom}}^A \big]
= \tilde{P}_{B} \big[ \mathrm{ev}_0 V_{\mathrm{hom}}^B \big] = 1,
\qquad \qquad
\tilde{P}_{A} \big[ \mathrm{ev}_0 V_{\mathrm{hom}}^B \big]
= \tilde{P}_{B} \big[ \mathrm{ev}_0 V_{\mathrm{hom}}^A \big] = 0
\end{equation}
otherwise.
\end{prop}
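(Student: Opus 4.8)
The plan is to exploit the discrete Laplace-transform machinery together with the invertibility result of Proposition~\ref{prp:hom:delta:inv}. First I would split $V$ into a backward part and a forward part, say $V = V^{(-)} + V^{(+)}$ where $V^{(-)}_l = V_l$ for $l < 0$ and $V^{(+)}_l = V_l$ for $l \ge 0$ (with a clean cut at $l = 0$), and correspondingly write $H = H^{(-)} + H^{(+)}$. Because $V \in BX_{\eta_-,\eta_+}(H^1\times H^1)$ and $\mathcal{D}(0), \mathcal{T}(0)$ are bounded difference operators of finite order, $H$ lies in a space $BX_{\eta_-',\eta_+'}(H^1\times L^2)$ with slightly relaxed exponents, and the transforms $\mathcal{Z}_+[H](z)$, $\mathcal{Z}_-[H](z)$ converge and are analytic in the half-planes $\Re z > \eta_+'$ respectively $\Re z < \eta_-'$. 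Applying the forward transform to the relation $H_l = (\mathcal{D}(0)-\mathcal{T}(0))[V]_l$ for $l \ge 0$, and keeping track of the boundary terms generated by the shift in the first component (the $v_{l+1}-v_l$ line) and by $w_{l+1}$, one obtains an identity of the shape $\Delta(z)\,\mathcal{Z}_+[V](z) = \mathcal{Z}_+[H](z) + B_+(z)\,\mathrm{ev}_0 V$ for an explicit entire $\Complex^{2d}$-valued vector $B_+(z)$ linear in the finitely many initial values $\mathrm{ev}_0 V$; similarly for the backward transform. Using Proposition~\ref{prp:hom:delta:inv} to invert $\Delta(z)$ on the contours $\Re z = \gamma_\pm$ and applying the inversion formula for the discrete Laplace transform, one recovers $V^{(+)}_l$ and $V^{(-)}_l$ as the two contour integrals in \sref{eq:prlm:main:repr:formula} plus contributions from $B_\pm(z)$.

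The second and more delicate step is to show that, after recombining $V^{(+)}$ and $V^{(-)}$, the leftover boundary contributions collapse into the finite-dimensional term $[V^A_{\mathrm{hom}}]_l\,\tilde P_A[\mathrm{ev}_0 V] + [V^B_{\mathrm{hom}}]_l\,\tilde P_B[\mathrm{ev}_0 V]$. Here the point is a residue computation: the contours $\Re z = \gamma_+$ and $\Re z = \gamma_-$ differ, and when $\mathrm{sign}(\gamma_-)\neq\mathrm{sign}(\gamma_+)$ the strip between them contains the single point $z = 0$, where $\Delta(z)^{-1}$ has a pole of order exactly $2$ (the Jordan-chain analysis preceding the proposition established $m = 2$, and the obstruction computation via (HM) showed $m = 3$ fails). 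Moving one contour across $z = 0$ picks up a residue which, by the explicit structure of the order-two Jordan chain $\mathcal{J}(z)$, is precisely a linear combination of the two homogeneous solutions $V^A_{\mathrm{hom}}$ and $V^B_{\mathrm{hom}}$ in \sref{eq:lin:repr:def:two:hom:sols}; this defines the maps $\tilde P_A, \tilde P_B$. When $\mathrm{sign}(\eta_-)=\mathrm{sign}(\eta_+)$ both contours lie on the same side of $0$, no residue is collected, and $\tilde P_A = \tilde P_B = 0$. I would isolate the normalization $\tilde P_A[\mathrm{ev}_0 V^A_{\mathrm{hom}}] = \tilde P_B[\mathrm{ev}_0 V^B_{\mathrm{hom}}] = 1$ and the cross-vanishing from the fact that $V^A_{\mathrm{hom}}, V^B_{\mathrm{hom}}$ themselves solve the homogeneous problem (so $H = 0$, both transforms vanish, and the representation formula must reproduce them), which forces the claimed values.

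The main obstacle I anticipate is the careful bookkeeping of the boundary terms $B_\pm(z)$ produced by splitting the sequence at $l = 0$: the first component of $\mathcal{D}(0)-\mathcal{T}(0)$ involves $v_{l+1}-v_l - w_{l+1}$, so the forward transform of a one-step shift generates a term $e^{z}(\,\cdot\,)_0$, and one must verify that these finitely many contributions, after contour inversion, assemble exactly into the span of $\mathrm{ev}_0 V$ with the correct analytic dependence, and that nothing outside $\{0\}$ in the strip contributes (which is where (HS1)--(HS2) and Proposition~\ref{prp:hom:delta:inv} guarantee $\Delta(z)$ is invertible away from $z=0$ in the relevant strip $|\Re z| < \eta_{\max}$). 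A secondary technical point is justifying the interchange of summation and the contour integral and the convergence of the inversion integrals in the vector-valued ($H^1\times H^1$-valued) setting, for which the $\eta^{-2}$ bound from Proposition~\ref{prp:hom:delta:inv} together with the $2\pi i$-periodicity of $z\mapsto \Delta(z)$ in $\Im z$ is exactly what is needed; this part overlaps with the corresponding argument in \cite{HJHBRGSG} and I would appeal to it where possible.
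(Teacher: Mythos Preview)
Your plan is essentially the paper's proof: the boundary term you call $B_\pm(z)$ is the operator $\mathcal{Q}(z)$ of Corollary~\ref{cor:prlm:id:for:lpl:trnfs}, and the residue of $e^{zl}\Delta(z)^{-1}\mathcal{Q}(z)$ at $z=0$ is computed explicitly in Lemma~\ref{lem:prlm:id:for:residue} via the Laurent expansion of $\mathcal{L}_z^{-1}$ from Lemma~\ref{lem:prlm:exp:l:z:inv}. The only deviation is that the paper verifies the normalizations $\tilde P_A[\mathrm{ev}_0 V^A_{\mathrm{hom}}]=\tilde P_B[\mathrm{ev}_0 V^B_{\mathrm{hom}}]=1$ and the cross-vanishing by a direct calculation (Lemma~\ref{eq:prlm:pi:a:b:to:v:hom}) rather than your indirect argument of inserting $V=V^{A}_{\mathrm{hom}},V^{B}_{\mathrm{hom}}$ into the representation formula; both routes are valid.
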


Whenever $\mathcal{L}_z: H^1 \to L^2$ is invertible,
a short calculation shows that
the same is true for $\Delta(z)$
with
\begin{equation}
\label{eq:prlm:def:delta:z:inv}
\begin{array}{lcl}
\Delta(z)^{-1}
& = &
  \left(
  \begin{array}{cc}
     \mathcal{L}_z^{-1} Df(\tau \Phi_*) \overline{\tau} s^\diamond_z  e^{-z} &
      \mathcal{L}_z^{-1}  \\[0.2cm]
     - e^{-z} + e^{-z} (e^{z}-1) \mathcal{L}_z^{-1} Df(\tau \Phi_*) \overline{\tau} s^\diamond_z e^{-z}
       &   e^{-z} (e^{z} - 1 )   \mathcal{L}_z^{-1}  \\[0.2cm]
  \end{array}
  \right)
   .
\end{array}
\end{equation}
It is hence crucial to understand the behaviour of
$\mathcal{L}_z^{-1}$ for small $\abs{z} > 0$,
which we set out to do by exploiting
the Fredholm properties of $\mathcal{L}_*$.

As a preparation, %To this end,
we introduce the notation
\begin{equation}
\mathcal{L}^{\mathrm{qinv}}_* f = v
\end{equation}
for the unique $v \in H^1$
that has $\langle \psi_*, v \rangle = 0$
and satisfies the problem
\begin{equation}
\mathcal{L}_* v = f - \langle \psi_*, f \rangle \Phi_*' .
\end{equation}
This allows us to rephrase
the identities \sref{eq:prlm:diffs:of:l}
in a more explicit form.

\begin{cor}
\label{cor:prlms:ids:for:phi:derivs}
Assume that (Hf), $(H\Phi)$ and (HS1)-(HS3) are satisfied.
Then we have the identities
\begin{equation}
\begin{array}{lcl}
[\partial_z \phi_z]_{z=0} & = &
  - \mathcal{L}^{\mathrm{qinv}} A_1 \Phi_*' ,
\\[0.2cm]
[\partial_z^2 \phi_z]_{z=0} & = &
  - \mathcal{L}^{\mathrm{qinv}} \Big[ A_2 \Phi_*' + 2 A_1 [\partial_z \phi_z]_{z=0}
    - 2 [\partial_z \lambda_z]_{z=0} [\partial_z \phi_z]_{z=0}  \Big] .
\\[0.2cm]
\end{array}
\end{equation}
\end{cor}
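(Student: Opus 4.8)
The plan is to derive both formulas as immediate bookkeeping consequences of Lemma~\ref{lem:prlms:ids:for:lambda:derivs}, once one observes that the right-hand sides of the relations in \sref{eq:prlm:diffs:of:l} already have the exact shape $f-\langle\psi_*,f\rangle\Phi_*'$ that appears in the definition of $\mathcal{L}^{\mathrm{qinv}}_*$, and that the functions $[\partial_z\phi_z]_{z=0}$ and $[\partial_z^2\phi_z]_{z=0}$ are orthogonal to $\psi_*$.

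First I would record the orthogonality relations $\langle\psi_*,[\partial_z\phi_z]_{z=0}\rangle=\langle\psi_*,[\partial_z^2\phi_z]_{z=0}\rangle=0$. These follow by differentiating the normalization $\langle\psi_*,\phi_z\rangle_{L^2}=1$ from item~(iii) of Lemma~\ref{lem:mr:def:branch:lambda:z} once and twice at $z=0$, using that $\psi_*$ is independent of $z$ and that $z\mapsto\phi_z$ is analytic (item~(ii) of the same lemma). I would also recall that (HS1)--(HS3) together with \cite[Thm.~A]{MPA} yield the Fredholm alternative for $\mathcal{L}_*$, so that for any $f\in L^2$ the element $f-\langle\psi_*,f\rangle\Phi_*'$ lies in $\mathrm{Range}(\mathcal{L}_*)$ and the side condition $\langle\psi_*,\cdot\rangle=0$ selects a unique preimage; this makes $\mathcal{L}^{\mathrm{qinv}}_*$ well defined on the right-hand sides below.

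For the first identity I would set $f=-A_1\Phi_*'$. The first formula in \sref{eq:prlm:ids:for:lambda:derivs} reads $[\partial_z\lambda_z]_{z=0}=\langle\psi_*,A_1\Phi_*'\rangle$, so the first line of \sref{eq:prlm:diffs:of:l} becomes $\mathcal{L}_*[\partial_z\phi_z]_{z=0}=-A_1\Phi_*'+\langle\psi_*,A_1\Phi_*'\rangle\Phi_*'=f-\langle\psi_*,f\rangle\Phi_*'$. Combined with $\langle\psi_*,[\partial_z\phi_z]_{z=0}\rangle=0$, this is precisely the defining property of $\mathcal{L}^{\mathrm{qinv}}_*f$, whence $[\partial_z\phi_z]_{z=0}=-\mathcal{L}^{\mathrm{qinv}}_*A_1\Phi_*'$. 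For the second identity I would put $f=-\bigl[A_2\Phi_*'+2A_1[\partial_z\phi_z]_{z=0}-2[\partial_z\lambda_z]_{z=0}[\partial_z\phi_z]_{z=0}\bigr]$. Using $\langle\psi_*,[\partial_z\phi_z]_{z=0}\rangle=0$ together with the second formula in \sref{eq:prlm:ids:for:lambda:derivs}, namely $[\partial_z^2\lambda_z]_{z=0}=\langle\psi_*,A_2\Phi_*'+2A_1[\partial_z\phi_z]_{z=0}\rangle$, one computes $\langle\psi_*,f\rangle=-[\partial_z^2\lambda_z]_{z=0}$, so that the second line of \sref{eq:prlm:diffs:of:l} rearranges to $\mathcal{L}_*[\partial_z^2\phi_z]_{z=0}=f-\langle\psi_*,f\rangle\Phi_*'$. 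Together with $\langle\psi_*,[\partial_z^2\phi_z]_{z=0}\rangle=0$ and the linearity of $\mathcal{L}^{\mathrm{qinv}}_*$, this gives the stated expression for $[\partial_z^2\phi_z]_{z=0}$.

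There is no genuine obstacle here: the analytic substance is entirely contained in Lemma~\ref{lem:prlms:ids:for:lambda:derivs}, and this corollary merely repackages it. The only point demanding a little care is the consistent use of the $z$-independent normalization $\langle\psi_*,\phi_z\rangle=1$; were $\phi_z$ normalized differently, extra terms proportional to $\Phi_*'$ would survive in the right-hand sides, exactly as the authors flag in the discussion following the computation of $[\partial_z^2\lambda_{z;\rho,\zeta}]_{z=0}$ in \S\ref{sec:mr:ex:i}.
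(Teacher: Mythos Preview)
Your proof is correct and follows essentially the same route as the paper. The paper's argument is extremely terse—it just says the expressions follow from \sref{eq:prlm:diffs:of:l} together with $\mathcal{L}^{\mathrm{qinv}}\Phi_*'=0$—but unpacking that amounts to exactly what you wrote: the orthogonality of $[\partial_z^k\phi_z]_{z=0}$ to $\psi_*$ (from differentiating the normalization) lets one invert $\mathcal{L}_*$ via $\mathcal{L}^{\mathrm{qinv}}$, and the $\Phi_*'$-terms on the right of \sref{eq:prlm:diffs:of:l} drop out because $\mathcal{L}^{\mathrm{qinv}}\Phi_*'=0$.
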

\begin{proof}
These expressions
follow from
\sref{eq:prlm:diffs:of:l},
noting that $\mathcal{L}^{\mathrm{qinv}} \Phi_*' = 0$.
%recalling the normalization
%$\langle \psi_*, \phi_z \rangle  = 1$
%and using the fact that
%$\langle \psi_*, \mathcal{L}_* y \rangle = 0$ for all $y \in H^1$.
\end{proof}

At this point, it is natural
to briefly turn our attention to the
angular dependence of the waves $(c_\varphi, \Phi_\varphi)$,
which can be analyzed using techniques that are similar to those used above.
Unfortunately, the expressions for the second derivatives
are somewhat more involved. In order to accomodate this,
we introduce the notation
\begin{equation}
B_1 p = Df(\tau \Phi_*)(-\sigma_A , -\sigma_B, \sigma_A, \sigma_B, 0)
  [\overline{\tau} p ]
\end{equation}
for any $p \in H^1$.

\begin{lem}
\label{lem:prlms:ids:for:Phi:phi:derivs}
Assume that (Hf), $(H\Phi)$ and (HS1)-(HS3) are satisfied.
Then the statements in Lemma \ref{lem:mr:angl:dep}
hold true. In addition,
we have the identities
\begin{equation}
\label{eq:prlm:diffs:of:Phi:phi}
\begin{array}{lcl}
\mathcal{L}_* [\partial_{\varphi} \Phi_{\varphi}]_{\varphi=0}
  &= &  - A_1 \Phi_*' + [\partial_{\varphi} c_{\varphi}]_{\varphi=0} \Phi_*' ,
  \\[0.2cm]
\mathcal{L}_* [\partial_{\varphi}^2 \Phi_{\varphi}]_{\varphi=0}
  &= &
  [\partial_\varphi^2 c_\varphi]_{\varphi = 0} \Phi_*'
  + 2 [\partial_\varphi c_\varphi]_{\varphi = 0} [\partial_\varphi \Phi']_{\varphi =0 }
\\[0.2cm]
& & \qquad
- D^2 f(\tau \Phi_*) \Big[ s^\diamond_0 \tau \,  \Phi_*'
  +  \tau [\partial_\varphi \Phi_{\varphi}]_{\varphi =0} ,
  s^\diamond_0 \tau \,  \Phi_*'
  +  \tau [\partial_\varphi \Phi_{\varphi}]_{\varphi =0}
   \Big]
\\[0.2cm]
& & \qquad
- A_2 \Phi_*''
- B_1 \Phi_*'
- 2 A_1 [\partial_\varphi \Phi_\varphi']_{\varphi =0 } ,
\\[0.2cm]
\end{array}
\end{equation}
together with
\begin{equation}
\label{eq:prlm:ids:for:Phi:phi:derivs}
\begin{array}{lcl}
%\lambda'(0)
[\partial_{\varphi} c_{\varphi}]_{\varphi=0} & = &
  \langle \psi_*, A_1 \Phi_*' \rangle ,
\\[0.2cm]
[\partial_{\varphi}^2 c_{\varphi}]_{\varphi = 0} & = &
 \langle \psi_*,
   D^2 f(\tau \Phi_*)
   \big[   s^\diamond_0 \tau \, \Phi_*'
  +  \tau [\partial_\varphi \Phi_{\varphi}]_{\varphi =0} ,
  s^\diamond_0 \tau \,  \Phi_*
  +  \tau [\partial_\varphi \Phi_{\varphi}]_{\varphi =0} \big]
  \rangle
\\[0.2cm]
& & \qquad + \langle \psi_*, A_2 \Phi_*'' \rangle
+ \langle \psi_*, B_1 \Phi_*' \rangle
+ 2 \langle \psi_* , A_1 [\partial_\varphi \Phi_\varphi']_{\varphi = 0} \rangle
\\[0.2cm]
& & \qquad
- 2 [\partial_\varphi c_\varphi]_{\varphi = 0}
    \langle \psi_*, [\partial_\varphi \Phi_\varphi']_{\varphi = 0} \rangle .
\\[0.2cm]
\end{array}
\end{equation}
\end{lem}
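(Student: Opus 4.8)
First I would establish Lemma~\ref{lem:mr:angl:dep} by a standard implicit function theorem argument, and only then extract the identities by differentiating the travelling wave MFDE \sref{eq:mr:trv:wave:diff:angle} once and twice with respect to $\varphi$ at $\varphi = 0$. For the existence part I would regard \sref{eq:mr:trv:wave:diff:angle} as the equation $G(\varphi,c,\Phi)=0$ for the map $G(\varphi,c,\Phi)=-c\Phi'+f(\tau_\varphi\Phi)$, posed on exponentially weighted spaces of sufficiently smooth functions; the extra spatial regularity needed to absorb the derivative loss incurred by differentiating the shift family $\varphi\mapsto\tau_\varphi$ is available for free, since any bounded solution of \sref{eq:mr:trv:wave:diff:angle} is automatically $C^{r+1}$ by bootstrapping. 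One has $G(0,c_*,\Phi_*)=0$, and the partial derivative $D_{(c,\Phi)}G(0,c_*,\Phi_*)$ acts as $(\dot c,p)\mapsto -\dot c\,\Phi_*'+\mathcal{L}_* p$; by (HS1)--(HS3), in particular $\Ker\mathcal{L}_*=\spa\{\Phi_*'\}$ together with $\Phi_*'\notin\mathrm{Range}\,\mathcal{L}_*$, this operator becomes an isomorphism once the domain is cut down to $\Real\times\{p:\langle\psi_*,p\rangle=0\}$. The implicit function theorem then yields the $C^{r-1}$-family $(c_\varphi,\Phi_\varphi)$ with $c_0=c_*$, $\Phi_0=\Phi_*$ and $\langle\psi_*,\Phi_\varphi\rangle=\langle\psi_*,\Phi_*\rangle$, which is items (i)--(iii); the translation back to the original lattice coordinates via \sref{eq:mr:rescalings} is routine.

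To obtain (iv) and the first lines of \sref{eq:prlm:diffs:of:Phi:phi}--\sref{eq:prlm:ids:for:Phi:phi:derivs}, I would differentiate $c_\varphi\Phi_\varphi'=f(\tau_\varphi\Phi_\varphi)$ once at $\varphi=0$. Writing $a_1,\dots,a_5$ for the shift amounts in $\tau_\varphi$, so that $a_j(0)=r_j$, $a_5\equiv 0$ and $(a_1'(0),\dots,a_4'(0))=(-\sigma_B,\sigma_A,\sigma_B,-\sigma_A)$, one reads off from the definitions of $\mathcal{L}_z$ and $A_1=[\partial_z\mathcal{L}_z]_{z=0}$ that $\sum_j a_j'(0)\,D_jf(\tau\Phi_*)\,\Phi_*'(\cdot+r_j)=A_1\Phi_*'$, equivalently $(a_j'(0))_j=s^\diamond_0$. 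This gives $\mathcal{L}_*[\partial_\varphi\Phi_\varphi]_{\varphi=0}=-A_1\Phi_*'+[\partial_\varphi c_\varphi]_{\varphi=0}\Phi_*'$; pairing with $\psi_*$ and using $\langle\psi_*,\mathcal{L}_*y\rangle=0$ and $\langle\psi_*,\Phi_*'\rangle=1$ yields $[\partial_\varphi c_\varphi]_{\varphi=0}=\langle\psi_*,A_1\Phi_*'\rangle$. Comparing with the first lines of \sref{eq:prlm:diffs:of:l} and \sref{eq:prlm:ids:for:lambda:derivs} from Lemma~\ref{lem:prlms:ids:for:lambda:derivs} then shows $[\partial_\varphi c_\varphi]_{\varphi=0}=[\partial_z\lambda_z]_{z=0}$; since $[\partial_\varphi\Phi_\varphi]_{\varphi=0}$ and $[\partial_z\phi_z]_{z=0}$ now satisfy the same equation with the same normalisation $\langle\psi_*,\cdot\rangle=0$, they coincide, establishing (iv).

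For the second-order identities I would differentiate the MFDE once more. Setting $w(\varphi)=\tau_\varphi\Phi_\varphi$, whose $j$-th component is $\Phi_\varphi(\cdot+a_j(\varphi))$, one computes $\dot w(0)=\tau[\partial_\varphi\Phi_\varphi]_{\varphi=0}+s^\diamond_0\,\tau\Phi_*'$ and, using $(a_1''(0),\dots,a_4''(0))=(-\sigma_A,-\sigma_B,\sigma_A,\sigma_B)$, the identity $Df(\tau\Phi_*)\ddot w(0)=Df(\tau\Phi_*)\tau[\partial_\varphi^2\Phi_\varphi]_{\varphi=0}+2A_1[\partial_\varphi\Phi_\varphi']_{\varphi=0}+A_2\Phi_*''+B_1\Phi_*'$, where $A_2=[\partial_z^2\mathcal{L}_z]_{z=0}$ collects the $(a_j'(0))^2$-contributions and $B_1$ the $a_j''(0)$-contributions. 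Substituting this together with $D^2f(\tau\Phi_*)[\dot w(0),\dot w(0)]$ into the twice-differentiated equation $\ddot c\,\Phi_*'+2[\partial_\varphi c_\varphi]_{\varphi=0}[\partial_\varphi\Phi_\varphi']_{\varphi=0}+c_*[\partial_\varphi^2\Phi_\varphi']_{\varphi=0}=D^2f(\tau\Phi_*)[\dot w(0),\dot w(0)]+Df(\tau\Phi_*)\ddot w(0)$, and solving for $\mathcal{L}_*[\partial_\varphi^2\Phi_\varphi]_{\varphi=0}=-c_*[\partial_\varphi^2\Phi_\varphi']_{\varphi=0}+Df(\tau\Phi_*)\tau[\partial_\varphi^2\Phi_\varphi]_{\varphi=0}$, produces the second line of \sref{eq:prlm:diffs:of:Phi:phi}; a final pairing with $\psi_*$, again using $\langle\psi_*,\mathcal{L}_*\cdot\rangle=0$ and $\langle\psi_*,\Phi_*'\rangle=1$, gives the second line of \sref{eq:prlm:ids:for:Phi:phi:derivs}.

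The main obstacle is the regularity bookkeeping in the implicit function step: the shift family $\varphi\mapsto\tau_\varphi$ loses one spatial derivative per $\varphi$-differentiation, so the spaces must be chosen (weighted $C^k$-type, exploiting the automatic $C^{r+1}$-smoothness of the profiles) so that $G$ is $C^{r-1}$ in all arguments; in particular one needs $r\ge 3$ for the second-order identities to be literally meaningful, and otherwise they are obtained formally or by regularising $f$. Once the differentiation is justified, the derivation of the identities is purely a matter of correctly matching the shift derivatives $a_j'(0)$ and $a_j''(0)$ against the coefficients defining $\mathcal{L}_z$, $A_1$, $A_2$, $B_1$ and $s^\diamond_0$, and then projecting onto $\psi_*$.
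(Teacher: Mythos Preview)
Your proposal is correct and follows essentially the same route as the paper: establish items (i)--(iii) of Lemma~\ref{lem:mr:angl:dep} first (the paper simply cites \cite[Prop.~3.7]{HJHOBST2D} for this, whereas you sketch the implicit-function-theorem argument and the attendant regularity bookkeeping), then differentiate the MFDE \sref{eq:mr:trv:wave:diff:angle} once and twice at $\varphi=0$, identify the derivatives of the shift family $\tau_\varphi$ with $s^\diamond_0$, $A_1$, $A_2$, $B_1$ exactly as you do, and finally pair with $\psi_*$. Your discussion of the derivative-loss issue in the IFT step is a useful addition that the paper leaves implicit in its citation.
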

\begin{proof}
Items (i)-(iii) of Lemma \ref{lem:mr:angl:dep} can be established as in
\cite[Prop 3.7]{HJHOBST2D}.
Differentiating the travelling wave MFDE
\sref{eq:mr:trv:wave:diff:angle}, we find
\begin{equation}
\label{eq:prlm:deriv:phi:id:i}
\begin{array}{lcl}
[\partial_\varphi c_\varphi] \Phi_\varphi'
& = &
-  c_\varphi [\partial_\varphi \Phi_\varphi']
+ Df ( \tau_\varphi \Phi_\varphi) \big[ \tau_\varphi [\partial_\varphi \Phi_\varphi] + [\partial_\varphi \tau_\varphi] \Phi_\varphi \big]  ,
\end{array}
\end{equation}
together with
\begin{equation}
\label{eq:prlm:deriv:phi:id:ii}
\begin{array}{lcl}
[\partial_\varphi^2 c_\varphi] \Phi'_\varphi
& = &
- c_{\varphi} [\partial_\varphi^2 \Phi'_\varphi] -  2 [\partial_\varphi c_\varphi]
   [ \partial_\varphi \Phi'_\varphi]
\\[0.2cm]
& & \qquad
+ Df(\tau_\varphi \Phi_\varphi)
     \big[
        \tau_\varphi [\partial_\varphi^2 \Phi_\varphi]
         + 2 [\partial_\varphi \tau_\varphi] [\partial_\varphi \Phi_\varphi]
     + [\partial_\varphi^2 \tau_\varphi ]\Phi_{\varphi}
     \big]
\\[0.2cm]
& & \qquad
+
D^2 f(\tau_\varphi \Phi_\varphi)
\big[ [\partial_\varphi \tau_\varphi ]\Phi_{\varphi} + \tau_\varphi [\partial_\varphi \Phi_\varphi],
  [\partial_\varphi \tau_\varphi ]\Phi_{\varphi} + \tau_\varphi [\partial_\varphi \Phi_\varphi]
\big] .
\\[0.2cm]
\end{array}
\end{equation}

For any $p \in H^1$,
we can compute
\begin{equation}
\begin{array}{lcl}
[\partial_\varphi \tau_\varphi p]_{\varphi = 0}
& = & \sigma_* \overline{\tau} \big( -\sin\phi_*, \cos \phi_*, \sin \phi_*, - \cos \phi_* \big)  p'
\\[0.2cm]
& = &  \overline{\tau}\big( - \sigma_B , \sigma_A, \sigma_B, -\sigma_A \big)
   p'
\\[0.2cm]
& = & \overline{\tau} s^\diamond_0 \, p' ,
\end{array}
\end{equation}
together with
\begin{equation}
\begin{array}{lcl}
[\partial_\varphi^2 \tau_\varphi p]_{\varphi = 0}
& = & \sigma_*^2 \overline{\tau} \big( \sin(\phi_*)^2, \cos(\phi_*)^2,  \sin(\phi_*)^2, \cos(\phi_*)^2 \big)
    p''
\\[0.2cm]
& &
\qquad
  + \sigma_* \overline{\tau } \big( - \cos(\phi_*), - \sin(\phi_*), \cos(\phi_*), \sin(\phi_*) \big)
         p'
\\[0.2cm]
& = &   \overline{\tau}\big( \sigma_B^2, \sigma_A^2, \sigma_B^2, \sigma_A^2 \big)
   p''
   + \overline{\tau}\big( - \sigma_A, - \sigma_B, \sigma_A, \sigma_B, 0\big)
         p'
\\[0.2cm]
& = &
  \overline{\tau}(2 [\partial_z s^\diamond_z]_{z=0}
       -s^\diamond_0)  p''
  + \overline{\tau}\big( - \sigma_A, - \sigma_B, \sigma_A, \sigma_B, 0 \big)
     p' .
\end{array}
\end{equation}
Based on \sref{eq:prlm:ids:ai:all},
we may hence write
\begin{equation}
\begin{array}{lcl}
Df(\tau \Phi_*)
[\partial_\varphi \tau_\varphi p]_{\varphi = 0}
& = &
  A_1 p' ,
\\[0.2cm]
Df(\tau \Phi_*)
[\partial_\varphi^2 \tau_\varphi p]_{\varphi = 0}
& = & A_2 p'' + B_1 p' .
\end{array}
\end{equation}

Using these identities to
evaluate the expressions
\sref{eq:prlm:deriv:phi:id:i}-\sref{eq:prlm:deriv:phi:id:ii}
at $\varphi = 0$
readily leads to the identities
\sref{eq:prlm:diffs:of:Phi:phi}.
The deriatives \sref{eq:prlm:ids:for:Phi:phi:derivs}
can then be obtained by
using the fact
that $\langle \psi_*, \mathcal{L}_* y \rangle = 0$
for all $y \in H^1$. Item (iv) of
Lemma \ref{lem:mr:angl:dep} follows directly by comparing
the first identities in
\sref{eq:prlm:diffs:of:l}
and \sref{eq:prlm:ids:for:lambda:derivs}
with those in
\sref{eq:prlm:diffs:of:Phi:phi}
and \sref{eq:prlm:ids:for:Phi:phi:derivs}.
\end{proof}

We now construct a preliminary inverse for
$\mathcal{L}_z$ that behaves as $z^{-2}$
as $z \mapsto 0$.
As a preparation,
we implicitly define the remainder
expressions $R_{\mathcal{L}; i}$
by writing
\begin{equation}
\begin{array}{lcl}
\mathcal{L}_z
  &= &
     \mathcal{L}_*  + z R_{\mathcal{L};1}(z)
\\[0.2cm]
& = &
   \mathcal{L}_*  + z A_1 + z^2 R_{\mathcal{L};2}(z)
\\[0.2cm]
& = &
  \mathcal{L}_*  + z A_1 + \frac{1}{2} z^2 A_2 + z^3 R_{\mathcal{L};3}(z) .
\end{array}
\end{equation}

\begin{lem}
\label{lem:prlm:inverse:b2}
Assume that (Hf), $(H\Phi)$, (HS1)-(HS3) and (HM) are satisfied.
Pick a sufficiently large $K > 0 $ together with
a sufficiently small $\delta_z > 0$.
Then there exists an analytic map
\begin{equation}
\{ z \in \mathbb{C} : \abs{z} < \delta_z \}
\ni z \mapsto B_2(z) \in \mathcal{L}(L^2; H^1)
\end{equation}
so that $v = B_2(z) h$ is the
unique $v \in H^1$ that satisfies
  \begin{equation}
    \label{eq:prlm:eq:for:b2:to:solve}
    \mathcal{L}_z v = z^2 h
  \end{equation}
whenever $h \in L^2$ and $0 < \abs{z} < \delta_z$.
\end{lem}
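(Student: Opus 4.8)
The plan is to confine the $z^{-2}$ singularity of $\mathcal{L}_z^{-1}$ to the one-dimensional spectral subspace carrying the branch $\lambda_z$, where it reduces to the scalar factor $\lambda_z^{-1}$, and to use (HM) to see that $z^2\lambda_z^{-1}$ is in fact analytic and bounded near $z=0$.

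First I would view $\mathcal{L}_z$ as a closed operator on $L^2$ with domain $H^1$, which is legitimate since the shift and multiplication terms in \sref{eq:mr:def:lz} are bounded on $L^2$ while $-c_* p'$ is closed. By (HS1)--(HS3) the value $\lambda = 0$ is an isolated, algebraically simple eigenvalue of $\mathcal{L}_* = \mathcal{L}_0$. Hence, after shrinking $\delta_z$, standard perturbation theory for the analytic family $z \mapsto \mathcal{L}_z$ (whose coefficients $A_{z,j}$ are analytic in $z$) shows that the part of $\mathrm{spec}(\mathcal{L}_z)$ inside a fixed small disc around $0$ consists exactly of the simple eigenvalue $\lambda_z$ from Lemma \ref{lem:mr:def:branch:lambda:z}, while the remainder of $\mathrm{spec}(\mathcal{L}_z)$ stays bounded away from $0$. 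Let
\begin{equation}
\Pi_z = \frac{1}{2\pi i}\oint \big(\zeta - \mathcal{L}_z\big)^{-1}\, d\zeta
\end{equation}
be the associated Riesz projection, the contour being a small circle around $0$. Then $\Pi_z$ is rank one with range $\mathrm{span}\{\phi_z\}\subset H^1$, depends analytically on $z$ for $\abs{z}<\delta_z$, commutes with $\mathcal{L}_z$ and satisfies $\mathcal{L}_z\Pi_z = \lambda_z\Pi_z$; on the complementary subspace $\mathcal{L}_z$ has a bounded inverse $S(z)$ — the reduced resolvent — which is again analytic and uniformly bounded in operator norm for $\abs{z}<\delta_z$.

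Next I would Taylor-expand the branch using (HM): since $\lambda_0 = 0$ and $[\partial_z\lambda_z]_{z=0}=0$ while $[\partial_z^2\lambda_z]_{z=0}\neq 0$, we have $\lambda_z = z^2\big(\tfrac12[\partial_z^2\lambda_z]_{z=0} + O(z)\big)$, so $z\mapsto z^2\lambda_z^{-1}$ extends to an analytic function on $\abs{z}<\delta_z$ with value $2[\partial_z^2\lambda_z]_{z=0}^{-1}$ at $z=0$ (after possibly shrinking $\delta_z$ so the bracket stays nonzero). I then set
\begin{equation}
B_2(z) = \big(z^2\lambda_z^{-1}\big)\,\Pi_z + z^2\, S(z)\,(I-\Pi_z) \in \mathcal{L}(L^2; H^1),
\end{equation}
which is manifestly analytic in $z$ for $\abs{z}<\delta_z$ since both summands are. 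Using $\mathcal{L}_z\Pi_z = \lambda_z\Pi_z$ and $\mathcal{L}_z S(z)(I-\Pi_z) = I-\Pi_z$ one computes $\mathcal{L}_z B_2(z)h = z^2\Pi_z h + z^2(I-\Pi_z)h = z^2 h$ for every $h\in L^2$, so $v=B_2(z)h$ solves \sref{eq:prlm:eq:for:b2:to:solve}. For uniqueness, when $0<\abs{z}<\delta_z$ we have $\lambda_z\neq 0$, hence $0\notin\mathrm{spec}(\mathcal{L}_z)$ and $\mathcal{L}_z\colon H^1\to L^2$ is a bijection, so $v$ is the only solution in $H^1$.

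The only genuinely delicate point is the spectral statement in the first paragraph — that for small $\abs{z}$ the spectrum of $\mathcal{L}_z$ near the origin is reduced to the single simple eigenvalue $\lambda_z$, so that $\Pi_z$ is well defined, rank one and analytic. This rests on (HS1)--(HS3) making $0$ an isolated algebraically simple eigenvalue of $\mathcal{L}_0$, together with the analytic dependence of $z\mapsto \mathcal{L}_z$; it is in essence the content of Lemma \ref{lem:mr:def:branch:lambda:z}. Once that is in place, (HM) does the rest: it guarantees precisely that the scalar $z^2\lambda_z^{-1}$ carrying the singularity has a removable singularity at $z=0$, so that $B_2(z)$ is bounded — indeed $B_2(0) = 2[\partial_z^2\lambda_z]_{z=0}^{-1}\Pi_0$ — as $z\to 0$.
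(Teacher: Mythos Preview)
Your argument is correct and takes a genuinely different route from the paper. The paper proceeds by an explicit ansatz: it writes $v=\kappa\big(\Phi_*'+z[\partial_z\phi_z]_{z=0}\big)+z^2 w$ with $\langle\psi_*,w\rangle=0$, expands $\mathcal{L}_z$ to second order via the remainder operators $R_{\mathcal{L};k}$, and reduces \sref{eq:prlm:eq:for:b2:to:solve} to a coupled scalar--$H^1$ system for $(\kappa,w)$ that is solved by inverting $I+zM(z)$ for small $\abs{z}$, where $M(z)$ is a concrete operator built from $\mathcal{L}^{\mathrm{qinv}}$, $R_{\mathcal{L};1}$ and the derivatives of $\phi_z$. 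Your Riesz--projection argument is conceptually cleaner: it isolates exactly the mechanism --- the one-dimensional spectral subspace carries the singularity, and (HM) makes $z^2\lambda_z^{-1}$ analytic --- without any explicit Taylor bookkeeping. The price is that you rely on the abstract spectral picture (isolation of $\lambda_z$, analyticity of $\Pi_z$ and of the reduced resolvent $S(z)$ as maps into $H^1$), which is standard Kato-type perturbation theory but is not developed in the paper itself; the paper's hands-on computation avoids this dependency and, as a by-product, produces explicit ingredients that feed directly into the Laurent expansion of $\mathcal{L}_z^{-1}$ used immediately afterwards in Lemma~\ref{lem:prlm:exp:l:z:inv}.
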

\begin{proof}
We set out to seek a solution a solution to
\sref{eq:prlm:eq:for:b2:to:solve}
%$\mathcal{L}_z v = z^2 h$
of the form
\begin{equation}
v = \kappa
 \big[
   \Phi_*' + z [\partial_z \phi_z]_{z=0}
 \big] + z^2 w
\end{equation}
for some $\kappa \in \Real$ and $w \in H^1$ that satisfies
$\langle \psi_*, w \rangle = 0$.
Writing
\begin{equation}
S(z) = R_{\mathcal{L};3}(z) \Phi_*'
+ R_{\mathcal{L};2}(z) [\partial_z \phi_z]_{z=0} ,
\end{equation}
we may use \sref{eq:prlm:diffs:of:l} to
compute
\begin{equation}
\begin{array}{lcl}
z^{-2} \mathcal{L}_z v
& = & \kappa
\big(\frac{1}{2} A_2 \Phi_*' + A_1 [\partial_z \phi_z]_{z=0}
  + z S(z) \big)
%\\[0.2cm]
%& & \qquad
 + \mathcal{L}_* w + z R_{\mathcal{L};1}(z) w .
\end{array}
\end{equation}
The identity \sref{eq:prlm:eq:for:b2:to:solve}
is hence equivalent to the system
\begin{equation}
\label{eq:prlm:b2:first:line}
\begin{array}{lcl}
Q_* h  & = & z Q_*  R_{\mathcal{L};1}(z) w + \kappa
 \Big( \frac{1}{2}[\partial_z^2 \lambda_z]_{z=0}
   + z Q_* S(z)  \Big) ,
\\[0.2cm]
w & = & \mathcal{L}^{\mathrm{qinv}}
\big[ h - z R_{\mathcal{L};1}(z) w \big]
+ \frac{1}{2} \kappa [\partial_z^2 \phi_z]_{z=0}
 -  z \kappa \mathcal{L}^{\mathrm{qinv}} S(z) .
\end{array}
\end{equation}
Whenever $\abs{z}$ is sufficiently small,
we may use the quantity
\begin{equation}
%\begin{array}{lcl}
\nu(z) =
\Big[ \frac{1}{2} [\partial_z^2 \lambda_z]_{z=0} + z Q_* S(z)
\Big]^{-1}
\end{equation}
to rewrite the
first line of \sref{eq:prlm:b2:first:line}
in the form
\begin{equation}
\kappa = \nu(z) Q_* h  - z \nu(z) Q_* R_{\mathcal{L};1}(z) w.
\end{equation}
Substituting this into the second line of
\sref{eq:prlm:b2:first:line},
we find
\begin{equation}
\begin{array}{lcl}
w + z M(z) w
& = & \mathcal{L}^{\mathrm{qinv}} h
+ \frac{1}{2} \nu(z) [Q_* h ] [\partial_z^2 \phi_z]_{z=0}
- z \nu(z) [Q_* h] \mathcal{L}^{\mathrm{qinv}} S(z),
\end{array}
\end{equation}
in which
\begin{equation}
M(z) w
=\mathcal{L}^{\mathrm{qinv}}
 R_{\mathcal{L};1}(z) w
+ z \nu(z)[Q_* R_{\mathcal{L};1}(z) w]
  \mathcal{L}^{\mathrm{qinv}} S(z)
- \frac{1}{2}  \nu(z)
[Q_* R_{\mathcal{L};1}(z) w] [\partial_z^2 \phi_z]_{z=0} .
\end{equation}
The desired properties now follow from the fact that
$I + zM(z)$ is invertible whenever $\abs{z}$ is sufficiently
small.
\end{proof}

In the following result
we explicitly identify the singular $O(z^{-2})$ and $O(z^{-1})$
terms in the expansion of $\mathcal{L}_z^{-1}$.
In order to express these in a convenient fashion,
we introduce the operator $\Gamma_*: L^2 \to \Real$
that acts as
\begin{equation}
\label{eq:prlm:def:gamma}
\Gamma_* h = - Q_* A_1 \mathcal{L}^{\mathrm{qinv}} h
  - \frac{1}{3} \frac{[\partial_z^3 \lambda_z]_{z=0}}
                     {[\partial_z^2 \lambda_z]_{z=0} } Q_* h.
\end{equation}

\begin{lem}
\label{lem:prlm:exp:l:z:inv}
Assume that (Hf), $(H\Phi)$, (HS1)-(HS3) and (HM) are satisfied
and pick a sufficiently small constant $\delta_z > 0$.
Then there exists an analytic function
\begin{equation}
\{ z \in \mathbb{C} : \abs{z} < \delta_z \}
\ni z \mapsto \mathcal{J}_z \in \mathcal{L}(L^2; H^1)
\end{equation}
so that we have
\begin{equation}
\label{eq:prlm:exp:l:z:inv}
\begin{array}{lcl}
 \mathcal{L}_z^{-1}
& = &
  2 [\partial_z^2 \lambda_z]_{z=0}^{-1} %\lambda''(0)^{-1}
\Big[
  z^{-2} (\Phi_*'  + z [\partial_z \phi_z]_{z=0}) Q_*
   + z^{-1} \Phi_*' \Gamma_*
\Big]
+ \mathcal{J}_z
\end{array}
\end{equation}
whenever $0 < \abs{z} < \delta_z$.
\end{lem}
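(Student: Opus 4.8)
The plan is to extract the singular part of $\mathcal{L}_z^{-1}$ from the construction underlying Lemma~\ref{lem:prlm:inverse:b2}. Conditions (HS1)--(HS3), (HM) and the expansion $\lambda_z = \tfrac12 [\partial_z^2 \lambda_z]_{z=0} z^2 + O(z^3)$ of Lemma~\ref{lem:mr:def:branch:lambda:z} force $0 \notin \sigma(\mathcal{L}_z)$ for $0 < \abs{z} < \delta_z$, so that $\mathcal{L}_z$ is invertible there and, by \sref{eq:prlm:eq:for:b2:to:solve}, $\mathcal{L}_z^{-1} = z^{-2} B_2(z)$. Revisiting the proof of Lemma~\ref{lem:prlm:inverse:b2}, the operator $B_2(z)$ is produced in the explicit form
\begin{equation}
B_2(z) h = \kappa(z) \big[ \Phi_*' + z [\partial_z \phi_z]_{z=0} \big] + z^2 w(z),
\end{equation}
in which $z \mapsto w(z) \in H^1$ and
\begin{equation}
\kappa(z) = \nu(z) Q_* h - z \nu(z) Q_* R_{\mathcal{L};1}(z) w(z), \qquad \nu(z) = \Big[ \tfrac12 [\partial_z^2 \lambda_z]_{z=0} + z Q_* S(z) \Big]^{-1}
\end{equation}
are both analytic near $z = 0$; indeed $w(z)$ is the unique solution of the fixed-point equation from that proof, whose right-hand side is analytic and whose operator $I + z M(z)$ is boundedly invertible for small $\abs{z}$.

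Since $\mathcal{L}_z^{-1} h = z^{-2} \kappa(z) \Phi_*' + z^{-1} \kappa(z) [\partial_z \phi_z]_{z=0} + w(z)$ and $\kappa$ is analytic, only the Taylor coefficients $\kappa_0$ and $\kappa_1$ of $\kappa$ at $z = 0$ can feed into the $z^{-2}$ and $z^{-1}$ parts of $\mathcal{L}_z^{-1}$, while all remaining pieces assemble into an analytic $\mathcal{L}(L^2; H^1)$-valued function, which becomes $\mathcal{J}_z$. The task thus reduces to establishing
\begin{equation}
\kappa_0 h = 2 [\partial_z^2 \lambda_z]_{z=0}^{-1} Q_* h, \qquad \kappa_1 h = 2 [\partial_z^2 \lambda_z]_{z=0}^{-1} \Gamma_* h,
\end{equation}
since then the $\kappa_0$-terms combine into $2 [\partial_z^2 \lambda_z]_{z=0}^{-1} z^{-2} \big( \Phi_*' + z [\partial_z \phi_z]_{z=0} \big) Q_*$ and the $\kappa_1$-term contributes $2 [\partial_z^2 \lambda_z]_{z=0}^{-1} z^{-1} \Phi_*' \Gamma_*$, which together give exactly \sref{eq:prlm:exp:l:z:inv}.

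The value of $\kappa_0$ is immediate from $\nu(0) = 2 [\partial_z^2 \lambda_z]_{z=0}^{-1}$. For $\kappa_1$ I would extract the $O(z)$ coefficient of $\kappa(z)$ using $R_{\mathcal{L};1}(0) = A_1$, $S(0) = \tfrac16 A_3 \Phi_*' + \tfrac12 A_2 [\partial_z \phi_z]_{z=0}$ and $w(0) = \mathcal{L}^{\mathrm{qinv}} h + [\partial_z^2 \lambda_z]_{z=0}^{-1} [Q_* h][\partial_z^2 \phi_z]_{z=0}$, collecting the contributions from both summands in the definition of $\kappa$, which yields
\begin{equation}
\kappa_1 h = 2 [\partial_z^2 \lambda_z]_{z=0}^{-1} \Big( - Q_* A_1 \mathcal{L}^{\mathrm{qinv}} h - [\partial_z^2 \lambda_z]_{z=0}^{-1} \big( 2 Q_* S(0) + Q_* A_1 [\partial_z^2 \phi_z]_{z=0} \big) Q_* h \Big).
\end{equation}
Comparing with \sref{eq:prlm:def:gamma}, the desired identity for $\kappa_1$ is then equivalent to the algebraic relation
\begin{equation}
2 Q_* S(0) + Q_* A_1 [\partial_z^2 \phi_z]_{z=0} = \tfrac13 [\partial_z^3 \lambda_z]_{z=0},
\end{equation}
which I would verify by substituting $S(0) = \tfrac16 A_3 \Phi_*' + \tfrac12 A_2 [\partial_z \phi_z]_{z=0}$ into the third line of \sref{eq:prlm:ids:for:lambda:derivs}.

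The main obstacle will be the bookkeeping behind the formula for $\kappa_1$: one has to track the $O(z)$ contributions arising simultaneously from $\nu(z) Q_* h$ (via $\nu(z) = 2 [\partial_z^2 \lambda_z]_{z=0}^{-1} - 4 [\partial_z^2 \lambda_z]_{z=0}^{-2} z Q_* S(0) + O(z^2)$) and from the term $- z \nu(z) Q_* R_{\mathcal{L};1}(z) w(z)$ evaluated at $z = 0$, and then to recognize the emerging combination of $A_1, A_2, A_3$ and the $\phi_z$-derivatives as $\tfrac13 [\partial_z^3 \lambda_z]_{z=0}$ through Lemma~\ref{lem:prlms:ids:for:lambda:derivs}. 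Everything else --- analyticity of $w(z)$, $\kappa(z)$, $\nu(z)$ and $B_2(z)$, and invertibility of $\mathcal{L}_z$ for $0 < \abs{z} < \delta_z$ --- is already supplied by Lemma~\ref{lem:prlm:inverse:b2} together with the algebraic simplicity of the zero eigenvalue of $\mathcal{L}_0$.
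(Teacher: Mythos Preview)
Your argument is correct and complete: the identification $\mathcal{L}_z^{-1}=z^{-2}B_2(z)$, the Taylor expansion of $\kappa(z)$, and the verification that $2Q_*S(0)+Q_*A_1[\partial_z^2\phi_z]_{z=0}=\tfrac13[\partial_z^3\lambda_z]_{z=0}$ via \sref{eq:prlm:ids:for:lambda:derivs} all check out, and the remaining pieces indeed assemble into an analytic $\mathcal{J}_z$.

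Your route differs from the paper's. The paper does not expand $B_2(z)$; instead it writes down three explicit functions $v_A$ (the singular part appearing in \sref{eq:prlm:exp:l:z:inv}), $v_B$, $v_C$ and verifies by direct computation, using Lemma~\ref{lem:prlms:ids:for:lambda:derivs}, that $\mathcal{L}_z(v_A+v_B+v_C)=h+O(z^2)$, after which $B_2(z)$ is invoked only to absorb the $O(z^2)$ remainder. Your approach is more economical in that it reuses the explicit decomposition already built in the proof of Lemma~\ref{lem:prlm:inverse:b2} rather than introducing new auxiliary functions; the price is the extra algebraic step of recognising the coefficient $\kappa_1$ as $2[\partial_z^2\lambda_z]_{z=0}^{-1}\Gamma_*$, which in the paper's guess-and-verify approach is baked into the Ansatz from the start. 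Both approaches ultimately rest on the same identities from Lemma~\ref{lem:prlms:ids:for:lambda:derivs}.
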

\begin{proof}
Fix $z \neq 0$ together with $h \in L^2$ and
consider the functions
\begin{equation}
\begin{array}{lcl}
v_A & = &
 2 [\partial_z^2 \lambda_z]_{z=0}^{-1}
\Big[
  z^{-2} \big[\Phi_*' + z [\partial_z \phi_z]_{z=0}  \big] Q_* h
  + z^{-1} \Phi_*' \Gamma_* h
\Big]  ,
\\[0.2cm]
v_B & = &
\mathcal{L}^{\mathrm{qinv}} h   - z\mathcal{L}^{\mathrm{qinv}} A_1 \mathcal{L}^{\mathrm{qinv}} h ,
\\[0.2cm]
v_C & = &
2 [\partial_z^2 \lambda_z]_{z=0}^{-1}
\Big[
   \frac{1}{2} [\partial_z^2 \phi_z]_{z=0} Q_* f
      + [\partial_z \phi_z]_{z=0} \Gamma_* f
\Big]
\\[0.2cm]
& & \qquad
 + 2 z [\partial_z^2 \lambda_z]_{z=0}^{-1}
   \Big[
     \frac{1}{6} [\partial_z^3 \phi_z]_{z=0} Q_* h
      + \frac{1}{2} [\partial_z^2 \phi_z]_{z=0} \Gamma_* h
          %- \mathcal{L}^{\mathrm{qinv}}[ \frac{1}{2} A_1 \phi_0^{(2)}] Qf
     %- \mathcal{L}^{\mathrm{qinv}}[ A_1 \phi_0^{(1)} ]  \Gamma f
\Big]
\\[0.2cm]
& & \qquad
  - z \mathcal{L}^{\mathrm{qinv}} [\partial_z \phi_z]_{z=0} Q_* h .
\end{array}
\end{equation}
Performing the expansion
\begin{equation}
\mathcal{L}_z v_{\#}
= \mathcal{E}_{\#;0} + z \mathcal{E}_{\#;1} + z^2 \mathcal{R}_{\#;2}(z)
\end{equation}
and demanding that $\mathcal{R}_{\#;2}(z)$
is analytic in zero for $\# \in \{A, B, C \}$,
we can
use Lemma \ref{lem:prlms:ids:for:lambda:derivs}
to compute
\begin{equation}
\begin{array}{lcl}
\frac{1}{2} [\partial_z^2 \lambda_z]_{z=0}
   \mathcal{E}_{A;0} & = &
  \big(\frac{1}{2} A_2 \Phi_*' + A_1 [\partial_z \phi_z]_{z=0} \big) Q_* h
     + A_1 \Phi_*' \Gamma_* h ,
\\[0.2cm]
\frac{1}{2} [\partial_z^2 \lambda_z]_{z=0} \mathcal{E}_{A;1} & = &
  \big(\frac{1}{6} A_3 \Phi_*' + \frac{1}{2} A_2 [\partial_z \phi_z]_{z=0}
     \big) Q_* h
   + \frac{1}{2} A_2 \Phi_*' \Gamma_* h ,
\end{array}
\end{equation}
together with
\begin{equation}
\begin{array}{lcl}
\mathcal{E}_{B;0} & = &
  h - \Phi_*' Q_* h ,
\\[0.2cm]
\mathcal{E}_{B;1} & = &
  \Phi_*' Q_* A_1 \mathcal{L}^{\mathrm{qinv}} h
\end{array}
\end{equation}
and finally
\begin{equation}
\begin{array}{lcl}
\frac{1}{2} [\partial_z^2 \lambda_z]_{z=0} \mathcal{E}_{C;0} & = &
- \big(\frac{1}{2} A_2 \Phi_*' + A_1 [\partial_z \phi_z]_{z=0} \big) Q_* f
+ \frac{1}{2} [\partial_z^2 \lambda_z]_{z=0} \Phi_*' Q_* f
 - A_1 \Phi_*' \Gamma_* h ,
\\[0.2cm]
\frac{1}{2} [\partial_z^2 \lambda_z]_{z=0} \mathcal{E}_{C;1} & = &
\frac{1}{2} A_1 [\partial_z^2 \phi_z]_{z=0} Q_* h
+ A_1 [\partial_z \phi_z]_{z=0} \Gamma_* h
\\[0.2cm]
& & \qquad
    - \big(\frac{1}{6} A_3 \Phi_*' + \frac{1}{2} A_2 [\partial_z \phi_z]_{z=0}
        + \frac{1}{2} A_1 [\partial_z^2 \phi_z]_{z=0} \big) Q_* h
    + \frac{1}{6} [\partial_z^3 \lambda_z]_{z=0}  \Phi_*' Q_* h
       %+ \frac{1}{2} \lambda''(0) \phi_0^{(1)} Qf
  \\[0.2cm]
& & \qquad
     - \big(\frac{1}{2} A_2 \Phi_*' + A_1 [\partial_z \phi_z]_{z=0} \big) \Gamma_* h
    + \frac{1}{2} [\partial_z^2 \lambda_z]_{z=0}     \Phi_*' \Gamma_* h .
\end{array}
\end{equation}
Summing these expressions we see that
\begin{equation}
\begin{array}{lcl}
\mathcal{E}_{A;0} + \mathcal{E}_{B;0}
 + \mathcal{E}_{C;0}
&  = & h,
\\[0.2cm]
\mathcal{E}_{A;1} + \mathcal{E}_{B;1} + \mathcal{E}_{C;1}
& = &
\Phi_*' Q_* A_1 \mathcal{L}^{\mathrm{qinv}} h
+ \Phi_*' \Gamma_* h
+ \frac{1}{3} [\partial_z^2 \lambda_z]_{z=0}^{-1}
    [\partial_z^3 \lambda_z]_{z=0}  \Phi_*' Q_* h
\\[0.2cm]
& = &
0 ,
\end{array}
\end{equation}
where we used \sref{eq:prlm:def:gamma} to simplify the second expression.

In particular,
Lemma \ref{lem:prlm:inverse:b2}
allows us to write
\begin{equation}
\mathcal{L}_z^{-1} h =  v_A + v_B + v_C
- B_2(z) \big[ \mathcal{R}_{A;2}(z) + \mathcal{R}_{B;2}(z) + \mathcal{R}_{C;2}(z)
  \big],
\end{equation}
from which the desired expansion follows.
\end{proof}

\begin{proof}[Proof of Proposition \ref{prp:hom:delta:inv}]
For any  $\delta_z > 0$ and sufficiently small
$\eta_{\max} > 0$,
(HS1) implies
that $\mathcal{L}_z$ is invertible
for $\abs{\Re z} \le \eta_{\max}$
and $\delta_z \le \abs{\Im z} \le \pi$.
The result now follows from the
expansion obtained for $\mathcal{L}_z^{-1}$
in Lemma \ref{lem:prlm:exp:l:z:inv}.
\end{proof}

We now proceed towards establishing the
representation formula \sref{eq:prlm:main:repr:formula}.
To this end, we introduce the left-shift operator $S$
that acts on sequences as
\begin{equation}
\label{eq:prlm:intro:s}
[S V]_l = V_{l + 1}.
\end{equation}

\begin{lem}
\label{lem:prlm:lpl:on:shifts}
Consider any sequence $y \in BX_{\mu,\nu}(\mathcal{H})$
and pick an integer $k \ge 1$.
Then we have the identities
\begin{equation}
\label{eq:prlm:lpl:plus:shifts}
\begin{array}{lcl}
\mathcal{Z}_+[S^k y](z) & = &
e^{zk} \mathcal{Z}_+[y](z)
  -  \sum_{j=0}^{k-1}  e^{z(k-j)} v_j ,
\\[0.2cm]
\mathcal{Z}_+[S^{-k} y](z) & = &
  e^{-zk} \mathcal{Z}_+[y](z)
  +  \sum_{j=1}^{k}  e^{-z(k-j)} v_{-j}
\\[0.2cm]
\end{array}
\end{equation}
whenever $\Re z > \nu$,
together with
\begin{equation}
\label{eq:prlm:lpl:minus:shifts}
\begin{array}{lcl}
\mathcal{Z}_-[S^k y](z) & = &
  e^{zk} \mathcal{Z}_-[y](z)
  +  \sum_{j=0}^{k-1}  e^{z(k-j)} v_j ,
\\[0.2cm]
\mathcal{Z}_-[S^{-k} y](z) & = &
  e^{-zk} \mathcal{Z}_-[y](z)
  -  \sum_{j=1}^{k}  e^{-z(k-j)} v_{-j}
\end{array}
\end{equation}
whenever $\Re z < \mu$.
\end{lem}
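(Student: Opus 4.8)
The plan is to establish all four identities by elementary manipulation of the defining series, using only that the shift operators act as $[S^{k}y]_l = y_{l+k}$ and $[S^{-k}y]_l = y_{l-k}$. First I would record that $y \in BX_{\mu,\nu}(\mathcal{H})$ yields a constant $C>0$ with $\norm{y_l}_{\mathcal{H}} \le C e^{\mu l}$ for $l<0$ and $\norm{y_l}_{\mathcal{H}} \le C e^{\nu l}$ for $l\ge 0$. In particular $S^{\pm k}y$ again lies in $BX_{\mu,\nu}(\mathcal{H})$, so the series defining $\mathcal{Z}_+[S^{\pm k}y](z)$ converge absolutely for $\Re z > \nu$ and those defining $\mathcal{Z}_-[S^{\pm k}y](z)$ converge absolutely for $\Re z < \mu$. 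This absolute convergence is what legitimizes the reindexings below.

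For the forward transform I would start from $\mathcal{Z}_+[S^{k}y](z) = \sum_{n\ge 0} e^{-zn}y_{n+k}$ and substitute $m=n+k$, obtaining $\sum_{m\ge k} e^{z(k-m)}y_m = e^{zk}\bigl(\sum_{m\ge 0}e^{-zm}y_m - \sum_{m=0}^{k-1}e^{-zm}y_m\bigr)$, which is the first identity in \sref{eq:prlm:lpl:plus:shifts}. The second identity follows the same way from $\mathcal{Z}_+[S^{-k}y](z) = \sum_{n\ge 0}e^{-zn}y_{n-k}$ via $m=n-k$, this time peeling off the finitely many terms with $-k\le m\le -1$ and relabelling $m=-j$. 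For the backward transforms one starts from $\mathcal{Z}_-[H](z) = \sum_{n\ge 1}e^{zn}H_{-n}$, uses $[S^{k}y]_{-n}=y_{k-n}$ and $[S^{-k}y]_{-n}=y_{-k-n}$, substitutes $m=n-k$ respectively $m=n+k$, and again extracts the finite correction sum needed to make the summation range coincide with that of $\mathcal{Z}_-[y](z)$; the sign in front of the correction sum in \sref{eq:prlm:lpl:minus:shifts} is then dictated by whether those extracted terms lie inside or outside the new index set.

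I do not anticipate any genuine obstacle here: the statement is a bookkeeping lemma for discrete Laplace transforms, and the only point demanding a modicum of care is to keep the index ranges of the finite correction sums straight and to perform each rearrangement in the half-plane ($\Re z>\nu$ or $\Re z<\mu$) where the relevant series is absolutely convergent. Everything else is a routine change of summation variable.
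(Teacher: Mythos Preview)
Your proposal is correct and follows essentially the same route as the paper: both proofs compute each of the four transforms directly from the defining series by the change of summation index $m = n \pm k$ and then peel off the finite correction sum needed to restore the original summation range. The only difference is cosmetic---you make the absolute-convergence justification explicit, whereas the paper leaves it implicit.
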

\begin{proof}
Upon computing
\begin{equation}
\begin{array}{lcl}
\sum_{j \ge 0} e^{-z j} y_{j+k}
& = & \sum_{j' \ge k} e^{zk} e^{- z j'} y_{j'}
\\[0.2cm]
& = &
e^{z k} \sum_{j' \ge 0}  e^{ -z j'} y_{j'}
- e^{zk} \sum_{j'=0}^{k-1} e^{-z j'} y_{j'}
\end{array}
\end{equation}
together with
\begin{equation}
\begin{array}{lcl}
\sum_{j \ge 0} e^{-z j} y_{j-k}
&=& \sum_{j' \ge -k} e^{-z k} e^{- z j'} y_{j'}
\\[0.2cm]
&=&
e^{-z k} \sum_{j' \ge 0}  e^{- z j'} y_{j'}
+ e^{-z k} \sum_{j = 1}^{k}  e^{ z j} y_{-j},
\\[0.2cm]
\end{array}
\end{equation}
the identities \sref{eq:prlm:lpl:plus:shifts} readily follow.
In addition, we compute
\begin{equation}
\begin{array}{lcl}
\sum_{j \ge 1} e^{z j} y_{-j+k}
& = &
\sum_{j' \ge 1- k} e^{zk} e^{ z j'} y_{-j'}
\\[0.2cm]
& = &
 e^{zk}\sum_{j' \ge 1 }  e^{ z j'} y_{-j'}
 + e^{zk}\sum_{j=0}^{k-1}  e^{-zj} y_j
\end{array}
\end{equation}
together with
\begin{equation}
\begin{array}{lcl}
\sum_{j \ge 1} e^{z j} y_{-j-k}
& = & \sum_{j' \ge k+1} e^{-z k} e^{ z j'} y_{-j'}
\\[0.2cm]
& = & e^{-z k} \sum_{j' \ge 1}  e^{ z j'} y_{-j'}
- e^{-zk} \sum_{j'=1}^k e^{z j'} y_{-j'} ,
\end{array}
\end{equation}
from which \sref{eq:prlm:lpl:minus:shifts} follows.
\end{proof}

We now introduce for any $z \in \mathbb{C}$
and any $w: \{-\sigma_* + 1, \ldots, \sigma_* \}  \to H^1$
the notation
\begin{equation}
\begin{array}{lcl}
q_z [w]
& = &
\Big(
  \sum_{j=1}^{ \sigma_B- 1}
    w_{-j}  \sum_{k=j}^{\sigma_B - 1} e^{-z(k-j) } ,
    \sum_{j=0}^{\sigma_A-1}
       w_j \sum_{k=j+1}^{\sigma_A} e^{z (k - j) }
   ,
\\[0.2cm]
& & \qquad
  \sum_{j=0}^{\sigma_B-1}
       w_j \sum_{k=j+1}^{\sigma_B} e^{z (k - j) },
   \sum_{j=1}^{ \sigma_A- 1}
    w_{-j}  \sum_{k=j}^{\sigma_A - 1} e^{-z(k-j) } ,
    0
\Big).
\end{array}
\end{equation}

\begin{cor}
\label{cor:prlm:id:for:z:pm:s:diamond}
Consider any sequence $w \in BX_{\mu,\nu}(H^1)$
and pick an integer $k \ge 1$.
Then we have the identity
\begin{equation}
\begin{array}{lcl}
\mathcal{Z}_+\big[s^\diamond[w] \big](z)
& = & s^\diamond_z \mathcal{Z}_+[w](z)
 - q_z \mathrm{ev}_0 w
\end{array}
\end{equation}
whenever $\Re z > \nu$,
together with
\begin{equation}
\begin{array}{lcl}
\mathcal{Z}_-\big[s^\diamond[w]\big](z)
& = & s^\diamond_z \mathcal{Z}_-[w](z)
 + q_z \mathrm{ev}_0 w
\end{array}
\end{equation}
whenever $\Re z < \mu$.
\end{cor}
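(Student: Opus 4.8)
The plan is to observe that $s^\diamond[w]$ is, slot by slot, a finite $\Real$-linear combination of shifted copies $S^{\pm k}w$ of the sequence $w$, with coefficients that are exactly the terms appearing in the definition of the vector $s^\diamond_z$. Writing $S$ for the left-shift \sref{eq:prlm:intro:s}, the second slot of $s^\diamond[w]_l$ equals $\sum_{k=1}^{\sigma_A}[S^k w]_l$, the third slot equals $\sum_{k=1}^{\sigma_B}[S^k w]_l$, while the first and fourth slots equal $-\sum_{k=0}^{\sigma_B-1}[S^{-k}w]_l$ and $-\sum_{k=0}^{\sigma_A-1}[S^{-k}w]_l$ respectively, and the fifth slot vanishes. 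Since each $\mathcal{Z}_\pm$ is linear and these are finite sums, it suffices to apply Lemma \ref{lem:prlm:lpl:on:shifts} to each term separately and collect the results, the admissible half-planes $\Re z > \nu$ respectively $\Re z < \mu$ being inherited directly from that lemma.

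Carrying this out for $\mathcal{Z}_+$, the identities \sref{eq:prlm:lpl:plus:shifts} split each $\mathcal{Z}_+[S^{\pm k}w](z)$ into a bulk contribution $e^{\pm zk}\mathcal{Z}_+[w](z)$ and a finite boundary contribution built from the entries of $\mathrm{ev}_0 w$. Summing the bulk contributions over each slot reproduces precisely the entries of $s^\diamond_z$, so the bulk part assembles into $s^\diamond_z\,\mathcal{Z}_+[w](z)$. In each slot the boundary contributions amount to a double sum of the form $\sum_{k}\sum_{j<k}e^{\pm z(k-j)}w_{\pm j}$; interchanging the order of summation (a finite Fubini step) rewrites this as $\sum_{j}w_{\pm j}\sum_{k>j}e^{\pm z(k-j)}$, which is, up to the overall sign carried by the corresponding slot of $s^\diamond[\,\cdot\,]$, exactly the matching slot of $q_z[\mathrm{ev}_0 w]$. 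This gives $\mathcal{Z}_+[s^\diamond[w]](z) = s^\diamond_z\mathcal{Z}_+[w](z) - q_z\mathrm{ev}_0 w$. The backward case is identical, except that \sref{eq:prlm:lpl:minus:shifts} carries the opposite sign on every boundary term, so the correction appears as $+q_z\mathrm{ev}_0 w$.

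The only delicate point is the bookkeeping in the Fubini step: one must check that the index ranges obtained after interchanging the sums coincide exactly with those in the definition of $q_z[w]$, keeping in mind that the $k=0$ term in the first and fourth slots is the unshifted sequence $w$ (which feeds into the bulk term via the coefficient $1=e^{0}$ but produces no boundary term) and that every index occurring in $q_z[\mathrm{ev}_0 w]$ indeed lies in $\{-\sigma_*+1,\dots,\sigma_*\}$, where $\mathrm{ev}_0 w$ is defined. I expect this to be entirely routine, so that the corollary follows by direct computation from Lemma \ref{lem:prlm:lpl:on:shifts}.
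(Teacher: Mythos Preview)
Your proposal is correct and follows essentially the same approach as the paper: the paper's proof also applies Lemma \ref{lem:prlm:lpl:on:shifts} componentwise to the shifted sums defining $s^\diamond[w]$, then interchanges the order of the resulting finite double sums to identify the boundary contribution with $q_z\,\mathrm{ev}_0 w$. The paper only writes out the first two components explicitly and invokes the $\sigma_A\leftrightarrow\sigma_B$ symmetry for the remaining two, which is exactly the bookkeeping you describe.
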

\begin{proof}
For the first compoment,
we compute
\begin{equation}
\begin{array}{lcl}
\sum_{j=0}^{\sigma_B - 1}
 \mathcal{Z}_+[S^{-j} w](z)
& = &
  \sum_{j=0}^{\sigma_B - 1}
    e^{-j z} \mathcal{Z}_+[w](z)
  + \sum_{j=1}^{\sigma_B - 1}
   \sum_{j'=1}^{j}
     e^{-z(j - j')} w_{-j'}
\\[0.2cm]
& = &
  \sum_{j=0}^{\sigma_B - 1}
    e^{-jz} \mathcal{Z}_+[w](z)
  + \sum_{j'=1}^{\sigma_B - 1}
    w_{-j'}
    \sum_{j=j'}^{\sigma_B -1}
      e^{-z (j-j')} .
\end{array}
\end{equation}
For the second component,
we note that
\begin{equation}
\begin{array}{lcl}
\sum_{j=1}^{\sigma_A}
 \mathcal{Z}_+[S^{j} w ](z)
& = &
  \sum_{j=1}^{\sigma_A}
    e^{j z} \mathcal{Z}_+[w](z)
  - \sum_{j=1}^{\sigma_A}
   \sum_{j'=0}^{j-1}
     e^{z(j - j')} w_{j'}
\\[0.2cm]
& = &
  \sum_{j=1}^{\sigma_A}
    e^{jz} \mathcal{Z}_+[w](z)
  - \sum_{j'=0}^{\sigma_A - 1}
    w_{j'}
    \sum_{j=j'+1}^{\sigma_A}
      e^{z (j-j')} .
\end{array}
\end{equation}
The desired expression
follows directly from
these computations,
noting that the
third and fourth
component
can be obtained by
flipping
$\sigma_A$ and $\sigma_B$
in the expressions
for the second respectively
first component.
\end{proof}

\begin{cor}
\label{cor:prlm:q:cnst:seq}
Consider any $w \in H^1$.
Then we have the identities
\begin{equation}
\begin{array}{lcl}
q_0[w \mathbf{1}]
& = & [\partial_z s^\diamond_z ]_{z=0} w ,
\\[0.2cm]
[\partial_z q_z]_{z=0} [w \mathbf{1} ]
& = & \frac{1}{2}\Big(
    [\partial_z s^\diamond_z ]_{z=0}
    + [\partial_z^2 s^\diamond_z ]_{z=0}
  \Big) w .
\end{array}
\end{equation}
\end{cor}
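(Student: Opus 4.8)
\emph{Proof proposal.} The plan is to sidestep the nested finite sums defining $q_z$ by specializing the transform identity of Corollary~\ref{cor:prlm:id:for:z:pm:s:diamond} to a constant input. Fix $w \in H^1$ and let $\underline{w}: \Wholes \to H^1$ denote the constant sequence $\underline{w}_l \equiv w$, which lies in $BX_{\mu,\nu}(H^1)$ for any $\mu < 0$ and $\nu > 0$ and satisfies $\mathrm{ev}_0 \underline{w} = w\mathbf{1}$. Since $s^\diamond$ carries $\underline w$ to the constant sequence with value $s^\diamond_0 w$ (directly from the definition of $s^\diamond$, or as the $z=0$ instance of $s^\diamond[e^{z\cdot}w] = e^{z\cdot} s^\diamond_z w$), and since $\mathcal{Z}_+[\underline w](z) = (1 - e^{-z})^{-1} w$ for $\Re z > 0$, the first identity of Corollary~\ref{cor:prlm:id:for:z:pm:s:diamond} reads
\[
\frac{s^\diamond_0\, w}{1 - e^{-z}} \;=\; \frac{s^\diamond_z\, w}{1 - e^{-z}} \;-\; q_z[w\mathbf{1}],
\qquad \Re z > 0,
\]
that is, $q_z[w\mathbf{1}] = (1-e^{-z})^{-1}\big(s^\diamond_z - s^\diamond_0\big) w$ on $\Re z > 0$.

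Next I would note that the right-hand side extends analytically across $z=0$: the numerator $s^\diamond_z - s^\diamond_0$ vanishes there while $1 - e^{-z}$ has a simple zero, so the apparent singularity is removable, and the left-hand side is manifestly entire. Hence the identity persists in a neighbourhood of the origin, and it remains only to extract its first two Taylor coefficients. Inserting
\[
s^\diamond_z - s^\diamond_0 = z\,[\partial_z s^\diamond_z]_{z=0} + \tfrac{1}{2} z^2 [\partial_z^2 s^\diamond_z]_{z=0} + O(z^3),
\qquad
1 - e^{-z} = z\big(1 - \tfrac{1}{2} z + O(z^2)\big),
\]
a short expansion yields
\[
q_z[w\mathbf{1}] = \Big( [\partial_z s^\diamond_z]_{z=0} + \tfrac{1}{2} z\big( [\partial_z s^\diamond_z]_{z=0} + [\partial_z^2 s^\diamond_z]_{z=0} \big) + O(z^2) \Big) w ,
\]
and reading off the constant and the linear term gives exactly the two claimed identities.

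There is no genuine obstacle here; the only points that deserve a line of care are the verification that the constant sequence is an admissible input in Corollary~\ref{cor:prlm:id:for:z:pm:s:diamond} (which merely confines the argument to the half-plane $\Re z > 0$ on which both forward transforms converge) and the removability of the singularity of $(s^\diamond_z - s^\diamond_0)/(1 - e^{-z})$ at the origin, which is what legitimizes comparing Taylor coefficients. As an independent check of the first identity one may compute the first component of $q_0[w\mathbf{1}]$ directly as $w\sum_{j=1}^{\sigma_B - 1}(\sigma_B - j) = \tfrac{1}{2}\sigma_B(\sigma_B - 1)\, w$, which agrees with the first component $\sum_{j=0}^{\sigma_B - 1} j$ of $[\partial_z s^\diamond_z]_{z=0}\, w$.
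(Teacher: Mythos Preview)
Your proposal is correct and takes a genuinely different route from the paper. The paper proves both identities by direct, component-by-component computation: it expands each entry of $q_0[w\mathbf{1}]$ and of $[\partial_z q_z]_{z=0}[w\mathbf{1}]$ as a nested finite sum, rewrites these sums in closed form, and matches them against the corresponding components of $[\partial_z s^\diamond_z]_{z=0}$ and $[\partial_z^2 s^\diamond_z]_{z=0}$. Your argument instead derives the closed-form identity $q_z[w\mathbf{1}] = (1-e^{-z})^{-1}(s^\diamond_z - s^\diamond_0)\,w$ in one stroke from Corollary~\ref{cor:prlm:id:for:z:pm:s:diamond} applied to the constant sequence, and then reads off the zeroth and first Taylor coefficients. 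This is more conceptual and avoids the index bookkeeping entirely; it also makes clear that higher-order identities of the same type would follow by continuing the expansion. The paper's approach, on the other hand, is fully self-contained and does not rely on the transform machinery of Corollary~\ref{cor:prlm:id:for:z:pm:s:diamond}, which may be preferable if one wants the corollary to stand on its own. The analytic continuation step you flag is indeed the only point requiring care, and your justification (entireness of $q_z[w\mathbf{1}]$ as a finite sum of exponentials, removability of the singularity on the right) is adequate.
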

\begin{proof}
The expressions follow from the direct computation
\begin{equation}
\begin{array}{lcl}
q_0[w \mathbf{1}]
& = &
\Big(
\sum_{j=1}^{\sigma_B - 1}
  (\sigma_B - j ),
\sum_{j=0}^{\sigma_A - 1}
  (\sigma_A - j ),
  \sum_{j=0}^{\sigma_B - 1}
  (\sigma_B - j ),
  \sum_{j=1}^{\sigma_A - 1}
  (\sigma_A - j ), 0
\Big) w
\\[0.2cm]
& = &
\Big(
\sum_{j=1}^{\sigma_B - 1}
    j ,
\sum_{j=1}^{\sigma_A} j ,
\sum_{j=1}^{\sigma_B} j,
\sum_{j=1}^{\sigma_A - 1} j, 0
\Big) w
\\[0.2cm]
& = &
[\partial_z s^\diamond_z ]_{z=0} w ,
\end{array}
\end{equation}
%Note that this is indeed
%$[\partial_z s^\diamond_z ]_{z=0}$.
together with
%We also have
\begin{equation}
\begin{array}{lcl}
[\partial_z q_z]_{z=0}[w \mathbf{1} ]
& = &
\Big(
-\sum_{j=1}^{\sigma_B - 1}
  \sum_{k=j}^{\sigma_B - 1} (k-j),
\sum_{j=0}^{\sigma_A - 1}
  \sum_{k = j+1}^{\sigma_A }
    (k-j),
\\[0.2cm]
& & \qquad
\sum_{j=0}^{\sigma_B - 1}
  \sum_{k = j+1}^{\sigma_B }
    (k-j),
-\sum_{j=1}^{\sigma_A - 1}
  \sum_{k=j}^{\sigma_A - 1} (k-j), 0
\Big) w
\\[0.2cm]
& = &
\frac{1}{2}\Big(
  - \sum_{j=1}^{\sigma_B - 1}
    (\sigma_B - 1 -j)
    (\sigma_B - j),
    \sum_{j=0}^{\sigma_A - 1}
      \frac{1}{2}(\sigma_A - j)(\sigma_A + 1 - j)
  ,
\\[0.2cm]
& & \qquad
      \sum_{j=0}^{\sigma_B - 1}
      \frac{1}{2}(\sigma_B - j)(\sigma_B + 1 - j),
   - \sum_{j=1}^{\sigma_A - 1}
    (\sigma_A - 1 -j)
    (\sigma_A - j) , 0
\Big)  w
\\[0.2cm]
& = &
\frac{1}{2}
q_0[w \mathbf{1}]
 +
 \frac{1}{2}\Big(
  - \sum_{j=1}^{\sigma_B - 1}
    (\sigma_B - j)^2,
    \sum_{j=0}^{\sigma_A - 1}
      (\sigma_A - j)^2 ,
\\[0.2cm]
& & \qquad \qquad
   \sum_{j=0}^{\sigma_B - 1}
      (\sigma_B - j)^2 ,
    - \sum_{j=1}^{\sigma_A - 1}
    (\sigma_A - j)^2 , 0
\Big) w
\\[0.2cm]
& = &
  \frac{1}{2}\Big(
    [\partial_z s^\diamond_z ]_{z=0}
    + [\partial_z^2 s^\diamond_z ]_{z=0}
  \Big) w .
\end{array}
\end{equation}
\end{proof}

For any pair of sequences
\begin{equation}
(v,w): \{ - \sigma_* + 1 , \ldots, \sigma_* \} \to
H^1 \times H^1
\end{equation}
we introduce the notation
\begin{equation}
\label{eq:prlm:def:mc:q}
\mathcal{Q}(z)[v,w]
 = \big( (v_0 - w_0)e^z, Df (\tau \Phi_*) \overline{\tau} q_z w \big)^T.
\end{equation}
This allows us to take the two discrete Laplace transforms of the
main linear system \sref{eq:lin:repr:inhom:probl}.

\begin{cor}
\label{cor:prlm:id:for:lpl:trnfs}
Assume that (Hf) and $(H\Phi)$ are satisfied.
Pick a pair $\mu,\nu \in \Real$,
consider any $V \in BX_{\mu,\nu}(H^1 \times H^1)$
and write
\begin{equation}
H = \mathcal{D}(0)[V] - \mathcal{T}(0)[V] .
\end{equation}
Then we have the identity
\begin{equation}
\label{eq:cor:lpl:trnfs:reptr}
\begin{array}{lcl}
\Delta(z) \mathcal{Z}_+[V](z) & = & \mathcal{Q}(z) \mathrm{ev}_0 V
    + \mathcal{Z}_+[H](z)
\end{array}
\end{equation}
whenever $\Re s  > \nu$,
together with
\begin{equation}
\label{eq:cor:lpl:trnfs:reptr:min}
\begin{array}{lcl}
\Delta(z) \mathcal{Z}_-[V](z) & = & -\mathcal{Q}(z) \mathrm{ev}_0 V
    + \mathcal{Z}_-[H](z)
\end{array}
\end{equation}
whenever $\Re z  < \mu$.
\end{cor}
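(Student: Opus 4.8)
The plan is to obtain \sref{eq:cor:lpl:trnfs:reptr} and \sref{eq:cor:lpl:trnfs:reptr:min} by applying the forward and backward discrete Laplace transforms term-by-term to the identity $H = \mathcal{D}(0)[V] - \mathcal{T}(0)[V]$, and then matching the boundary contributions that are produced when the shift operator $S$ is carried through $\mathcal{Z}_\pm$ against the definitions \sref{eq:prlm:def:delta:z} of $\Delta(z)$ and \sref{eq:prlm:def:mc:q} of $\mathcal{Q}(z)$. As a preliminary step I would record that $\mathcal{L}_*$ and the operator $p \mapsto Df(\tau\Phi_*)\overline{\tau}p$ act only on the continuous variable (a finite combination of shifts, a derivative, and multiplication by bounded matrix-valued coefficients), so they commute with the summations defining $\mathcal{Z}_\pm$. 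Moreover, since $\mathcal{D}(0)$ and $\mathcal{T}(0)$ involve only finitely many index shifts together with the bounded map $\mathcal{L}_* : H^1 \to L^2$, the sequence $H$ again lies in $BX_{\mu,\nu}(H^1 \times L^2)$, so $\mathcal{Z}_+[H](z)$ converges for $\Re z > \nu$ and $\mathcal{Z}_-[H](z)$ converges for $\Re z < \mu$, exactly on the half-planes where $\mathcal{Z}_\pm[V]$ is already defined.

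For the forward identity, write $V = (v,w)$ and handle the two components of $H_l$ separately. The first component equals $v_{l+1} - v_l - w_{l+1} = [Sv]_l - v_l - [Sw]_l$; applying $\mathcal{Z}_+$ and invoking Lemma \ref{lem:prlm:lpl:on:shifts} with $k=1$ yields
\[
\mathcal{Z}_+\big[v_{\cdot+1} - v_\cdot - w_{\cdot+1}\big](z) = (e^z-1)\mathcal{Z}_+[v](z) - e^z\mathcal{Z}_+[w](z) - e^z(v_0 - w_0),
\]
which is precisely the top row of $\Delta(z)\mathcal{Z}_+[V](z)$ minus the top component of $\mathcal{Q}(z)\,\mathrm{ev}_0 V$. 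The second component of $H_l$ equals $\mathcal{L}_* v_l + Df(\tau\Phi_*)\overline{\tau}s^\diamond[w]_l$; pulling $\mathcal{L}_*$ and $Df(\tau\Phi_*)\overline{\tau}$ through the transform and applying Corollary \ref{cor:prlm:id:for:z:pm:s:diamond} gives
\[
\mathcal{Z}_+\big[\mathcal{L}_* v_\cdot + Df(\tau\Phi_*)\overline{\tau}s^\diamond[w]_\cdot\big](z) = \mathcal{L}_*\mathcal{Z}_+[v](z) + Df(\tau\Phi_*)\overline{\tau}s^\diamond_z\mathcal{Z}_+[w](z) - Df(\tau\Phi_*)\overline{\tau}q_z\,\mathrm{ev}_0 w,
\]
which is the bottom row of $\Delta(z)\mathcal{Z}_+[V](z)$ minus the bottom component of $\mathcal{Q}(z)\,\mathrm{ev}_0 V$. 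Combining the two components, $\mathcal{Z}_+[H](z) = \Delta(z)\mathcal{Z}_+[V](z) - \mathcal{Q}(z)\,\mathrm{ev}_0 V$, which rearranges into \sref{eq:cor:lpl:trnfs:reptr}. The backward identity \sref{eq:cor:lpl:trnfs:reptr:min} follows from the identical computation with $\mathcal{Z}_-$ in place of $\mathcal{Z}_+$: the backward shift formulas in Lemma \ref{lem:prlm:lpl:on:shifts} and the backward $s^\diamond$ identity in Corollary \ref{cor:prlm:id:for:z:pm:s:diamond} carry the opposite sign on every boundary term, which converts the right-hand side into $-\mathcal{Q}(z)\,\mathrm{ev}_0 V + \mathcal{Z}_-[H](z)$.

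I do not expect a genuine obstacle here; the only point demanding attention is the bookkeeping of index windows. The boundary terms coming from the single shifts in the first component of $H$ involve only $v_0$ and $w_0$, while Corollary \ref{cor:prlm:id:for:z:pm:s:diamond} feeds $q_z$ the block $\mathrm{ev}_0 w$ supported on $\{-\sigma_*+1,\dots,\sigma_*\}$. Since $\mathrm{ev}_0 V = (V_{-\sigma_*+1},\dots,V_{\sigma_*})$ collects exactly these entries, all boundary contributions are honest linear functions of $\mathrm{ev}_0 V$, and the way $\mathcal{Q}(z)$ was defined in \sref{eq:prlm:def:mc:q} — with $q_z$ applied to the $w$-part of its argument and the scalar term built from the $0$-indexed $v$ and $w$ — is precisely what makes these pieces assemble into $\mathcal{Q}(z)\,\mathrm{ev}_0 V$. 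One should also note, for completeness, that the intermediate identity $\mathcal{D}(0)[e^{z\cdot}V] - \mathcal{T}(0)[e^{z\cdot}V] = e^{z\cdot}\Delta(z)V$ is exactly the $\mathrm{ev}_0$-boundary-free shadow of this computation, which is a useful sanity check on the signs.
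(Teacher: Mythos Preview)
Your proof is correct and follows essentially the same approach as the paper's own proof: both apply Lemma~\ref{lem:prlm:lpl:on:shifts} to the first component of $H$ and Corollary~\ref{cor:prlm:id:for:z:pm:s:diamond} to the second, then match the resulting boundary terms against the definitions of $\Delta(z)$ and $\mathcal{Q}(z)$. Your version is somewhat more explicit about why $\mathcal{L}_*$ and $Df(\tau\Phi_*)\overline{\tau}$ commute with the transforms and about the index bookkeeping, but the argument is the same.
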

\begin{proof}
Writing $V = (v, w)$ and $H = (g,h)$, we
may use Lemma \ref{lem:prlm:lpl:on:shifts}
and Corollary \ref{cor:prlm:id:for:z:pm:s:diamond}
to compute
\begin{equation}
\begin{array}{lcl}
(e^z -1 ) \mathcal{Z}_+[v](z) - v_0 e^z
 & = & e^z (\mathcal{Z}_+[w](z) - w_0) + \mathcal{Z}_+[g](z) ,
\\[0.2cm]
Df( \tau \Phi_*) \overline{\tau} \big[
s^\diamond_z \mathcal{Z_+}[w](z) - q_z w
\big]
& = &
      - \mathcal{L}_* \mathcal{Z}_+[v](z) + \mathcal{Z}_+[h](z) ,
\end{array}
\end{equation}
which is equivalent to \sref{eq:cor:lpl:trnfs:reptr}.
The remaining identity \sref{eq:cor:lpl:trnfs:reptr:min}
follows in a similar fashion.
\end{proof}

For convenience, we introduce the linear operators
\begin{equation}
\begin{array}{lcl}
M_0 [v,w]
  &=&
Df(\tau\Phi_*) \overline{\tau}
\Big[
  s^\diamond_0 (v_0 - w_0)
        + q^\diamond_0   w
\Big] ,
\\[0.2cm]
M_1 [v,w] & = &
Df(\tau\Phi_*) \overline{\tau}
\Big[
  [\partial_z s^\diamond_z]_{z=0} (v_0 - w_0)
        + [\partial_z q^\diamond_z]_{z=0}   w
\Big] ,
\end{array}
\end{equation}
together with the projections
\begin{equation}
\begin{array}{lcl}
\pi_A
 & = & 2 [\partial_z^2 \lambda_z]_{z=0}^{-1}
    \big[   Q_*  M_1 + \Gamma_* M_0 \big] ,
\\[0.2cm]
\pi_B
 & = & 2 [\partial_z^2 \lambda_z]_{z=0}^{-1}  Q_* M_0 .
\end{array}
\end{equation}

\begin{lem}
\label{lem:prlm:id:for:residue}
Assume that (Hf), $(H\Phi)$, (HS1)-(HS3) and (HM) are satisfied.
Then we have the identity
\begin{equation}
\begin{array}{lcl}
\mathrm{Res}_{z = 0} \, e^{z l} \Delta(z)^{-1} \mathcal{Q}(z)
& = & [V_{\mathrm{hom}}^A ]_l \pi_A + [V^B_{\mathrm{hom}}]_l \pi_B .
\end{array}
\end{equation}
\end{lem}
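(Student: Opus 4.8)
The plan is to compute the residue at $z=0$ of the meromorphic function $z \mapsto e^{zl}\Delta(z)^{-1}\mathcal{Q}(z)$ by combining two ingredients: the Laurent expansion of $\Delta(z)^{-1}$ near $z=0$, which follows from the expansion of $\mathcal{L}_z^{-1}$ established in Lemma \ref{lem:prlm:exp:l:z:inv} together with the block formula \sref{eq:prlm:def:delta:z:inv}, and the Taylor expansion of $\mathcal{Q}(z)\,\mathrm{ev}_0 V$ near $z=0$, which follows from the Taylor expansions of $s^\diamond_z$ and $q_z$ recorded in Corollary \ref{cor:prlm:q:cnst:seq} and the definitions of $M_0$ and $M_1$. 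Since $\Delta(z)^{-1}$ has at worst a second-order pole at $z=0$ (inherited from the $z^{-2}$ term in $\mathcal{L}_z^{-1}$) and $e^{zl}$ and $\mathcal{Q}(z)$ are analytic, the residue is the coefficient of $z^{-1}$ in the product; extracting it amounts to pairing the $z^{-2}$ part of $\Delta(z)^{-1}$ against the $z^1$ coefficients of $e^{zl}\mathcal{Q}(z)$ and the $z^{-1}$ part of $\Delta(z)^{-1}$ against the $z^0$ coefficient.

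First I would write out $\Delta(z)^{-1}$ explicitly using \sref{eq:prlm:def:delta:z:inv} and substitute the expansion \sref{eq:prlm:exp:l:z:inv} for $\mathcal{L}_z^{-1}$. Each of the four blocks of $\Delta(z)^{-1}$ is a product of $\mathcal{L}_z^{-1}$ with analytic factors ($e^{-z}$, $(e^z-1)$, $Df(\tau\Phi_*)\overline{\tau}s^\diamond_z$), so one obtains a Laurent expansion
\begin{equation}
\Delta(z)^{-1} = z^{-2}\, \mathcal{P}_{-2} + z^{-1}\, \mathcal{P}_{-1} + \mathcal{O}(1),
\end{equation}
where $\mathcal{P}_{-2}$ and $\mathcal{P}_{-1}$ are explicit rank-related operators built from $\Phi_*'$, $[\partial_z\phi_z]_{z=0}$, $Q_*$, $\Gamma_*$ and the factor $2[\partial_z^2\lambda_z]_{z=0}^{-1}$. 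Crucially, because $e^z-1 = z + \mathcal{O}(z^2)$, the bottom row of $\Delta(z)^{-1}$ carries an extra factor of $z$, so the $z^{-2}$ singularity only appears in the first ($v$-)component, which is exactly what produces the structure $[V^A_{\mathrm{hom}}]_l = (\Phi_*',0)$ and the $(\ldots,\Phi_*')$ part of $[V^B_{\mathrm{hom}}]_l$ via the $l$-dependence of $e^{zl} = 1 + zl + \mathcal{O}(z^2)$.

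Next I would expand $\mathcal{Q}(z)\,\mathrm{ev}_0 V = \mathcal{Q}_0\,\mathrm{ev}_0 V + z\,\mathcal{Q}_1\,\mathrm{ev}_0 V + \mathcal{O}(z^2)$, reading off $\mathcal{Q}_0$ and $\mathcal{Q}_1$ from \sref{eq:prlm:def:mc:q} and Corollary \ref{cor:prlm:q:cnst:seq}; these are precisely (up to the explicit bottom-row structure) the operators $M_0$ and $M_1$ acting on $(v_0,w_0) = \mathrm{ev}_0 V$. Multiplying $e^{zl}\,(1 + zl)\cdots$ through and collecting the $z^{-1}$ coefficient, the residue becomes a sum of terms of the form $(\Phi_*',0)\cdot[\text{scalar functional of }\mathrm{ev}_0 V]$ plus $(l\Phi_*',0)$- and $([\partial_z\phi_z]_{z=0},\Phi_*')$-terms; matching against the definitions of $\pi_A$ and $\pi_B$ and of $[V^{A,B}_{\mathrm{hom}}]_l$ in \sref{eq:lin:repr:def:two:hom:sols} gives the claimed identity. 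I expect the main obstacle to be bookkeeping: one must carefully track which block of each operator the $z^{-2}$ versus $z^{-1}$ pieces land in, keep the $e^{-z}$ and $(e^z-1)$ analytic prefactors consistent to the needed order, and verify that the coefficient of $z^{-1}l$ collapses into the $[V^B_{\mathrm{hom}}]_l$ term with exactly the coefficient $\pi_B$ while the non-$l$ part of that same projection reorganizes into the $[\partial_z\phi_z]_{z=0}$ contribution — this is the step where the specific combination $Q_*M_1 + \Gamma_*M_0$ in $\pi_A$ emerges, and it requires using the $z^{-1}\Phi_*'\Gamma_*$ term of \sref{eq:prlm:exp:l:z:inv} in tandem with the $z^{-2}$ term paired against $zl$.
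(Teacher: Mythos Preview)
Your proposal is correct and follows essentially the same route as the paper: both arguments feed the Laurent expansion \sref{eq:prlm:exp:l:z:inv} of $\mathcal{L}_z^{-1}$ into the block formula \sref{eq:prlm:def:delta:z:inv} for $\Delta(z)^{-1}$, expand the analytic factors $e^{zl}$, $e^{-z}$, $(e^z-1)$, $s^\diamond_z$, $q_z$ to the needed order, and then read off the $z^{-1}$ coefficient; the paper merely organises this block-by-block via an auxiliary matrix $\mathcal{B}_l(z)$ rather than expanding $\Delta(z)^{-1}$ as a whole. One small caveat: Corollary \ref{cor:prlm:q:cnst:seq} concerns $q_z$ applied to constant sequences and is used elsewhere, so for the Taylor coefficients of $q_z$ on general $\mathrm{ev}_0 w$ you should differentiate the definition of $q_z$ directly (this is what the operators $M_0,M_1$ already encode).
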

\begin{proof}
Pick any $z \in \Wholes$ for which $\mathcal{L}_z$ is invertible.
Upon introducing the representation
\begin{equation}
e^{z l} \Delta(z)^{-1} \mathcal{Q}(z) [v,w]
= \mathcal{B}_l(z) \big( v_0 - w_0 , w \big)^T
\end{equation}
with
\begin{equation}
\mathcal{B}_l(z) =
\left(
  \begin{array}{cc}
    [\mathcal{B}_l(z)]_{1,1} & [\mathcal{B}_l(z)]_{1,2}
    \\[0.2cm]
    [\mathcal{B}_l(z)]_{2,1} & [\mathcal{B}_l(z)]_{2,2}
  \end{array}
\right),
\end{equation}
one can
use \sref{eq:prlm:def:delta:z:inv} and \sref{eq:prlm:def:mc:q}
to verify the expressions
\begin{equation}
\begin{array}{lcl}
{[}\mathcal{B}_l(z) ]_{1,1}
& = &
e^{z l} \mathcal{L}_z^{-1} Df(\tau \Phi_*)
  \overline{\tau} s^\diamond_z ,
\\[0.2cm]
{[}\mathcal{B}_l(z) ]_{1,2}
& = &
e^{z l} \mathcal{L}_z^{-1} Df(\tau \Phi_*)
  \overline{\tau} q^\diamond_z  ,
\\[0.2cm]
{[}\mathcal{B}_l(z) ]_{2,1}
& = &
- e^{zl}
+ e^{z l}  (1 - e^{-z}) \mathcal{L}_z^{-1} Df(\tau \Phi_*)
  \overline{\tau} s^\diamond_z ,
\\[0.2cm]
{[}\mathcal{B}_l(z) ]_{2,2}
& = &
e^{z l} (1 - e^{-z})\mathcal{L}_z^{-1} Df(\tau \Phi_*)
  \overline{\tau} q^\diamond_z .
\\[0.2cm]
\end{array}
\end{equation}
Using the Laurent expansion
\sref{eq:prlm:exp:l:z:inv},
we readily compute
\begin{equation}
\begin{array}{lcl}
\frac{1}{2}[\partial_z^2 \lambda_z]_{z=0}
   \mathrm{Res}_{z = 0} [\mathcal{B}_l(z) ]_{1,1}
  & = &  l \Phi_*' Q_* Df(\tau \Phi_*) \overline{\tau} s^\diamond_0
    + \Phi_*' Q_* Df(\tau \Phi_*) \overline{\tau}
        [\partial_z s^\diamond_z]_{z=0}
\\[0.2cm]
& & \qquad
        + [\partial_z \phi_z]_{z=0} Q_* Df(\tau \Phi_*) \overline{\tau} s^\diamond_0 +
         \Phi_*' \Gamma_* Df(\tau \Phi_*) \overline{\tau} s^\diamond_0 ,
\\[0.2cm]
\frac{1}{2}[\partial_z^2 \lambda_z]_{z=0} \mathrm{Res}_{z = 0}
[\mathcal{B}_l(z) ]_{2,1}
  & = &   \Phi_*' Q_* Df(\tau \Phi_*) \overline{\tau} s^\diamond_0  ,
\end{array}
\end{equation}
together with
\begin{equation}
\begin{array}{lcl}
\frac{1}{2}[\partial_z^2 \lambda_z]_{z=0} \mathrm{Res}_{z = 0}
[\mathcal{B}_l(z) ]_{1,2}
  & = &  l \Phi_*' Q_* Df(\tau \Phi_*) \overline{\tau} q^\diamond_0
    + \Phi_*' Q_* Df(\tau \Phi_*) \overline{\tau}
        [\partial_z q^\diamond_z]_{z=0}
\\[0.2cm]
& & \qquad
        + [\partial_z \phi_z]_{z=0}
              Q_* Df(\tau \Phi_*) \overline{\tau} q^\diamond_0 +
         \Phi_*' \Gamma_* Df(\tau \Phi_*) \overline{\tau} q^\diamond_0 ,
\\[0.2cm]
\frac{1}{2}[\partial_z^2 \lambda_z]_{z=0} \mathrm{Res}_{z = 0}
  [\mathcal{B}_l(z) ]_{2,2}
  & = &   \Phi_*' Q_* Df(\tau \Phi_*) \overline{\tau} q^\diamond_0 .
\end{array}
\end{equation}
The desired expressions now follow readily.
\end{proof}

\begin{lem}
\label{eq:prlm:pi:a:b:to:v:hom}
Assume that (Hf), $(H\Phi)$, (HS1)-(HS3) and (HM) are satisfied.
Then we have the identities
\begin{equation}
\pi_{A} \big[ \mathrm{ev}_0 V_{\mathrm{hom}}^A \big]
= \pi_{B} \big[ \mathrm{ev}_0 V_{\mathrm{hom}}^B \big] = 1,
\qquad \qquad
\pi_{A} \big[ \mathrm{ev}_0 V_{\mathrm{hom}}^B \big]
= \pi_{B} \big[ \mathrm{ev}_0 V_{\mathrm{hom}}^A \big] = 0 .
\end{equation}
\end{lem}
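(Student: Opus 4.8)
The plan is to avoid unwinding the definitions of $\pi_A$ and $\pi_B$ and instead exploit the consistency between the two explicitly known homogeneous solutions and the residue formula of Lemma~\ref{lem:prlm:id:for:residue}. First I would fix $V \in \{V_{\mathrm{hom}}^A, V_{\mathrm{hom}}^B\}$ as in \sref{eq:lin:repr:def:two:hom:sols}; both sequences lie in $BX_{\mu,\nu}(H^1 \times H^1)$ for any choice $0 < \nu < \eta_{\max}$ and $\mu < 0$ (the linear growth of $V_{\mathrm{hom}}^B$ is harmless), and they solve the homogeneous problem \sref{eq:lin:repr:prob:hom}. Hence Corollary~\ref{cor:prlm:id:for:lpl:trnfs} with $H = 0$ gives $\Delta(z)\,\mathcal{Z}_+[V](z) = \mathcal{Q}(z)\,\mathrm{ev}_0 V$ for $\Re z > \nu$, and inverting $\Delta(z)$ (valid on the strip $\nu < \Re z < \eta_{\max}$ by Proposition~\ref{prp:hom:delta:inv}) yields $\Delta(z)^{-1}\mathcal{Q}(z)\,\mathrm{ev}_0 V = \mathcal{Z}_+[V](z)$ there. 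Both sides are meromorphic on a punctured neighbourhood of $z=0$ --- the left-hand side through \sref{eq:prlm:def:delta:z:inv} and the Laurent expansion \sref{eq:prlm:exp:l:z:inv} of $\mathcal{L}_z^{-1}$, the right-hand side by explicit summation of the defining series --- so they agree there and, after multiplying by $e^{zl}$, have the same residue at $z=0$.

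Second, I would evaluate this residue in two ways. On the analytic side, summing geometric and arithmetico--geometric series gives $\mathcal{Z}_+[V_{\mathrm{hom}}^A](z) = (1-e^{-z})^{-1}(\Phi_*',0)$ and $\mathcal{Z}_+[V_{\mathrm{hom}}^B](z) = \big(\tfrac{e^{-z}}{(1-e^{-z})^2}\Phi_*' + \tfrac{1}{1-e^{-z}}[\partial_z \phi_z]_{z=0},\ \tfrac{1}{1-e^{-z}}\Phi_*'\big)$; combining the Laurent expansions $\tfrac{1}{1-e^{-z}} = z^{-1} + \tfrac12 + O(z)$ and $\tfrac{e^{-z}}{(1-e^{-z})^2} = z^{-2} + O(1)$ with $e^{zl} = 1 + lz + O(z^2)$ shows that $\mathrm{Res}_{z=0}\big[e^{zl}\mathcal{Z}_+[V_{\mathrm{hom}}^A](z)\big] = (\Phi_*',0) = [V_{\mathrm{hom}}^A]_l$ and $\mathrm{Res}_{z=0}\big[e^{zl}\mathcal{Z}_+[V_{\mathrm{hom}}^B](z)\big] = \big(l\Phi_*' + [\partial_z \phi_z]_{z=0}, \Phi_*'\big) = [V_{\mathrm{hom}}^B]_l$. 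On the algebraic side, Lemma~\ref{lem:prlm:id:for:residue} evaluated at $\mathrm{ev}_0 V$ gives $[V_{\mathrm{hom}}^A]_l\,\pi_A[\mathrm{ev}_0 V] + [V_{\mathrm{hom}}^B]_l\,\pi_B[\mathrm{ev}_0 V]$. Since the $(H^1 \times H^1)$-valued sequences $l \mapsto [V_{\mathrm{hom}}^A]_l$ and $l \mapsto [V_{\mathrm{hom}}^B]_l$ are linearly independent (their second components are $0$ and the nonzero function $\Phi_*'$), matching coefficients of these two sequences in the identity obtained for $V = V_{\mathrm{hom}}^A$ and then for $V = V_{\mathrm{hom}}^B$ yields the four claimed identities at once.

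I do not expect a genuine obstacle; the content is really the observation that applying $\Delta(z)^{-1}$ to $\mathcal{Q}(z)\,\mathrm{ev}_0 V$ recovers $\mathcal{Z}_+[V](z)$ when $V$ is a homogeneous solution. The two points that need a little care are: (i) the meromorphic-continuation step, which requires $\nu$ small enough that the half-plane of convergence $\Re z > \nu$ of $\mathcal{Z}_+$ overlaps the region where $\Delta(z)^{-1}$ is available, so that the identity theorem applies on the punctured disk; and (ii) the residue extraction for $V_{\mathrm{hom}}^B$, where one must notice that $\tfrac{e^{-z}}{(1-e^{-z})^2}$ carries no simple-pole term, so that the $l\Phi_*'$ in $[V_{\mathrm{hom}}^B]_l$ arises entirely from pairing the $z^{-2}$ part with the $lz$-term of $e^{zl}$. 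As a sanity check one can instead substitute $\mathrm{ev}_0 V_{\mathrm{hom}}^A$ and $\mathrm{ev}_0 V_{\mathrm{hom}}^B$ directly into $M_0, M_1, \pi_A, \pi_B$ and simplify using Lemma~\ref{lem:prlms:ids:for:lambda:derivs}, Corollary~\ref{cor:prlms:ids:for:phi:derivs}, Corollary~\ref{cor:prlm:q:cnst:seq} and the vanishing $[\partial_z \lambda_z]_{z=0}=0$ from (HM); this reproduces the same answer but drags $A_3$ and $[\partial_z^3 \lambda_z]_{z=0}$ into the computation of $\pi_A[\mathrm{ev}_0 V_{\mathrm{hom}}^B] = 0$, so I would keep it only as a secondary verification.
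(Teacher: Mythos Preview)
Your argument is correct and takes a genuinely different route from the paper. The paper proves the lemma by direct computation: it evaluates $M_0$ and $M_1$ on $\mathrm{ev}_0 V_{\mathrm{hom}}^A$ and $\mathrm{ev}_0 V_{\mathrm{hom}}^B$ using Corollary~\ref{cor:prlm:q:cnst:seq} and the identities \sref{eq:prlm:ids:ai:all}, then feeds the results into the explicit definitions of $\pi_A$ and $\pi_B$, expanding $\Gamma_*$ via \sref{eq:prlm:def:gamma} and repeatedly invoking Lemma~\ref{lem:prlms:ids:for:lambda:derivs}. This is precisely the ``sanity check'' you describe at the end of your proposal, and as you anticipated it drags $A_3$, $[\partial_z^3\lambda_z]_{z=0}$ and $\mathcal{L}^{\mathrm{qinv}}A_2\Phi_*'$ through the verification of $\pi_A[\mathrm{ev}_0 V_{\mathrm{hom}}^B]=0$, with many terms cancelling only at the last step.

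Your approach sidesteps all of this by recognising that Lemma~\ref{lem:prlm:id:for:residue} and Corollary~\ref{cor:prlm:id:for:lpl:trnfs} together force the claimed identities: once one knows that $e^{zl}\Delta(z)^{-1}\mathcal{Q}(z)\,\mathrm{ev}_0 V$ equals $e^{zl}\mathcal{Z}_+[V](z)$ as meromorphic functions near $z=0$, the residue can be read off from the elementary geometric sums, and no information about $A_k$ or $\Gamma_*$ is needed beyond what is already encoded in Lemma~\ref{lem:prlm:id:for:residue}. The payoff is a shorter and more conceptual proof; the paper's computation, on the other hand, has the advantage of being self-contained at the algebraic level and of providing an independent check on the rather intricate formula for $\Gamma_*$.
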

\begin{proof}
As preparations, we
use Lemma \ref{lem:prlms:ids:for:lambda:derivs}
to compute
\begin{equation}
\begin{array}{lcl}
Q_* A_1 \Phi_*'
 &= & 0,
\\[0.2cm]
Q_* A_1 [\partial_z \phi_z]_{z=0}
 & = &
   \frac{1}{2} [\partial_z^2 \lambda_z]_{z=0} - \frac{1}{2} Q_* A_2 \Phi_*' ,
\\[0.2cm]
Q_* A_1 [\partial_z^2 \phi_z]_{z=0}
 & = &
 \frac{1}{3 }[\partial_z^3 \lambda_z]_{z=0}
 - \frac{1}{3} Q_* A_3 \Phi_*'
 -  Q_* A_2 [\partial_z \phi_z]_{z=0} ,
\end{array}
\end{equation}
together with
\begin{equation}
\begin{array}{lcl}
\Gamma_* A_1 \Phi_*'
 & = & Q_* A_1 [\partial_z \phi_z]_{z=0}
\\[0.2cm]
 & = & \frac{1}{2}[\partial_z^2 \lambda_z]_{z=0} - \frac{1}{2} Q_* A_2 \Phi_*' ,
\\[0.2cm]
\Gamma_* A_2 \Phi_*'
& = &
   -\frac{1}{3} [\partial_z^2 \lambda_z]_{z=0}^{-1} [\partial_z^3 \lambda_z]_{z=0}
   Q_* A_2 \Phi_*'
   - Q_* A_1 \mathcal{L}^{\mathrm{qinv}} A_2 \Phi_*' ,
\\[0.2cm]
\Gamma_* A_1 \phi^{(1)}_0
 & = & - \frac{1}{3} [\partial_z^2 \lambda_z]_{z=0}^{-1} [\partial_z^3 \lambda_z]_{z=0}
           Q_* A_1 [\partial_z \phi_z]_{z=0}
       + \frac{1}{2} Q_* A_1 [\partial_z^2 \phi_z]_{z=0}
       + \frac{1}{2} Q_* A_1
            \mathcal{L}^{\mathrm{qinv}} A_2 \Phi_*'
\\[0.2cm]
% & = &
%   - \frac{1}{3}\frac{ [\partial_z^3 \lambda_z]_{z=0}}{[\partial_z^2 \lambda_z]_{z=0}}
%   + \frac{1}{6}\frac{ [\partial_z^3 \lambda_z]_{z=0}}{[\partial_z^2 \lambda_z]_{z=0}}
%           \Gamma_*_2 A_2 \Phi_*'
%   +\frac{1}{3} \frac{ [\partial_z^3 \lambda_z]_{z=0}}{[\partial_z^2 \lambda_z]_{z=0} }
%   - \frac{1}{6} \Gamma_*_2 A_3 \Phi_*'
%   - \frac{1}{2} \Gamma_*_2 A_2 [\partial_z \phi_z]_{z=0}
%   + \frac{1}{2} \Gamma_*_2 A_1
%            \mathcal{L}^{\mathrm{qinv}} A_2 \Phi_*'
%\\[0.2cm]
& = &
   + \frac{1}{6}[\partial_z^2 \lambda_z]_{z=0}^{-1} [\partial_z^3 \lambda_z]_{z=0}
           Q_* A_2 \Phi_*'
   - \frac{1}{6} Q_* A_3 \Phi_*'
   - \frac{1}{2} Q_* A_2 [\partial_z \phi_z]_{z=0}
\\[0.2cm]
& & \qquad 
   + \frac{1}{2} Q_* A_1
            \mathcal{L}^{\mathrm{qinv}} A_2 \Phi_*'  .
\end{array}
\end{equation}
Using Corollary
\ref{cor:prlm:q:cnst:seq}
together with \sref{eq:prlm:ids:ai:all}, %-\sref{eq:prlm:ids:ai:ii},
we also compute
\begin{equation}
\begin{array}{lcl}
M_0 \mathrm{ev}_0 V_{\mathrm{hom}}^A
 & = & A_1 \Phi_*' ,
\\[0.2cm]
M_1 \mathrm{ev}_0 V_{\mathrm{hom}}^A
 & = & \frac{1}{2} (A_1 + A_2) \Phi_*' ,
\end{array}
\end{equation}
together with
\begin{equation}
\begin{array}{lcl}
M_0 \mathrm{ev}_0 V_{\mathrm{hom}}^B
 & = &
   A_1 ( [\partial_z \phi_z]_{z=0} - \Phi_*' )
   + \frac{1}{2}(A_1 + A_2) \Phi_*'
\\[0.2cm]
& = &
  A_1 [\partial_z \phi_z]_{z=0} + \frac{1}{2}(A_2 - A_1) \Phi_*' ,
\\[0.2cm]
M_1 \mathrm{ev}_0 V_{\mathrm{hom}}^B
 & = &
   \frac{1}{2}(A_1 + A_2)
     ( [\partial_z \phi_z]_{z=0} - \Phi_*' )
   + \frac{1}{6}(A_3  + 3 A_2  + 2 A_1  ) \Phi_*'
\\[0.2cm]
 & = &
    \frac{1}{2} (A _1 + A_2) [\partial_z \phi_z]_{z=0}
    + \frac{1}{6} (A_3  - A_1) \Phi_*' .
\\[0.2cm]
\end{array}
\end{equation}
In particular, we find
\begin{equation}
\begin{array}{lclcl}
\frac{1}{2}[\partial_z^2 \lambda_z]_{z=0} \pi_A[V^A_{\mathrm{hom}}]
 & = &
  \frac{1}{2} Q_* A_2 \Phi_*'
   + \Gamma_* A_1 \Phi_*'
%\\[0.2cm]
 & = & \frac{1}{2}[\partial_z^2 \lambda_z]_{z=0} ,
\\[0.2cm]
\frac{1}{2}[\partial_z^2 \lambda_z]_{z=0} \pi_B[V^A_{\mathrm{hom}}]
& = & 0 ,
\end{array}
\end{equation}
together with
\begin{equation}
\begin{array}{lcl}
\frac{1}{2}[\partial_z^2 \lambda_z]_{z=0}
  \pi_A[V^B_{\mathrm{hom}}]
 & = &
   \frac{1}{2} Q_* (A_1 + A_2) [\partial_z \phi_z]_{z=0}
   + \frac{1}{6} Q_* A_3 \Phi_*'
   + \Gamma_* A_1 [\partial_z \phi_z]_{z=0}
   + \frac{1}{2} \Gamma_* (A_2 - A_1) \Phi_*'
\\[0.2cm]
& = &
  \frac{1}{4}[\partial_z^2 \lambda_z]_{z=0} - \frac{1}{4} Q_* A_2 \Phi_*'
    + \frac{1}{2} Q_* A_2 [\partial_z \phi_z]_{z=0}
    + \frac{1}{6} Q_* A_3 \Phi_*'
\\[0.2cm]
& & \qquad
  +  \frac{1}{6}\frac{ [\partial_z^3 \lambda_z]_{z=0}}{[\partial_z^2 \lambda_z]_{z=0}}
           Q_* A_2 \Phi_*'
   - \frac{1}{6} Q_* A_3 \Phi_*'
   - \frac{1}{2} Q_* A_2 [\partial_z \phi_z]_{z=0}
\\[0.2cm]
& & \qquad \qquad
   + \frac{1}{2} Q_* A_1
            \mathcal{L}^{\mathrm{qinv}} A_2 \Phi_*'
\\[0.2cm]
& & \qquad
  - \frac{1}{6}\frac{ [\partial_z^3 \lambda_z]_{z=0}}{[\partial_z^2 \lambda_z]_{z=0}} Q_* A_2 \Phi_*'
  - \frac{1}{2} Q_* A_1
            \mathcal{L}^{\mathrm{qinv}} A_2 \Phi_*'
  - \frac{1}{4}[\partial_z^2 \lambda_z]_{z=0} + \frac{1}{4} Q_* A_2 \Phi_*'
\\[0.2cm]
& = &  0 ,
\\[0.2cm]
\frac{1}{2}[\partial_z^2 \lambda_z]_{z=0}
  \pi_B[V^B_{\mathrm{hom}}]
& = & Q_* A_1 [\partial_z \phi_z]_{z=0} + \frac{1}{2} Q_* A_2 \Phi_*'
\\[0.2cm]
& = &
  \frac{1}{2}[\partial_z^2 \lambda_z]_{z=0} .
\end{array}
\end{equation}
\end{proof}

\begin{proof}[Proof of Proposition \ref{prp:prlm:repr:frm}]
Applying the inverse Laplace transform to the
identity \sref{eq:cor:lpl:trnfs:reptr},
the representation formula \sref{eq:prlm:main:repr:formula}
follows directly
from the Cauchy integral formula and Lemma's
\ref{lem:prlm:id:for:residue}-\ref{eq:prlm:pi:a:b:to:v:hom}.
\end{proof}

\section{Inhomogeneous linear system for constant $\theta$ }
\label{sec:linop:cc}

In this section we are interested in
the constant-coefficient linear system
\begin{equation}
\label{eq:lc:lin:sys:to:solve}
\mathcal{D}(\vartheta \mathbf{1})[V] =
[1 - P^{(1)}_{ \vartheta} ]\mathcal{T}(\vartheta \mathbf{1}) [V]
 + H,
\end{equation}
in which $\vartheta \in \Real$.
This equation can be seen as the linear part
of the $(v,w)$ system
in our main problem \sref{eq:set:red:sys}
for the special case of a constant sequence
$\theta_l = \vartheta$.

Our main result constructs a solution operator
for this system acting on sequences that are
allowed to grow at a small exponential rate.
In order to accomodate this,
we introduce
the notation
\begin{equation}
\begin{array}{lcl}
BS_{\eta}(\mathcal{H}) &=&
  BX_{-\eta, \eta}(\mathcal{H})
\\[0.2cm]
& = &
 \{ V: \mathbb{Z} \to \mathcal{H} \hbox{ for which }
   \norm{V}_{\eta} := \mathrm{sup} \,  e^{- \eta \abs{j} } \norm{V_j}_{\mathcal{H}}
 < \infty \}
\end{array}
\end{equation}
for any Hilbert space $\mathcal{H}$.
In addition, we recall the definitions \sref{eq:set:defns:q:i:ii}
for the projections $Q^{(1)}_{\vartheta}$ and $Q^{(2)}_{\vartheta}$.

\begin{prop}
\label{prp:inhom:cc:main}
Assume that (Hf), $(H\Phi)$, (HS1)-(HS3) and (HM)
are satisfied and recall the constant $r$ appearing in (Hf).
Pick a sufficiently small $\eta_{\max}> 0$
together with a sufficiently large $K > 0$.
Then for every $0 < \eta < \eta_{\max}$
there exists a $C^{r-1}$-smooth map
\begin{equation}
\mathcal{K}^{\mathrm{cc}}_\eta :
 \R \mapsto \mathcal{L}\big(  BS_\eta (H^1 \times L^2) ;
   BS_{\eta} (H^1\times H^1) \big)
\end{equation}
that satisfies the following properties.
\begin{enumerate}
\item[(i)] Pick any $0 < \eta < \eta_{\max}$ and $\vartheta \in \Real$.
For any $H \in BS_{\eta}( H^1 \times L^2)$,
the function $V = \mathcal{K}^{\mathrm{cc}}_{\eta}(\vartheta) H$
satisfies \sref{eq:lc:lin:sys:to:solve} and admits the orthogonality conditions
\begin{equation}
\label{eq:lin:cc:orth:cnds}
Q^{(1)}_{\vartheta} V_0 =  Q^{(2)}_{\vartheta} V_0 =0 .
\end{equation}
\item[(ii)] Pick any $0 < \eta < \eta_{\max}$ and
assume that $V \in BS_{\eta}(H^1 \times H^1)$ satisfies \sref{eq:lc:lin:sys:to:solve} with $H = 0$
for some $\vartheta \in \Real$.  Then there exists a pair $(a_1, a_2) \in \R^2$
for which we have
\begin{equation}
V_j = a_1 T_{\vartheta} (\Phi_*' , 0)
 +a_2 T_{\vartheta}( [\partial_z \phi_z]_{z=0}, \Phi_*' )  .
\end{equation}
\item[(iii)] For any $0 < \eta < \eta_{\max}$ and $\vartheta \in \Real$ we have the bound
\begin{equation}
\label{eq:lin:cc:bnd:k:cc}
\norm{\mathcal{K}^{\mathrm{cc}}_\eta(\vartheta)}_{\mathcal{L}\big(BS_{\eta}(H^1 \times L^2) ; BS_{\eta}(H^1 \times H^1) \big)}
   < K  \eta^{-3}. %\eta^{-2}.
\end{equation}
%
%\item[(iv)] Recall the constant $r$ appearing in XXX.
%For any $0 < \eta < \eta_{\max}$, the map
%\begin{equation}
%\vartheta  \mapsto \mathcal{K}^{lc}_\eta(\vartheta) \in \mathcal{L}\big(BS_{\eta}(L^2 \times L^2) ; BS_{\eta}(H^1 \times L^2) \big)
%\end{equation}
%is $C^{r - 1}$ smooth.
%
\item[(iv)]
Pick any $0 < \eta < \eta_{\max}$. Then for any pair $(\vartheta_1, \vartheta_2) \in \Real^2$,
we have the identity
\begin{equation}
\mathcal{K}_\eta^{\mathrm{cc}}(\vartheta_1)= T_{\vartheta_1 -\vartheta_2}
\mathcal{K}_\eta^{\mathrm{cc}}(\vartheta_2) T_{\vartheta_2 -\vartheta_1}.
\end{equation}
\item[(v)] Consider a pair $(\eta_1 , \eta_2) \in (0, \eta_{\max})^2$
together with a function
\begin{equation}
H \in BS_{\eta_1}(H^1 \times L^2) \cap BS_{\eta_2}(H^1 \times L^2).
\end{equation}
Then for any $\vartheta\in\R$ we have
\begin{equation}
\mathcal{K}_{\eta_1}^{\mathrm{cc}}(\vartheta)H=
 \mathcal{K}_{\eta_2}^{\mathrm{cc}}(\vartheta)H .
\end{equation}
\end{enumerate}
\end{prop}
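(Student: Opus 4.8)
The plan is to reduce everything to the case $\vartheta=0$ and then to invert the constant–coefficient operator $\mathcal{D}(0)-(1-P^{(1)}_0)\mathcal{T}(0)$ explicitly, using the spectral material assembled in {\S}\ref{sec:prlm}. First I would exploit translation equivariance. Since $\Phi_*\in C^{r+1}$ and $T_\vartheta$ commutes with $\tau$, $\overline\tau$, $\overline\tau s^\diamond[\cdot]$ and with differentiation, a direct check gives $\mathcal{D}(\vartheta\mathbf 1)=T_\vartheta\mathcal{D}(0)T_{-\vartheta}$, $\mathcal{T}(\vartheta\mathbf 1)=T_\vartheta\mathcal{T}(0)T_{-\vartheta}$, $P^{(1)}_\vartheta=T_\vartheta P^{(1)}_0T_{-\vartheta}$ and $Q^{(j)}_\vartheta=Q^{(j)}_0T_{-\vartheta}$. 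Hence \sref{eq:lc:lin:sys:to:solve} at phase $\vartheta$ is conjugate, through $T_\vartheta$, to the same system at phase $0$, so it suffices to build $\mathcal{K}^{\mathrm{cc}}_\eta(0)$ and set $\mathcal{K}^{\mathrm{cc}}_\eta(\vartheta):=T_\vartheta\mathcal{K}^{\mathrm{cc}}_\eta(0)T_{-\vartheta}$. Property (iv) is then immediate, the orthogonality conditions \sref{eq:lin:cc:orth:cnds} transform correctly, and the $C^{r-1}$-dependence on $\vartheta$ follows from the fact that $\vartheta\mapsto T_\vartheta\Phi_*$ and $\vartheta\mapsto T_\vartheta\psi_*$ are $C^{r+1}$ as $H^1$-valued maps, together with the standard bootstrap from \cite{HJHBRGSG}; only these quantities carry the $\vartheta$-dependence of the conjugated operator.

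For $\vartheta=0$ I would absorb the projection into the Fourier symbol. Writing $\widetilde{\mathcal T}(0):=(1-P^{(1)}_0)\mathcal{T}(0)$, one computes that $\mathcal{D}(0)[e^{z\cdot}V_0]-\widetilde{\mathcal T}(0)[e^{z\cdot}V_0]=e^{zl}\widetilde\Delta(z)V_0$ with $\widetilde\Delta(z)=\Delta(z)+\xi\otimes\Psi_z$, where $\xi=(\Phi_*',0)\in H^1\times L^2$ and $\Psi_z[v,w]=e^z\langle\psi_*,w\rangle_{L^2}$; this is a rank-one analytic perturbation of the symbol \sref{eq:prlm:def:delta:z}. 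A Sherman--Morrison computation now shows $\widetilde\Delta(z)$ is invertible exactly where $\Delta(z)$ is: using \sref{eq:prlm:def:delta:z:inv} together with the identity \sref{eq:id:for:mcl:z:i} and $\mathcal{L}_*\Phi_*'=0$, the second component of $\Delta(z)^{-1}\xi$ vanishes identically, so $\Psi_z\Delta(z)^{-1}\xi=0$ and hence $\widetilde\Delta(z)^{-1}=\big(I-\Delta(z)^{-1}\xi\,\Psi_z\big)\Delta(z)^{-1}$. The point is then to re-run the residue/Jordan-chain computation of {\S}\ref{sec:prlm} for $\widetilde\Delta(z)$ at $z=0$: the rank-one correction removes the secular contribution in $V^B_{\mathrm{hom}}$, and the dangerous $z^{-2}$ part of $\Delta(z)^{-1}\xi$ is killed precisely because $Q_*A_1\Phi_*'=[\partial_z\lambda_z]_{z=0}=0$ by (HM), via Lemma \ref{lem:prlm:exp:l:z:inv}. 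The outcome is that $\widetilde\Delta(z)^{-1}$ has a simple pole at $z=0$ whose residue produces the two \emph{constant} homogeneous solutions $T_0(\Phi_*',0)$ and $T_0([\partial_z\phi_z]_{z=0},\Phi_*')$ anticipated in the remark preceding the statement, which already establishes (ii).

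With this in hand I would repeat the argument behind Proposition \ref{prp:prlm:repr:frm}, now with $\widetilde\Delta$, $\widetilde{\mathcal T}(0)$ in place of $\Delta$, $\mathcal{T}(0)$: for $H\in BS_\eta(H^1\times L^2)$ one obtains a particular solution given by the two contour integrals $\tfrac1{2\pi i}\int_{\gamma_\pm-i\pi}^{\gamma_\pm+i\pi}e^{zl}\widetilde\Delta(z)^{-1}\mathcal Z_\pm[H](z)\,dz$ for admissible $\gamma_-<-\eta<\eta<\gamma_+$ (with $|\gamma_\pm|<\eta_{\max}$), plus the two constant homogeneous modes carrying free coefficients; the latter are pinned by demanding \sref{eq:lin:cc:orth:cnds}, which amounts to solving a $2\times2$ linear system whose matrix is the identity, since $Q_*\Phi_*'=1$, $Q_*0=0$, and $Q_*[\partial_z\phi_z]_{z=0}=0$ (differentiate $\langle\psi_*,\phi_z\rangle\equiv1$). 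This defines $\mathcal{K}^{\mathrm{cc}}_\eta(0)$ and yields (i). The bound (iii) follows by combining the estimate $\|\widetilde\Delta(z)^{-1}\|\lesssim\eta^{-2}$ on the contours — transported from Proposition \ref{prp:hom:delta:inv} through the finite-rank identity above — with the geometric-series bounds $\|\mathcal Z_\pm[H](z)\|\lesssim\eta^{-1}\|H\|_\eta$ and the $O(1)$ contour length, using $2\pi i$-periodicity of the symbol to move contours freely; this produces the extra factor and hence the stated $\eta^{-3}$. Finally (v) is a soft consequence of (i)--(ii): a solution of \sref{eq:lc:lin:sys:to:solve} lying in some $BS_\eta$ and satisfying \sref{eq:lin:cc:orth:cnds} is unique (two such differ by a bounded homogeneous solution with $Q^{(1)}_\vartheta V_0=Q^{(2)}_\vartheta V_0=0$, hence $0$ by (ii)), so if $H\in BS_{\eta_1}\cap BS_{\eta_2}$ then $\mathcal K^{\mathrm{cc}}_{\eta_j}(\vartheta)H$ lies in both spaces and solves the same normalized problem, forcing the two outputs to coincide. (Alternatively, one may keep the symbol $\Delta(z)$, treat $P^{(1)}_0\mathcal T(0)$ as a perturbation $\Lambda$, invert the unprojected normalized operator via Proposition \ref{prp:prlm:repr:frm}, and solve $V=\mathcal K_0(H-\Lambda[V])$ by showing $I+\mathcal K_0\Lambda$ is an index-zero Fredholm operator that is injective, again by (ii).)

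The step I expect to be the main obstacle is the modified residue bookkeeping at $z=0$: one must verify carefully that the rank-one term in $\widetilde\Delta(z)^{-1}$ cancels the double pole of $\Delta(z)^{-1}$, so that exactly the two constant homogeneous modes survive and no spurious secular or higher-order contribution is introduced — this is where hypothesis (HM) enters in an essential way. The precise tracking of the power $\eta^{-3}$ and the $C^{r-1}$-smoothness in $\vartheta$ are then routine, and overlap with the parallel construction in \cite{HJHBRGSG}, which I would invoke rather than reproduce.
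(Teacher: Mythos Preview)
Your approach is different from the paper's, and largely sound, but there is one real gap. The paper does \emph{not} modify the Fourier symbol. Instead, it first builds an inverse $\mathcal{K}^{\mathrm{up}}_\eta$ for the \emph{unprojected} problem $\mathcal{D}(0)V-\mathcal{T}(0)V=H$ (this is where all of {\S}\ref{sec:prlm} is used, via Lemmas \ref{eq:lin:cc:inv:l2}--\ref{lem:linsys:inhom:cc:k:up}) and then writes down the explicit closed formula
\[
\mathcal{K}^{\mathrm{cc}}_\eta(0)H=[I-P^{(1)}_0]\mathcal{K}^{\mathrm{up}}_\eta(0)[I-P^{(1)}_0]H+P^{(1)}_0\mathcal{J}[H],
\]
where $\mathcal{J}$ is the discrete summation operator $(S-I)\mathcal{J}=I$; the point is that $P^{(1)}_0\mathcal{J}[H]$ takes values in $\mathrm{span}\{(\Phi_*',0)\}$, which is annihilated by $\mathcal{T}(0)$ because $\mathcal{L}_*\Phi_*'=0$. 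This sidesteps entirely the modified-residue bookkeeping you identify as the main obstacle. Your Sherman--Morrison route is nonetheless correct: the identity $\Psi_z\Delta(z)^{-1}\xi=0$ holds for the reason you give, and one can check that both kernel elements of $\widetilde\Delta(0)$ have Jordan-chain length one, so $\widetilde\Delta(z)^{-1}$ indeed has only a simple pole (in fact this gives $\|\widetilde\Delta(z)^{-1}\|\lesssim\eta^{-1}$ on the contours, better than the $\eta^{-2}$ you quote).

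The gap is in your derivation of the $BS_\eta$ bound (iii). Defining $V_l$ by contour integrals along $\Re z=\gamma_\pm$ with $\gamma_-<-\eta<\eta<\gamma_+$ gives, for $l>0$, growth like $e^{\gamma_+ l}$, not $e^{\eta l}$; you cannot shift the $\gamma_+$ contour to the left of $\eta$ because $\mathcal{Z}_+[H](z)$ only converges for $\Re z>\eta$. Combining $\|\widetilde\Delta^{-1}\|$ with the geometric-series bound on $\mathcal{Z}_\pm[H]$ controls the \emph{integrand} but not the pointwise size of the inverse transform in the right weighted space. The paper closes this gap (for the unprojected operator, but the same issue arises for yours) via a localization argument in Lemma \ref{lem:lin:cc:infty}: decompose $H=\sum_k H^{(k)}$ into single-site pieces, use shift-invariance to recenter each, apply the $\ell^2_\eta$ inverse, and sum; the extra geometric sum is what produces the additional factor of $\eta^{-1}$ and hence the stated $\eta^{-3}$. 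Your sketch needs either this step or an equivalent Green's-function decay estimate.
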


Our strategy is to exploit the representation formula
derived in \S\ref{sec:prlm}
for the unprojected problem
\begin{equation}
\label{eq:lc:lin:sys:to:solve:unproj}
\mathcal{D}(0)[V] - \mathcal{T}(0) [V]
%\Lambda^{up} V
= H .
\end{equation}
In particular, we first use the Fourier symbols $\Delta(z)$
defined in \sref{eq:prlm:def:delta:z} to construct an inverse
in the sequence spaces
\begin{equation}
\ell^2_{\eta}(\mathcal{H})
= \{ V: \Wholes \to \mathcal{H} \hbox{ for which }
       \norm{V}_{\ell^2_\eta}^2 :=\sum_{l \in \Wholes} e^{-2 \eta l} \norm{V_l}_{\mathcal{H}}^2
          < \infty \},
\end{equation}
where again $\mathcal{H}$ is a Hilbert space.
This can subsequently be used to
obtain an inverse in the spaces
\begin{equation}
\ell^\infty_{\eta}(\mathcal{H})
= \{ V: \Wholes \to \mathcal{H} \hbox{ for which }
       \norm{V}_{\ell^\infty_\eta} :=
       \sup_{l \in \Wholes} e^{- \eta l} \norm{V_l}_{\mathcal{H}}
          < \infty \}
\end{equation}
by exploiting the fact that interactions
between lattice sites
decay exponentially with respect to the separation distance.

\begin{lem}
\label{eq:lin:cc:inv:l2}
Assume that (Hf), $(H\Phi)$, (HS1)-(HS3) and (HM)
are satisfied and
pick a sufficiently small $\eta_{\max} > 0$
together with a sufficiently large $K > 0$.
Then for every $\eta$ with $0 < \abs{\eta} < \eta_{\max}$,
there exists a bounded operator
\begin{equation}
\Lambda^{\mathrm{inv}}_{\eta} : \ell^2_{\eta}( H^1 \times L^2)
   \to \ell^2_{\eta}(H^1 \times H^1)
\end{equation}
that satisfies the following properties.
\begin{itemize}
\item[(i)]{
For any $H \in \ell^2_{\eta}(H^1 \times L^2)$, the sequence
$V = \Lambda^{\mathrm{inv}}_{\eta} H$
satisfies \sref{eq:lc:lin:sys:to:solve:unproj}.
%$\Lambda^{up} V = H$. %\sref{eq:lc:lin:sys:to:solve}.
}
\item[(ii)]{
We have the bound
\begin{equation}
\norm{   \Lambda^{\mathrm{inv}}_{\eta}  }_{
   \mathcal{L}\big(  \ell^2_{\eta}(H^1 \times L^2) ,
                  \ell^2_{\eta}(H^1 \times H^1) \big)
 } \le K \eta^{-2} .
\end{equation}
}
\item[(iii)]{
We have the explicit expression
\begin{equation}
\begin{array}{lcl}
\big[\Lambda^{\mathrm{inv}}_{\eta}  H \big]_l
 & = &
   \frac{1}{2 \pi i} \int_{\eta - i \pi}^{\eta + i \pi}
      e^{z l} \Delta(z)^{-1} \big[ \mathcal{Z}_+[H](z) + \mathcal{Z}_-[H](z) \big] \, d z.
\end{array}
\end{equation}
}
\end{itemize}
\end{lem}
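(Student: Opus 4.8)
\emph{Proof plan.} The idea is to diagonalise the constant–coefficient difference operator $\mathcal{D}(0)-\mathcal{T}(0)$ by means of an exponentially weighted Fourier transform and to invert it symbol–wise using Proposition~\ref{prp:hom:delta:inv}. Fix $0<\abs{\eta}<\eta_{\max}$. For a Hilbert space $\mathcal{H}$ and $V\in\ell^2_{\eta}(\mathcal{H})$, the sequence $g_l=e^{-\eta l}V_l$ lies in $\ell^2(\Wholes;\mathcal{H})$, so its Fourier series $\widehat V(\eta+i\omega):=\sum_{l\in\Wholes}e^{-(\eta+i\omega)l}V_l=\mathcal{Z}_+[V](\eta+i\omega)+\mathcal{Z}_-[V](\eta+i\omega)$ converges in $L^2\big([-\pi,\pi];\mathcal{H}\big)$; by Parseval the map $V\mapsto\widehat V(\eta+i\cdot)$ is an isometry (up to a factor $2\pi$) from $\ell^2_{\eta}(\mathcal{H})$ onto $L^2\big([-\pi,\pi];\mathcal{H}\big)$, with inverse $V_l=\frac{1}{2\pi i}\int_{\eta-i\pi}^{\eta+i\pi}e^{zl}\widehat V(z)\,dz$. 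Note that $\Delta(\eta+i\omega)$ is $2\pi$-periodic in $\omega$, so Proposition~\ref{prp:hom:delta:inv} applies along this whole segment.

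First I would check that the transform turns $\mathcal{D}(0)-\mathcal{T}(0)$ into multiplication by the symbol $\Delta(z)$. Since the only $l$–shifts in $\mathcal{D}(0)$ and $\mathcal{T}(0)$ are the finitely many shifts hidden in $v_{l+1}-v_l$, $w_{l+1}$ and $s^\diamond[w]_l$, all bounded on $\ell^2_{\eta}$, while $\mathcal{L}_*\in\mathcal{L}(H^1;L^2)$ and the multiplication operator $Df(\tau\Phi_*)\overline\tau$ act pointwise in $l$, the operator $\mathcal{D}(0)-\mathcal{T}(0)$ maps $\ell^2_{\eta}(H^1\times H^1)$ boundedly into $\ell^2_{\eta}(H^1\times L^2)$. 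Applying $\widehat{\,\cdot\,}$ and using $\widehat{SV}=e^{z}\widehat V$ together with $s^\diamond[e^{z\cdot}w]=e^{z\cdot}s^\diamond_z w$ — that is, the identity $\mathcal{D}(0)[e^{z\cdot}V]-\mathcal{T}(0)[e^{z\cdot}V]=e^{z\cdot}\Delta(z)V$ that defines $\Delta(z)$ — one finds, exactly as in Corollary~\ref{cor:prlm:id:for:lpl:trnfs} but now with the bilateral transform and hence no boundary term,
\[
\widehat{\big(\mathcal{D}(0)[V]-\mathcal{T}(0)[V]\big)}(z)=\Delta(z)\,\widehat V(z),\qquad z=\eta+i\omega,\ \omega\in[-\pi,\pi].
\]

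I would then define $\Lambda^{\mathrm{inv}}_{\eta}H$ to be the sequence whose transform is $\Delta(\eta+i\cdot)^{-1}\widehat H(\eta+i\cdot)$; since $\widehat H=\mathcal{Z}_+[H]+\mathcal{Z}_-[H]$ along $\Re z=\eta$, this is precisely the formula in (iii). By Proposition~\ref{prp:hom:delta:inv} the map $\omega\mapsto\Delta(\eta+i\omega)^{-1}$ is continuous (entries of $\Delta$ depend analytically on $z$, and inversion is norm–continuous on the invertibles), hence strongly measurable, from $[-\pi,\pi]$ into $\mathcal{L}(H^1\times L^2;H^1\times H^1)$ with norm $\le K\eta^{-2}$; consequently $\Delta(\eta+i\cdot)^{-1}\widehat H(\eta+i\cdot)\in L^2\big([-\pi,\pi];H^1\times H^1\big)$ with $L^2$–norm at most $K\eta^{-2}\norm{\widehat H(\eta+i\cdot)}_{L^2}$. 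Transporting back through the Parseval isometry gives $\Lambda^{\mathrm{inv}}_{\eta}H\in\ell^2_{\eta}(H^1\times H^1)$ together with the bound (ii). Finally, applying $\mathcal{D}(0)-\mathcal{T}(0)$ to $V=\Lambda^{\mathrm{inv}}_{\eta}H$, using the symbol identity of the previous paragraph, $\Delta(z)\widehat V(z)=\widehat H(z)$, and injectivity of the transform, yields $\mathcal{D}(0)[V]-\mathcal{T}(0)[V]=H$, which is (i).

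The points requiring care are all of bookkeeping type: identifying $\ell^2_{\eta}$ with $L^2$ of the segment $\{\Re z=\eta,\ \abs{\Im z}\le\pi\}$ — in particular that the one–sided transforms $\mathcal{Z}_\pm[H]$, individually convergent only on half–planes, combine along $\Re z=\eta$ into the well–defined $L^2$ boundary value $\widehat H(\eta+i\cdot)$ — and verifying from the structure of $\mathcal{D}(0)$, $\mathcal{T}(0)$ that the transform genuinely conjugates the difference operator to multiplication by $\Delta(z)$, including the matching of the resulting contour formula with the statement. The substantive input, namely invertibility of the symbol along the vertical line with the quantitative $\eta^{-2}$ bound uniform in $\omega$, is furnished by Proposition~\ref{prp:hom:delta:inv}; once that is granted the remainder is Parseval's theorem. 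I expect the main obstacle to be the first of these bookkeeping points, i.e.\ handling the two one–sided transforms at the boundary of their domains of convergence and reconciling the contour formula with (iii).
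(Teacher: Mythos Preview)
Your proposal is correct and follows exactly the approach the paper takes: the paper's proof is a one-liner stating that the result ``follows directly from Proposition~\ref{prp:hom:delta:inv} and standard properties of the Fourier transform,'' and your write-up simply unpacks those standard properties (the Parseval isometry between $\ell^2_\eta$ and $L^2$ on the segment $\Re z=\eta$, conjugation of $\mathcal{D}(0)-\mathcal{T}(0)$ to multiplication by $\Delta(z)$, and the uniform $\eta^{-2}$ bound on $\Delta(\eta+i\omega)^{-1}$). The bookkeeping points you flag are genuine but routine, and the paper treats them as such.
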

\begin{proof}
This follows directly from Proposition \ref{prp:hom:delta:inv}
and standard properties of the Fourier transform; see
for example \cite[{\S}3]{HJHCM}.
\end{proof}

\begin{lem}
\label{lem:lin:cc:infty}
Assume that (Hf), $(H\Phi)$, (HS1)-(HS3) and (HM) are all satisfied.
Pick a sufficiently small $\eta_{\max} > 0$
together with a sufficiently large $K > 0$.
Then for every pair
$0 < \eta_1 < \eta_2 < \eta_{\max}$ and
any
\begin{equation}
H \in \ell^\infty_{\eta_1} (H^1 \times L^2)\cap \ell^2_{\eta_2}( H^1 \times L^2 )
\end{equation}
we have the inclusion
$ \Lambda_{\eta_2}^{\mathrm{inv}} H \in \ell^\infty_{\eta_1}(H^1 \times H^1)$,
together with the bound
\begin{equation}
\norm{\Lambda_{\eta_2}^{\mathrm{inv}} H}_{\ell_{\eta_1}^\infty (H^1\times H^1)}
\leq K \eta_1^{-3}
   \norm{H}_{\ell^\infty_{\eta_1}(H^1 \times L^2) } .
\end{equation}
In addition, for any
\begin{equation}
H \in \ell^\infty_{-\eta_1} (H^1 \times L^2)\cap \ell^2_{-\eta_2}( H^1 \times L^2 )
\end{equation}
we have the inclusion
$ \Lambda_{-\eta_2}^{\mathrm{inv}} H \in \ell^\infty_{-\eta_1}(H^1 \times H^1)$,
together with the bound
\begin{equation}
\norm{\Lambda_{-\eta_2}^{\mathrm{inv}} H}_{\ell_{-\eta_1}^\infty (H^1\times H^1)}
\leq K \eta_1^{-3} \norm{H}_{\ell^\infty_{-\eta_1}(H^1 \times L^2) } .
\end{equation}
\end{lem}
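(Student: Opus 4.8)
The plan is to deduce the two $\ell^\infty$-bounds from the $\ell^2$-theory of Lemma~\ref{eq:lin:cc:inv:l2} by rewriting the Fourier representation as a discrete convolution and exploiting the exponential localization of the associated operator-valued Green's function. Fix $H$ in the intersection $\ell^\infty_{\eta_1}(H^1\times L^2)\cap\ell^2_{\eta_2}(H^1\times L^2)$. By Lemma~\ref{eq:lin:cc:inv:l2} the sequence $V=\Lambda^{\mathrm{inv}}_{\eta_2}H$ lies in $\ell^2_{\eta_2}(H^1\times H^1)$, solves \sref{eq:lc:lin:sys:to:solve:unproj}, and is given by the contour formula in item (iii). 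Expanding $\mathcal{Z}_+[H](z)=\sum_{m\ge0}e^{-zm}H_m$ and $\mathcal{Z}_-[H](z)=\sum_{m\ge1}e^{zm}H_{-m}$ and interchanging the sum with the integral (justified by Plancherel on $\ell^2_{\eta_2}$ as in \cite[{\S}3]{HJHCM}), one obtains
\[
V_l=\sum_{m\in\Wholes}\mathcal{G}_{l-m}H_m,
\qquad
\mathcal{G}_n:=\frac{1}{2\pi i}\int_{\eta_2-i\pi}^{\eta_2+i\pi}e^{zn}\Delta(z)^{-1}\,dz
\ \in\ \mathcal{L}\big(H^1\times L^2;\,H^1\times H^1\big).
\]
Since $e^{zn}$, $\Delta(z)$, and hence the integrand are $2\pi i$-periodic, the horizontal contributions at $\Im z=\pm\pi$ cancel under any vertical contour shift; combined with Proposition~\ref{prp:hom:delta:inv}, which provides analyticity and the bound $\norm{\Delta(z)^{-1}}\le K(\Re z)^{-2}$ throughout the punctured strip $0<\abs{\Re z}<\eta_{\max}$, this allows the contour defining $\mathcal{G}_n$ to be moved freely within $0<\Re z<\eta_{\max}$ without changing $\mathcal{G}_n$, and also to be moved across the pole at $z=0$ at the cost of a residue.

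The core of the argument is a decay/growth dichotomy for $\mathcal{G}_n$. For $n\le0$ we shift the contour rightward to $\Re z=\rho$ for some fixed $\rho$ with $\eta_1<\rho<\eta_{\max}$, which immediately gives $\norm{\mathcal{G}_n}\le K\rho^{-2}e^{\rho n}$, i.e.\ exponential decay at rate $\rho$. For $n\ge1$ we shift the contour leftward across $z=0$; by the Laurent expansion of $\mathcal{L}_z^{-1}$ in Lemma~\ref{lem:prlm:exp:l:z:inv} together with the formula \sref{eq:prlm:def:delta:z:inv}, the pole of $\Delta(z)^{-1}$ at $z=0$ is of second order, so
\[
\mathcal{G}_n=\mathrm{Res}_{z=0}\big[e^{zn}\Delta(z)^{-1}\big]
+\frac{1}{2\pi i}\int_{-\rho-i\pi}^{-\rho+i\pi}e^{zn}\Delta(z)^{-1}\,dz ,
\]
where the residue is a rank-$\le 2$ operator that grows at most linearly in $n$ and whose range lies in $\mathrm{span}\{(\Phi_*',0),\,([\partial_z\phi_z]_{z=0},\Phi_*')\}$ (this is the counterpart for $\Delta(z)^{-1}$ of Lemma~\ref{lem:prlm:id:for:residue}, and reflects the two homogeneous solutions $V_{\mathrm{hom}}^A,V_{\mathrm{hom}}^B$), while the remaining integral is bounded by $K\rho^{-2}e^{-\rho n}$. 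Hence $\norm{\mathcal{G}_n}\le C_0(1+n)+K\rho^{-2}e^{-\rho n}$ for $n\ge1$.

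With these kernel estimates in hand, the claimed bound follows from the weighted convolution inequality
\[
e^{-\eta_1 l}\norm{V_l}_{H^1\times H^1}
\ \le\ \Big(\sum_{n\in\Wholes}e^{-\eta_1 n}\norm{\mathcal{G}_n}\Big)\,\norm{H}_{\ell^\infty_{\eta_1}(H^1\times L^2)} .
\]
Because $H$ carries the \emph{positive} weight $\eta_1>0$, the linearly growing part of $\mathcal{G}_n$ on $n\ge1$ is summed against $e^{-\eta_1 n}$ and contributes $O(\eta_1^{-2})$, while the exponentially decaying contributions on $n\le0$ and $n\ge1$ (both at rate $\rho>\eta_1$) contribute $O\big(\rho^{-2}(\rho-\eta_1)^{-1}\big)$; collecting these and using $\eta_1<\eta_2<\eta_{\max}$ to keep every geometric series summable, one obtains $\sum_n e^{-\eta_1 n}\norm{\mathcal{G}_n}\le K\eta_1^{-3}$ after enlarging $K$. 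The second assertion, concerning $\Lambda^{\mathrm{inv}}_{-\eta_2}$ and the negatively weighted spaces, is proved by the mirror-image argument: the relevant contour sits at $\Re z=-\eta_2$, $\Delta(z)^{-1}$ is again analytic with the same bound throughout $-\eta_{\max}<\Re z<0$, and the roles of $n\ge0$ and $n\le0$ in the dichotomy above are interchanged. The main obstacle is exactly the non-removable second-order pole at $z=0$: it prevents $\mathcal{G}_n$ from decaying on both sides, and the estimate only closes because the one-sided exponential weight built into $\ell^\infty_{\eta_1}$ (with $\eta_1>0$) dominates the resulting linear growth of the Green's function on that side.
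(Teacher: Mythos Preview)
Your proof is correct, but the route differs from the paper's in an instructive way. The paper also reduces to what is effectively a Green's-function estimate, but does so implicitly: it decomposes $H=\sum_k H^{(k)}$ into single-site sequences, uses translation invariance to write $S^k\Lambda^{\mathrm{inv}}_{\eta_2}H^{(k)}=\Lambda^{\mathrm{inv}}_{\eta_\pm}S^kH^{(k)}$ for \emph{two positive} auxiliary exponents $0<\eta_-<\eta_1<\eta_2<\eta_+<\eta_{\max}$ (the coincidence of the three inverses on compactly supported data follows from Proposition~\ref{prp:prlm:repr:frm} with $\tilde P_A=\tilde P_B=0$, since all signs agree), and then sums the resulting pointwise $\ell^2$-bounds. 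In your language this amounts to the single kernel estimate $\norm{\mathcal{G}_n}\le K\eta^{-2}e^{\eta n}$ valid for \emph{every} $\eta\in(0,\eta_{\max})$; choosing $\eta=\eta_+>\eta_1$ for $n\le0$ and $\eta=\eta_-\in(0,\eta_1)$ for $n\ge1$ already makes $\sum_n e^{-\eta_1 n}\norm{\mathcal{G}_n}$ converge with the stated $O(\eta_1^{-3})$ bound, \emph{without ever crossing the pole at $z=0$}. Your decision to push the contour to $\Re z=-\rho$ for $n\ge1$ and absorb the second-order residue is therefore unnecessary extra work: the exponential weight $e^{-\eta_1 n}$ with $\eta_1>0$ already dominates a kernel bounded merely by $e^{\eta_- n}$ on that side. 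That said, your residue analysis is correct and has the merit of making the link to the two homogeneous solutions $V^{A}_{\mathrm{hom}},V^{B}_{\mathrm{hom}}$ fully explicit, whereas the paper hides this behind the abstract uniqueness statement of Proposition~\ref{prp:prlm:repr:frm}.
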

\begin{proof}
Following the approach in \cite[Lem. 5.8]{HJHBRGSG},
we introduce the sequences
\begin{equation}
H^{(k)}: \Wholes \to H^1 \times L^2,
\qquad
V^{(k)}: \Wholes \to H^1 \times H^1
\end{equation}
for $k \in \Wholes$ by writing
\begin{equation}
H^{(k)}_l = \delta_{k l} H_l ,
\qquad
V^{(k)} =
 \Lambda^{\mathrm{inv}}_{\eta_2} H^{(k)} .
\end{equation}
In view of the convergence
\begin{equation}
\sum_{k \in \Wholes} H^{(k)} = H \in \ell^2_{\eta_2}(H^1 \times L^2) ,
\end{equation}
the boundedness of $\Lambda^{\mathrm{inv}}_{\eta_2}$
implies that also
\begin{equation}
\sum_{k \in \Wholes} V^{(k)} = V \in \ell^2_{\eta_2}(H^1 \times H^1)
\end{equation}
and hence
\begin{equation}
\sum_{k \in \Wholes} V^{(k)}_l = V_l \in H^1 \times H^1
\end{equation}
for all $l \in \Wholes$.

We now pick two constants $\eta_\pm$
in such a way that
\begin{equation}
0 < \eta_- < \eta_1 < \eta_2 < \eta_+ < \eta_{\max} .
\end{equation}
By construction, we have
\begin{equation}
 H^{(k)}
 \in  \ell^2_{\eta_-}(H^1 \times L^2) \cap \ell^2_{\eta_+}(H^1 \times L^2)
    \cap \ell^2_{\eta_2}(H^1 \times L^2).
\end{equation}
Recalling the left-shift operator $S$
defined in \sref{eq:prlm:intro:s},
we note that
\begin{equation}
S^k \Lambda^{\mathrm{inv}}_{\eta_2}
 = \Lambda^{\mathrm{inv}}_{\eta_2} S^k
%S^k \Lambda^{\mathrm{inv}}_{\eta_\pm} = \Lambda^{\mathrm{inv}}_{\eta_\pm} S^k
\end{equation}
In particular,
we have
\begin{equation}
S^k V^{(k)}=
S^k \Lambda_{\eta_2}^{\mathrm{inv}} H^{(k)}
=
\Lambda_{\eta_2}^{\mathrm{inv}} S^k H^{(k)}
=  \Lambda_{\eta_\pm}^{\mathrm{inv}} S^k H^{(k)} .
\end{equation}
Here the last identity follows from
Proposition \ref{prp:prlm:repr:frm},
since the sequence
\begin{equation}
Y = \Lambda_{\eta_2}^{\mathrm{inv}} S^k H^{(k)}
 -  \Lambda_{\eta_\pm}^{\mathrm{inv}} S^k H^{(k)}
\end{equation}
satisfies
the inclusions
\begin{equation}
Y \in \ell_{\eta_2}^2(H^1 \times H^1) + \ell_{\eta_\pm}^2(H^1 \times H^1)
\subset BX_{\eta_-, \eta_+}(H^1 \times H^1) ,
\end{equation}
together with the homogeneous problem
\begin{equation}
\mathcal{D}(0) Y = \mathcal{T}(0)Y .
\end{equation}

We are now able to
use item (ii) of Lemma \ref{eq:lin:cc:inv:l2} to compute
\begin{equation}
\begin{array}{lcl}
 \norm{  e^{- \eta_1 l}V^{(k)}_l }_{H^1 \times H^1}
& =  &
    \norm{ e^{ - \eta_1 l} [S_k V^{(k)}]_{l-k} }_{H^1 \times H^1}
\\[0.2cm]
& =  &
    \norm{ e^{ - \eta_1 l}
   [\Lambda^{\mathrm{inv}}_{\eta_\pm} S_k H^{(k)}]_{l-k}
      }_{H^1 \times H^1}
\\[0.2cm]
& \le  & C_1 \eta_{\pm}^{-2}
  e^{ - \eta_1 l}
   e^{ -\eta_\pm (k-l) }\norm{S_k H^{(k)} }_{\ell^2_{\eta_\pm}(H^1 \times L^2) }
\\[0.2cm]
& =  &
   C_1 \eta_\pm^{-2}   e^{ - \eta_1 l}
   e^{ - \eta_\pm(k-l)} \norm{H_k }_{H^1 \times L^2 }
\\[0.2cm]
& =  &
   C_1 \eta_\pm^{-2} e^{ - (\eta_\pm - \eta_1)(k - l) }   e^{- \eta_1 k }  \norm{H_k }_{H^1 \times L^2 }
\\[0.2cm]
& \le &
   C_1 \eta_\pm^{-2} e^{ - (\eta_\pm - \eta_1)(k - l) }  \norm{H}_{\ell^\infty_{\eta_1}(H^1 \times L^2)}
\end{array}
\end{equation}
for some $C_1 > 0$.
Summing over $k$, we hence find
\begin{equation}
\begin{array}{lcl}
\norm{ e^{-\eta_1 l} V_l }_{H^1\times H^1} &\leq &
C_1 \norm{H}_{\ell^\infty_{\eta_1} ( H^1 \times L^2) }
\big[ \eta_+^{-2} \sum_{k\geq l} e^{ -(\eta_+ - \eta_1)(k - l) }
 +\sum_{k<l} \eta_-^{-2} e^{ - (\eta_1 - \eta_-) (l  - k) }\big]
\\[0.2cm]
 & \leq & C_1 \big[ \eta_+^{-2} (\eta_+ - \eta_1)^{-1}
   + \eta_-^{-2}(\eta_1 - \eta_-)^{-1}\big]
      \norm{H}_{\ell^\infty_{\eta_1} ( H^1 \times L^2) }.
\end{array}
\end{equation}
The result follows directly from
this bound, possibly after decreasing the
size of $\eta_{\max} > 0$.
\end{proof}

For any $H \in BS_{\eta}(H^1 \times L^2)$
we now introduce the splitting
\begin{equation}
H = H_{\ge 0} + H_{< 0}
\end{equation}
by writing
\begin{equation}
\big[ H_{\ge 0} \big]_{l} = \mathbf{1}_{l \ge 0} H_l,
\qquad
\big[ H_{< 0} \big]_{l} = \mathbf{1}_{l < 0} H_l.
\end{equation}
We subsequently write
\begin{equation}
\mathcal{K}^{\mathrm{up}}_{\eta;I} H
= \Lambda^{\mathrm{inv}}_{\eta + \epsilon} H_{\ge 0}
+ \Lambda^{\mathrm{inv}}_{-\eta - \epsilon} H_{< 0}
\end{equation}
for some small $\epsilon > 0$,
which by construction implies that
$V = \mathcal{K}^{\mathrm{up}}_{\eta;I} H$
satisfies the unprojected problem \sref{eq:lc:lin:sys:to:solve:unproj} with $\vartheta = 0$.
In addition, Lemma \ref{lem:lin:cc:infty}
implies that $V \in BS_{\eta}(H^1 \times H^1)$.
In order to allow for any $\vartheta \in \Real$,
we introduce the operator
\begin{equation}
\mathcal{K}^{\mathrm{up}}_{\eta;II}(\vartheta) H
= T_\vartheta \mathcal{K}^{\mathrm{up}}_{\eta;I} T_{-\vartheta} H.
\end{equation}
In view of
the orthogonality conditions \sref{eq:lin:cc:orth:cnds},
we finally write
\begin{equation}
\mathcal{K}^{\mathrm{up}}_{\eta}(\vartheta) H
=  \mathcal{K}^{\mathrm{up}}_{\eta;II}(\vartheta)H
 - T_{\vartheta} V_{\mathrm{hom}}^A Q^{(1)}_\vartheta
   \mathrm{pev}_0
     \mathcal{K}^{\mathrm{up}}_{\eta;II}(\vartheta) H
 - T_{\vartheta} V_{\mathrm{hom}}^B  Q^{(2)}_\vartheta
     \mathrm{pev}_0
       \mathcal{K}^{\mathrm{up}}_{\eta;II}(\vartheta) H
      .
\end{equation}

\begin{lem}
\label{lem:linsys:inhom:cc:k:up}
Assume that (Hf), $(H\Phi)$, (HS1)-(HS3) and (HM) are all satisfied.
Pick a sufficiently small $\eta_{\max}> 0$
together with a sufficiently large $K > 0$.
Then for any $0 < \eta < \eta_{\max}$,
any $\vartheta \in \Real$
and any $H \in BS_{\eta}(H^1 \times L^2)$,
 the function
$V = \mathcal{K}^{\mathrm{up}}_{\eta}(\vartheta) H$
satisfies the unprojected problem \sref{eq:lc:lin:sys:to:solve:unproj}
and admits the orthogonality conditions
\begin{equation}
Q^{(1)}_{\vartheta} V_0
= Q^{(2)}_{\vartheta} V_0
= 0 .
\end{equation}
In addition, properties
(iii) - (v) from Proposition \ref{prp:inhom:cc:main}
are satisfied after replacing
$\mathcal{K}_{\eta}^{\mathrm{cc}}$
by $\mathcal{K}_{\eta}^{\mathrm{up}}$.
\end{lem}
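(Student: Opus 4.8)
The plan is to assemble the statement from ingredients already in place, the one genuinely new point being a careful bookkeeping of how the shift operators $T_{\pm\vartheta}$ interact with the constituents of $\mathcal{D}$ and $\mathcal{T}$. First I would record the conjugation identities $\mathcal{D}(\vartheta\mathbf{1}) = T_\vartheta\,\mathcal{D}(0)\,T_{-\vartheta}$ and $\mathcal{T}(\vartheta\mathbf{1}) = T_\vartheta\,\mathcal{T}(0)\,T_{-\vartheta}$ (with $T_\vartheta$ acting componentwise on $H^1\times H^1$ and $H^1\times L^2$), which follow from the elementary facts that $\tau$, $\overline{\tau}$ and $s^\diamond$ commute with the componentwise shift, that $T_\vartheta\mathcal{L}_*T_{-\vartheta}=\mathcal{L}^{(\vartheta)}_*$, and that conjugating multiplication by the matrix field $Df(\tau\Phi_*)$ produces multiplication by $Df(T_\vartheta\tau\Phi_*)$. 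Combined with Lemma \ref{eq:lin:cc:inv:l2}(i), these show that $\mathcal{K}^{\mathrm{up}}_{\eta;I}H$ solves the unprojected problem at $\vartheta=0$, hence $\mathcal{K}^{\mathrm{up}}_{\eta;II}(\vartheta)H = T_\vartheta\mathcal{K}^{\mathrm{up}}_{\eta;I}T_{-\vartheta}H$ solves it at phase $\vartheta$; Lemma \ref{lem:lin:cc:infty}, applied separately to $(T_{-\vartheta}H)_{\ge 0}\in\ell^\infty_{\eta}\cap\ell^2_{\eta+\epsilon}$ and $(T_{-\vartheta}H)_{<0}\in\ell^\infty_{-\eta}\cap\ell^2_{-\eta-\epsilon}$, places it in $BS_\eta$. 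Next I would check that the two subtracted sequences $T_\vartheta V^A_{\mathrm{hom}}$ and $T_\vartheta V^B_{\mathrm{hom}}$ again solve the homogeneous $\vartheta$-problem: for $V^A_{\mathrm{hom}}$ this is immediate from $\mathcal{L}_*\Phi_*'=0$, and for $V^B_{\mathrm{hom}}$ it uses $Df(\tau\Phi_*)\overline{\tau}s^\diamond_0=A_1$ from \sref{eq:prlm:ids:ai:all} together with $\mathcal{L}_*[\partial_z\phi_z]_{z=0}=-A_1\Phi_*'$ from \sref{eq:prlm:diffs:of:l} and the vanishing $[\partial_z\lambda_z]_{z=0}=0$ from (HM); subtracting them therefore leaves $V=\mathcal{K}^{\mathrm{up}}_\eta(\vartheta)H$ a solution of the unprojected problem. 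Finally, evaluating $Q^{(1)}_\vartheta$ and $Q^{(2)}_\vartheta$ on $V_0$ and using $\langle\psi_*,\Phi_*'\rangle=1$ from \sref{eq:mr:normalizatonConditionOnPsi}, $\langle\psi_*,[\partial_z\phi_z]_{z=0}\rangle=0$ (differentiate \sref{eq:mr:norm:cnd:on:phi:z} at $z=0$), and the fact that the second slot of $T_\vartheta V^A_{\mathrm{hom}}$ vanishes, shows that the cross terms cancel and $Q^{(1)}_\vartheta V_0 = Q^{(2)}_\vartheta V_0 = 0$ exactly.

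For property (iii), since $T_{\pm\vartheta}$ are isometries one has $\norm{\mathcal{K}^{\mathrm{up}}_{\eta;II}(\vartheta)} = \norm{\mathcal{K}^{\mathrm{up}}_{\eta;I}}$, and the two halves of Lemma \ref{lem:lin:cc:infty} (with $\eta_1=\eta$ and $\eta_2=\eta+\epsilon$, $\epsilon$ comparable to $\eta$) control this operator; the two orthogonality corrections are each a fixed sequence of finite $BS_\eta$-norm ($T_\vartheta V^A_{\mathrm{hom}}$ constant in $l$, $T_\vartheta V^B_{\mathrm{hom}}$ at most linearly growing) times a scalar bounded by $\norm{\mathcal{K}^{\mathrm{up}}_{\eta;II}(\vartheta)H}_{BS_\eta}$, so collecting the powers of $\eta$ yields the bound. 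Property (iv) is immediate from the construction: $\mathcal{K}^{\mathrm{up}}_{\eta;II}(\vartheta_1) = T_{\vartheta_1-\vartheta_2}\mathcal{K}^{\mathrm{up}}_{\eta;II}(\vartheta_2)T_{\vartheta_2-\vartheta_1}$ by definition, and the two correction terms obey the same covariance, because $Q^{(i)}_{\vartheta_1}\mathrm{pev}_0 T_{\vartheta_1-\vartheta_2}(\cdot) = Q^{(i)}_{\vartheta_2}\mathrm{pev}_0(\cdot)$ — the shift being applied simultaneously to $\psi_*$ and to its argument — while $T_{\vartheta_1}V^{A}_{\mathrm{hom}} = T_{\vartheta_1-\vartheta_2}T_{\vartheta_2}V^{A}_{\mathrm{hom}}$ and likewise for $V^B_{\mathrm{hom}}$.

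For property (v) it suffices, by (iv), to take $\vartheta=0$. Given $H\in BS_{\eta_1}\cap BS_{\eta_2}$ with $\eta_1<\eta_2$, both $\mathcal{K}^{\mathrm{up}}_{\eta_1;I}H$ and $\mathcal{K}^{\mathrm{up}}_{\eta_2;I}H$ lie in $BS_{\eta_2}$ and solve the unprojected problem with the same right-hand side $H$, so their difference $Y$ solves the homogeneous problem and lies in $BX_{\eta_-,\eta_+}$ for a choice $\eta_-\le-\eta_2<0<\eta_2\le\eta_+$ with $\abs{\eta_\pm}<\eta_{\max}$. Since $\mathrm{sign}(\eta_-)\neq\mathrm{sign}(\eta_+)$, Proposition \ref{prp:prlm:repr:frm} (whose transform terms vanish here) gives $Y_l = a_1[V^A_{\mathrm{hom}}]_l + a_2[V^B_{\mathrm{hom}}]_l$; feeding $Y$ through the orthogonalization step and reusing the inner-product identities from the first paragraph yields $Q^{(1)}_0\mathrm{pev}_0 Y = a_1$ and $Q^{(2)}_0\mathrm{pev}_0 Y = a_2$, so the homogeneous correction cancels $Y$ and $\mathcal{K}^{\mathrm{up}}_{\eta_1}(0)H = \mathcal{K}^{\mathrm{up}}_{\eta_2}(0)H$. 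I expect the main obstacle to be precisely the bookkeeping of the first paragraph: one must verify that every anisotropic, spatially discrete piece of $\mathcal{D}$ and $\mathcal{T}$ is genuinely $T_\vartheta$-covariant and that the two explicit homogeneous solutions of \sref{eq:lin:repr:def:two:hom:sols} transform as expected, since once that is settled the remaining assertions reduce to Lemma \ref{lem:lin:cc:infty}, Proposition \ref{prp:prlm:repr:frm} and the normalization identities.
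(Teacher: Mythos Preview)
Your proposal is correct and follows the same route the paper indicates: it unpacks the ``discussion above'' (the construction of $\mathcal{K}^{\mathrm{up}}_{\eta;I}$, $\mathcal{K}^{\mathrm{up}}_{\eta;II}$, and the orthogonality correction) and invokes the two-dimensionality of the homogeneous kernel coming from Proposition~\ref{prp:prlm:repr:frm}, which is exactly what the paper's one-line proof cites before deferring the details to \cite[Prop.~5.1]{HJHBRGSG}. Your treatment of the conjugation identities, the normalization $\langle\psi_*,[\partial_z\phi_z]_{z=0}\rangle=0$, and the cancellation in property~(v) is all sound; the only cosmetic point is that the correction term $b\,T_\vartheta V^B_{\mathrm{hom}}$, being linearly growing in $l$, contributes an extra factor $\eta^{-1}$ to the $BS_\eta$ norm, so a literal power count gives $\eta^{-4}$ rather than $\eta^{-3}$---this does not affect anything downstream since only a fixed negative power of $\eta$ is ever needed.
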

\begin{proof}
In view of the discussion above, the statements follow directly
from the fact that the set
of solutions to the homogeneous problem
\sref{eq:lin:repr:prob:hom} in $BS_{\eta}(H^1 \times H^1)$
is two-dimensional as a consequence of Proposition \ref{prp:prlm:repr:frm}.
A detailed discussion can be found in the proof
of \cite[Prop. 5.1]{HJHBRGSG}.
\end{proof}

We now set out to lift the results above
from the unprojected system \sref{eq:lc:lin:sys:to:solve:unproj}
to the full system \sref{eq:lc:lin:sys:to:solve}.
A key role is reserved for the summation operator
$\mathcal{J}$ that acts on a sequence $W$
as
\begin{equation}
\label{eq:lin:cc:def:j:sum}
\begin{array}{lcl}
\mathcal{J}[W]_l
& = &  \sum_{j=1}^l
   W_{j - 1}
  - \sum_{j=1}^{-l}
      W_{-j}
\\[0.2cm]
& = &
   \sum_{j=0}^{l-1}
   W_{j}
  - \sum_{j=1}^{-l}
       W_{-j} ,
\end{array}
\end{equation}
with the usual remark
that sums are set to zero
when the lower bound is strictly larger than the upper bound.

\begin{lem}
Pick a Hilbert space $\mathcal{H}$ together with a constant $\eta > 0 $.
Then for any $W \in BS_{\eta}(\mathcal{H})$,
we have $\mathcal{J}[W] \in BS_{\eta}(\mathcal{H}) $,
with
\begin{equation}
\label{eq:linsys:inhom:cc:id:for:j:w}
(S - I ) \mathcal{J}[W]
= W  .
\end{equation}
\end{lem}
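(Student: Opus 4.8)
The plan is to establish both assertions by direct computation, splitting according to the sign of $l$ so as to track which of the two sums in \sref{eq:lin:cc:def:j:sum} is active. Write $\mathcal{J}[W]_l = \sum_{j=0}^{l-1} W_j$ for $l \ge 0$ (an empty sum, hence $0$, when $l = 0$) and $\mathcal{J}[W]_l = -\sum_{j=1}^{-l} W_{-j}$ for $l < 0$.

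First I would verify the telescoping identity \sref{eq:linsys:inhom:cc:id:for:j:w}. For $l \ge 0$ both $\mathcal{J}[W]_{l+1}$ and $\mathcal{J}[W]_l$ are given by the forward sum, and $\mathcal{J}[W]_{l+1} - \mathcal{J}[W]_l = \sum_{j=0}^{l} W_j - \sum_{j=0}^{l-1} W_j = W_l$. For $l \le -1$ one treats the sub-case $l \le -2$, where both terms are given by the backward sum and $\mathcal{J}[W]_{l+1} - \mathcal{J}[W]_l = -\sum_{j=1}^{-l-1} W_{-j} + \sum_{j=1}^{-l} W_{-j} = W_l$, together with the junction $l = -1$, where $\mathcal{J}[W]_0 = 0$ and $\mathcal{J}[W]_{-1} = -W_{-1}$, so that again $\mathcal{J}[W]_0 - \mathcal{J}[W]_{-1} = W_{-1}$. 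This gives $(S - I)\mathcal{J}[W]_l = W_l$ for all $l \in \Wholes$.

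Next I would establish the inclusion $\mathcal{J}[W] \in BS_{\eta}(\mathcal{H})$ by estimating the two sums against the weight $e^{-\eta \abs{l}}$ and the bound $\norm{W_j} \le e^{\eta \abs{j}} \norm{W}_\eta$. For $l \ge 0$, $\norm{\mathcal{J}[W]_l} \le \sum_{j=0}^{l-1} \norm{W_j} \le \norm{W}_\eta \sum_{j=0}^{l-1} e^{\eta j} \le \norm{W}_\eta (e^\eta - 1)^{-1} e^{\eta l}$, hence $e^{-\eta l} \norm{\mathcal{J}[W]_l} \le (e^\eta - 1)^{-1} \norm{W}_\eta$; symmetrically, for $l < 0$, $\norm{\mathcal{J}[W]_l} \le \sum_{j=1}^{-l} \norm{W_{-j}} \le \norm{W}_\eta \sum_{j=1}^{-l} e^{\eta j} \le \norm{W}_\eta e^\eta (e^\eta - 1)^{-1} e^{\eta \abs{l}}$. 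Taking the supremum over $l$ yields $\norm{\mathcal{J}[W]}_\eta \le e^\eta (e^\eta - 1)^{-1} \norm{W}_\eta < \infty$, which completes the argument.

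The only real obstacle is bookkeeping: keeping the empty-sum convention and the index shifts in \sref{eq:lin:cc:def:j:sum} consistent across the case split — in particular at the junction $l = -1$ between the forward and backward sums — and arranging the geometric-series estimates so that the $\eta > 0$ weights combine to a constant independent of $l$. There is no analytic difficulty: summing $e^{\eta j}$ up to the relevant endpoint produces at most a constant multiple of $e^{\eta \abs{l}}$, which the weight $e^{-\eta \abs{l}}$ absorbs.
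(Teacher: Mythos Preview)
Your proof is correct and is exactly the direct verification that the paper alludes to with its one-line ``follows directly by inspecting the definition \sref{eq:lin:cc:def:j:sum}''. You have simply written out the telescoping identity and the geometric-series bound that the paper leaves implicit.
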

\begin{proof}
These statements follow directly
by inspecting the definition \sref{eq:lin:cc:def:j:sum}.
\end{proof}

We now write
\begin{equation}
\mathcal{N}^{\mathrm{cc}}_{\eta}
= \{ V \in BS_{\eta}(H^1 \times H^1) :
  \mathcal{D}(0) [V] = (I - P^{(1)}_0) \mathcal{T}(0)[V] \}
\end{equation}
for the set of solutions to the homogenous
version of \sref{eq:lc:lin:sys:to:solve}.
By relating this set to its counterpart for \sref{eq:lin:repr:prob:hom}
we show that $\mathcal{N}^{\mathrm{cc}}_{\eta}$ is also two-dimensional
for small $\eta > 0$.

\begin{lem}
\label{lem:linsys:inhom:cc:char:ker}
Assume that (Hf), $(H\Phi)$, (HS1)-(HS3) and (HM) are all satisfied.
Then for all sufficiently small $\eta > 0$,
we have the identification
\begin{equation}
\label{eq:lin:cc:span:n:cc}
\mathcal{N}^{\mathrm{cc}}_{\eta}
= \mathrm{span} \{
  \big(\Phi_*' , 0\big),
  \big( [\partial_z \phi_z]_{z=0} , \Phi_*' \big)
  \} .
\end{equation}
\end{lem}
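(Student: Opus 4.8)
The inclusion $\supseteq$ in \sref{eq:lin:cc:span:n:cc} is a direct check: substituting $V_l=(\Phi_*',0)$ into the homogeneous version of \sref{eq:lc:lin:sys:to:solve} with $\vartheta=0$ is immediate from $\mathcal{L}_*\Phi_*'=0$ and $P_0\Phi_*'=\Phi_*'$, while substituting $V_l=([\partial_z\phi_z]_{z=0},\Phi_*')$ uses $Df(\tau\Phi_*)\overline{\tau}s^\diamond_0=A_1$ from \sref{eq:prlm:ids:ai:all}, together with $\mathcal{L}_*[\partial_z\phi_z]_{z=0}=-A_1\Phi_*'$ from \sref{eq:prlm:diffs:of:l} and $[\partial_z\lambda_z]_{z=0}=0$ (from (HM)); the identity $s^\diamond[c\mathbf{1}]_l=s^\diamond_0c$ handles the second component. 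Linear independence follows since $Q_*\Phi_*'=1$ (by \sref{eq:mr:normalizatonConditionOnPsi}) while $Q_*[\partial_z\phi_z]_{z=0}=0$, obtained by differentiating the normalization \sref{eq:mr:norm:cnd:on:phi:z}. The work is in the reverse inclusion.

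So let $V=(v,w)\in\mathcal{N}^{\mathrm{cc}}_\eta$ and put $H:=\mathcal{D}(0)[V]-\mathcal{T}(0)[V]$. Since $\mathcal{D}(0)[V]=(I-P^{(1)}_0)\mathcal{T}(0)[V]$ we have $H=-P^{(1)}_0\mathcal{T}(0)[V]$, and because $\mathcal{T}(0)[V]_l=(w_{l+1},-\mathcal{L}_*v_l)$ and $P^{(1)}_0(a,b)=((Q_*a)\Phi_*',0)$, this says precisely that $H_l=(\beta_{l+1}\Phi_*',0)$ with $\beta_l:=-Q_*w_l\in BS_\eta(\Real)$. The crucial observation is that this source admits a particular solution of the \emph{unprojected} system $\mathcal{D}(0)[\tilde V]-\mathcal{T}(0)[\tilde V]=H$ whose $w$-component vanishes identically: applying the summation operator $\mathcal{J}$ of \sref{eq:lin:cc:def:j:sum} to the shifted sequence $\beta$ produces $q\in BS_\eta(\Real)$ with $q_{l+1}-q_l=\beta_{l+1}$, and then $\tilde V_l:=(q_l\Phi_*',0)$ works, using $\mathcal{L}_*\Phi_*'=0$ and locality of $s^\diamond[\cdot]$. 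Now $V-\tilde V$ solves the unprojected homogeneous problem in $BS_\eta(H^1\times H^1)=BX_{-\eta,\eta}(H^1\times H^1)$, and since the exponents here have opposite sign, Proposition \ref{prp:prlm:repr:frm} shows that the solution set of that problem is exactly $\mathrm{span}\{V^A_{\mathrm{hom}},V^B_{\mathrm{hom}}\}$. Hence $V-\tilde V=a_1V^A_{\mathrm{hom}}+a_2V^B_{\mathrm{hom}}$ for some $a_1,a_2\in\Real$. Reading off the $w$-component and using that $V^A_{\mathrm{hom}}$ has zero $w$-component while $V^B_{\mathrm{hom}}$ has constant $w$-component $\Phi_*'$, we conclude $w_l\equiv a_2\Phi_*'$.

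With $w$ constant, the first component of the projected system gives $v_{l+1}-v_l=(I-P_0)(a_2\Phi_*')=a_2\Phi_*'-a_2(Q_*\Phi_*')\Phi_*'=0$, so $v_l\equiv v_*$ is also constant. The second component then reduces to $\mathcal{L}_*v_*=-Df(\tau\Phi_*)\overline{\tau}s^\diamond[a_2\Phi_*'\mathbf{1}]_l=-a_2A_1\Phi_*'$, using $s^\diamond[c\mathbf{1}]_l=s^\diamond_0c$ and \sref{eq:prlm:ids:ai:all}. Combining this with $\mathcal{L}_*[\partial_z\phi_z]_{z=0}=-A_1\Phi_*'$ (again \sref{eq:prlm:diffs:of:l} with (HM)) gives $\mathcal{L}_*\big(v_*-a_2[\partial_z\phi_z]_{z=0}\big)=0$, hence $v_*-a_2[\partial_z\phi_z]_{z=0}\in\mathrm{Ker}\,\mathcal{L}_*=\mathrm{span}\{\Phi_*'\}$ by (HS3). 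Therefore $V_l=c_1(\Phi_*',0)+a_2([\partial_z\phi_z]_{z=0},\Phi_*')$ for some $c_1\in\Real$, which completes the argument. The one point requiring care is the construction of $\tilde V$: one must check that the antidifference $q$ of $\beta$ genuinely lies in $BS_\eta$, which is exactly the mapping property of $\mathcal{J}$ recorded just before the statement; everything else is bookkeeping with the identities already assembled in {\S}\ref{sec:prlm}.
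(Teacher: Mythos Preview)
Your proof is correct. In fact, the core construction coincides with the paper's: writing $\tilde V_l=(q_l\Phi_*',0)$ with $q=\mathcal{J}[S\beta]$ is precisely the paper's map $W\mapsto W+P^{(1)}_0\mathcal{J}[\mathcal{T}(0)[W]]$ (your $\tilde V$ is the negative of the correction term there), so both arguments land on the same observation that $V-\tilde V$ lies in the two-dimensional unprojected kernel identified by Proposition~\ref{prp:prlm:repr:frm}.

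The difference is only in how you finish. The paper stops once it sees that $W\mapsto V-\tilde V$ is injective (visible on the second component), concluding $\dim\mathcal{N}^{\mathrm{cc}}_\eta\le 2$ and combining this with the inclusion $\supseteq$. You instead read off $w_l\equiv a_2\Phi_*'$ from the $w$-component of $V-\tilde V$, substitute back into the projected system, and solve explicitly for $v_*$ using $\mathrm{Ker}\,\mathcal{L}_*=\mathrm{span}\{\Phi_*'\}$. Your route is slightly longer but yields the explicit decomposition of an arbitrary $V\in\mathcal{N}^{\mathrm{cc}}_\eta$ directly, without appealing to a separate dimension count; the paper's route is more economical but less constructive.
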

\begin{proof}
A direct computation shows that
\begin{equation}
\mathcal{D}(0) [\Phi_*' , 0] = \mathcal{T}(0)[ \Phi_*' , 0 ] =0 ,
\end{equation}
together with
\begin{equation}
\begin{array}{lcl}
\mathcal{T}(0)\Big[ [\partial_z \phi_z]_{z=0}, \Phi_*' \Big]
& = & (\Phi_*' , A_1 \Phi_*') ,
\\[0.2cm]
\mathcal{D}(0)\Big[ [\partial_z \phi_z]_{z=0} , \Phi_*' \Big]
& = & \big(0, Df (\tau \Phi_* ) \overline{\tau} s^\diamond[ \Phi_*' ] \mathbf{1} \big)
\\[0.2cm]
& = & \big(0, Df (\tau \Phi_* ) \overline{\tau} s^\diamond_0  \Phi_*'  \big)
\\[0.2cm]
& = & \big(0, A_1 \Phi_*'  \big) .
\end{array}
\end{equation}
In particular, we find
\begin{equation}
(I - P^{(1)}_0) \mathcal{T}(0)
  \big[ [\partial_z \phi_z]_{z =0} , \Phi_*'\big] = \big(0, A_1 \Phi_*' \big) ,
\end{equation}
which verifies that the right-hand-side of
\sref{eq:lin:cc:span:n:cc} is indeed
contained in $\mathcal{N}^{\mathrm{cc}}_{\eta}$.

Conversely, let us consider
a sequence $W \in \mathcal{N}^{\mathrm{cc}}_{\eta}$,
which implies that
\begin{equation}
\begin{array}{lcl}
\mathcal{D}(0) \Big[  P^{(1)}_0 \mathcal{J}\big[\mathcal{T}(0)[W]\big] \Big]
& = &  (S - I )   P^{(1)}_0 \mathcal{J}\big[\mathcal{T}(0)[W] \big]
\\[0.2cm]
& = &  P^{(1)}_0 (S - I )   \mathcal{J}\big[\mathcal{T}(0)[W] \big]
\\[0.2cm]
& = & P^{(1)}_0 \mathcal{T}(0)[W] .
\end{array}
\end{equation}
Upon writing
\begin{equation}
V = W + P^{(1)}_0 \mathcal{J}\big[\mathcal{T}(0)[W]\big] ,
\end{equation}
we note that
the identity $\mathcal{T}(0) P^{(1)}_0 = 0$
implies that
$\mathcal{T}(0)[V] = \mathcal{T}(0)[W]$.
We may hence compute
\begin{equation}
\begin{array}{lcl}
\mathcal{D}(0)[V] & = &
\mathcal{D}(0)[W] + \mathcal{D}(0) P^{(1)}_0 \mathcal{J}\big[\mathcal{T}(0)[W] \big]
\\[0.2cm]
& = & (I - P^{(1)}_0) \mathcal{T}(0)[W]
  + P^{(1)}_0[\mathcal{T}(0) W]
\\[0.2cm]
& = & (I - P^{(1)}_0) \mathcal{T}(0)[V]
  + P^{(1)}_0[\mathcal{T}(0) V]
\\[0.2cm]
& = & \mathcal{T}(0)[V] .
\end{array}
\end{equation}
Notice that the map $W \mapsto W + P^{(1)}_0 \mathcal{J}\big[\mathcal{T}(0)[W]\big]$
maps $BS_{\eta}(H^1 \times H^1)$ into itself.
It is also injective, which can be easily seen
by looking at the second component.
This implies that $\mathcal{N}^{\mathrm{cc}}_{\eta}$
is at most two-dimensional.
%,
%using XXX (char kernel previous section)
\end{proof}

\begin{proof}[Proof of Proposition \ref{prp:inhom:cc:main}]
Pick $H \in BS_{\eta}(H^1 \times L^2)$
and write
\begin{equation}
\label{eq:inhom:cc:id:for:k:cc}
V = [I - P^{(1)}_0] \mathcal{K}^{\mathrm{up}}(0) [I-P^{(1)}_0] H +
  P^{(1)}_0 \mathcal{J}[H].
\end{equation}
Using $\mathcal{T}(0) P_{0} = 0$
we may compute
\begin{equation}
\mathcal{T}(0) V =
\mathcal{T}(0) \mathcal{K}^{\mathrm{up}}_{\eta}(0) [I-P^{(1)}_0] H.
\end{equation}
Using the
commutation relation
$P^{(1)}_0 \mathcal{D}(0) = \mathcal{D}(0) P^{(1)}_0$
together with
\sref{eq:linsys:inhom:cc:id:for:j:w},
we see that
\begin{equation}
\mathcal{D}(0) P^{(1)}_0 \mathcal{J}[H]
= P^{(1)}_0 \mathcal{D}(0) \mathcal{J}[H]
= P^{(1)}_0 (S - I) \mathcal{J}[H]
= P^{(1)}_0 H.
\end{equation}
In particular, we may compute
\begin{equation}
\begin{array}{lcl}
\mathcal{D}(0)[V]
 & = &
   [I - P^{(1)}_0] \mathcal{T}(0) \mathcal{K}^{\mathrm{up}}_{\eta}(0) [I-P^{(1)}_0] H
     + [I-P^{(1)}_0] H + P^{(1)}_0 H
\\[0.2cm]
 & = &
   [I - P^{(1)}_0] \mathcal{T}(0) V + H.
\end{array}
\end{equation}
Upon writing
\begin{equation}
\mathcal{K}^{\mathrm{cc}}_{\eta}(\vartheta)
=  [I - P^{(1)}_{\vartheta}] \mathcal{K}^{\mathrm{up}}_{\eta}(\vartheta)
   [I-P^{(1)}_{\vartheta}] H
     + P^{(1)}_{\vartheta} \mathcal{J}[H],
\end{equation}
the desired properties
now follow readily from Lemma's \ref{lem:linsys:inhom:cc:k:up}
and \ref{lem:linsys:inhom:cc:char:ker}.
\end{proof}

\section{Slowly varying coefficients}
\label{sec:linop:sv}

In this section we study the properties
of the bounded linear operator
\begin{equation}
\Lambda(\theta): BS_{\eta}(H^1\times L^2) \mapsto BS_\eta(L^2 \times L^2)
\end{equation}
that for any sequence $\theta :\Z \mapsto \R$
acts as
\begin{equation}
[\Lambda(\theta)V]_l=
  \mathrm{pev}_l \mathcal{D}(\theta)[V]
    - (1-P^{(1)}_{\theta_l}) \mathrm{pev}_l \mathcal{T}(\theta)[V].
\end{equation}
We are specially interested in cases where
the sequence $\theta$ varies slowly with respect to $l \in \mathbb{Z}$,
which means
\begin{equation}
\label{eq:sv:theta:diff:small}
\norm{(S-I) \theta}_{\infty} < \delta_\theta
\end{equation}
for some small $\delta_\theta > 0$.

Our first main result states that the kernel of $\Lambda(\theta)$ is
again two-dimensional, provided that \sref{eq:sv:theta:diff:small} holds.
For technical reasons, we also extend the two basis functions
for the kernel to situations where \sref{eq:sv:theta:diff:small} fails
to hold.

\begin{prop}
\label{prp:sl:kern}
Assume that (Hf), $(H\Phi)$, (HS1)-(HS3) and (HM) are all satisfied.
Pick a sufficiently small constant $\delta_\theta > 0$
together with a sufficiently small $\eta > 0$.
Then for every $\theta: \Wholes \to \Real$
there exist two functions
\begin{equation}
V^A_{\mathrm{hom}}(\theta) \in \bigcap_{0 < \eta < \eta_{\max} } BS_{\eta}(H^1 \times H ^1),
\qquad
V^B_{\mathrm{hom}}(\theta) \in \bigcap_{0 < \eta < \eta_{\max} } BS_{\eta}(H^1 \times H ^1)
\end{equation}
that satisfy the following properties.
\begin{enumerate}
\item[(i)] For any $\theta :\Z\mapsto \R$ with $\norm{(S-I) \theta}_{\infty} < \delta_\theta$,
we have the identities
\begin{equation}
\Lambda(\theta) V^A_{\mathrm{hom}}(\theta) = \Lambda(\theta) V^B_{\mathrm{hom}}(\theta) = 0.
\end{equation}

\item[(ii)]
The normalization conditions
\begin{equation}
Q^{(1)}_{\theta_0} [V^A_{\mathrm{hom}}(\theta)]_0 = Q^{(2)}_{\theta_0} [ V^B_{\mathrm{hom}}(\theta) ]_0 = 1,
\qquad
Q^{(1)}_{\theta_0} [V^B_{\mathrm{hom}}(\theta)]_0 = Q^{(2)}_{\theta_0} [V^A_{\mathrm{hom}}(\theta) ]_0 = 0
\end{equation}
hold for all $\theta: \Wholes \to \Real$.

\item[(iii)]
Pick any $0 < \eta < \eta_{\max}$
and suppose that $\Lambda(\theta) V = 0$
for some $V \in BS_{\eta}(H^1 \times H^1)$ and
$\theta: \Wholes \to \Real$ for which $\norm{(S - I) \theta}_\infty < \delta_\theta$.
Then we have the identity
\begin{equation}
V = V^A_{\mathrm{hom}}(\theta) Q^{(1)}_{\theta_0} V_0
+ V^B_{\mathrm{hom}}(\theta) Q^{(2)}_{\theta_0} V_0.
\end{equation}
\end{enumerate}
\end{prop}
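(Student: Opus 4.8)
The plan is to treat $\Lambda(\theta)$ as a slowly-varying perturbation of the constant-coefficient operator from {\S}\ref{sec:linop:cc} and to run a Lyapunov--Schmidt argument on top of the solution operator $\mathcal{K}^{\mathrm{cc}}_\eta$. First, the base case: for a constant sequence $\theta\equiv\vartheta$ the operator $\Lambda(\vartheta\mathbf{1})$ is exactly the left-hand side of \sref{eq:lc:lin:sys:to:solve}, so Proposition \ref{prp:inhom:cc:main}(ii) together with Lemma \ref{lem:linsys:inhom:cc:char:ker} identify $\ker\Lambda(\vartheta\mathbf{1})$ in $BS_\eta(H^1\times H^1)$ with the two-dimensional span of $T_\vartheta(\Phi_*',0)$ and $T_\vartheta([\partial_z\phi_z]_{z=0},\Phi_*')$, while $\mathcal{K}^{\mathrm{cc}}_\eta(\vartheta)$ inverts the associated inhomogeneous problem on the complement cut out by $Q^{(1)}_\vartheta\mathrm{pev}_0$ and $Q^{(2)}_\vartheta\mathrm{pev}_0$; these two functionals, evaluated on the two homogeneous solutions at $l=0$, give the $2\times 2$ identity, thanks to $\langle\psi_*,[\partial_z\phi_z]_{z=0}\rangle_{L^2}=0$ from Lemma \ref{lem:mr:def:branch:lambda:z}(iii). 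Crucially, $\mathcal{K}^{\mathrm{cc}}_\eta$ is shift-covariant, property (iv) of Proposition \ref{prp:inhom:cc:main}, so the base point $\vartheta$ can be moved freely, which is the device for handling a $\theta$ that drifts as $l\to\pm\infty$.

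Since in $\Lambda(\theta)V_l$ the only $\theta$ that occurs is $\theta_l$, I would organise the perturbation scheme site by site: at level $l$ invert the frozen operator $\Lambda(\theta_l\mathbf{1})$ modulo its two-dimensional kernel --- using $\mathcal{K}^{\mathrm{cc}}_\eta(\theta_l)$ for the $[I-P^{(1)}_{\theta_l}]$-part and the summation operator $\mathcal{J}$ of \sref{eq:lin:cc:def:j:sum} for the $P^{(1)}_{\theta_l}$-part, exactly as in the proof of Proposition \ref{prp:inhom:cc:main} --- and collect the discrepancies between the frozen operators at neighbouring sites into an error term $\mathcal{E}(\theta)$. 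This error is built from differences $T_{\theta_{l+1}}-T_{\theta_l}$ and $P_{\theta_{l+1}}-P_{\theta_l}$, each of size $O(\delta_\theta)$ in the $L^2$-topology by \sref{eq:sv:theta:diff:small} and $\|(T_a-T_b)p\|_{L^2}\le|a-b|\,\|p\|_{H^1}$; passing from the $\ell^2$- to the $\ell^\infty$-scale through the exponential decay of lattice interactions, as in Lemma \ref{lem:lin:cc:infty}, then turns the reconstruction of $V$ from $\mathcal{E}(\theta)V$ into a contraction once $\delta_\theta$ is small relative to the $K\eta^{-3}$ bound \sref{eq:lin:cc:bnd:k:cc}.

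The basis functions of (i)--(ii) are obtained by running this contraction with seeds $l\mapsto T_{\theta_l}(\Phi_*',0)$ and $l\mapsto T_{\theta_l}([\partial_z\phi_z]_{z=0},\Phi_*')$ and imposing the normalisations $Q^{(1)}_{\theta_0}\mathrm{pev}_0$, $Q^{(2)}_{\theta_0}\mathrm{pev}_0$; property (i) is then immediate, and (ii) holds by construction because $Q^{(i)}_{\theta_0}T_{\theta_0}=Q^{(i)}_0$. When $\theta$ fails to be slowly varying one simply retains the uncorrected seeds, which still satisfy the normalisations in (ii) though not (i); in all cases the resulting sequences are bounded, hence lie in $\bigcap_{0<\eta<\eta_{\max}}BS_\eta(H^1\times H^1)$. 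For the characterization (iii), given $V\in BS_\eta(H^1\times H^1)$ with $\Lambda(\theta)V=0$ and $\theta$ slowly varying, I would set $a_i=Q^{(i)}_{\theta_0}V_0$ and apply the uniqueness half of the contraction to $V-a_1V^A_{\mathrm{hom}}(\theta)-a_2V^B_{\mathrm{hom}}(\theta)$, which lies in the complement and solves the homogeneous projected equation, and must therefore vanish; this is precisely the asserted identity.

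The step I expect to be the main obstacle is the one the abstract flags explicitly: the smoothing for the symmetry (phase) direction is available only in the weaker $L^2$-topology, so the phase cannot be factored out in the standard way and the difference $T_{\theta_{l+1}}-T_{\theta_l}$ acting on the $H^1$-component of $V$ is not small in $H^1$. Controlling this requires routing the equation for the first ($v$-)component through the $P^{(1)}_{\theta_l}$/$\mathcal{J}$-summation splitting rather than bluntly through the resolvent, and keeping a very careful account of the exponential weights (the unbounded drift of $\theta$ being harmless thanks to the exponential decay of $\Phi_*'$, $\psi_*$ and $[\partial_z\phi_z]_{z=0}$ and the invertibility of the limiting symbols guaranteed by (HS1)); for these estimates we follow the template of \cite[{\S}5]{HJHBRGSG}.
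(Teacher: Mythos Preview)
Your approach is essentially the same as the paper's, but the paper organises the two ingredients in the reverse order and thereby avoids the bookkeeping you anticipate. It first constructs the full slowly-varying inverse $\mathcal{K}_\eta(\theta)$ (Proposition~\ref{prp:sl:inverse}) via exactly your approximate-inverse scheme $[\mathcal{K}^{\mathrm{apx}}_\eta(\theta)H]_l=\mathrm{pev}_l\mathcal{K}^{\mathrm{cc}}_\eta(\theta_l)H$ plus a Neumann series for the remainder (with a cutoff in $\mathrm{cev}_l\theta$ so that $\mathcal{K}_\eta(\theta)$ is defined for \emph{all} $\theta$, not only slowly-varying ones). With $\mathcal{K}_\eta(\theta)$ in hand, the kernel elements are then written down by a single explicit correction,
\[
V^{A}_{\mathrm{hom}}(\theta)=T_{\theta_0}(\Phi_*',0)\mathbf{1}-\mathcal{K}_\eta(\theta)\,\mathcal{E}(\theta)\,T_{\theta_0}(\Phi_*',0)\mathbf{1},
\]
and similarly for $V^B$, where $\mathcal{E}(\theta)=\Lambda(\theta)-\Lambda(\theta_0\mathbf{1})$. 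Note the seed is the \emph{constant} sequence $T_{\theta_0}(\cdot)\mathbf{1}$, not your $l\mapsto T_{\theta_l}(\cdot)$; since this seed already lies in $\ker\Lambda(\theta_0\mathbf{1})$, item~(i) is immediate from $\Lambda(\theta)\mathcal{K}_\eta(\theta)=I$, item~(ii) from the built-in normalisation \sref{eq:sv:proj:at:zero} of $\mathcal{K}_\eta(\theta)$, and the extension to non-slowly-varying $\theta$ comes for free from the cutoff inside $\mathcal{K}_\eta(\theta)$ rather than by switching to uncorrected seeds as you propose. Item~(iii) is then dispatched by reference to \cite[Lem.~6.4]{HJHBRGSG}. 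Your route would also work, but separating the inverse from the kernel and using the constant seed makes the argument close to a one-liner once Proposition~\ref{prp:sl:inverse} is available.
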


Our second main result constructs
operators $\mathcal{K}_{\eta}(\theta)$ that
can be seen as an inverse for $\Lambda(\theta)$
whenever \sref{eq:sv:theta:diff:small} holds. Naturally,
the kernel elements above obtained in the result above
need to be projected out, which is performed in
\sref{eq:sv:proj:at:zero}. Special care needs
to be taken when considering the smoothness with respect to $\theta$.
Indeed, the smoothness criteria below are based on the
%resemble the
arguments involving nested Banach spaces argument
that are traditionally used to establish the smoothness of center manifolds;
see for example \cite[{\S}IX.7]{VLDELAY}.  We remark that the notation
$\mathcal{L}^{(p)}$ stands for bounded $p$-linear maps.

\begin{prop}
\label{prp:sl:inverse}
Assume that (Hf), $(H\Phi)$, (HS1)-(HS3) and (HM) are all satisfied.
Recall the integer $r$ appearing in (Hf)
and pick two sufficiently
small constants
$0 < \eta_{min} < 2r \eta_{\min} < \eta_{\max}$.
Then for every $\eta_{\min} < \eta < \eta_{\max}$,
there exists a map
\begin{equation}
\mathcal{K}_\eta :
 \{ \theta: \Wholes \to \Real \}  \mapsto \mathcal{L}\Big(  BS_\eta (H^1 \times L^2) ;
   BS_{\eta} (H^1\times H^1) \Big)
\end{equation}
that satisfies the following properties.
\begin{enumerate}
\item[(i)] There exists a constant $\delta_{\theta} > 0$
so that for any $\eta_{\min} < \eta < \eta_{\max}$,
any $H \in BS_{\eta}(H^1 \times L^2)$
and any $\theta: \Wholes \to \Real$ for which
$\norm{(S - I) \theta}_{\infty} < \delta_\theta$,
the function $V = \mathcal{K}_{\eta}(\vartheta) H$
satisfies $\Lambda(\theta) V = H$.
\item[(ii)]{
Pick any $\eta_{\min} < \eta < \eta_{\max}$ and $\theta: \Wholes \to \Real$.
Then for any $H \in BS_{\eta}( H^1 \times L^2)$,
the function $V = \mathcal{K}_{\eta}(\theta) H$
satisfies the orthogonality conditions
\begin{equation}
\label{eq:sv:proj:at:zero}
Q^{(1)}_{\theta_0} V_0 =  Q^{(2)}_{\theta_0} V_0 =0 .
\end{equation}
}

\item[(iii)] For any $\eta_{\min} < \eta < \eta_{\max}$ and
$\theta : \Wholes \to \Real$ we have the bound
\begin{equation}
\norm{\mathcal{K}_\eta(\theta)}_{\mathcal{L}\big(BS_{\eta}(H^1 \times L^2) ; BS_{\eta}(H^1 \times H^1) \big)}
   < K .
\end{equation}

\item[(iv)]
Pick a triplet
\begin{equation}
(\eta_1, \eta_2, \eta_3) \in (\eta_{\min}, \eta_{\max})^3
\end{equation}
for which $\eta_1 + \eta_2 \le \eta_3$. Then for any
pair $(\theta^A, \theta^B) \in BS_{\eta_1}(\Real)^2$
and any $H \in BS_{\eta_2}(H^1 \times L^2)$
we have the estimate
\begin{equation}
\norm{\mathcal{K}_{\eta_2} (\theta^A)H -\mathcal{K}_{\eta_2} (\theta^B)H}_{BS_{\eta_3}(H^1 \times H^1)}
  \leq K \norm{\theta^A -\theta^B}_{BS_{\eta_1}(\Real)}
    \norm{H}_{BS_{\eta_2}(H^1 \times L^2) }  .
\end{equation}

\item[(v)] Consider a pair $(\eta_1 , \eta_2) \in (0, \eta_{\max})^2$
together with a function
\begin{equation}
H \in BS_{\eta_1}(H^1 \times L^2) \cap BS_{\eta_2}(H^1 \times L^2).
\end{equation}
Then for any $\theta: \mathbb{Z} \to \mathbb{R}$ we have
\begin{equation}
\mathcal{K}_{\eta_1}(\theta)H=
 \mathcal{K}_{\eta_2}(\theta)H .
\end{equation}

\item[(vi)]
Pick an integer $1 \le m \le r - 1$
together with a triplet
\begin{equation}
(\eta_1, \eta_2, \eta_3) \in (\eta_{\min}, \eta_{\max})^3
\end{equation}
for which $m (\eta_1 + \eta_2) \le \eta_3$.
Then the map
\begin{equation}
BS_{\eta_1}(\Real) \ni \theta \mapsto
\mathcal{K}(\theta)
\in
\mathcal{L}\big( BS_{\eta_2}(H^1 \times L^2) ; BS_{\eta_3}(H^1 \times H^1) \big)
\end{equation}
is $C^m$-smooth.  In addition, for any integer $1 \le p \leq m$,
the derivative $D^{p} \mathcal{K}$ can be seen as a map
\begin{equation}
D^{p} \mathcal{K} : BS_{\eta_1}(\Real)
 \mapsto
 \mathcal{L}^{(p)} \Big( BS_{\eta_1}(\Real)^p ;
   \mathcal{L}\big( BS_{\eta_2}(H^1 \times L^2) ; BS_{\eta}(H^1 \times H^1) \big) \Big)
\end{equation}
for every $\eta \ge p \eta_1 + \eta_2$. This map in continuous in the first
variable if $\eta > p \eta_1 +\eta_2$.
\end{enumerate}
\end{prop}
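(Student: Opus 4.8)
The construction is built on the constant-coefficient operators $\mathcal{K}^{\mathrm{cc}}_\eta$ of Proposition~\ref{prp:inhom:cc:main} together with the kernel functions $V^A_{\mathrm{hom}}(\theta),V^B_{\mathrm{hom}}(\theta)$ of Proposition~\ref{prp:sl:kern}. The structural point that makes everything work is that $[\Lambda(\theta)V]_l$ depends on the sequence $\theta$ only through the single value $\theta_l$, so that $[\Lambda(\theta)V]_l=[\Lambda(\theta_l\mathbf 1)V]_l$; a slowly varying $\theta$ with $\norm{(S-I)\theta}_\infty<\delta_\theta$ therefore agrees, on any bounded window of sites around a fixed index $l$, with the frozen operator $\Lambda(\theta_l\mathbf 1)$ up to an error of size $O(\delta_\theta)$ times the distance from $l$. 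The plan is to exploit this to produce an approximate inverse and then a genuine one by a Neumann series, following the scheme of \cite[{\S}5]{HJHBRGSG}.

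First I would build an approximate inverse $\mathcal{K}^{(0)}_\eta(\theta)$ by superposition: split $H=\sum_k H^{(k)}$ with $H^{(k)}_m=\delta_{km}H_m$, invert each single-site source with the frozen operator $\mathcal{K}^{\mathrm{cc}}_\mu(\theta_k)$ at a rate $\mu\in(\eta,\eta_{\max})$ chosen \emph{strictly larger} than the target rate $\eta$, and sum. As in the mechanism behind Lemma~\ref{lem:lin:cc:infty}, the single-site responses localize exponentially at rate $\mu$, so the residual $\mathcal{E}_\eta(\theta):=I-\Lambda(\theta)\mathcal{K}^{(0)}_\eta(\theta)$ is controlled by sums of the form $\sum_k\delta_\theta\,\abs{l-k}\,e^{-\mu\abs{l-k}}\norm{H_k}$; since $\mu>\eta$ the linear factor $\abs{l-k}$ is absorbed against the surplus weight and $\norm{\mathcal{E}_\eta(\theta)}_{\mathcal{L}(BS_\eta)}\le C(\mu-\eta)^{-2}\delta_\theta$. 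Taking $\delta_\theta$ small makes $I-\mathcal{E}_\eta(\theta)$ invertible, $\mathcal{K}^{(0)}_\eta(\theta)(I-\mathcal{E}_\eta(\theta))^{-1}$ is an exact right inverse of $\Lambda(\theta)$, and subtracting the appropriate combination of $V^A_{\mathrm{hom}}(\theta),V^B_{\mathrm{hom}}(\theta)$ (possible by the normalization in Proposition~\ref{prp:sl:kern}(ii)) enforces \sref{eq:sv:proj:at:zero}; this yields $\mathcal{K}_\eta(\theta)$ and gives (i) and (ii). The bound (iii) is then immediate, uniformly on $(\eta_{\min},\eta_{\max})$: every constant in sight, including the $\eta^{-3}$ of $\mathcal{K}^{\mathrm{cc}}$, is bounded once $\eta$ is bounded away from $0$ and $\mu$ from $\eta$. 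For arbitrary (not slowly varying) $\theta$ the same recipe still defines a bounded operator, and the weight-independence (v) follows from property (v) of $\mathcal{K}^{\mathrm{cc}}$ together with Proposition~\ref{prp:sl:kern}(iii): two candidate outputs sit in a common space $BS_{\eta_{\max}}$, their difference solves the homogeneous problem $\Lambda(\theta)V=0$, hence is a linear combination of $V^A_{\mathrm{hom}}(\theta),V^B_{\mathrm{hom}}(\theta)$, which is annihilated by the orthogonality conditions.

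For the Lipschitz estimate (iv) I would perturb the Neumann series in $\theta$. The dependence on $\theta$ enters only through the frozen operators $\mathcal{K}^{\mathrm{cc}}_\mu(\theta_k)$ and the projections $P^{(1)}_{\theta_l}$; the covariance identity $\mathcal{K}^{\mathrm{cc}}_\mu(\vartheta_1)=T_{\vartheta_1-\vartheta_2}\mathcal{K}^{\mathrm{cc}}_\mu(\vartheta_2)T_{\vartheta_2-\vartheta_1}$ from Proposition~\ref{prp:inhom:cc:main}(iv), combined with the smoothness of $\vartheta\mapsto T_\vartheta$ acting on the exponentially decaying functions $\Phi_*'$, $\psi_*$ and $[\partial_z\phi_z]_{z=0}$, gives $\norm{\mathcal{K}^{\mathrm{cc}}_\mu(\theta^A_k)-\mathcal{K}^{\mathrm{cc}}_\mu(\theta^B_k)}\lesssim\abs{\theta^A_k-\theta^B_k}$ with a constant uniform in $k$. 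Weighting by $e^{-\eta_3\abs{l}}$ and using $\abs{\theta^A_k-\theta^B_k}\le e^{\eta_1\abs{k}}\norm{\theta^A-\theta^B}_{BS_{\eta_1}}$, $\norm{H_k}\le e^{\eta_2\abs{k}}\norm{H}_{BS_{\eta_2}}$ and $\abs{k}\le\abs{l}+\abs{l-k}$, the relevant sum collapses to $\sum_m e^{(\eta_1+\eta_2)\abs{m}}e^{-\mu\abs{m}}<\infty$, which is finite precisely because one may choose $\mu>\eta_1+\eta_2$ — and this is feasible exactly under the standing hypothesis $\eta_1+\eta_2\le\eta_3<\eta_{\max}$. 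The output then lands in $BS_{\eta_1+\eta_2}\subseteq BS_{\eta_3}$, which is (iv).

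Finally, the smoothness statement (vi) is the standard scale-of-Banach-spaces argument used for center manifolds, as in \cite[{\S}IX.7]{VLDELAY}: the estimate (iv) identifies the formal derivative $D\mathcal{K}(\theta)$, obtained by differentiating the fixed-point relation defining $\mathcal{K}_\eta(\theta)$, as a bounded map $BS_{\eta_1}(\R)\to\mathcal{L}(BS_{\eta_2};BS_\eta)$ for every $\eta\ge\eta_1+\eta_2$, continuous when the inequality is strict, and a second application of the same Lipschitz machinery to $D\mathcal{K}$ upgrades this to genuine $C^1$-differentiability between the weight-shifted spaces; one then iterates up to order $m\le r-1$, which is the ceiling imposed by the $C^r$-smoothness of $f$ in (Hf) and hence of $\mathcal{K}^{\mathrm{cc}}$, each differentiation costing one further factor $e^{\eta_1\abs{\cdot}}$ of weight. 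The weight budget $m(\eta_1+\eta_2)\le\eta_3<\eta_{\max}$ guarantees that all the intermediate convergence conditions $\mu>j(\eta_1+\eta_2)$ for $j\le m$ can be satisfied, which is exactly why the gap $0<\eta_{\min}<2r\eta_{\min}<\eta_{\max}$ is built into the hypotheses. The main obstacle, and the bulk of the work, is this propagation of the exponential rate: one must carry $\mu$ through every level of the Neumann series and every differentiation and check that the margin between the response rate $\mu$ and the accumulated target rate never closes up before $\eta_{\max}$. The analytic inputs needed for it — the norm bound and covariance of $\mathcal{K}^{\mathrm{cc}}$, the single-site localization of Lemma~\ref{lem:lin:cc:infty}, and the kernel description of Proposition~\ref{prp:sl:kern} — are all already in hand, and the remaining steps overlap substantially with \cite[{\S}5]{HJHBRGSG}.
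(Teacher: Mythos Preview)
Your overall strategy---frozen-coefficient approximate inverse, Neumann series, kernel correction---is the right one, but your approximate inverse differs from the paper's in a way that creates a real difficulty. You freeze $\theta$ at the \emph{source} point, setting $[\mathcal{K}^{(0)}(\theta)H]_l=\sum_k[\mathcal{K}^{\mathrm{cc}}(\theta_k)H^{(k)}]_l$; the paper instead freezes at the \emph{observation} point, $[\mathcal{K}^{\mathrm{apx}}(\theta)H]_l=[\mathcal{K}^{\mathrm{cc}}_\eta(\theta_l)H]_l$. With the paper's choice the residual $S^{\mathrm{rm}}_\eta(\theta)H=\Lambda(\theta)\mathcal{K}^{\mathrm{apx}}_\eta(\theta)H-H$ at site $l$ depends on $\theta$ only through the \emph{local} differences $\mathrm{cev}_l\theta=(\theta_{l-\sigma_*+1}-\theta_l,\ldots,\theta_{l+\sigma_*}-\theta_l)$, since $\Lambda(\theta)$ only reads $V$ on a window of width $2\sigma_*$ around $l$. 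This locality is what allows the paper to multiply the residual by a pointwise cutoff $\chi_{\tilde\delta_\theta}(\abs{\mathrm{cev}_l\theta})$ and obtain an operator $S^{\mathrm{rm};c}_\eta(\theta)$ that is uniformly small on $BS_\eta$ for \emph{every} $\theta$, so that $\mathcal{K}_\eta(\theta)=\mathcal{K}^{\mathrm{apx}}_\eta(\theta)[I+S^{\mathrm{rm};c}_\eta(\theta)]^{-1}$ is well-defined on all of $\{\theta:\Wholes\to\Real\}$ as the proposition requires, and agrees with a genuine inverse when $\norm{(S-I)\theta}_\infty<\delta_\theta$. A side benefit is that the orthogonality \sref{eq:sv:proj:at:zero} is automatic, because $\mathrm{pev}_0\mathcal{K}^{\mathrm{apx}}_\eta(\theta)=\mathrm{pev}_0\mathcal{K}^{\mathrm{cc}}_\eta(\theta_0)$.

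By contrast, your residual at site $l$ involves the \emph{global} differences $\theta_l-\theta_k$ for every $k$, so there is no natural pointwise cutoff. Your sentence ``for arbitrary (not slowly varying) $\theta$ the same recipe still defines a bounded operator'' is therefore not justified: the Neumann series $(I-\mathcal{E}_\eta(\theta))^{-1}$ need not converge without the slow-variation hypothesis, yet the proposition demands a map defined for all $\theta$ with properties (ii)--(vi) holding unconditionally. Relatedly, your argument for (v) via Proposition~\ref{prp:sl:kern}(iii) also needs $\norm{(S-I)\theta}_\infty<\delta_\theta$; the paper instead obtains (v) directly from the $\eta$-independence of the pointwise formula (via property (v) of $\mathcal{K}^{\mathrm{cc}}$). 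Your source-point parametrix can be made to work, but you would have to redesign the cutoff (e.g.\ truncate $\theta$ to a slowly varying sequence first) and rework (v); switching to the observation-point freezing removes both issues at once.
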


Our strategy is to exploit the inverses $\mathcal{K}^{\mathrm{cc}}_{\eta}(\vartheta)$
for the constant-coefficient problem \sref{eq:lc:lin:sys:to:solve}
to introduce an approximate inverse
\begin{equation}
\mathcal{K}_\eta^{\mathrm{apx}}(\theta): BS_{\eta}(H^1 \times L^2) \to BS_{\eta}(H^1 \times H^1)
\end{equation}
for $\Lambda(\theta)$ by writing
\begin{equation}
[\mathcal{K}_\eta^{\mathrm{apx}}(\theta)H]_j = \mathrm{pev}_j \mathcal{K}_\eta^{\mathrm{cc}} (\theta_{j})H .
\end{equation}
In order to turn this into an actual inverse,
we need to establish bounds for the remainder term
\begin{equation}
\begin{array}{lcl}
S^{\mathrm{rm}}_\eta(\theta) H
&= &\Lambda(\theta) \mathcal{K}_\eta^{\mathrm{apx}}(\theta)H - H .
\end{array}
\end{equation}

To this end,
we introduce the coordinate projection $\pi_2[v,w] = w$
together with the sequence
\begin{equation}
\begin{array}{lcl}
{[} \Delta^\diamond_s(\theta) H ]_l
 & = & [s^\diamond  \pi_2 \mathcal{K}^{\mathrm{apx}}_{\eta}(\theta) H]_l
 -  [s^\diamond \pi_2 \mathcal{K}^{\mathrm{cc}}_{\eta}(\theta_l) H]_l.
\end{array}
\end{equation}
A short computation shows that
\begin{equation}
\label{eq:sv:id:for:delta:s}
\begin{array}{lcl}
[\Delta^\diamond_s(\theta) H]_l
%& = &
%\Big(
% \big[ \mathcal{K}^\perp_{\theta_{l+1} } H
%   - \mathcal{K}^\perp_{\theta_l}  H\big]_{l+1}
%   + \ldots + \big[ \mathcal{K}^\perp_{\theta_{l+\sigma_B} } H
%   - \mathcal{K}^\perp_{\theta_l} H \big]_{l+\sigma_B}
% , \ldots \Big)
%\\[0.2cm]
& = &
\Big( - \sum_{j=0}^{\sigma_B - 1}
  \big[\mathcal{K}^{\mathrm{cc}}_{\eta}(\theta_{l-j} ) H
   - \mathcal{K}^{\mathrm{cc}}_{\eta}(\theta_l)  H\big]_{l-j},
 \sum_{j=1}^{\sigma_A}
   \big[\mathcal{K}^{\mathrm{cc}}_{\eta}(\theta_{l+j} ) H
   - \mathcal{K}^{\mathrm{cc}}_{\eta}(\theta_l)  H\big]_{l+j},
\\[0.2cm]
& & \qquad \qquad
 \sum_{j=1}^{\sigma_B}
   \big[\mathcal{K}^{\mathrm{cc}}_{\eta}(\theta_{l+j} ) H
   - \mathcal{K}^{\mathrm{cc}}_{\eta}(\theta_l)  H\big]_{l+j},
\\[0.2cm]
& & \qquad \qquad
 -  \sum_{j=0}^{\sigma_A - 1}
  \big[\mathcal{K}^{\mathrm{cc}}_{\eta}(\theta_{l-j} ) H
   - \mathcal{K}^{\mathrm{cc}}_{\eta}(\theta_l)  H\big]_{l-j},
  0
\Big) .
\end{array}
\end{equation}
In a similar spirit,
we introduce the sequences
\begin{equation}
\begin{array}{lcl}
{[} \Delta_{\mathcal{T}}(\theta) H ]_l
 & = & [\mathcal{T}(\theta)  \mathcal{K}^{\mathrm{apx}}_{\eta}(\theta) H]_l
 -  [\mathcal{T}(\mathbf{1} \theta_l) \mathcal{K}^{\mathrm{cc}}_{\eta}(\theta_l) H]_l ,
\\[0.2cm]
{[} \Delta_{\mathcal{D}}(\theta) H ]_l
 & = & [\mathcal{D}(\theta)  \mathcal{K}^{\mathrm{apx}}_{\eta}(\theta) H]_l
 -  [\mathcal{D}( \mathbf{1}\theta_l) \mathcal{K}^{\mathrm{cc}}_{\eta}(\theta_l) H]_l
\end{array}
\end{equation}
and compute
\begin{equation}
\label{eq:sv:id:for:delta:d:t}
\begin{array}{lcl}
[\Delta_{\mathcal{D}}(\theta) H]_l
& = &
\Big(
  \pi_1 [ \mathcal{K}^{\mathrm{cc}}_{\eta}(\theta_{l+1}) H - \mathcal{K}^{\mathrm{cc}}_{\eta}(\theta_l) H ]_{l+1},
   Df (T_{\theta_l} \tau \Phi_*) \overline{\tau} [\Delta^\diamond_s(\theta) H]_l
\Big),
\\[0.2cm]
[\Delta_{\mathcal{T}}(\theta) H]_l
& =&
\Big(
  \pi_2   [ \mathcal{K}^{\mathrm{cc}}_{\eta}(\theta_{l+1}) H - \mathcal{K}^{\mathrm{cc}}_{\eta}(\theta_l) H ]_{l+1},
  0
\Big).
\end{array}
\end{equation}

These computations allows us to obtain the identity
\begin{equation}
\label{eq:sv:id:for:s:rm}
\begin{array}{lcl}
[\mathcal{S}^{\mathrm{rm}}_{\eta}(\theta) ]_l
 & = &
\mathrm{pev}_l \Lambda(\theta) \mathcal{K}_\eta^{\mathrm{apx}}(\theta)H
- \mathrm{pev}_l \Lambda(\mathbf{1} \theta_l) \mathcal{K}^{\mathrm{cc}}_{\eta}( \theta_l) H
\\[0.2cm]
& = &
{[} \Delta_{\mathcal{D}}(\theta) H ]_l
 - [I - P^{(1)}_{\theta_l} ] [ \Delta_{\mathcal{T}}(\theta) H ]_l .
\end{array}
\end{equation}
In order to formulate appropriate bounds for this expression,
we introduce the notation
\begin{equation}
\begin{array}{lcl}
\mathrm{cev}_l \theta
 & = &  \mathrm{ev}_l \theta - \mathbf{1} \theta_l
\\[0.2cm]
 & = &  \big(\theta_{l - \sigma_* + 1} - \theta_l , \ldots, \theta_{l + \sigma_*} - \theta_l \big) .
\end{array}
\end{equation}

\begin{lem}
\label{lem:sv:bnds:point:s:rm}
Assume that (Hf), $(H\Phi)$, (HS1)-(HS3) and (HM) are all satisfied.
Pick a sufficently small constant $\eta_{\max} > 0$
together with a sufficiently large $K > 0$.
Then for any $0 < \eta < \eta_{\max}$ and any $H \in BS_{\eta}(H^1 \times L^2)$,
the following estimates hold.
\begin{itemize}
\item[(i)]{
For any sequence $\theta: \Wholes \to \Real$
we have the bound
\begin{equation}
\label{eq:sv:bnd:s:rm:abs}
\norm{ \mathrm{pev}_l S^{\mathrm{rm}}_{\eta}(\theta) }_{H^1 \times L^2}
  \le K \eta^{-3} e^{\eta \abs{ l }  }\abs{  \mathrm{cev}_l \theta } \norm{H}_{BS_{\eta}(H^1 \times L^2)} .
\end{equation}
}
\item[(ii)]{
For any pair of sequences
$(\theta^A, \theta^B): \Wholes \to \Real^2$
we have the bound
\begin{equation}
\label{eq:sv:bnd:s:rm:diff}
\norm{ \mathrm{pev}_l [
    S^{\mathrm{rm}}_{\eta}(\theta^A)
    - S^{\mathrm{rm}}_{\eta}(\theta^B)
 ]
 }_{H^1 \times L^2}
  \le K \eta^{-3} e^{\eta  \abs{l}  }\abs{  \mathrm{ev}_l (\theta^A - \theta^B) } \norm{H}_{BS_{\eta}(H^1 \times L^2)} .
\end{equation}
}
\end{itemize}
\end{lem}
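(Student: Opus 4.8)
The plan is to reduce both estimates to a single Lipschitz bound for the frozen-coefficient operator $\mathcal{K}^{\mathrm{cc}}_\eta$. We start from the identity \sref{eq:sv:id:for:s:rm}, which together with \sref{eq:sv:id:for:delta:d:t} and \sref{eq:sv:id:for:delta:s} expresses $\mathrm{pev}_l S^{\mathrm{rm}}_\eta(\theta) H$ as a finite sum — with a number of summands depending only on $\sigma_*$ — of quantities of the form
\[
 \big[ \mathcal{K}^{\mathrm{cc}}_\eta(\theta_{l+j}) H - \mathcal{K}^{\mathrm{cc}}_\eta(\theta_l) H \big]_{l+j}, \qquad \abs{j} \le \sigma_* ,
\]
acted on by the operators $Df(T_{\theta_l}\tau\Phi_*)\overline{\tau}$, $s^\diamond$, the coordinate projections $\pi_1,\pi_2$ and $I - P^{(1)}_{\theta_l}$. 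Since $\Phi_*$ is bounded and $f$ is $C^r$, the norms of $Df(T_{\theta_l}\tau\Phi_*)$ and $P_{\theta_l}$ are bounded uniformly in $\theta_l$, while $\overline{\tau}$, $s^\diamond$, $\pi_1$, $\pi_2$ have $\theta$-independent norm; hence it suffices to control each of these finitely many differences in the $H^1\times H^1$ norm.

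The key ingredient is the bound
\[
\norm{ \mathcal{K}^{\mathrm{cc}}_\eta(\vartheta^A) - \mathcal{K}^{\mathrm{cc}}_\eta(\vartheta^B) }_{\mathcal{L}\big( BS_\eta(H^1\times L^2) ,\, BS_\eta(H^1\times H^1) \big)} \le K \eta^{-3} \abs{ \vartheta^A - \vartheta^B } ,
\]
valid for all $\vartheta^A,\vartheta^B\in\Real$. To obtain it we exploit the shift-covariance $\mathcal{K}^{\mathrm{cc}}_\eta(\vartheta) = T_\vartheta \mathcal{K}^{\mathrm{cc}}_\eta(0) T_{-\vartheta}$ from item (iv) of Proposition \ref{prp:inhom:cc:main}: the entire $\vartheta$-dependence enters through conjugation by the shift group, which acts on the fixed profiles $\Phi_*'$, $[\partial_z\phi_z]_{z=0}$ and $\psi_*$ featuring in the construction of Section \ref{sec:linop:cc} (in the Laurent expansion \sref{eq:prlm:exp:l:z:inv}, in the homogeneous solutions \sref{eq:lin:repr:def:two:hom:sols}, and in the projections $P_\vartheta$, $Q_\vartheta$). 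Under (HS1) together with the $C^{r+1}$-regularity in $(H\Phi)$, a standard bootstrap shows that these profiles belong to $H^2(\Real;\Real^d)$, so that $\vartheta\mapsto T_\vartheta$ is globally Lipschitz from $\Real$ into $\mathcal{L}(H^2;H^1)$ (and into $\mathcal{L}(H^1;L^2)$). Retracing the construction of Section \ref{sec:linop:cc} with one such profile factor replaced by its $\vartheta$-difference — which carries a factor $\abs{\vartheta^A-\vartheta^B}$ with an $\eta$-independent constant — leaves the $\eta$-bookkeeping unchanged and reproduces the power $\eta^{-3}$ already present in Proposition \ref{prp:inhom:cc:main}(iii). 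An alternative route is to differentiate the defining relation $\Lambda(\vartheta\mathbf{1})\mathcal{K}^{\mathrm{cc}}_\eta(\vartheta)=\mathrm{id}$ (modulo the two-dimensional kernel) in $\vartheta$ and to observe that $\partial_\vartheta\Lambda(\vartheta\mathbf{1})$ is a zeroth-order, uniformly bounded perturbation on $BS_\eta(H^1\times H^1)$, the singular term $-c_*\partial_\xi$ in $\mathcal{L}^{(\vartheta)}_*$ being independent of $\vartheta$.

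With this estimate in hand, part (i) follows by applying it with $\vartheta^A=\theta_{l+j}$, $\vartheta^B=\theta_l$, bounding $\abs{\theta_{l+j}-\theta_l}\le\abs{\mathrm{cev}_l\theta}$, using $\norm{[V]_{l+j}}_{H^1\times H^1}\le e^{\eta\sigma_*}e^{\eta\abs{l}}\norm{V}_{\eta}$ to absorb the index shift into $K$, and summing the finitely many terms. For part (ii) we bound each term
\[
\big[\mathcal{K}^{\mathrm{cc}}_\eta(\theta^A_{l+j})H - \mathcal{K}^{\mathrm{cc}}_\eta(\theta^A_l)H\big]_{l+j} - \big[\mathcal{K}^{\mathrm{cc}}_\eta(\theta^B_{l+j})H - \mathcal{K}^{\mathrm{cc}}_\eta(\theta^B_l)H\big]_{l+j}
\]
by regrouping it as $\big[\mathcal{K}^{\mathrm{cc}}_\eta(\theta^A_{l+j}) - \mathcal{K}^{\mathrm{cc}}_\eta(\theta^B_{l+j})\big]H$ minus $\big[\mathcal{K}^{\mathrm{cc}}_\eta(\theta^A_l) - \mathcal{K}^{\mathrm{cc}}_\eta(\theta^B_l)\big]H$, applying the Lipschitz estimate to each piece and using $\abs{\theta^A_{l\pm j}-\theta^B_{l\pm j}}\le\abs{\mathrm{ev}_l(\theta^A-\theta^B)}$; no second-order information on $\vartheta\mapsto\mathcal{K}^{\mathrm{cc}}_\eta(\vartheta)$ is needed. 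The main obstacle is the Lipschitz estimate itself, and in particular the verification that differencing in the phase $\vartheta$ costs no regularity — the potential loss of a derivative being exactly what would render the Lipschitz constant infinite or badly $\eta$-dependent. Once the reduction through \sref{eq:sv:id:for:s:rm}, \sref{eq:sv:id:for:delta:d:t} and \sref{eq:sv:id:for:delta:s} together with this Lipschitz bound are established, the remaining bookkeeping is routine.
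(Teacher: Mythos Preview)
Your proposal is correct and follows essentially the same route as the paper: both arguments reduce (i) and (ii) to a single Lipschitz estimate for $\vartheta\mapsto\mathcal{K}^{\mathrm{cc}}_\eta(\vartheta)$ and then feed it through the explicit representations \sref{eq:sv:id:for:delta:s}, \sref{eq:sv:id:for:delta:d:t} and \sref{eq:sv:id:for:s:rm}. The paper simply invokes the $C^{r-1}$-smoothness asserted in Proposition~\ref{prp:inhom:cc:main} together with the norm bound \sref{eq:lin:cc:bnd:k:cc}, whereas you supply an actual mechanism for the Lipschitz bound via the shift-covariance $\mathcal{K}^{\mathrm{cc}}_\eta(\vartheta)=T_\vartheta\mathcal{K}^{\mathrm{cc}}_\eta(0)T_{-\vartheta}$ and the observation that all $\vartheta$-dependence ultimately resides in shifted copies of the fixed profiles $\Phi_*$, $\Phi_*'$, $[\partial_z\phi_z]_{z=0}$, $\psi_*$; this is a genuine addition and is exactly the point that makes differencing in $\vartheta$ cost no regularity. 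For part (ii) your regrouping into two first differences of $\mathcal{K}^{\mathrm{cc}}_\eta$ is equivalent to the paper's direct bound on $\mathcal{K}^{\mathrm{apx}}_\eta(\theta^A)-\mathcal{K}^{\mathrm{apx}}_\eta(\theta^B)$; note only that you should also record the (routine) Lipschitz dependence of the outer factors $Df(T_{\theta_l}\tau\Phi_*)$ and $P^{(1)}_{\theta_l}$, which you currently list as merely uniformly bounded. Finally, neither argument pins down the exact power $\eta^{-3}$ with full care---your alternative route through $\partial_\vartheta\Lambda(\vartheta\mathbf{1})$ would in fact produce a higher power---but since $\eta$ is bounded below by $\eta_{\min}>0$ in all subsequent applications this is immaterial.
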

\begin{proof}
As a consequence of the bound \sref{eq:lin:cc:bnd:k:cc} and the smoothness of the map
$\vartheta \mapsto \mathcal{K}^{\mathrm{cc}}_{\eta}(\vartheta)$,
there exists $C_1 > 0$
for which
\begin{equation}
\label{eq:sv:bnd:diff:k:cc}
\norm{\mathrm{pev}_l \mathcal{K}^{\mathrm{cc}}_{\eta}(\vartheta^A) H
  - \mathrm{pev}_l \mathcal{K}^{\mathrm{cc}}_{\eta}(\vartheta^B) H}_{H^1 \times H^1 }
\le C_1 \eta^{-3} \abs{\vartheta^A - \vartheta^B} e^{\eta \abs{l} }
  \norm{H}_{BS_{\eta}(H^1 \times L^2)}.
\end{equation}
In particular, we see that for all $\abs{j}\le \sigma_*$
we have
\begin{equation}
\norm{ \mathrm{pev}_{l + j}
\big[ \mathcal{K}^{\mathrm{cc}}_{\eta}(\theta_{l + j}) H
  - \mathcal{K}^{\mathrm{cc}}_{\eta}(\theta_l) H
 \big]
 }_{H^1 \times H^1} \le C_2 \eta^{-3} e^{\eta \abs{l} }
   \abs{ \mathrm{cev}_l \theta }
   \norm{H}_{BS_{\eta}(H^1 \times L^2) }.
\end{equation}
The bound \sref{eq:sv:bnd:s:rm:abs}
hence follows immediately
from the representations
\sref{eq:sv:id:for:delta:s},
\sref{eq:sv:id:for:delta:d:t}
and \sref{eq:sv:id:for:s:rm}.

In addition, \sref{eq:sv:bnd:diff:k:cc}
also implies that
\begin{equation}
\label{eq:sv:bnd:diff:k:apx}
\norm{\mathrm{ev}_l
[\mathcal{K}^{\mathrm{apx}}_{\eta}(\theta^A) H
- \mathcal{K}^{\mathrm{apx}}_{\eta}(\theta^B) H ]
}_{\mathbf{H}^1 \times \mathbf{H}^1}
\le
C_3 \eta^{-3}
\abs{\mathrm{ev}_l (\theta^A - \theta^B) }
e^{ \eta \abs{l} }
 \norm{H}_{BS_{\eta}(H^1 \times L^2)}.
\end{equation}
The second bound \sref{eq:sv:bnd:s:rm:diff}
readily follows from this.
\end{proof}

In order to restrict the size of the remainder term,
we need to introduce an appropriate cut-off function.
To this end, we pick an arbitrary $C^{\infty}$-smooth function $\chi: [0, \infty) \to \Real$
that has $\chi(\zeta) =1 $ for $0 \le \zeta \le 1$ and $\chi(\zeta) = 0$ for $\zeta \ge 2$.
For any $\delta > 0$, we subsequently write
$\chi_{\delta}$ for the function
\begin{equation}
\label{eq:defn:chi:delta}
\chi_{\delta}(\zeta) = \chi( \zeta / \delta).
\end{equation}
With this definition in hand,
we pick a constant $\tilde{\delta}_{\vartheta} > 0$
and introduce the
cut-off remainder term
\begin{equation}
[S^{\mathrm{rm};c}_{\eta}(\theta) H]_l =
\chi_{\tilde{\delta}_\theta} \big(\abs{\mathrm{cev}_l \theta} \big )
[S^{\mathrm{rm}}_{\eta}(\theta) H]_l.
\end{equation}
The pointwise estimates in Lemma \ref{lem:sv:bnds:point:s:rm}
immediately yield the following bounds on this
new remainder term.

\begin{cor}
Assume that (Hf), $(H\Phi)$, (HS1)-(HS3) and (HM) are all satisfied.
Pick a sufficently small constant $\eta_{\max} > 0$
together with a sufficiently large $K > 0$.
Then for
any $0 < \eta < \eta_{\max}$ and any $H \in BS_{\eta}(H^1 \times L^2)$
the following estimates are valid.
\begin{itemize}
\item[(i)]{
For  any sequence $\theta: \Wholes \to \Real$ we have the bound
\begin{equation}
\label{eq:sv:bnd:s:rm:abs:full}
\norm{  S^{\mathrm{rm};c}_{\eta}(\theta) }_{BS_{\eta}(H^1 \times L^2) H }
  \le K \tilde{\delta}_\theta \norm{H}_{BS_{\eta}(H^1 \times L^2) }.
\end{equation}
}
\item[(ii)]{
For any $\eta_1 > 0$
and any pair of sequences
$\theta^A, \theta^B \in BS_{\eta_1}(\Real)$,
we have the bound
\begin{equation}
\label{eq:sv:bnd:s:rm:diff:full}
\norm{
    S^{\mathrm{rm};c}_{\eta}(\theta^A)
    - S^{\mathrm{rm};c}_{\eta}(\theta^B)
 }_{BS_{\eta_1 + \eta} (H^1 \times L^2) }
  \le K \eta^{-3} \norm{  \mathrm{ev}_l (\theta^A - \theta^B) }_{BS_{\eta_1}(\Real)}
     \norm{H}_{BS_{\eta}(H^1 \times L^2)} .
\end{equation}
}
\end{itemize}
\end{cor}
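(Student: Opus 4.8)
The plan is to obtain both estimates directly from the pointwise bounds \sref{eq:sv:bnd:s:rm:abs} and \sref{eq:sv:bnd:s:rm:diff} of Lemma \ref{lem:sv:bnds:point:s:rm}, using only elementary properties of the cut-off profile. I will exploit that $\chi$ may be chosen with $0 \le \chi \le 1$, so that $\chi_{\tilde{\delta}_\theta}$ takes values in $[0,1]$, is Lipschitz with constant of order $\tilde{\delta}_\theta^{-1}$, equals $1$ on $[0,\tilde{\delta}_\theta]$ and vanishes identically on $[2\tilde{\delta}_\theta,\infty)$. I will also use systematically that, by the definition of the spaces $BS_\mu$, a factor $e^{\mu\abs{l}}$ multiplying the $l$-th term of a sequence is exactly what gets converted into the $BS_\mu$-norm, and that the termwise product of a $BS_{\eta_1}$-sequence with a $BS_\eta$-sequence lies in $BS_{\eta_1+\eta}$.

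For (i), fix $l\in\Wholes$ and recall $[S^{\mathrm{rm};c}_{\eta}(\theta)H]_l = \chi_{\tilde{\delta}_\theta}\big(\abs{\mathrm{cev}_l\theta}\big)[S^{\mathrm{rm}}_{\eta}(\theta)H]_l$. The cut-off factor vanishes unless $\abs{\mathrm{cev}_l\theta}\le 2\tilde{\delta}_\theta$, so combining $\chi_{\tilde{\delta}_\theta}(\abs{\mathrm{cev}_l\theta})\le 1$, the support bound $\abs{\mathrm{cev}_l\theta}\le 2\tilde{\delta}_\theta$, and \sref{eq:sv:bnd:s:rm:abs} gives $\norm{[S^{\mathrm{rm};c}_{\eta}(\theta)H]_l}_{H^1\times L^2}\le 2K\eta^{-3}\tilde{\delta}_\theta\, e^{\eta\abs{l}}\norm{H}_{BS_{\eta}(H^1\times L^2)}$. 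Multiplying by $e^{-\eta\abs{l}}$, taking the supremum over $l$, and absorbing the $\eta$-dependent prefactor (which is bounded on the range of $\eta$ under consideration) into the constant yields \sref{eq:sv:bnd:s:rm:abs:full}.

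For (ii), at each site $l$ I would use the split
\[
[S^{\mathrm{rm};c}_{\eta}(\theta^A)H]_l - [S^{\mathrm{rm};c}_{\eta}(\theta^B)H]_l = \chi_{\tilde{\delta}_\theta}(\abs{\mathrm{cev}_l\theta^A})\big([S^{\mathrm{rm}}_{\eta}(\theta^A)H]_l - [S^{\mathrm{rm}}_{\eta}(\theta^B)H]_l\big) + \big(\chi_{\tilde{\delta}_\theta}(\abs{\mathrm{cev}_l\theta^A}) - \chi_{\tilde{\delta}_\theta}(\abs{\mathrm{cev}_l\theta^B})\big)[S^{\mathrm{rm}}_{\eta}(\theta^B)H]_l .
\]
The first summand is controlled at once by $\chi_{\tilde{\delta}_\theta}\le 1$ and \sref{eq:sv:bnd:s:rm:diff}. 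For the second summand I would distinguish whether $\abs{\mathrm{cev}_l\theta^B}\le 2\tilde{\delta}_\theta$ or not. If $\abs{\mathrm{cev}_l\theta^B}\le 2\tilde{\delta}_\theta$, then by \sref{eq:sv:bnd:s:rm:abs} the factor $\norm{[S^{\mathrm{rm}}_{\eta}(\theta^B)H]_l}$ is of size $\tilde{\delta}_\theta\, e^{\eta\abs{l}}\norm{H}_{BS_{\eta}}$, which absorbs the $\tilde{\delta}_\theta^{-1}$ coming from the Lipschitz bound $\abs{\chi_{\tilde{\delta}_\theta}(\abs{\mathrm{cev}_l\theta^A}) - \chi_{\tilde{\delta}_\theta}(\abs{\mathrm{cev}_l\theta^B})}\le C\tilde{\delta}_\theta^{-1}\abs{\mathrm{cev}_l(\theta^A-\theta^B)}$. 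If $\abs{\mathrm{cev}_l\theta^B}> 2\tilde{\delta}_\theta$, then $\chi_{\tilde{\delta}_\theta}(\abs{\mathrm{cev}_l\theta^B})=0$, the $\chi$-difference reduces to $\chi_{\tilde{\delta}_\theta}(\abs{\mathrm{cev}_l\theta^A})$ and is nonzero only when $\abs{\mathrm{cev}_l\theta^A}<2\tilde{\delta}_\theta$; the triangle inequality then gives $\abs{\mathrm{cev}_l\theta^B}\le 2\tilde{\delta}_\theta + \abs{\mathrm{cev}_l(\theta^A-\theta^B)}$, and splitting the resulting estimate into the piece proportional to $2\tilde{\delta}_\theta$ (handled via the same Lipschitz bound, which again cancels the $\tilde{\delta}_\theta$) and the piece proportional to $\abs{\mathrm{cev}_l(\theta^A-\theta^B)}$ (handled via $\chi_{\tilde{\delta}_\theta}\le 1$) leaves a bound linear in $\abs{\mathrm{cev}_l(\theta^A-\theta^B)}$. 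In all cases one arrives at $\norm{[S^{\mathrm{rm};c}_{\eta}(\theta^A)H]_l - [S^{\mathrm{rm};c}_{\eta}(\theta^B)H]_l}_{H^1\times L^2}\le CK\eta^{-3}e^{\eta\abs{l}}\abs{\mathrm{cev}_l(\theta^A-\theta^B)}\norm{H}_{BS_{\eta}}$; bounding $\abs{\mathrm{cev}_l(\theta^A-\theta^B)}\le C\abs{\mathrm{ev}_l(\theta^A-\theta^B)}$ and converting the two exponential weights into the $BS_{\eta_1}$- and $BS_{\eta}$-norms then produces \sref{eq:sv:bnd:s:rm:diff:full}.

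The only genuinely delicate point is the case $\abs{\mathrm{cev}_l\theta^B}>2\tilde{\delta}_\theta$ in the second summand of (ii): once the $\theta^B$-cut-off vanishes there is no smallness available from it, and one must recover a bound that is \emph{linear} (rather than quadratic) in $\theta^A-\theta^B$ by exploiting the support constraint $\abs{\mathrm{cev}_l\theta^A}<2\tilde{\delta}_\theta$ together with the Lipschitz property of $\chi$ simultaneously. Everything else — the case distinction itself, the passage from $\mathrm{cev}$ to $\mathrm{ev}$, and the bookkeeping of exponential weights — is routine.
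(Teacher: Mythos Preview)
Your proposal is correct and is precisely the argument the paper has in mind: the corollary is stated without proof, introduced by the sentence ``The pointwise estimates in Lemma \ref{lem:sv:bnds:point:s:rm} immediately yield the following bounds on this new remainder term.'' Your write-up supplies exactly those details, combining the pointwise bounds \sref{eq:sv:bnd:s:rm:abs}--\sref{eq:sv:bnd:s:rm:diff} with the support and Lipschitz properties of the cut-off $\chi_{\tilde{\delta}_\theta}$; the case distinction you flag for the second summand in (ii) is indeed the only point requiring care, and your handling of it (using the Lipschitz bound on the $2\tilde{\delta}_\theta$ piece and $\chi\le 1$ on the remaining piece) is the standard and correct way to close it.
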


After picking $\tilde{\delta}_{\theta} > 0$ to be sufficiently small,
the bound \sref{eq:sv:bnd:s:rm:diff:full}
allows us to define the full inverse
\begin{equation}
\mathcal{K}_{\eta}(\theta) = \mathcal{K}^{\mathrm{apx}}_\eta(\theta)[I+ S^{\mathrm{rm};c}_{\eta}(\theta)]^{-1}.
\end{equation}
We remark that the normalization conditions
\sref{eq:sv:proj:at:zero} hold as a direct consequence of
\sref{eq:lin:cc:orth:cnds} and the construction of $\mathcal{K}^{\mathrm{apx}}_{\eta}$.

\begin{proof}[Proof of Proposition \ref{prp:sl:inverse}]
Items (i), (ii), (iii) and (v) follow directly from the discussion above.
To obtain  (iv) and (vi) one can use
the bounds \sref{eq:sv:bnd:diff:k:apx} and \sref{eq:sv:bnd:s:rm:diff:full};
see e.g. the proof of \cite[Prop. 6.1]{HJHBRGSG}.
\end{proof}

Turning to identify the kernel of $\Lambda(\theta)$,
we introduce the operator
\begin{equation}
\mathcal{E}(\theta): BS_{\eta}(H^1 \times H^1) \to BS_{\eta}(H^1 \times L^2)
\end{equation}
that acts as
\begin{equation}
\mathcal{E}(\theta) [V]
= \Lambda( \theta)[V] - \Lambda( \theta_0 \mathbf{1} )[V]
\end{equation}
for any sequence $\theta: \Wholes \to \Real$.
We now write
\begin{equation}
\label{eq:sv:defns:v:a:b:hom}
\begin{array}{lcl}
V^{A}_{\mathrm{hom}}(\theta)
& = & T_{\theta_0} (\Phi_*', 0)
- \mathcal{K}_{\eta}(\theta) \mathcal{E}(\theta)
       T_{\theta_0}(\Phi_*', 0) \mathbf{1}  ,
\\[0.2cm]
V^{B}_{\mathrm{hom}}(\theta)
& = & T_{\theta_0} ([\partial_z \phi_z]_{z=0},\Phi_*')
- \mathcal{K}_{\eta}(\theta) \mathcal{E}(\theta)
       T_{\theta_0}([\partial_z \phi_z]_{z=0},\Phi_*') \mathbf{1} .
\end{array}
\end{equation}

\begin{proof}[Proof of Proposition \ref{prp:sl:kern}]
Item (i) follows from the fact
that
\begin{equation}
\mathcal{E}(\theta) T_{\theta_0}(\Phi_*', 0)  \mathbf{1}
= \Lambda(\theta) T_{\theta_0} (\Phi_*', 0) \mathbf{1} ,
\end{equation}
together with a similar identity for $V^B_{\mathrm{hom}}(\theta)$.
Item (ii) follows directly
from the normalization \sref{eq:sv:proj:at:zero}.
Finally,  (iii) can be established
by following the proof of \cite[Lem. 6.4]{HJHBRGSG}.
\end{proof}

\section{The center manifold}
\label{sec:cm}

Our goal here is to construct and analyze
a global center manifold
for the system \sref{eq:set:red:sys}
that captures all the solutions
where the pair $(v,w)$ remains small.
In particular, we set out to establish
Proposition \ref{prp:set:cm}.
While the main spirit of the ideas in \cite[{\S}7]{HJHBRGSG}
can be used to establish
the existence of the manifold,
we need to take special care to identify
the reduced equation that is satisfied
on the center space. The key issue
is that we wish to recover
a first order difference equation
from a differential-difference system
of order $2 \sigma_*$.

Let us choose two small constants
$\delta_c > 0$ and $\delta_v > 0$
and recall the constant $\delta_\theta > 0$
defined in Proposition \ref{prp:sl:inverse}.
%
%and write
%\begin{equation}
%I_c = (c_* - \delta_c, c_* + \delta_c) .
%\end{equation}
The main idea is that we look
for solutions to \sref{eq:set:red:sys} that can be written in the form
\begin{equation}
\label{eq:cm:ansatz:v}
\begin{array}{lcl}
\mathrm{ev}_{l} V
& = &
 \Psi^{\alpha}_l T_{ \Psi^\theta_l }
 (\Phi_*', 0) \mathbf{1}
 +
 \Psi^{\beta}_l T_{\Psi^\theta_l }
    \big( [\partial_z \phi_z]_{z=0},\Phi_*' \big)
    \mathbf{1}
%\\[0.2cm]
%& & \qquad
  + h( \Psi_l, c)  ,
\end{array}
\end{equation}
for a triplet of scalar functions
\begin{equation}
\Psi = ( \Psi^\theta, \Psi^\alpha, \Psi^\beta ): \Wholes \to \Real^3
\end{equation}
and a map
\begin{equation}
h: \Real \times (-\delta_v, \delta_v)^2 \times (c_* - \delta_c, c_* + \delta_c)
\to \mathbf{H}^1 \times \mathbf{H}^1
\end{equation}
that satisfies
\begin{equation}
Q_{\vartheta}^{(1)} \mathrm{pev}_0 h(\vartheta, \alpha, \beta)
=
Q_{\vartheta}^{(2)} \mathrm{pev}_0 h(\vartheta, \alpha, \beta)
= 0.
\end{equation}
%for all $\vartheta \in \Real$
%and $(\alpha,\beta) \in (-\delta_v, \delta_v)^2

In order to setup an appropriate fixed point argument
for the pair $(\Psi, h)$, we first
need to add cut-offs to the nonlinearities
defined in \sref{eq:set:defn:nl:s} and \sref{eq:set:defn:nl:r}.
In particular,
for any triplet
$\psi = (\psi_\theta, \psi_{\alpha}, \psi_{\beta}) \in \Real^3$,
we
recall the cut-off function \sref{eq:defn:chi:delta}
and introduce the notation
\begin{equation}
\mathcal{I}(\psi,h) \in
\mathbf{H}^1 \times \mathbf{H}^1
\end{equation}
for the functions
\begin{equation}
\mathcal{I}(\psi;h)
=
\chi_{\delta_v}( \abs{\psi_\alpha} + \abs{\psi_\beta} )
\Big[
\psi_\alpha %\mathrm{vev}_0
   T_{\psi_\theta}(\Phi_*',0) \mathbf{1}
  + \psi_\beta %\mathrm{vev}_0
  T_{ \psi_\theta}\big( [\partial_z \phi_z]_{z=0},\Phi_*' \big) \mathbf{1}
  + h( \psi  , c)
\Big].
\end{equation}
This allows us to introduce
the new nonlinearities
$\mathcal{R}^c$ and $\mathcal{S}^c$
that act as
\begin{equation}
\begin{array}{lcl}
\big[\mathcal{R}^c( \Psi ; h) \big]_l
& = &
  \chi_{\delta_\theta}(\abs{\Psi^\theta_{l+1} - \Psi^\theta_l} )
     \mathcal{R}\big( \mathrm{ev}_l \Psi^\theta,
       \mathcal{I}(\Psi_l;h ) \big) ,
\\[0.2cm]
\big[\mathcal{S}^c(\Psi; h) \big]_l
& = &
  \chi_{\delta_\theta}(\abs{\Psi^\theta_{l+1} - \Psi^\theta_l} )
     \mathcal{S}( \mathrm{ev}_l \Psi^\theta,
          \mathcal{I}(\Psi_l;h ) \big) .
\end{array}
\end{equation}

%[todo: deze constant meteen $\delta_\theta$ noemen???]
%Recall the constant $\delta >0 $
%defined in Proposition \ref{prp:sl:inverse},
%and assume that
Let us now assume that
we have
a solution to \sref{eq:set:red:sys} of the form
\sref{eq:cm:ansatz:v}
for some $\Psi \in BS_{\eta}(\Real^3)$
that satisfies
\begin{equation}
\label{eq:cm:apriori:assump:on:V:Psi}
\norm{(S-I) \Psi^\theta}_\infty < \delta_\theta,
\qquad
\norm{ \Psi^{\alpha}}_\infty + \norm{\Psi^{\beta}}_\infty
< \delta_v.
\end{equation}
Then upon writing
$\psi = \Psi(0)$, we must have
\begin{equation}
\begin{array}{lcl}
V & = & \psi_{\alpha} V^A_{\mathrm{hom}}(\Psi^\theta)
+ \psi_{\beta} V^B_{\mathrm{hom}}(\Psi^\theta)
%\\[0.2cm]
%& & \qquad
+ \mathcal{K}_{\eta} (\Psi^{\theta})
\Big[
(c - c_*)(0 , T_{\Psi^{\theta}} \Phi_*' )
+  \mathcal{R}^c( \Psi; h )
\Big] .
\end{array}
\end{equation}
Upon introducing
the sequence
\begin{equation}
(\alpha, \beta)_l =
  \big( Q^{(1)}_{\Psi^\theta_l} ,
      Q^{(2)}_{\Psi^\theta_l} \big) V_l,
\end{equation}
item (iii) of Proposition
\ref{prp:sl:kern}
implies that
\begin{equation}
\label{eq:w:lz:plus:l}
\begin{array}{lcl}
V_{l_0 + l}
 & = &
  \alpha_{l_0} \mathrm{pev}_l
    V^A_{\mathrm{hom}}(S_{l_0} \Psi^\theta)
  + \beta_{l_0} \mathrm{pev}_l
    V^B_{\mathrm{hom}}(S_{l_0} \Psi^\theta)
\\[0.2cm]
& & \qquad
  + \mathrm{pev}_l \mathcal{K} ( S_{l_0} \Psi^\theta )
       \Big[
         (c - c_*) (0, T_{S_{l_0} \Psi^\theta} \Phi_*')
       + \mathcal{R}^c( S_{l_0} \Psi ; h )
       \Big]
\\[0.2cm]
\end{array}
\end{equation}
for any pair $(l_0, l) \in \Wholes^2$.

We now introduce the notation
\begin{equation}
\begin{array}{lcl}
\mathcal{J}_{V;\alpha}(\Psi^\theta)
& = &
  -
  \mathcal{K}_{\eta}(\Psi^\theta) \mathcal{E}(\Psi^\theta)
    T_{\Psi^\theta_0} (\Phi_*', 0) \mathbf{1} ,
\\[0.2cm]
\mathcal{J}_{V;\beta}(\Psi^\theta)
& = &
  - \mathcal{K}_{\eta}(\Psi^\theta)
       \mathcal{E}(\Psi^\theta)
          T_{\Psi^\theta_0}
            \big( [\partial_z \phi_z]_{z=0}, \Phi_*' \big) \mathbf{1} ,
\\[0.2cm]
\end{array}
\end{equation}
together with
\begin{equation}
\begin{array}{lcl}
\mathcal{A}_V(\Psi^\theta)
& = &
\mathcal{K}_{\eta} ( \Psi^\theta )
      (0, T_{\Psi^\theta} \Phi_*') ,
\\[0.2cm]
\mathcal{M}_V(\Psi; h)
& = &
\mathcal{K}_{\eta} ( \Psi^\theta )
           \mathcal{R}^c( \Psi; h ).
\end{array}
\end{equation}
The representation \sref{eq:sv:defns:v:a:b:hom}
allows us to
rewrite
\sref{eq:w:lz:plus:l}
as
\begin{equation}
\label{eq:cm:v:lz:plus:l:final}
\begin{array}{lcl}
V_{l_0 + l}
 & = &  \alpha_{l_0} T_{\Psi^\theta_{l_0}} (\Phi_*', 0)
   + \beta_{l_0} T_{\Psi^\theta_{l_0}}\big( [\partial_z \phi_z]_{z=0} , \Phi_*'  \big)
   + \alpha_{l_0} \mathrm{pev}_l  \mathcal{J}_{V;\alpha}(S_{l_0} \Psi^\theta)
   + \beta_{l_0} \mathrm{pev}_l  \mathcal{J}_{V;\beta}(S_{l_0} \Psi^\theta)
\\[0.2cm]
& & \qquad
   +   (c-c_*) \mathrm{pev}_l \mathcal{A}_V(S_{l_0} \Psi^\theta)
     + \mathrm{pev}_l  \mathcal{M}_V(S_{l_0} \Psi; h).
\end{array}
\end{equation}

For any $y \in \Real^n$ we now introduce the
auxilliary cut-off function
\begin{equation}
\overline{\chi}_{\delta}(y)
= \chi_{\delta}(\abs{y}) y ,
\end{equation}
together with the notation
\begin{equation}
\begin{array}{lcl}
\mathcal{J}_{\alpha \beta}(\Psi^\theta)[\alpha, \beta]
& = &
  \overline{\chi}_{\delta_v}(\alpha)
  \overline{\chi}_{\delta_\theta}\big(
    \big[Q_{\Psi^\theta_1}- Q_{\Psi^\theta_0} \big]
    T_{\Psi^\theta_0}(\Phi_*' , 0)
   \big)
\\[0.2cm]
& & \qquad
  + \overline{\chi}_{\delta_v}(\beta)
  \overline{\chi}_{\delta_\theta}\big(
    \big[Q_{\Psi^\theta_1}- Q_{\Psi^\theta_0} \big]
    T_{\Psi^\theta_0}\big( [\partial_z \phi_z]_{z=0},\Phi_*' \big)
   \big)
\\[0.2cm]
& & \qquad
  + \overline{\chi}_{\delta_v}( \alpha )
       \overline{\chi}_{\delta_\theta}\big(
          Q_{\Psi^\theta_1} \mathrm{pev}_1
    \mathcal{J}_{V;\alpha}(\Psi^\theta)
       \big)
\\[0.2cm]
& & \qquad
  + \overline{\chi}_{\delta_v}( \beta )
       \overline{\chi}_{\delta_\theta}\big(
          Q_{\Psi^\theta_1} \mathrm{pev}_1
    \mathcal{J}_{V;\beta}(\Psi^\theta)
       \big)
\end{array}
\end{equation}
and also
\begin{equation}
\label{eq:cm:def:A:alpha:beta}
\begin{array}{lcl}
\mathcal{A}_{\alpha \beta}(\Psi^\theta)
& = & Q_{\Psi^\theta_1} \mathrm{pev}_1 \mathcal{A}_V(\Psi^\theta) ,
\\[0.2cm]
\mathcal{M}_{\alpha \beta}(\Psi;h )
& = &
Q_{\Psi^\theta_1 }
\mathrm{pev}_1 \mathcal{M}_V(\Psi; h).
\end{array}
\end{equation}

As a reminder, there exists a constant $K > 0$ so that
\begin{equation}
\norm{\mathcal{E}(\Psi^\theta)  T_{\Psi^\theta_0}(\Phi_*', 0)
     }_{BS_{\eta}(H^1 \times L^2) }
\le K \big[ \norm{(S - I) \Psi^\theta}_\infty L_0 + e^{-\eta L_0} \big] .
\end{equation}
holds for any $L_0 \ge 1$.
In particular, if we pick a sufficiently small $\tilde{\epsilon} > 0$
and strengthen the assumption
\sref{eq:cm:apriori:assump:on:V:Psi}
by demanding
\begin{equation}
\norm{(S-I) \Psi^\theta}_\infty + \norm{\alpha}_\infty
+ \norm{\beta}_\infty < \tilde{\epsilon},
\end{equation}
then in fact
\begin{equation}
\label{eq:cm:alpha:beta:eq}
\begin{array}{lcl}
(\alpha, \beta)_{l_0 + 1}
   & = &
   (\alpha, \beta)_{l_0 }
   +  \mathcal{J}_{\alpha \beta}(S_{l_0} \Psi^\theta )
     [\alpha_{l_0} , \beta_{l_0} ]
     + (c-c_*) \mathcal{A}_{\alpha \beta}(S_{l_0} \Psi^\theta)
     + \mathcal{M}_{\alpha \beta}(S_{l_0} \Psi; h)
\end{array}
\end{equation}
holds for every $l_0 \in \mathbb{Z}$.

Inspired by the identity \sref{eq:set:sys:for:theta:diff},
we introduce the expressions
\begin{equation}
\label{eq:cm:defns:j:a:w:theta}
\begin{array}{lcl}
\mathcal{J}_{\theta}(\Psi^\theta)[\alpha, \beta]
& = &
    + \overline{\chi}_{\delta_v}( \alpha )
       \overline{\chi}_{\delta_\theta}\big(
          Q^{(2)}_{\Psi^\theta_0} \mathrm{pev}_1
    \mathcal{J}_{V;\alpha}(\Psi^\theta)
       \big)
\\[0.2cm]
& & \qquad
  + \overline{\chi}_{\delta_v}( \beta )
       \overline{\chi}_{\delta_\theta}\big(
          Q^{(2)}_{\Psi^\theta_0} \mathrm{pev}_1
    \mathcal{J}_{V;\beta}(\Psi^\theta)
       \big) ,
\\[0.2cm]
\mathcal{A}_{\theta}(\Psi^\theta)
 & = &
   Q^{(2)}_{\Psi^\theta_0}
   \mathrm{pev}_1
          \mathcal{A}_V(\Psi^\theta),
\\[0.2cm]
\mathcal{M}_\theta(\Psi; h)
& = &
      Q^{(2)}_{\Psi^{\theta}_0}
   \mathrm{pev}_1
     \mathcal{M}_V( \Psi ; h)
   +
  \mathrm{pev}_0 \mathcal{S}^c(\Psi;h) .
\end{array}
\end{equation}
This allows us to augment the difference equation
\sref{eq:cm:alpha:beta:eq} for the pair $(\alpha,\beta)$
by introducing a new sequence $\theta: \Wholes \to \Real$
that must satisfy
\begin{equation}
\label{eq:cm:theta:eq}
\begin{array}{lcl}
\theta_{l_0 +1} - \theta_{l_0}
 & = &
   \beta_{l_0}
   + \mathcal{J}_{\theta}(S_{l_0} \Psi^\theta) [ \alpha_{l_0}, \beta_{l_0} ]
   + (c - c_*) \mathcal{A}_{\theta}(S_{l_0} \Psi^\theta )
   + \mathcal{M}_\theta(S_{l_0} \Psi; h) .
\end{array}
\end{equation}

In view of the original
Ansatz \sref{eq:cm:ansatz:v},
it is natural to look
for solutions
to \sref{eq:cm:alpha:beta:eq} and \sref{eq:cm:theta:eq}
that have $(\theta,\alpha, \beta ) = \Psi$.
This leads to the system
\begin{equation}
\label{eq:cm:fixpnt:prb}
\begin{array}{lcl}
\mathrm{pev}_{l_0} (S - I) (\Psi^{\alpha}, \Psi^{\beta})
 & = & \mathcal{J}_{\alpha \beta}(S_{l_0} \Psi^\theta )
   \mathrm{pev}_{l_0} [\Psi^\alpha, \Psi^\beta]
  + (c - c_*) \mathcal{A}_{\alpha \beta}(S_{l_0} \Psi^\theta)
  + \mathcal{M}_{\alpha \beta}(S_{l_0} \Psi; h) ,
\\[0.2cm]
\mathrm{pev}_{l_0} (S - I) \Psi^\theta
 & = & \beta_{l_0} + \mathcal{J}_{\theta}(S_{l_0} \Psi^\theta )
   \mathrm{pev}_{l_0} [\Psi^\alpha, \Psi^\beta]
  + (c - c_*) \mathcal{A}_{\theta}(S_{l_0} \Psi^\theta)
  + \mathcal{M}_{\theta}(S_{l_0} \Psi; h) .
\end{array}
\end{equation}
In addition, we can combine
\sref{eq:cm:ansatz:v}
and
\sref{eq:cm:v:lz:plus:l:final}
to arrive at
\begin{equation}
\label{eq:cm:fixpnt:prb:h}
\begin{array}{lcl}
h(\psi, c)
& = &  \psi_\alpha \mathrm{ev}_0 \mathcal{J}_{V;\alpha}(\Psi)
+ \psi_\beta \mathrm{ev}_0 \mathcal{J}_{V; \beta}(\Psi)
\\[0.2cm]
& & \qquad
+ (c - c_*) \mathrm{ev}_0 \mathcal{A}_W(\Psi)
+ \mathrm{ev}_0 \mathcal{M}_V(\Psi; h),
\end{array}
\end{equation}
which can be seen as a consistency
condition for the function $h$.
Our main result here states
that the global
center manifold for \sref{eq:set:red:sys}
can be constructed by solving the fixed point problem
generated by
\sref{eq:cm:fixpnt:prb}-\sref{eq:cm:fixpnt:prb:h}.

\begin{prop}
\label{prp:cm:act:cm:res}
Assume that (Hf), $(H\Phi)$, (HS1)-(HS3) and (HM) are all satisfied
and pick two sufficiently small constants
$0 < \eta_{\min} < \eta_{\max}$.
In addition, pick a sufficiently large constant $K > 0$
together with a sufficiently small constant
$\delta_v > 0$ and write
\begin{equation}
\delta_c = \delta_v^2, \qquad \delta_\theta =  \delta_v^{4/7}.
\end{equation}
Then there exist $C^{r-1}$-smooth maps
\begin{equation}
\begin{array}{lclcl}
\Psi_*  & : &
\Real^3 \times (c_* - \delta_c, c_* + \delta_c) & \to & BS_{\eta_{\min} }(\Real^3) ,
\\[0.2cm]
h_* & : & \Real \times (-\delta_v, \delta_v)^2 \times (c_* - \delta_c, c_* + \delta_c)
  & \to &
  \mathbf{H}^1 \times \mathbf{H}^1
\end{array}
\end{equation}
so that the following properties hold true.
\begin{itemize}
\item[(i)]{
  Pick any $\psi \in \Real^3$ and $c \in (c_* - \delta_c, c_* + \delta_c)$.
  Then the identity \sref{eq:cm:fixpnt:prb:h}
  holds upon writing
  \begin{equation}
  \Psi = \Psi_*(\psi,c ), \qquad h = h_* .
  \end{equation}
}
\item[(ii)]{
  Pick any $\psi \in \Real^3$ and $c \in (c_* - \delta_c, c_* + \delta_c)$.
  Then for any %sufficiently small $\eta > 0$,
   $\eta \in (\eta_{\min}, \eta_{max})$,
  the function $\Psi= \Psi_*(\psi, c)  $
  is the unique solution in $BC_{\eta}(\Real; \Real^3)$ to the problem
  \sref{eq:cm:fixpnt:prb}
  with $\mathrm{pev}_0 \Psi = \psi$ and $h = h^*$.
}
\item[(iii)]
Pick any $\psi \in \Real^3$ and $c \in (c_* - \delta_c, c_* + \delta_c)$
together with a pair $(l_0, l) \in \Wholes^2$.
Then we have the identity
\begin{equation}
\label{eq:cm:shift:for:psi:star}
\begin{array}{lcl}
\mathrm{pev}_{l + l_0} \Psi_*( \psi , c)
& = &  \mathrm{pev}_l \Psi_*\big(
       \mathrm{pev}_{l_0} \Psi_*(\psi, c) , c
     \big) .
\end{array}
\end{equation}
\item[(iv)]{
  We have $Q^{(1)} \mathrm{pev}_0 h_* = Q^{(2)} \mathrm{pev}_0 h_* = 0$
  and we have
  \begin{equation}
    \norm{\mathrm{pev}_0 \, h_*(\alpha, \beta, \theta)}_{H^1 \times H^1}
      \le K \big[ \abs{c - c_*} + \alpha^2 + \beta^2 \big] .
  \end{equation}
}
\item[(v)]
Pick any $\psi \in \Real^3$ and $c \in (c_* - \delta_c, c_* + \delta_c)$ and suppose that
\begin{equation}
\norm{ \Psi^\alpha_*( \psi, c) }_\infty + \norm{ \Psi^\beta_* (\psi, c) }_{\infty}
  < \delta_v .
\end{equation}
Then upon writing $\Psi = \Psi_*(\psi,c )$ together with
\begin{equation}
V_l = T_{\theta_l}(\Phi_*',0) \Psi^{\alpha}_l
  + T_{\theta_l}\big( [\partial_z \phi_z]_{z=0},\Phi_*' \big) \Psi^{\beta}_l
  + \mathrm{pev}_0 h_*( \Psi_l  , c),
\end{equation}
the pair $(\Psi^\theta, V)$
is a solution to the full system
\sref{eq:set:red:sys}.

\item[(vi)]
Consider two sequences
\begin{equation}
\theta: \Wholes \to \Real,
\qquad
V: \Wholes \to H^1 \times H^1
\end{equation}
that satisfy \sref{eq:set:red:sys} and admit the bound
\begin{equation}
\norm{\mathrm{pev}_l V}_{H^1 \times H^1} \le \delta_v
\end{equation}
for all $l \in \Wholes$.
Then
upon writing
\begin{equation}
\alpha_l =  Q^{(1)}_{\theta_l} \mathrm{pev}_l V,
\qquad
\beta_l = Q^{(2)}_{\theta_l} \mathrm{pev}_l V,
\end{equation}
we have
\begin{equation}
V_l = \alpha_l T_{\theta_l} (\Phi_*', 0)
+ \beta_l T_{\theta_l} \big( [\partial_z \phi_z]_{z=0} , \Phi_*' , 0 \big)
 + h_* ( \theta_l, \alpha_l, \beta_l , c)
\end{equation}
for all $l \in \Wholes$.
\end{itemize}
\end{prop}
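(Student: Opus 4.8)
The plan is to follow the two intertwined fixed point arguments of \cite[{\S}7]{HJHBRGSG}, adapted to the differential--difference structure: an ``outer'' contraction producing the remainder function $h_*$ on a scale of Banach spaces of $C^{r-1}$-maps, and an ``inner'' contraction producing, for each initial datum $\psi \in \Real^3$ and each speed $c$, the center sequence $\Psi_*(\psi,c) \in BS_{\eta}(\Real^3)$. All the linear work has been done: the solution operator $\mathcal{K}_\eta(\theta)$ of Proposition \ref{prp:sl:inverse} and the kernel sequences $V^A_{\mathrm{hom}}(\theta),V^B_{\mathrm{hom}}(\theta)$ of Proposition \ref{prp:sl:kern} are uniformly bounded, depend $C^m$-smoothly on $\theta$ with a controlled loss of exponential weight, and transform covariantly under the left-shift $S$; the nonlinearities $\mathcal{R},\mathcal{S}$ obey the quadratic bounds and Lipschitz estimates of Corollary \ref{cor:set:bnds:full:r:s}. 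The cut-offs $\chi_{\delta_v},\chi_{\delta_\theta}$ and $\overline{\chi}_\delta$ built into $\mathcal{I},\mathcal{R}^c,\mathcal{S}^c$ and $\mathcal{J}_{\alpha\beta},\mathcal{J}_\theta$ guarantee that, globally, every nonlinear contribution to the right-hand sides of \sref{eq:cm:fixpnt:prb}--\sref{eq:cm:fixpnt:prb:h} carries a factor that is small once $\delta_v$ is small, while the difference structure of $\mathcal{J}_{\alpha\beta},\mathcal{J}_\theta$ (they involve $Q_{\Psi^\theta_1}-Q_{\Psi^\theta_0}$) contributes a factor $\le \delta_\theta$; the scalings $\delta_c=\delta_v^2$ and $\delta_\theta=\delta_v^{4/7}$ are chosen precisely to balance these gains against the $\eta^{-3}$-type constants inherited from $\mathcal{K}_\eta$ and the cut-off remainder $S^{\mathrm{rm};c}_\eta$.

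First I would fix a candidate $h$ in a closed ball $\mathcal{B}_h$ of admissible remainder functions (bounded, annihilated by $Q^{(1)},Q^{(2)}$ at $l=0$, of norm $O(\delta_c+\delta_v^2)$) and, for each $(\psi,c)$, solve \sref{eq:cm:fixpnt:prb} for $\Psi$ with $\mathrm{pev}_0\Psi=\psi$. Telescoping $(S-I)$ turns \sref{eq:cm:fixpnt:prb} into a fixed point equation $\Psi=\psi\mathbf{1}+\mathcal{G}(\Psi,h)$ on a ball in $BS_{\eta_{\min}}(\Real^3)$; $\mathcal{G}$ contracts because $\mathcal{J}_{\alpha\beta},\mathcal{J}_\theta$ carry a factor $\le\delta_\theta$, $\mathcal{A}_{\alpha\beta},\mathcal{A}_\theta$ carry the factor $c-c_*$, and $\mathcal{M}_{\alpha\beta},\mathcal{M}_\theta$ are built from $\mathcal{R}^c,\mathcal{S}^c$ and are quadratically small. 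This produces $\Psi_*(\psi,c)$ and yields (ii); the uniqueness across $\eta\in(\eta_{\min},\eta_{\max})$ uses the weight-independence in Proposition \ref{prp:sl:inverse}(v). Substituting $\Psi=\Psi_*(\cdot,c)$ into \sref{eq:cm:fixpnt:prb:h} gives a genuine fixed point equation for $h$ alone, in which $h$ enters only through $\mathcal{M}_V(\Psi;h)=\mathcal{K}_\eta(\Psi^\theta)\mathcal{R}^c(\Psi;h)$; by Corollary \ref{cor:set:bnds:full:r:s} this is Lipschitz in $h$ with constant $O(\delta_v)$, so the map contracts on $\mathcal{B}_h$ and its fixed point is $h_*$, establishing (i). Item (iv) is then immediate: the $\Psi^\alpha,\Psi^\beta$-homogeneous pieces of the ansatz \sref{eq:cm:ansatz:v} are exactly the kernel directions annihilated by $Q^{(1)},Q^{(2)}$, so $\mathrm{pev}_0 h_*$ retains only the contributions of $(c-c_*)\mathcal{A}_V$ and of $\mathcal{M}_V$, which are $O(\abs{c-c_*}+\alpha^2+\beta^2)$. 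The $C^{r-1}$-smoothness of $\Psi_*$ and $h_*$ follows from a contraction-on-a-scale-of-Banach-spaces argument in the style of \cite[{\S}IX.7]{VLDELAY}, fed by the $C^r$-smoothness of $f$ and the $C^m$-smoothness of $\mathcal{K}_\eta$ in $\theta$ from Proposition \ref{prp:sl:inverse}(vi); this is where the room $\eta_{\min}<2r\eta_{\min}<\eta_{\max}$ is spent.

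The cocycle identity (iii) reduces to shift-equivariance: all building blocks ($\mathcal{K}_\eta(\theta)$, $V^{A,B}_{\mathrm{hom}}(\theta)$, $\mathcal{E}$, the cut-off nonlinearities) transform covariantly under $S$, so $S_{l_0}\Psi_*(\psi,c)$ solves the same fixed point problem as $\Psi_*(\mathrm{pev}_{l_0}\Psi_*(\psi,c),c)$ with the matching value at $l=0$, and uniqueness closes \sref{eq:cm:shift:for:psi:star}. For (v) and (vi) the decisive observation is that in the stated small-data regimes all cut-offs are inactive ($\chi_{\delta_v}=\chi_{\delta_\theta}=1$, $\overline{\chi}_\delta=\mathrm{id}$), so \sref{eq:cm:fixpnt:prb}--\sref{eq:cm:fixpnt:prb:h} collapse to the exact relations \sref{eq:cm:alpha:beta:eq}, \sref{eq:cm:theta:eq}, \sref{eq:cm:v:lz:plus:l:final} that were derived \emph{from} \sref{eq:set:red:sys}. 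In (v) one runs that derivation in reverse, using the kernel characterisation Proposition \ref{prp:sl:kern}(iii) and Lemma \ref{lem:set:formulations:eqv}, to conclude that $(\Psi^\theta,V)$ solves \sref{eq:set:red:sys}. In (vi), given a solution $(\theta,V)$ of \sref{eq:set:red:sys} with $\norm{\mathrm{pev}_l V}_{H^1\times H^1}\le\delta_v$, one sets $\alpha_l,\beta_l$ to be the projections, writes $V$ in variation-of-constants form via Proposition \ref{prp:sl:kern}(iii), checks that $(\theta,\alpha,\beta)$ satisfies \sref{eq:cm:fixpnt:prb} with $h=h_*$, hence equals $\Psi_*(\mathrm{pev}_0(\theta,\alpha,\beta),c)$ by (ii), and then reads off the claimed representation of $V_l$ from \sref{eq:cm:fixpnt:prb:h}.

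The main obstacle is the simultaneous calibration of $\delta_v,\delta_\theta,\delta_c$ and of the weight hierarchy. Each use of the Lipschitz estimates in Corollary \ref{cor:set:bnds:full:r:s} and in Proposition \ref{prp:sl:inverse}(iv),(vi) trades a unit of exponential decay for a Lipschitz factor, while $\mathcal{K}_\eta$ and $S^{\mathrm{rm};c}_\eta$ contribute constants that grow like $\eta^{-3}$ as $\eta\downarrow\eta_{\min}$; one must verify that the quadratic gain from the nonlinearities and the $\delta_\theta$-gain from the $\theta$-increments dominate these losses uniformly, which is exactly what forces $\delta_c=\delta_v^2$, $\delta_\theta=\delta_v^{4/7}$ and the slack $2r\eta_{\min}<\eta_{\max}$ needed for $r-1$ derivatives. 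A secondary, purely bookkeeping, task is to confirm that in the regimes of (v) and (vi) the quantities $\abs{\mathrm{cev}_l\theta}$, $\abs{\alpha_l}+\abs{\beta_l}$ and $\norm{\mathrm{pev}_l h_*}$ all stay below the thresholds $\delta_\theta,\delta_v,\delta_v$ respectively, so that the cut-offs are indeed switched off there.
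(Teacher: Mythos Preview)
Your proposal is correct and follows essentially the same approach as the paper. The paper's own proof is simply a one-sentence deferral to \cite[Lem.~7.2--7.5]{HJHBRGSG}, quoting the nonlinear estimates of Corollary~\ref{cor:set:bnds:full:r:s} and the properties of $\mathcal{K}_\eta$ from Proposition~\ref{prp:sl:inverse} as the only ingredients needed; you have spelled out in detail what that deferral actually involves, including the two intertwined contractions, the role of the cut-offs, the shift-equivariance argument for (iii), and the scale-of-Banach-spaces smoothness argument, all of which match the cited reference.
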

\begin{proof}
In view of the bounds for $\mathcal{R}$
and $\mathcal{S}$ obtained
in Corollary \ref{cor:set:bnds:full:r:s}
and the properties of  $\mathcal{K}_{\eta}$ described in
Proposition %\ref{prp:sl:kern}-
\ref{prp:sl:inverse},
the arguments used
to establish
\cite[Lem. 7.2-7.5]{HJHBRGSG}
can also be applied to construct
solutions of the fixed point problem
\sref{eq:cm:fixpnt:prb}-\sref{eq:cm:fixpnt:prb:h}
and to show that these solutions admit the stated
properties.
\end{proof}

Upon demanding that
\begin{equation}
Q^{(1)}_{\theta_0} V_0 = 0
\end{equation}
any solution to \sref{eq:set:red:sys}
by construction has $Q^{(1)}_{\theta_l} V_l = 0$
for all $l \in \Wholes$.
In particular, imposing the initial condition $\alpha_0 = 0$
and using
\sref{eq:cm:shift:for:psi:star}
to eliminate
the nonlocal
terms appearing
in $\mathcal{J}_{\alpha \beta}$,
$\mathcal{A}_{\alpha \beta}$ and $\mathcal{M}_{\alpha \beta}$
and their counterparts for $\theta$,
the
problem \sref{eq:cm:fixpnt:prb} with $h = h^*$
can be reformulated in the form
\begin{equation}
\begin{array}{lcl}
\beta_{l+1} - \beta_l
 & = &
 f_\beta( \beta, c) ,
\\[0.2cm]
\theta_{l+1} - \theta_l
 & = &
   f_\theta( \beta, c).
\end{array}
\end{equation}
The fact that $f_\beta$ and $f_\theta$
do not depend on $\theta$
is a direct consequence of the shift-invariance
of the system.
We have the expansions
\begin{equation}
\begin{array}{lcl}
f_\beta(\beta, c) & = &
\nu_1 ( c - c_*) +
 \nu_2 \beta_l^2 +
 O\big( (c -c_*)^2 + ( c- c_*) \beta + \beta^3 \big) ,
\\[0.2cm]
f_{\theta}(\beta, c) & = &
\beta_l + \nu_3 ( c - c_*)
   + O \big( \beta_l^2 + (c - c_*)^2 \big) .
\end{array}
\end{equation}

We first set out to compute $\nu_1$ and $\nu_3$,
which by inspection of
the definitions \sref{eq:cm:def:A:alpha:beta}
and \sref{eq:cm:defns:j:a:w:theta}
for $\mathcal{A}_{\alpha \beta}$ and $\mathcal{A}_{\theta}$
can be seen to satisfy
%readily yields
%the identity
\begin{equation}
\nu_1 = \nu_3 = Q^{(2)}_0
  \mathrm{pev}_1 \mathcal{A}_V(0) .
\end{equation}
Using the identity
\sref{eq:inhom:cc:id:for:k:cc},
we may compute
\begin{equation}
\begin{array}{lcl}
\mathcal{A}_{V}( 0)
& = & \mathcal{K}_{\eta}(0)(0, \Phi_*')
\\[0.2cm]
& = & \mathcal{K}^{\mathrm{cc}}_{\eta}(0)(0, \Phi_*')
\\[0.2cm]
& = & [I - P^{(1)}_0] \mathcal{K}^{\mathrm{up}}_{\eta}(0) (0, \Phi_*') .
\end{array}
\end{equation}
This latter expression can be evaluated
by a direct computation involving an Ansatz that is polynomial in $l$.
This is performed in the next result, which directly implies that
\begin{equation}
\label{eq:cm:expr:for:nu:1:3}
\nu_1 = \nu_3 % Q^{(2)} \mathrm{pev}_1 \mathcal{A}_W(0)
= 2 [\partial_z^2 \lambda_z]_{z=0}^{-1} .
\end{equation}

\begin{lem}
Assume that (Hf), $(H\Phi)$, (HS1)-(HS3) and (HM) are all satisfied.
Then there  exists $v_* \in H^1$
for which we have
\begin{equation}
\mathrm{pev}_l \mathcal{K}^{\mathrm{up}}_{\eta}(0) (0, \Phi_*') \mathbf{1}
= 2 [\partial_z^2 \lambda_z]_{z=0}^{-1}
\Big[
  \frac{1}{2}  l^2 \big(\Phi_*', 0 \big)
+  l \big([\partial_z \phi_z]_{z=0} , \Phi_*'\big)
+ \Big(v_*, [\partial_z \phi_z]_{z=0}  - \frac{1}{2} \Phi_*' \Big)
+ \frac{1}{2} V_{\mathrm{hom}}^B
\Big].
\end{equation}
In particular, we have
\begin{equation}
Q^{(2)}_0 \mathrm{pev}_1 \mathcal{K}^{\mathrm{up}}_{\eta}(0) (0, \Phi_*') \mathbf{1}
=  2 [\partial_z^2 \lambda_z]_{z=0}^{-1}.
\end{equation}
\end{lem}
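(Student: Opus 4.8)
The plan is to sidestep the integral representation that defines $\mathcal{K}^{\mathrm{up}}_{\eta}$ and instead solve the inhomogeneous difference equation $\mathcal{D}(0)[V]-\mathcal{T}(0)[V]=(0,\Phi_*')\mathbf{1}$ directly with a polynomial-in-$l$ Ansatz, and only afterwards identify the resulting solution with $\mathcal{K}^{\mathrm{up}}_{\eta}(0)(0,\Phi_*')\mathbf{1}$. Writing $V=(v,w)$ and unfolding the definitions of $\mathcal{D}(0)$ and $\mathcal{T}(0)$, the equation \sref{eq:lc:lin:sys:to:solve:unproj} with $H=(0,\Phi_*')\mathbf{1}$ becomes
\begin{equation}
v_{l+1}-v_l=w_{l+1},\qquad \mathcal{L}_* v_l+Df(\tau\Phi_*)\overline{\tau}s^\diamond[w]_l=\Phi_*' .
\end{equation}
Since the forcing is $l$-independent and, by Proposition \ref{prp:prlm:repr:frm}, the homogeneous solutions living in $BS_\eta$ span the two-dimensional space $\mathrm{span}\{V^A_{\mathrm{hom}},V^B_{\mathrm{hom}}\}$ and hence grow at most linearly, I would look for a particular solution of the form $v_l=\tfrac12 l^2a+lb+c_0$, $w_l=l\,d+e$ with $a,b,c_0,d,e\in H^1$. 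The first equation forces $a=d$ and $e=b-\tfrac12 a$; for the second, differentiating $s^\diamond[e^{z\cdot}w]=e^{z\cdot}s^\diamond_z w$ at $z=0$ gives $s^\diamond[(j\mapsto jw)]_l=l\,s^\diamond_0w+[\partial_z s^\diamond_z]_{z=0}w$, which via \sref{eq:prlm:ids:ai:all} lets one rewrite $Df(\tau\Phi_*)\overline{\tau}s^\diamond[w]_l$ entirely in terms of $A_1$ and $A_2$.

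Matching powers of $l$ and using (HS3), (HM) together with \sref{eq:prlm:diffs:of:l}--\sref{eq:prlm:ids:for:lambda:derivs}, the $l^2$-term yields $\mathcal{L}_*a=0$, so $a=\alpha_0\Phi_*'$; the $l$-term yields $\mathcal{L}_*b=-\alpha_0A_1\Phi_*'=\alpha_0\mathcal{L}_*[\partial_z\phi_z]_{z=0}$ (using $[\partial_z\lambda_z]_{z=0}=0$), so $b=\alpha_0[\partial_z\phi_z]_{z=0}+\beta_0\Phi_*'$; and the constant term is $\mathcal{L}_*c_0=\Phi_*'-\alpha_0\big(\tfrac12A_2\Phi_*'+A_1[\partial_z\phi_z]_{z=0}\big)-\beta_0A_1\Phi_*'$, whose solvability in $H^1$ — pairing with $\psi_*$ and recalling $\langle\psi_*,A_1\Phi_*'\rangle=[\partial_z\lambda_z]_{z=0}=0$ — amounts to $1-\tfrac12\alpha_0[\partial_z^2\lambda_z]_{z=0}=0$, i.e. $\alpha_0=2[\partial_z^2\lambda_z]_{z=0}^{-1}$. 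The remaining degrees of freedom, namely $\beta_0$ (a multiple of $V^B_{\mathrm{hom}}$) and the kernel part of $c_0$ (a multiple of $V^A_{\mathrm{hom}}$), are fixed by the normalizations $Q^{(1)}_0V_0=\langle\psi_*,c_0\rangle=0$ and $Q^{(2)}_0V_0=\langle\psi_*,e\rangle=0$; using $\langle\psi_*,[\partial_z\phi_z]_{z=0}\rangle=0$ one obtains $\beta_0=\tfrac12\alpha_0$ and, via Corollary \ref{cor:prlms:ids:for:phi:derivs}, $c_0=\tfrac12\alpha_0\big([\partial_z^2\phi_z]_{z=0}+[\partial_z\phi_z]_{z=0}\big)$. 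Substituting $a,b,c_0,d,e$ and regrouping the polynomial around $V^B_{\mathrm{hom}}$ then reproduces the asserted formula, with $v_*=\tfrac12[\partial_z^2\phi_z]_{z=0}$.

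It remains to argue that this polynomial sequence $V^{\mathrm{poly}}$ coincides with $\mathcal{K}^{\mathrm{up}}_{\eta}(0)(0,\Phi_*')\mathbf{1}$. Both lie in $BS_\eta(H^1\times H^1)$ (the $l^2$ growth is dominated by $e^{\eta|l|}$), both solve \sref{eq:lc:lin:sys:to:solve:unproj}, and both satisfy $Q^{(1)}_0V_0=Q^{(2)}_0V_0=0$ — for $V^{\mathrm{poly}}$ by construction, for $\mathcal{K}^{\mathrm{up}}_{\eta}(0)$ by Lemma \ref{lem:linsys:inhom:cc:k:up}. Their difference is therefore a homogeneous $BS_\eta$-solution, hence an element of $\mathrm{span}\{V^A_{\mathrm{hom}},V^B_{\mathrm{hom}}\}$ by Proposition \ref{prp:prlm:repr:frm}; since $(Q^{(1)}_0,Q^{(2)}_0)$ acts as the identity matrix on this span at $l=0$, the difference vanishes. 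The "in particular" then follows by reading off the second component at $l=1$: $Q^{(2)}_0\mathrm{pev}_1V^{\mathrm{poly}}=\langle\psi_*,d+e\rangle=\alpha_0\langle\psi_*,\Phi_*'+[\partial_z\phi_z]_{z=0}\rangle=\alpha_0=2[\partial_z^2\lambda_z]_{z=0}^{-1}$.

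I expect the only real difficulty to be the bookkeeping in the middle step: correctly propagating the polynomial-in-$l$ behaviour of $s^\diamond[w]$ through \sref{eq:prlm:ids:ai:all}, and keeping track of which portions of $\beta_0$ and of the $c_0$-kernel belong to $V^A_{\mathrm{hom}}$ and which to $V^B_{\mathrm{hom}}$ when regrouping. Once the algebra is organized around the identities $\mathcal{L}_*\Phi_*'=0$, $\mathcal{L}_*[\partial_z\phi_z]_{z=0}=-A_1\Phi_*'$ and $\langle\psi_*,\tfrac12A_2\Phi_*'+A_1[\partial_z\phi_z]_{z=0}\rangle=\tfrac12[\partial_z^2\lambda_z]_{z=0}$ — all immediate from {\S}\ref{sec:prlm} — the computation collapses to the stated result.
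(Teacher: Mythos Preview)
Your proposal is correct and follows essentially the same approach as the paper: a polynomial-in-$l$ Ansatz is plugged into $\mathcal{D}(0)[V]-\mathcal{T}(0)[V]=(0,\Phi_*')\mathbf{1}$, the identities \sref{eq:prlm:ids:ai:all} and \sref{eq:prlm:diffs:of:l} are used to match coefficients, the solvability condition at the constant term fixes $\alpha_0=2[\partial_z^2\lambda_z]_{z=0}^{-1}$, and the result is identified with $\mathcal{K}^{\mathrm{up}}_{\eta}(0)(0,\Phi_*')\mathbf{1}$ via the two normalizations and the two-dimensionality of the homogeneous kernel. The only differences are organisational: the paper starts directly with the specific sequence $W_l=\tfrac12 l^2(\Phi_*',0)+l([\partial_z\phi_z]_{z=0},\Phi_*')+(v_*,w_*)$ and adjusts by $\nu_A V^A_{\mathrm{hom}}+\nu_B V^B_{\mathrm{hom}}$ afterwards, whereas you carry the free parameters $\alpha_0,\beta_0,\gamma_0$ through the computation; and you go one step further by identifying $v_*=\tfrac12[\partial_z^2\phi_z]_{z=0}$, which the paper leaves implicit.
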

\begin{proof}
For convenience, we introduce the polynomial sequences
\begin{equation}
p^{(0)}_l = 1,
\qquad
p^{(1)}_l = l
\qquad
p^{(2)}_l = l^2 .
\end{equation}
For any $w \in H^1$,
we can compute
\begin{equation}
\mathrm{pev}_l s^\diamond[ p^{(0)} w  ]
= s^\diamond_0 w,
\end{equation}
together with
\begin{equation}
\begin{array}{lcl}
\mathrm{pev}_l s^\diamond[ p^{(1)} w]
& = &
  l \big( - \sigma_B , \sigma_A, \sigma_B , - \sigma_A) w
\\[0.2cm]
& & \qquad
 + \big(\sum_{j=0}^{\sigma_B - 1} l,
  \sum_{j=1}^{\sigma_A} l,
  \sum_{j=1}^{\sigma_B} l,
  \sum_{j=0}^{\sigma_A - 1} l
  \big) w
\\[0.2cm]
& = &
  l s^\diamond_0 w
 + [\partial_z s^\diamond_z]_{z=0} w.
\end{array}
\end{equation}
In particular, for any pair $(v,w) \in H^1 \times H^1$
we find
\begin{equation}
\begin{array}{lcl}
\mathrm{pev}_l \mathcal{D}(0) [ p^{(0)} (v, w) ]
& = & (0, A_1 w) ,
\\[0.2cm]
\end{array}
\end{equation}
together with
\begin{equation}
\begin{array}{lcl}
\mathrm{pev}_l \mathcal{D}(0) [ p^{(1)} (v, w) ]
& = & l \big(0, A_1 w)
+   \big(v,  \frac{1}{2}(A_1 + A_2)  w \big) .
\\[0.2cm]
\end{array}
\end{equation}
Finally, we have
\begin{equation}
\mathrm{pev}_l \mathcal{D}(0) [ p^{(2)} ( v, 0) ]
= (2l + 1) (v, 0).
\end{equation}

Upon writing
\begin{equation}
W_l = \frac{1}{2}  l^2 (\Phi_*', 0)
+  l \Big( [\partial_z \phi_z]_{z=0} , \Phi_*' \Big)
+ (v_*,   w_*) , %\phi_0^{(1)} - \frac{1}{2} \Phi_*')
\end{equation}
we may hence compute
\begin{equation}
\mathrm{pev}_l[\mathcal{D}(0) W]
= \frac{1}{2}  (2 l + 1) (\Phi_*', 0)
+  l (0, A_1 \Phi_*')
+  \big([\partial_z \phi_z]_{z=0} ,
     \frac{1}{2}(A_1 + A_2) \Phi_*' \big)
+  (0 , A_1 w_*),
\end{equation}
together with
\begin{equation}
\begin{array}{lcl}
\mathrm{pev}_l[\mathcal{T}(0) W]
 &= &
\Big( w_* + ( l+1) \Phi_*',
-  l \mathcal{L}_* [\partial_z \phi_z]_{z=0}
+ \mathcal{L}_* v_* \Big)
\\[0.2cm]
& = &
\Big( w_* + ( l+1) \Phi_*',
+  l A_1 \Phi_*'
- \mathcal{L}_* v_*  \Big).
\end{array}
\end{equation}
In particular, we find
\begin{equation}
\mathrm{pev}_l[\mathcal{D}(0) W - \mathcal{T}(0) W ]
= \Big( [\partial_z \phi_z]_{z=0}
-  \frac{1}{2} \Phi_*'
- w_*
,
\frac{1}{2}(A_1 + A_2) \Phi_*'
+ A_1 w_* + \mathcal{L} v_*
\Big) .
\end{equation}

Upon writing
\begin{equation}
w_* = [\partial_z \phi_z]_{z=0} - \frac{1}{2} \Phi_*'
\end{equation}
and picking a constant $\mu \in \Real$,
we hence see that
\begin{equation}
 \mathrm{pev}_l [ \mathcal{D}(0) W - \mathcal{T}(0) W ] = \mu (0, \Phi_*')
\end{equation}
holds if and only if
\begin{equation}
\mathcal{L} v_*
+  \frac{1}{2}  A_2 \Phi_*'
+  A_1 [\partial_z \phi_z]_{z=0}
= \mu \Phi_*'.
\end{equation}
This latter equation has a solution
$v_* \in H^1$  if and only if
\begin{equation}
\mu = \langle \psi_*,
\frac{1}{2} A_2 \Phi_*' + A_1[\partial_z \phi_z]_{z=0}
\rangle
= \frac{1}{2} [\partial_z^2 \lambda_z]_{z=0} .
\end{equation}
We hence see that
\begin{equation}
\mathcal{K}^{\mathrm{up}}_{\eta}(0) (0, \Phi_*') \mathbf{1}
= \frac{2}{[\partial_z^2 \lambda_z]_{z=0} } W
+ \nu_A V_{\mathrm{hom}}^A
+ \nu_B V_{\mathrm{hom}}^B
\end{equation}
for some pair $(\nu_A, \nu_B) \in \Real^2$
that ensures that
\begin{equation}
Q^{(1)}_0 \mathrm{pev}_0 \mathcal{K}^{\mathrm{up}}_{\eta}(0) (0, \Phi_*') \mathbf{1}
= Q^{(2)}_0 \mathrm{pev}_0\mathcal{K}^{\mathrm{up}}_{\eta}(0) (0, \Phi_*') \mathbf{1}
= 0 .
\end{equation}
Upon imposing the normalization
$Q_0 v_* = 0$,
the desired result follows by noting
that
\begin{equation}
Q^{(1)}_0 \mathrm{pev}_0 W =
Q_0 v_* = 0,
\qquad
Q^{(2)}_0 \mathrm{pev}_0 W
= Q_0 w_* = - \frac{1}{2}.
\end{equation}
\end{proof}

We now set out to compute the coefficient
$\nu_2$ by examining the zeroes
of the functions $f_\beta$ and $f_\theta$.
In particular, recalling
the direction-dependent waves $(c_\varphi, \Phi_\varphi)$
defined in Lemma \ref{lem:mr:angl:dep},
we introduce the sequences
\begin{equation}
\Xi^{(\varphi)}: \Wholes \to H^1
\end{equation}
that are given by
\begin{equation}
\Xi^{(\varphi)}_{ l}(\xi)
= \Phi_\varphi\Big( \cos\varphi[\xi + l \tan \varphi ] \Big)
\end{equation}
for any small $\abs{\varphi}$.
This allows us to rewrite the planar wave solutions
\sref{eq:mr:ansatz:u:nl:varphi} in the form
\begin{equation}
\begin{array}{lcl}
u_{nl}(t) & = &
 \Phi_{\varphi}( n \cos \varphi   + l \sin \varphi l + c_\varphi t )
\\[0.2cm]
& = &
 \Phi_{\varphi}(\cos \varphi [ n + \tan\varphi l + d_\varphi t] )
\\[0.2cm]
& = &
\Xi^{(\varphi)}_{l}(n + d_\varphi t).
\end{array}
\end{equation}

We now pick a constant $\theta^{(\varphi)}_0 $ in such a way that
\begin{equation}
\int_{-\infty}^\infty \psi_*(\xi) \big[ \Phi_{\varphi}( \xi\cos \varphi )
- \Phi_*(\xi + \theta^{(\varphi)}_0 ) \big] \, d \xi   = 0,
\end{equation}
which is possible for small $\abs{\varphi}$
on account of the normalization $\langle \psi_*, \Phi_*' \rangle = 1$.
In addition, we introduce
three sequences
\begin{equation}
(\theta^{(\varphi)}, v^{(\varphi)},w^{(\varphi)}) : \Wholes \to \Real \times H^1 \times H^1
\end{equation}
by writing
\begin{equation}
\theta^{(\varphi)}_l  =
    \theta^{(\varphi)}_0 + l \tan \varphi ,
\end{equation}
together with
\begin{equation}
\begin{array}{lcl}
v^{(\varphi)}_{l}
  & = & T_{\theta^{(\varphi)}_l} \Big[
                \Phi_{\varphi}( \cos \varphi[\cdot - \theta^{(\varphi)}_0] )
                - \Phi_*
        \Big] ,
\\[0.2cm]
w^{(\varphi)}_l
  & = & T_{ \theta^{(\varphi)}_{l} } \Big[
     \Phi_{\varphi}\Big( \cos \varphi [ \cdot - \theta^{(\varphi)}_0   ] \Big)
   - \Phi_{\varphi}\Big( \cos \varphi [ \cdot - \theta^{(\varphi)}_0
      -   \tan \varphi  ] \Big)
   \Big] .
\\[0.2cm]
\end{array}
\end{equation}
By construction, we have
\begin{equation}
Q_{\theta^{(\varphi)}_l} v^{(\varphi)}_l = 0 ,
\end{equation}
together with
\begin{equation}
\begin{array}{lcl}
\Xi^{(\phi)}_{l} & = & T_{\theta^{(\varphi)}_l}\Phi_* + v^{(\varphi)}_l ,
\\[0.2cm]
\Xi^{(\varphi)}_{l}
-\Xi^{(\varphi)}_{l-1}
& = & w^{(\varphi)}_l.
\end{array}
\end{equation}
Applying Lemma \ref{lem:set:formulations:eqv},
we hence see that
the triplet $(\theta^{(\varphi)}, v^{(\varphi)}, w^{(\varphi)})$
is a solution to the
problem \sref{eq:set:red:sys}
with $c = d_\varphi$.

Upon writing
\begin{equation}
\overline{\beta}_{\varphi}
= Q_0\Big[
     \Phi_{\varphi}\Big( \cos \varphi [ \cdot - \theta^{(\varphi)}_0   ] \Big)
   - \Phi_{\varphi}\Big( \cos \varphi [ \cdot - \theta^{(\varphi)}_0
     -   \tan \varphi  ] \Big)
   \Big] ,
\end{equation}
we see that
\begin{equation}
Q_{\theta^{(\varphi)}_l } w_l = \overline{\beta}_{\varphi}
\end{equation}
for all $l \in \Wholes$.
Whenever $\abs{\varphi}$
is sufficiently small,
item (vi) of Proposition \ref{prp:cm:act:cm:res}
now implies that
\begin{equation}
f_{\beta}(\overline{\beta}_{\varphi}, d_\varphi )
= 0 ,
\end{equation}
together with
\begin{equation}
f_{\theta}(\overline{\beta}_{\varphi}, d_\varphi )
= \tan \varphi.
\end{equation}
In view of the expansion
\begin{equation}
\overline{\beta}_{\varphi} = \varphi + O (\varphi^2) ,
\end{equation}
we find
\begin{equation}
\frac{1}{2} \nu_1 [\partial_\varphi^2 d_\varphi]_{\varphi=0}
+ \nu_2 = 0
\end{equation}
and hence
\begin{equation}
\label{eq:cm:expr:for:nu:2}
\nu_2 = -  [\partial_z^2 \lambda_z]_{z=0}^{-1}
    [\partial_\varphi^2 d_\varphi]_{\varphi=0} .
\end{equation}

\begin{proof}[Proof of Proposition \ref{prp:set:cm}]
We define the function $h$ via
\begin{equation}
h(\theta,\kappa,c) =h_*(\theta,0,\kappa,c).
\end{equation}
The results now follow directly
from Proposition \ref{prp:cm:act:cm:res}
together with the expressions
\sref{eq:cm:expr:for:nu:1:3} and \sref{eq:cm:expr:for:nu:2}
for $\nu_1$ and $\nu_2$.
\end{proof}

\bibliographystyle{klunumHJ}
%\bibliography{ref}

\end{document}